\newtheorem{theorem}{Theorem}[section]
\newtheorem{proposition}[theorem]{Proposition}
\newtheorem{corollary}[theorem]{Corollary}
\newtheorem{lemma}[theorem]{Lemma}
\theoremstyle{definition}
\newtheorem{definition}[theorem]{Definition}
\theoremstyle{remark}
\newtheorem{remark}[theorem]{Remark}
\DeclareMathOperator{\stab}{Stab}
\DeclareMathOperator{\slg}{SL}
\DeclareMathOperator{\psl}{PSL}
\DeclareMathOperator{\supp}{Supp}
\DeclareMathOperator{\aut}{Aut}
\DeclareMathOperator{\inn}{Inn}
\DeclareMathOperator{\out}{Out}
\DeclareMathOperator{\res}{res}
\DeclareMathOperator{\tr}{tr}
\DeclareMathOperator{\diag}{diag}
\newcommand{\gr}[1]{\Gamma^{(#1)}}
\DeclareMathOperator{\C}{C}
\DeclareMathOperator{\Pa}{P}
\DeclareMathOperator{\IC}{I_C}
\DeclareMathOperator{\IP}{I_P}
\newcommand{\IM}[1]{\operatorname{I}_{\mathrm{M}}^{#1}}
\newcommand{\III}{\mathrm{III}}
\DeclareMathOperator{\IIP}{II_P}
\DeclareMathOperator{\IIC}{II_C}
\DeclareMathOperator{\IIM}{II_M}
\newcommand{\FI}{1}
\newcommand{\FII}{2}
\newcommand{\FIII}{3}
\newcommand{\FIV}{4}
\newcommand{\FV}{5}
\title{Group gradings on exceptional simple Lie superalgebras}
\author{Sebastiano Argenti}
\address{Dipartimento di Matematica, Informatica ed Economia, Università degli Studi della Basilicata, Via dell'Ateneo Lucano 10, 85100 Potenza, Italy }
\email{sebastiano.argenti@unibas.it}
\author{Mikhail Kochetov}
\address{Department of Mathematics and Statistics, Memorial University of Newfoundland, St. John's, NL, A1C5S7, Canada.}
\email{mikhail@mun.ca}
\author{Felipe Yasumura}
\address{Department of Mathematics, Instituto de Matem\'atica e Estat\'istica, Universidade de S\~ao Paulo, SP, Brazil}
\email{fyyasumura@ime.usp.br}
\thanks{S.A. is supported by Università degli Studi della Basilicata and Università del Salento}
\thanks{M.K.~is supported by Discovery Grant 2018-04883 of the Natural Sciences and Engineering Research Council (NSERC) of Canada.}
\thanks{F.Y.~is supported by FAPESP, grant 2023/03922-8 and 2018/23690-6.}
\subjclass[2020]{17B70, 17B25}
\keywords{Exceptional Lie superalgebra, grading, classification}
\begin{document}
\begin{abstract}
We classify up to isomorphism the gradings by arbitrary groups on the exceptional classical simple Lie superalgebras $G(3)$, $F(4)$ and $D(2,1;\alpha)$ over an algebraically closed field of characteristic $0$. To achieve this, we apply the recent method developed by A.~Elduque and M.~Kochetov to the known classification of fine gradings up to equivalence on the same superalgebras, which was obtained by C.~Draper et al.~in 2011. We also classify gradings on the simple Lie superalgebra $A(1,1)$, whose automorphism group is different from the other members of the $A$ series.
\end{abstract}
\maketitle

\section{Introduction}
The classification of gradings by arbitrary groups on a given algebra has been a topic of significant interest, particularly after the groundbreaking works by Patera and Zassenhaus \cite{PZ} and Bahturin, Sehgal and Zaicev \cite{BSZ}. Such classifications have been obtained for many important classes of algebras, including simple finite-dimensional associative, Lie and Jordan algebras (see the monograph \cite{EK13} and the references therein). 

In this paper we focus on the exceptional simple Lie superalgebras. Recall that finite-dimensional simple Lie superalgebras over an algebraically closed field of characteristic $0$ were classified by Kac in \cite{Kac} and are of two types: classical and Cartan type. A simple Lie superalgebra $\mathcal{L}=\mathcal{L}_0\oplus\mathcal{L}_1$ is \emph{classical} if its odd component $\mathcal{L}_1$ is a semisimple module over the even component $\mathcal{L}_0$. There are six infinite series of such Lie superalgebras: four of them, denoted $A$, $B$, $C$, and $D$, are similar to the corresponding series of simple Lie algebras, while two more, denoted $P$ and $Q$, are known as the \emph{strange Lie superalgebras}. In addition to these, there are the \emph{exceptional} cases: $G(3)$, $F(4)$, and the family $D(2,1;\alpha)$.

All group gradings for the series $A$, $B$, $C$, and $D$ of simple Lie algebras were classified in \cite{BK10} up to isomorphism, and the so-called fine gradings were classified in \cite{Eld10} up to equivalence (the terminology is reviewed in Section~\ref{sec:preliminaries} below).
Moving on to superalgebras, classifications of gradings were first obtained for the series $Q$ in \cite{BHSK} and then for the series $P$ and partially $A$ in \cite{HSK}. The approach in these works was based on considering $\mathcal{L}_1$ as a graded $\mathcal{L}_0$-module. A different approach, similar to the works on simple Lie algebras cited above, was taken in the Ph.D. dissertation of Hornhardt \cite{HPhD}: transferring the problem to associative superalgebras with superinvolution and studying gradings on the latter. This resulted in a classification of all gradings up to isomorphism for all six series of classical Lie superalgebras, with the exception of  $A(1,1)=\mathfrak{psl}(2\,|\,2)$, which admits automorphisms and gradings that do not have counterparts in the associative case (a situation reminiscent of the Lie algebra $D_4$).

The main goal of this paper is the classification of group gradings up to isomorphism on the exceptional simple Lie superalgebras $G(3)$, $F(4)$ and $D(2,1;\alpha)$, as well as $A(1,1)$. To achieve this, we start with the classification of fine gradings up to equivalence obtained in \cite{DEM} (excluding $A(1,1)$, which we handle separately) and apply a method developed in \cite{EK23}. This method involves obtaining the so-called almost fine gradings from the fine gradings and computing their respective Weyl groups, which are of independent interest. Then the action of the Weyl group on the admissible homomorphisms from the universal grading group to a given group $G$ will give the isomorphism classes of $G$-gradings without any redundancy.

The paper is structured as follows. To begin with, Section~\ref{sec:preliminaries} provides an overview of the basic notions and notations that are essential for this work. It also includes some preliminary results. Then each of the following sections gives the classification of group gradings for one case: Section~\ref{sec:g3} for $G(3)$ (see \Cref{classG3}), Section~\ref{sec:f4} for $F(4)$ (see \Cref{thm:F4}), Section~\ref{sec:d} for the family  $D(2,1;\alpha)$ (see \Cref{thm:D1,thm:D2,thm:D3}), and finally Section~\ref{sec:a} for $A(1,1)$ (see \Cref{thm:psl}). The last one is obtained as a consequence of the classification of gradings for the non-simple Lie superalgebra $D(2,1;-1)$.

\section{Preliminaries\label{sec:preliminaries}}
 
We assume that $\mathbb{F}$ is an algebraically closed field of characteristic zero. Every vector space and algebra will be over $\mathbb{F}$. We use multiplicative notation for a group $G$ and write $G_{[2]}=\{g\in G\mid g^2=e\}$ and $G^{[2]}=\{g^2\mid g\in G\}$. We shall use additive notation for the infinite and finite cyclic groups $\mathbb{Z}$ and $\mathbb{Z}_n$, but whenever multiplicative notation is more convenient, we will write $C_n$ instead of $\mathbb{Z}_n$. 

\subsection{Gradings}
Let $\mathcal{A}$ be a vector space. A \emph{grading} $\Gamma$ on $\mathcal{A}$ is a family of subspaces $\{\mathcal{A}_s\}_{s\in S}$, called the homogeneous components, such that $\mathcal{A}=\bigoplus_{s\in S}\mathcal{A}_s$. It will be convenient for us to assume that $\mathcal{A}_s\ne0$ for every $s\in S$. If $\mathcal{A}$ is an algebra with a set of multilinear operations $\Omega$ (an $\Omega$-algebra), then we require the following condition: for each $n$-ary operation $\omega$ and all $s_1$, $s_2$, \dots, $s_n\in S$ such that $\omega(\mathcal{A}_{s_1},\ldots,\mathcal{A}_{s_n})\ne0$, there exists $s\in S$ such that $\omega(\mathcal{A}_{s_1},\ldots,\mathcal{A}_{s_n})\subseteq\mathcal{A}_s$. We say that $\Gamma$ is a \emph{group grading} if it can be realized as a $G$-grading for some group $G$, that is, there exists an injective map $S\to G$ such that the above element $s$ equals the product $s_1\cdots s_n$ in $G$. In other words, $\omega(\mathcal{A}_{s_1},\ldots,\mathcal{A}_{s_n})\subseteq\mathcal{A}_{s_1\cdots s_n}$, for each $n$-ary operation $\omega$ and all $s_1$, \dots, $s_n\in S$, where we set $\mathcal{A}_g=0$ if $g\notin S$. In this case, we say that  $(\mathcal{A},\Gamma)$, or just $\mathcal{A}$ if $\Gamma$ is clear from the context, is a $G$-graded $\Omega$-algebra. Also, we call $S$ the \emph{support} of $\Gamma$, or of $\mathcal{A}$, and write $S=\mathrm{Supp}(\Gamma)$ or $S=\mathrm{Supp}(\mathcal{A})$. The nonzero elements of $\mathcal{A}_s$ are called \emph{homogeneous} of degree $s$. If $0\ne x\in\mathcal{A}_s$, we write $\deg x=s$. A subspace $\mathcal{V}\subseteq\mathcal{A}$ is said to be \emph{graded} if $\mathcal{V}=\bigoplus_{s\in S}\mathcal{V}\cap\mathcal{A}_s$.

Among the groups that may realize a grading $\Gamma$, there is one which satisfies a universal property. It can be constructed as follows. Let $U=U(\Gamma)$ be the group generated by $S=\mathrm{Supp}(\Gamma)$, subject to the relations $s=s_1\cdots s_n$ whenever $0\ne\omega(\mathcal{A}_{s_1},\ldots,\mathcal{A}_{s_n})\subseteq\mathcal{A}_s$ for some $n$-ary operation $\omega$. The canonical map $S\to U(\Gamma)$ is injective if and only if $\Gamma$ is a group grading. In this case, $\Gamma$ is realized as a $U(\Gamma)$-grading, and this realization satisfies the following universal property: for any realization of $\Gamma$ as a $G$-grading, there exists a unique group homomorphism $U(\Gamma)\to G$ that is the identity on $S$. We call $U(\Gamma)$ the \emph{universal group} of $\Gamma$. 

Let $U_\mathrm{ab}(\Gamma)$ be the abelianization of $U(\Gamma)$. Then $\Gamma$ can be realized as a $G$-grading with an abelian group $G$ if and only if $S\to U_\mathrm{ab}(\Gamma)$ is injective. If this is the case, then we say that $\Gamma$ is an \emph{abelian group grading}. It is known that $U_\mathrm{ab}(\Gamma)=U(\Gamma)$ for any group grading $\Gamma$ on a simple Lie (super)algebra (see, e.g., \cite[Corollary 1.21]{EK13}). Hence, in this paper, we will only consider abelian group gradings.

\subsubsection{Isomorphism and equivalence} Consider two $G$-graded algebras $\mathcal{A}=\bigoplus_{g\in G}\mathcal{A}_g$ and $\mathcal{B}=\bigoplus_{g\in G}\mathcal{B}_g$. We say that $\mathcal{A}$ and $\mathcal{B}$ are \emph{isomorphic as graded algebras} if there exists an isomorphism of algebras $\varphi:\mathcal{A}\to\mathcal{B}$ such that $\varphi(\mathcal{A}_g)=\mathcal{B}_g$, for each $g\in G$. Given two gradings $\Gamma$ and $\Gamma'$ on the same algebra $\mathcal{A}$, we say that $\Gamma$ and $\Gamma'$ are \emph{isomorphic} and write $\Gamma\cong\Gamma'$ if $(\mathcal{A},\Gamma)$ and $(\mathcal{A},\Gamma')$ are isomorphic as graded algebras.

Now, let $\Gamma$ be a $G$-grading on $\mathcal{A}$ and $\Gamma'$ be an $H$-grading on $\mathcal{B}$. We say that the graded algebras $(\mathcal{A},\Gamma)$ and $(\mathcal{B},\Gamma')$ are \emph{equivalent} if there exists a map $\alpha:\mathrm{Supp}(\Gamma)\to\mathrm{Supp}(\Gamma')$ and an algebra isomorphism $\varphi:\mathcal{A}\to\mathcal{B}$ such that $\varphi(\mathcal{A}_g)=\mathcal{B}_{\alpha(g)}$, for each $g\in\mathrm{Supp}(\Gamma)$. In general, the map $\alpha$ is just a bijection of sets. However, if $G=U(\Gamma)$ and $H=U(\Gamma')$, then $\alpha$ extends to a group isomorphism $G\to H$. Given two gradings $\Gamma$ and $\Gamma'$ on the same algebra $\mathcal{A}$, we say that $\Gamma$ and $\Gamma'$ are \emph{equivalent} if $(\mathcal{A},\Gamma)$ and $(\mathcal{A},\Gamma')$ are equivalent.

\subsubsection{Refinement and coarsening}
Let $\Gamma:\mathcal{A}=\bigoplus_{g\in G}\mathcal{A}_g$ and $\Gamma':\mathcal{A}=\bigoplus_{h\in H}\mathcal{A}_h'$ be gradings on $\mathcal{A}$. We say that $\Gamma$ is a \emph{refinement} of $\Gamma'$ (or $\Gamma'$ is a \emph{coarsening} of $\Gamma$) if for each $g\in\mathrm{Supp}(\Gamma)$, there exists (necessarily unique) $h\in\mathrm{Supp}(\Gamma')$ such that $\mathcal{A}_g\subseteq\mathcal{A}_{h}'$. A refinement is said to be \emph{proper}, if for at least one $g\in G$, we have $\mathcal{A}_g\ne\mathcal{A}_{h}'$. A grading $\Gamma$ is said to be \emph{fine} if it admits no proper refinement.

There is a special construction that gives a coarsening of a grading. Let $\Gamma$ be a $G$-grading on $\mathcal{A}$, and let $\alpha:G\to H$ be a group homomorphism. Then $\alpha$ induces an $H$-grading on $\mathcal{A}$, denoted by $^\alpha\Gamma$, whose components are  $\mathcal{A}_h'=\bigoplus_{g\in\alpha^{-1}(h)}\mathcal{A}_g$, for each $h\in H$. If $G=U(\Gamma)$, then every coarsening of $\Gamma$ is obtained in this way.

\subsubsection{Automorphisms and the Weyl group of a grading} 

Following \cite{PZ} (see also \cite[\S 1.1]{EK13}), we can associate three important subgroups of $\aut(\mathcal{A})$ to any grading $\Gamma$ of $\mathcal{A}$. First, the \emph{diagonal group} of $\Gamma$ is
\[
\mathrm{Diag}(\Gamma)=\{\varphi\in\aut(\mathcal{A})\mid\forall s\in \mathrm{Supp}(\Gamma),\,\exists\lambda_s\in\mathbb{F}^\times\text{ such that }\varphi|_{\mathcal{A}_s}=\lambda_s\mathrm{id}_{\mathcal{A}_s}\}.
\]
This is a central subgroup of the \emph{stabilizer} of $\Gamma$:
\[
\mathrm{Stab}(\Gamma)=\{\varphi\in\aut(\mathcal{A})\mid\forall s\in \mathrm{Supp}(\Gamma),\,\varphi(\mathcal{A}_s)=\mathcal{A}_s\}.
\]
In its turn, $\mathrm{Stab}(\Gamma)$ is a normal subgroup of the \emph{automorphism group} of $\Gamma$:
\[
\mathrm{Aut}(\Gamma)=\{\varphi\in\aut(\mathcal{A})\mid\forall s\in\mathrm{Supp}(\Gamma),\,\exists t\in\mathrm{Supp}(\Gamma)\text{ such that }\varphi(\mathcal{A}_{s})=\mathcal{A}_{t}\}.
\]
Note that, if $\Gamma$ is a $G$-grading, then $\mathrm{Stab}(\Gamma)$ is the group of automorphisms and $\mathrm{Aut}(\Gamma)$ is the group of self-equivalences of the graded algebra $(\mathcal{A},\Gamma)$. 

The following result will be useful:

\begin{lemma}[{\cite[Corollary 3.7]{EK23}}]\label{l:DiagStab}
Let $\Gamma$ be a fine abelian group grading on a finite-dimensional algebra $\mathcal{A}$ over an algebraically closed field of characteristic zero. If $\aut(\mathcal{A})$ is reductive, then $\mathrm{Stab}(\Gamma)=\mathrm{Diag}(\Gamma)$.
\end{lemma}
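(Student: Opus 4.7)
The plan is to translate the statement into algebraic-group language and then exploit the structure theory of reductive groups. Set $G=\aut(\mathcal{A})$, which by hypothesis is a reductive algebraic group (and affine, since $\mathcal{A}$ is finite-dimensional). Abelian group gradings on $\mathcal{A}$ are in bijection with morphisms from diagonalizable (quasi-torus) subgroup schemes of $G$, where the homogeneous components are precisely the common eigenspaces and the characters recover the degrees. Under this dictionary, $\mathrm{Diag}(\Gamma)$ is the diagonalizable subgroup $D\subseteq G$ that realizes $\Gamma$, and fineness of $\Gamma$ corresponds to $D$ being a \emph{maximal} diagonalizable subgroup (MAD) of $G$.

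Next I would observe that $\mathrm{Stab}(\Gamma)=C_G(D)$. One inclusion is immediate: any element of $C_G(D)$ preserves each joint eigenspace of $D$, hence each homogeneous component. Conversely, an automorphism that preserves every homogeneous component acts by a scalar on each common eigenspace of $D$ (this uses that $\mathrm{Diag}(\Gamma)$ lies in the stabilizer and we can further decompose), so it commutes with every element of $D$. Thus the lemma is equivalent to $C_G(D)=D$ whenever $D$ is a MAD in the reductive group $G$.

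To prove this equality I would invoke that the centralizer $C:=C_G(D)$ of a diagonalizable subgroup in a reductive group is itself reductive (a standard consequence of rigidity of tori together with Steinberg's theorem on centralizers of semisimple elements). Pick a maximal torus $T$ of $C$. Since $T$ commutes with $D$ and both are diagonalizable, the subgroup they generate is again diagonalizable in $G$; by maximality of $D$, we get $T\subseteq D$. But $T$ is a maximal torus of $C$ and now sits in the center of $C$, so all maximal tori of $C$ coincide with $T$, forcing the root system of $C^\circ$ to be empty. Therefore $C^\circ=T\subseteq D$.

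Finally, for an arbitrary $x\in C$, take its Jordan decomposition $x=x_sx_u$ with $x_s,x_u\in C$. The semisimple part $x_s$ is diagonalizable and commutes with $D$, so $\langle x_s\rangle\cdot D$ is a diagonalizable subgroup of $G$ containing $D$; maximality yields $x_s\in D$. In characteristic zero every unipotent element of a linear algebraic group lies in the identity component, so $x_u\in C^\circ=T$, a torus, which contains no nontrivial unipotent elements; hence $x_u=1$ and $x=x_s\in D$. This shows $C_G(D)=D$, and the lemma follows. The main obstacle is step three, which requires the structural input that centralizers of diagonalizable subgroups in reductive groups are reductive; everything else is an application of the maximality of $D$ and Jordan decomposition.
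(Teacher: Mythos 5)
The paper does not actually prove this lemma: it is imported verbatim from \cite[Corollary 3.7]{EK23}, so there is no in-paper argument to measure yours against. Judged on its own, your proof is correct and follows the natural route, and your reductions are exactly the ones the paper's preliminaries already set up: for $\Gamma$ realized over $U_{\mathrm{ab}}(\Gamma)$ one has $\mathrm{Stab}(\Gamma)=C(\mathrm{Diag}(\Gamma))$, and fineness of $\Gamma$ is equivalent to $\mathrm{Diag}(\Gamma)$ being a maximal quasitorus; so the lemma amounts to the statement that a maximal diagonalizable subgroup $D$ of a reductive group in characteristic zero is self-centralizing, which is what you prove. Two remarks. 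First, your justification of $\mathrm{Stab}(\Gamma)\subseteq C_G(D)$ is misstated: the point is not that a stabilizing automorphism acts by scalars on the components (it need not), but that every element of $D$ does, so any automorphism preserving each component commutes with it; this is a one-line fix. Second, the only genuine external input, the reductivity of $C=C_G(D)$, is true in characteristic zero but is not really a consequence of rigidity of tori plus Steinberg's connectedness theorem; the relevant fact is that the fixed-point subgroup of a linearly reductive group acting by automorphisms on a reductive group has reductive identity component (Richardson), or equivalently the root-subgroup description of $C_G(s)^{\circ}$ for $s$ semisimple applied to generators of $D$. (One can even avoid reductivity of $C$ altogether: a nontrivial unipotent $u\in C$ gives a $D$-fixed nilpotent $\log u$, an equivariant Jacobson--Morozov triple then produces a torus commuting with $D$ and not contained in it, contradicting maximality.) With the attribution corrected, your argument stands; the remaining steps --- $T\subseteq D$ by maximality, $C^{\circ}=T$ because a connected reductive group with central maximal torus has empty root system, and the Jordan decomposition argument for the component group --- are all sound.
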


Finally, the Weyl group of $\Gamma$ is defined as the quotient
\[
W(\Gamma)=\aut(\Gamma)/\mathrm{Stab}(\Gamma).
\]
By definition, $W(\Gamma)$ can be identified with a group of permutations of the set $\mathrm{Supp}(\Gamma)$. If $\Gamma$ is a group grading, these permutations extend to automorphisms of the universal group $U(\Gamma)$ and we may regard $W(\Gamma)$ as a subgroup of $\aut(U(\Gamma))$.

\subsubsection{Duality}% In this subsection, it is convenient to assume that $G$ is generated by the support of the grading in question. If this is not the case, then a grading $\Gamma$, realized by a group $G$, is equivalent to the grading realized by $\langle\mathrm{Supp}(\Gamma)\rangle$.

There is a duality between gradings and actions, which we review in the case of abelian groups. Recall that we assume that the base field $\mathbb{F}$ is algebraically closed of characteristic zero. Let $\Gamma$ be a $G$-grading on a finite-dimensional $\Omega$-algebra $\mathcal{A}$, where $G$ is a finitely generated abelian group. Then the automorphism group $\aut(\mathcal{A})$ and the character group $\widehat{G}=\{\chi:G\to\mathbb{F}^\times\text{ homomorphism}\}$ have a canonical structure of (affine) algebraic groups, and we obtain a homomorphism of algebraic groups $\eta_\Gamma:\widehat{G}\to\aut(\mathcal{A})$ by setting
\[
\eta_\Gamma(\chi)(a)=\chi(g)a,\quad \forall \chi\in\widehat{G},\,g\in G,\,a\in\mathcal{A}_g.
\]
Conversely, any homomorphism of algebraic groups $\eta:\widehat{G}\to\aut(\mathcal{A})$ is a representation of $\widehat{G}$, compatible with the operations of $\mathcal{A}$. Since $\widehat{G}$ is a \emph{quasitorus}, i.e., the direct product of an algebraic torus and a finite abelian group, it is known that all representations of $\widehat{G}$ are diagonalizable. Thus we obtain an eigenspace decomposition of $\mathcal{A}$ indexed by the character group $\mathfrak{X}(\widehat{G})\cong G$: $\mathcal{A}=\bigoplus_{g\in G}\mathcal{A}_g$, and this is easily checked to be a $G$-grading on $\mathcal{A}$.

If $Q=\eta_\Gamma(\widehat{G})$, then $\mathrm{Diag}(\Gamma)$ contains $Q$, $\mathrm{Stab}(\Gamma)$ is the centralizer  $C(Q)$, and $\mathrm{Aut}(\Gamma)$ contains the normalizer $N(Q)$ in the group $\aut(\mathcal{A})$. 

Recall that $\Gamma$ can be realized as a grading by $U_\mathrm{ab}(\Gamma)$. It follows from the construction of $U_\mathrm{ab}(\Gamma)$ that its character group can be identified with $\mathrm{Diag}(\Gamma)$ and, hence, in the case $G=U_\mathrm{ab}(\Gamma)$, we have $\mathrm{Diag}(\Gamma)=Q$ and $\mathrm{Aut}(\Gamma)=N(Q)$. Therefore, 
\[
W(\Gamma)=N(\mathrm{Diag}(\Gamma))/C(\mathrm{Diag}(\Gamma)).
\]

Finally, it is worth mentioning that $\Gamma$ is a fine grading if and only if $\mathrm{Diag}(\Gamma)$ is a maximal quasitorus in $\aut(\mathcal{A})$. This gives a bijection between the equivalence classes of fine abelian group gradings on $\mathcal{A}$ and the conjugacy classes of maximal quasitori in $\aut(\mathcal{A})$.

\subsubsection{Gradings on Lie superalgebras} Recall that a \emph{Lie superalgebra} (in characteristic different from $2,3$) is a vector space $\mathcal{L}$ endowed with a bilinear operation $[\,,]$ and a $\mathbb{Z}_2$-grading $\mathcal{L}=\mathcal{L}_0\oplus \mathcal{L}_1$ satisfying $[\mathcal{L}_i,\mathcal{L}_j]\subseteq \mathcal{L}_{i+j}$, for $i,j\in\mathbb{Z}_2$, and the super versions of antisymmetry and Jacobi identities:
\begin{enumerate}
\renewcommand{\labelenumi}{(\roman{enumi})}
\item $[x,y]=(-1)^{ij}[y,x]$, for all $x\in \mathcal{L}_i$, $y\in \mathcal{L}_j$,
\item $[[x,y],z]=(-1)^{jk}[[x,z],y] + [x,[y,z]]$, for all $x\in \mathcal{L}_i$, $y\in \mathcal{L}_j$, $z\in \mathcal{L}_k$.
\end{enumerate}
It is convenient to denote $|x|=i$ if $0\ne x\in \mathcal{L}_i$ and refer to it as the \emph{parity} of $x$. 
The linear map $\delta:\mathcal{L}\to\mathcal{L}$ defined by $\delta(x)=(-1)^{|x|}x$ is the automorphism of the bracket $[\,,]$ corresponding to the $\mathbb{Z}_2$-grading via duality. It can be used to interpret a Lie superalgebra as an $\Omega$-algebra, where $\Omega$ consists of the binary operation $[\,,]$ and the unary operation $\delta$. The automorphisms, respectively gradings, of a Lie superalgebra $\mathcal{L}$ can be defined as the automorphisms, respectively gradings, of this $\Omega$-algebra. In other words, an automorphism must preserve $[\,,]$ and commute with $\delta$, i.e., leave both $\mathcal{L}_0$ and $\mathcal{L}_1$ invariant. For example, $\delta$ itself has this property; we will call it the \emph{parity automorphism}. A $G$-grading $\Gamma$ on $\mathcal{L}$ is a vector space decomposition $\mathcal{L}=\bigoplus_{g\in G}\mathcal{L}_g$ such that $[\mathcal{L}_g,\mathcal{L}_h]\subseteq \mathcal{L}_{gh}$ and $\delta(\mathcal{L}_g)\subseteq\mathcal{L}_g$, for all $g,h\in G$. The latter condition means that the grading $\Gamma$ and the canonical $\mathbb{Z}_2$-grading are compatible in the sense that both subspaces $\mathcal{L}_0$ and $\mathcal{L}_1$ are graded with respect to $\Gamma$ or, equivalently, each homogeneous component $\mathcal{L}_g$, $g\in G$, is graded with respect to the canonical $\mathbb{Z}_2$-grading. We denote by $\Gamma_0$ and $\Gamma_1$ the restriction of $\Gamma$ to the subspaces $\mathcal{L}_0$ and $\mathcal{L}_1$, respectively.

It follows from the definition that a $G$-grading on $\mathcal{L}$ can be refined to a $\mathbb{Z}_2\times G$-grading with homogeneous components $\mathcal{L}_{(i,g)}=\mathcal{L}_i\cap \mathcal{L}_g$. In particular, if $\Gamma$ is fine, then this refinement must be equivalent to $\Gamma$, which means $\mathrm{Supp}(\Gamma_0)\cap\mathrm{Supp}(\Gamma_1)=\emptyset$. Whenever this latter condition holds, we have a well-defined \emph{parity homomorphism} $p:U(\Gamma)\to\mathbb{Z}_2$ that maps $\mathrm{Supp}(\Gamma_0)$ to $0$ and $\mathrm{Supp}(\Gamma_1)$ to $1$.

\subsection{Examples of gradings}\label{s:examplesgr}

The following two examples are well known (see, e.g., \cite{EK13}), but we review them here to fix notation, because they will be used as building blocks throughout the paper. Let $G$ be an abelian group.

\begin{definition}\label{examples}
        On the matrix algebra $M_2(\mathbb{F})$, we have two types of $G$-gradings:
        \begin{itemize}
        \item For any $g\in G$, $\gr{\C}_{M_2}(g)$ is defined by assigning degrees to the basis elements
        \[
        I=\begin{pmatrix}1 & 0 \\ 0 & 1 \end{pmatrix},\, H=\begin{pmatrix}1 & 0 \\ 0 & -1 \end{pmatrix},\, E=\begin{pmatrix}0 & 1 \\ 0 & 0 \end{pmatrix},\, F=\begin{pmatrix}0 & 0 \\ 1 & 0 \end{pmatrix}
        \]
        as follows: $\deg(I)=\deg(H)=e$, $\deg(E)=g$, $\deg(F)=g^{-1}$. If $g^2\ne e$, this grading has type $(2,1)$, i.e., two components of dimension $1$ and one of dimension $2$. If $g^2=e\ne g$, it has type $(0,2)$.
        \item For $a,b\in G$ satisfying $\langle a,b\rangle=\{e,a,b,c\}\cong \mathbb{Z}_2^2$, $\gr{\Pa}_{M_2}(a,b)$ is defined by assigning degrees to the basis elements
        \[
        I=\begin{pmatrix}1 & 0 \\ 0 & 1 \end{pmatrix},\, A=\begin{pmatrix}i & 0 \\ 0 & -i \end{pmatrix},\,
        B=\begin{pmatrix}0 & -1 \\ 1 & 0 \end{pmatrix},\, C=\begin{pmatrix}0 & -i \\ -i & 0 \end{pmatrix}
        \]
        as follows: $\deg(I)=e$, $\deg(A)=a$, $\deg(B)=b$, $\deg(C)=ab=c$ ($i^2=-1$).
        \end{itemize}
        The restrictions of these gradings to $\mathfrak{sl}_2$ will be denoted by $\gr{\C}_{\mathfrak{sl}_2}(g)$ and $\gr{\Pa}_{\mathfrak{sl}_2}(a,b)$, respectively.
        \end{definition}
        
    Note that the matrices $A,B,C$ are scalar multiples of the Pauli matrices, hence the notation $\gr{\Pa}_{M_2}(a,b)$. The scalars are chosen so that $A,B,C\in\slg_2(\mathbb{F})$. This choice does not affect the grading, but will be convenient in Section~\ref{sec:d}.

    As to $\gr{\C}_{M_2}(g)$, it is induced by the group homomorphism $\mathbb{Z}\to G$, $1\mapsto g$, from the so-called \emph{Cartan grading} of $M_2(\mathbb{F})$, i.e., the grading corresponding to a maximal torus in the automorphism group.

    It is well known (see, e.g., \cite[Theorems 2.27 and 3.49]{EK13}) that any $G$-grading on $M_2(\mathbb{F})$, respectively $\mathfrak{sl}_2$, is isomorphic to $\gr{\C}_{M_2}(g)$ or $\gr{\Pa}_{M_2}(a,b)$, respectively $\gr{\C}_{\mathfrak{sl}_2}(g)$ or $\gr{\Pa}_{\mathfrak{sl}_2}(a,b)$.
    
    The Weyl group of the Cartan grading on $M_2(\mathbb{F})$ is cyclic of order $2$, generated by the inversion on its universal group $\mathbb{Z}$. The Weyl group of the Pauli grading is the full automorphism group of the universal group $\mathbb{Z}_2^2$, isomorphic to the symmetric group $S_3$ on the set $\{a,b,c\}$. It follows that the isomorphism class of $\gr{\Pa}_{\mathfrak{sl}_2}(a,b)$ depends only on the subgroup $\langle a,b\rangle$.

    Both $\gr{\C}_{M_2}(g)$ and $\gr{\Pa}_{M_2}(a,b)$ can be generalized to $M_n(\mathbb{F})$ as follows. 
    
\begin{definition}\label{def:elemM}
Given any $n$-tuple $\gamma=(g_1,\ldots,g_n)$ of elements of $G$, we obtain a $G$-grading $\Gamma_{M_n}(\gamma)$ on $M_n(\mathbb{F})$ by assigning degree $g_j g_k^{-1}$ to the matrix unit $E_{jk}$. Such gradings on $M_n(\mathbb{F})$ are called \emph{elementary}. 
\end{definition}

For example, $\gr{\C}_{M_2}(g)=\Gamma_{M_2}(\gamma)$ for $\gamma=(e,g^{-1})$. It is important to note that, if we define a $G$-grading $\Gamma_V(\gamma)$ on the vector space $V=\mathbb{F}^n$ by assigning degree $g_j$ to the $j$-th standard basis vector, then $\Gamma_{M_n}(\gamma)$ is induced on $M_n(\mathbb{F})=\mathrm{End}(V)$ in the following sense: a nonzero operator $T\in\mathrm{End}(V)$ is homogeneous of degree $g$ if and only if $T(V_h)\subseteq V_{gh}$ for all $h\in G$. In particular, if $V$ is a super vector space, i.e., it is equipped with a $\mathbb{Z}_2$-grading $V=V_0\oplus V_1$, then the associative algebra $\mathrm{End}(V)$ becomes $\mathbb{Z}_2$-graded, too, so we can define the supercommutator:
\[
[x,y]=xy-(-1)^{|x|\,|y|}yx.
\]
This operation turns $\mathrm{End}(V)$ into a Lie superalgebra, denoted by $\mathfrak{gl}(V)$, or $\mathfrak{gl}(n_0\,|\,n_1)$ if $\dim V_0=n_0$ and $\dim V_1=n_1$. All six series of classical simple Lie superalgebras can be defined as subquotients of $\mathfrak{gl}(n_0\,|\,n_1)$, including $\mathfrak{psl}(n_0\,|\,n_1)$ for $A$ and  $\mathfrak{osp}(n_0\,|\,n_1)$ for $B$, $C$ and $D$.

\begin{definition}\label{def:PauliM}
If $\zeta\in\mathbb{F}$ is a primitive $n$-th root of unity, then the diagonal matrix $X=\mathrm{diag}(1,\zeta,\ldots\zeta^{n-1})$ and the permutation matrix $Y$ corresponding to the cycle $(1,2,\ldots,n)$ commute up to $\zeta$ and satisfy $X^n=Y^n=I$. Therefore, $M_n(\mathbb{F})=\bigoplus_{j,k=0}^{n-1}\mathbb{F}X^jY^k$ is a grading by $\mathbb{Z}_n\times\mathbb{Z}_n$, called a \emph{generalized Pauli} grading. 
\end{definition}

The gradings $\gr{\C}_{M_2}(g)$ and $\gr{\Pa}_{M_2}(a,b)$ can be generalized in a different direction. Recall that $M_2(\mathbb{F})$ is the split quaternion algebra and can be obtained by two steps of the Cayley-Dickson doubling process starting from $\mathbb{F}$. A third step produces the (split) Cayley algebra $\mathcal{C}$.
All group gradings on $\mathcal{C}$ were described in \cite{Eld} (see also \cite[\S 4.2]{EK13}). The following two families cover all isomorphism classes. 

\begin{definition}\label{def:cayleyI}
Let $\{e_1,e_2,u_1,u_2,u_3,v_1,v_2,v_3\}$ be a `good basis' of $\mathcal{C}$. Given a triple $(h_1,h_2,h_3)$ of elements of $G$ satisfying $h_1h_2h_3=e$, we obtain a $G$-grading $\gr{\C}_\mathcal{C}(h_1,h_2,h_3)$ by assigning degrees as follows:
\begin{align*}
&\deg e_1=\deg e_2=e,\\
&\deg u_i=h_i,\,\deg v_i=h_i^{-1}\quad (i=1,2,3).
\end{align*}
\end{definition}

\begin{definition}\label{def:cayleyII}
 For $a_1,a_2,a_3\in G$ satisfying $T:=\langle a_1,a_2,a_3\rangle\cong\mathbb{Z}_2^3$, we obtain a $G$-grading on $\mathcal{C}$ by assigning degree $a_i$ to the generator added at the $i$-th step of the Cayley-Dickson doubling process. Since the isomorphism class of this grading depends only on the subgroup $T$, we will use notation $\gr{\Pa}_\mathcal{C}(T)$.
\end{definition}

\subsection{Almost fine gradings and admissible homomorphisms}
We now describe the method introduced in \cite{EK23} to obtain, for a given finite-dimensional $\Omega$-algebra $\mathcal{A}$ over an algebraically closed field and any abelian group $G$, a classification of $G$-gradings on $\mathcal{A}$ up to isomorphism if the fine abelian group gradings on $\mathcal{A}$ are known up to equivalence. We shall state the results in a simpler form, since we assume that the base field has characteristic $0$.

Let $\mathcal{A}$ be a finite-dimensional algebra. Then any $G$-grading $\Gamma$ on $\mathcal{A}$ is a coarsening of a fine grading $\Delta$, so $\Gamma$ is induced by a homomorphism $\alpha:U\to G$ where $U=U_\mathrm{ab}(\Delta)$. However, neither $\Delta$ nor $\alpha$ are uniquely determined. The main idea of \cite{EK23} is to force uniqueness (up to the action of the Weyl group of $\Delta$) by imposing a certain condition on $\alpha$ while slightly relaxing the fineness condition on $\Delta$. 

\begin{definition}[{\cite[Definition 3.2]{EK23}}]
A grading $\Gamma$ on $\mathcal{A}$ is \emph{almost fine} if the connected component of the identity  $\mathrm{Diag}(\Gamma)^\circ$ is a maximal torus in $\mathrm{Stab}(\Gamma)$.
\end{definition}

A $G$-grading on $\mathcal{A}$ induces a $G$-grading on the Lie algebra $\mathrm{Der}(\mathcal{A})$, which can be used to characterize almost fine gradings (in characteristic $0$): 
\begin{theorem}[{\cite[Theorem 5.4]{EK23}}]\label{thm:almostfine}
Assume that $\aut(\mathcal{A})$ is reductive, let $\Delta$ be a fine abelian group grading on $\mathcal{A}$, $U=U_\mathrm{ab}(\Delta)$, and let $\Delta'$ be an abelian group grading that is a coarsening of $\Delta$, so $U_\mathrm{ab}(\Delta')=U/E$, for some subgroup $E\subseteq U$. Then $\Delta'$ is almost fine if and only if $E\subseteq t(U)$ (the torsion subgroup of $U$) and $E\cap\Sigma\subseteq\{e\}$, where $\Sigma$ is the support of the induced grading on $\mathrm{Der}(\mathcal{A})$.
\end{theorem}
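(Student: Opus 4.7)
The plan is to work at the Lie algebra level and translate the condition that $\mathrm{Diag}(\Delta')^\circ$ be a maximal torus of $\mathrm{Stab}(\Delta')$ into explicit conditions about $E$ and the $U$-grading on $\mathfrak{g}:=\mathrm{Der}(\mathcal{A})$. By the duality of Section~\ref{sec:preliminaries}, $Q:=\mathrm{Diag}(\Delta)=\widehat{U}$ is a maximal quasitorus of $\aut(\mathcal{A})$, and $Q':=\mathrm{Diag}(\Delta')=\widehat{U/E}$ sits inside $Q$ as the subgroup of characters trivial on $E$; correspondingly $H:=\mathrm{Stab}(\Delta')=C_{\aut(\mathcal{A})}(Q')$. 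Since $\aut(\mathcal{A})$ is reductive and $Q'$ is a quasitorus, $H$ is reductive, so $\mathrm{Lie}(H)$ is a reductive Lie algebra.

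The adjoint $Q'$-action on $\mathfrak{g}=\bigoplus_{u\in U}\mathfrak{g}_u$ scales $\mathfrak{g}_u$ by $\chi(\bar u)$ for each $\chi\in\widehat{U/E}$, so the $Q'$-fixed subspace is
\[
\mathrm{Lie}(H)=\bigoplus_{u\in E}\mathfrak{g}_u=\mathfrak{g}_e\oplus\bigoplus_{u\in(E\cap\Sigma)\setminus\{e\}}\mathfrak{g}_u.
\]
Applying \Cref{l:DiagStab} to the fine grading $\Delta$ yields $\mathrm{Stab}(\Delta)=Q$, and taking centralizers at the Lie algebra level then gives $\mathfrak{g}_e=\mathrm{Lie}(Q^\circ)$; in particular $\dim\mathfrak{g}_e$ equals the free rank of $U$. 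The same duality applied to $\Delta'$ shows that $\dim\mathrm{Diag}(\Delta')^\circ$ equals the free rank of $U/E$, and this is at most $\dim\mathfrak{g}_e$, with equality precisely when $E\subseteq t(U)$.

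Since $\mathfrak{g}_e$ is a toral subalgebra of $\mathrm{Lie}(H)$, one always has $\mathrm{rank}(H)\ge\dim\mathfrak{g}_e$, and combined with the previous inequality this already shows that $\Delta'$ fails to be almost fine whenever $E\not\subseteq t(U)$. Assume therefore $E\subseteq t(U)$, so that $\dim\mathrm{Diag}(\Delta')^\circ=\dim\mathfrak{g}_e$; the issue becomes whether $\mathrm{rank}(H)=\dim\mathfrak{g}_e$. For every $u\in E$ the image of $u$ in $U/t(U)$ vanishes, so $Q^\circ$ acts trivially on $\mathfrak{g}_u$ and $[\mathfrak{g}_e,\mathfrak{g}_u]=0$, placing $\mathfrak{g}_e$ inside $Z(\mathrm{Lie}(H))$. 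The reductive decomposition $\mathrm{Lie}(H)=Z(\mathrm{Lie}(H))\oplus[\mathrm{Lie}(H),\mathrm{Lie}(H)]$ with semisimple derived algebra then makes the equality $\mathrm{rank}(H)=\dim\mathfrak{g}_e$ equivalent to $\mathrm{Lie}(H)=\mathfrak{g}_e$, i.e.~$\mathfrak{g}_u=0$ for every $u\in E\setminus\{e\}$, which is exactly $E\cap\Sigma\subseteq\{e\}$. Putting both steps together gives the characterization in the theorem.

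The step that requires the most care is passing from ``$\mathrm{Diag}(\Delta')^\circ$ is a maximal torus of $H$'' to the structural conclusion $\mathrm{Lie}(H)=\mathfrak{g}_e$, because the centrality of $\mathfrak{g}_e$ in $\mathrm{Lie}(H)$, on which the reductive rank count relies, is only available once $E\subseteq t(U)$ is in force. The argument therefore naturally splits into two cases, with the coarse bound $\mathrm{rank}(H)\ge\dim\mathfrak{g}_e$ disposing of $E\not\subseteq t(U)$ before one invokes the structure of reductive Lie algebras to finish.
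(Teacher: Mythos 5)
This statement is imported verbatim from \cite[Theorem 5.4]{EK23}; the paper under review gives no proof of it, so there is nothing internal to compare against. Your argument is correct and is, as far as one can tell from the shape of the statement (the appearance of the support $\Sigma$ of $\mathrm{Der}(\mathcal{A})$ is essentially forced by the identity $\mathrm{Lie}(\mathrm{Stab}(\Delta'))=\bigoplus_{u\in E}\mathrm{Der}(\mathcal{A})_u$), the same route the cited source takes: identify $\mathrm{Stab}(\Delta')$ with the centralizer of the quasitorus $Q'=\widehat{U/E}\subseteq Q$, compute its Lie algebra as the $Q'$-fixed part of the graded algebra $\mathrm{Der}(\mathcal{A})$, use \Cref{l:DiagStab} for the fine grading $\Delta$ to pin down $\mathfrak{g}_e=\mathrm{Lie}(Q^\circ)$, and then compare ranks. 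The case split is handled cleanly: the coarse bound $\mathrm{rank}(H)\ge\dim\mathfrak{g}_e$ kills $E\not\subseteq t(U)$, and once $E\subseteq t(U)$ the centrality of $\mathfrak{g}_e$ in $\mathrm{Lie}(H)$ reduces maximality of $\mathrm{Diag}(\Delta')^\circ$ to the vanishing of $\mathfrak{g}_u$ for $u\in E\smallsetminus\{e\}$. Two ingredients are used without justification and deserve at least a reference: that the centralizer of a diagonalizable subgroup of a reductive group in characteristic zero is reductive (so that $\mathrm{Lie}(H)=Z(\mathrm{Lie}(H))\oplus[\mathrm{Lie}(H),\mathrm{Lie}(H)]$ with semisimple derived algebra and $\mathrm{rank}(H)=\dim Z(\mathrm{Lie}(H))+\mathrm{rank}([\mathrm{Lie}(H),\mathrm{Lie}(H)])$), and that $\mathrm{Lie}(C_{\aut(\mathcal{A})}(Q'))=\mathrm{Der}(\mathcal{A})^{Q'}$ (smoothness of centralizers of linearly reductive subgroups). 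Both are standard in characteristic zero, so these are presentational rather than mathematical gaps.
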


For example, there are two fine gradings on $M_2(\mathbb{F})$, up to equivalence: the Cartan grading with universal group $\mathbb{Z}$ and the Pauli grading with universal group $\mathbb{Z}_2^2$. The above theorem tells us that they do not have proper almost fine coarsenings.

\begin{definition}[{\cite[Definition 4.1]{EK23}}]\label{def:admissible}
Let $\Delta$ be an almost fine abelian group grading on $\mathcal{A}$, $U=U_\mathrm{ab}(\Delta)$, and let $\pi_\Delta:U\to U/t(U)$ be the natural homomorphism. A group homomorphism $\alpha:U\to G$ is said to be \emph{admissible} if the restriction of the homomorphism $(\alpha,\pi_\Delta):U\to G\times U/t(U)$ to the support of $\Delta$ is injective.
\end{definition}

Then a list of isomorphism classes of $G$-gradings on $\mathcal{A}$ can be obtained from a list of almost fine gradings up to equivalence:

\begin{theorem}[{\cite[Theorem 4.3]{EK23}}]\label{thm:gradings}
Let $\{\Gamma_i\}_{i\in I}$ be a set of representatives of the equivalence classes of almost fine abelian group gradings on $\mathcal{A}$. Then, for any abelian group $G$ and a $G$-grading $\Gamma$ on $\mathcal{A}$, there exists a unique $i\in I$ such that $\Gamma$ is isomorphic to the induced grading $^\alpha\Gamma_i$, for some admissible homomorphism $\alpha:U_\mathrm{ab}(\Gamma_i)\to G$. Moreover, two such homomorphisms, $\alpha$ and $\alpha'$, induce isomorphic $G$-gradings if and only if there exists $w\in W(\Gamma_i)$ such that $\alpha=\alpha'\circ w$.
\end{theorem}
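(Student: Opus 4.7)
The plan is to extract from each $G$-grading $\Gamma$ on $\mathcal{A}$ a canonical almost fine refinement, unique up to equivalence, through which $\Gamma$ is induced via an admissible homomorphism. Once this canonical refinement is established, the existence and uniqueness of the index $i$ both follow, and the Weyl-group ambiguity arises precisely from the freedom in identifying the canonical refinement with the fixed representative $\Gamma_i$.

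To construct the almost fine refinement, I would first refine $\Gamma$ to a fine abelian group grading $\Delta$ with universal group $U = U_\mathrm{ab}(\Delta)$, so $\Gamma = {}^\beta \Delta$ for some $\beta \colon U \to G$. Let $\Sigma$ denote the support of the induced grading on $\mathrm{Der}(\mathcal{A})$. Among the subgroups $E \subseteq t(U) \cap \ker \beta$ with $E \cap \Sigma \subseteq \{e\}$, I would pick one that is maximal; this exists because $t(U)$ is finite. By \Cref{thm:almostfine}, the corresponding coarsening $\Delta'$ of $\Delta$, with universal group $U/E$, is almost fine, and $\beta$ factors as $\beta = \alpha \circ \pi$ for a unique $\alpha \colon U/E \to G$. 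Admissibility of $\alpha$ should then follow from maximality of $E$: if two distinct $\bar u, \bar v \in \mathrm{Supp}(\Delta')$ had $\alpha(\bar u) = \alpha(\bar v)$ and the same image in $U/t(U)$, then $u v^{-1} \in \ker \beta \cap t(U) \setminus E$, and one should be able to enlarge $E$ by $\langle u v^{-1}\rangle$ without hitting $\Sigma$ nontrivially, contradicting maximality. Verifying this last step is the main technical obstacle and requires control of how $\mathrm{Supp}(\Delta)$ sits inside $\Sigma$ — in the Lie-(super)algebra setting this will use that $\mathrm{ad}(\mathcal{A}_a) \ne 0$ for every $a \in \mathrm{Supp}(\Delta) \setminus \{e\}$, together with the fact that $\Sigma$ is closed under inversion.

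For uniqueness of $i$, suppose $\Gamma \cong {}^\alpha \Gamma_i \cong {}^{\alpha'} \Gamma_j$ for admissible $\alpha, \alpha'$. Via the duality between gradings and quasitorus actions, the identity components of the images of $\mathrm{Diag}(\Gamma_i)$ and $\mathrm{Diag}(\Gamma_j)$ embed as maximal tori of $\mathrm{Stab}(\Gamma)$. Since any two maximal tori of a linear algebraic group are conjugate, a conjugating element produces an equivalence $\Gamma_i \sim \Gamma_j$, forcing $i = j$. Here reductivity of $\aut(\mathcal{A})$, which allows \Cref{l:DiagStab} to identify $\mathrm{Stab}(\Gamma_k)$ with $\mathrm{Diag}(\Gamma_k)$ for $k = i, j$, is exactly what makes the conjugator into an honest automorphism of $\mathcal{A}$ implementing the equivalence.

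Finally, for the Weyl-group parameterisation, given $\Gamma = {}^\alpha \Gamma_i \cong {}^{\alpha'} \Gamma_i$, any graded isomorphism $\varphi$ implementing the second $\cong$ normalises $\mathrm{Diag}(\Gamma)$; after multiplication by a suitable element of the torus $\mathrm{Diag}(\Gamma_i)^\circ$ it may be arranged that $\varphi$ normalises $\mathrm{Diag}(\Gamma_i)$ as well, placing $\varphi \in \aut(\Gamma_i)$. Its image $w$ in $W(\Gamma_i) = \aut(\Gamma_i)/\mathrm{Stab}(\Gamma_i)$ then satisfies $\alpha = \alpha' \circ w$, and conversely every such $w$ lifts to an isomorphism of this form. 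I expect the admissibility verification in the existence step to be the hardest piece; the remaining ingredients are, once set up, direct consequences of the conjugacy of maximal tori and the duality reviewed earlier.
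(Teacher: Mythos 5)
The paper does not actually prove this statement --- it is quoted from \cite[Theorem 4.3]{EK23} --- so there is no internal proof to compare against; the argument in that reference is dual to yours: the almost fine refinement is built directly from the quasitorus $Q=\eta_\Gamma(\widehat{G})$ by adjoining a maximal torus $T$ of $C(Q)=\mathrm{Stab}(\Gamma)$, and uniqueness comes from conjugacy of maximal tori of $C(Q)$ by elements of $C(Q)$ itself (which fix $\Gamma$). Measured against that, your uniqueness and Weyl-group paragraphs are essentially sound in outline, although you should note that the theorem as stated carries no reductivity hypothesis while you invoke \Cref{thm:almostfine} and \Cref{l:DiagStab} (the latter, moreover, applies to fine rather than almost fine gradings), and that the conjugacy argument needs $\mathrm{Diag}(\Gamma_i)^\circ$ to be a maximal torus of $\mathrm{Stab}(\Gamma)$ and not merely of $\mathrm{Stab}(\Gamma_i)$ --- this is exactly where admissibility must be used, via $C(Q\cdot\mathrm{Diag}(\Gamma_i)^\circ)=C(\mathrm{Diag}(\Gamma_i))$.

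The genuine gap is in your existence step, and it is not the mere ``technical verification'' you flag but a failure of the construction. You start from an \emph{arbitrary} fine refinement $\Delta$ of $\Gamma$ and quotient by a maximal $E\subseteq t(U)\cap\ker\beta$ with $E\cap\Sigma\subseteq\{e\}$. Take $\mathcal{A}=\mathfrak{sl}_2$, let $\Gamma$ be the trivial $G$-grading, and choose $\Delta$ to be the Pauli grading, so $U=\mathbb{Z}_2^2$, $\beta$ trivial, and $\Sigma=\{a,b,c\}$ consists of \emph{all} nontrivial elements of $U$. The only admissible $E$ is $\{e\}$, so your recipe returns $\Delta'=\Delta$ and $\alpha=\beta$ trivial, which is not admissible: $t(U)=U$, so $(\alpha,\pi_{\Delta})$ kills the entire support. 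Your contradiction-with-maximality cannot be run here because the offending element $uv^{-1}$ (e.g.\ $c=ab$) lies in $\Sigma$ itself; no closure property of $\Sigma$ or nonvanishing of $\mathrm{ad}(\mathcal{A}_a)$ will rescue this. Indeed no admissible homomorphism out of $\mathbb{Z}_2^2$ induces the trivial grading --- the correct representative is the Cartan grading with the trivial map $\mathbb{Z}\to G$. The moral is that the almost fine refinement through which $\Gamma$ factors admissibly cannot be located inside an arbitrarily chosen fine refinement; it must be manufactured from $\Gamma$ itself, e.g.\ as the eigenspace grading of $Q\cdot T$ with $T$ a maximal torus of $\mathrm{Stab}(\Gamma)$, for which admissibility holds by construction since then $\mathrm{Diag}(\Gamma')=Q\cdot\mathrm{Diag}(\Gamma')^\circ$.
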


For example, in the case of the Cartan grading on $M_2(\mathbb{F})$, any homomorphism $\alpha:\mathbb{Z}\to G$ is admissible, and it induces the grading $\gr{\C}_{M_2}(g)$ with $g=\alpha(1)$. For the Pauli grading, a homomorphism $\alpha:\mathbb{Z}_2^2\to G$ is admissible if and only if it is injective; it induces the grading $\gr{\Pa}_{M_2}(a,b)$ with $a=\alpha(1,0)$ and $b=\alpha(0,1)$.

\subsection{Some preliminary results}\label{s:prelim}
Let $\mathcal{L}$ be a finite-dimensional Lie superalgebra such that restricting automorphisms of $\mathcal{L}$ to those of $\mathcal{L}_0$ gives the exact sequence
\[
1\to\langle\delta\rangle\to\aut(\mathcal{L})\to\aut(\mathcal{L}_0).
\]
For example, by \cite[Lemma 1]{S}, this holds if $\mathcal{L}_1$ is a simple $\mathcal{L}_0$-module. Given a grading $\Gamma$ on $\mathcal{L}$, we shall investigate the kernel and the image of the homomorphism $W(\Gamma)\to W(\Gamma_0)$ induced by the restriction map $\aut(\mathcal{L})\to\aut(\mathcal{L}_0)$. Letting
\[
K=\{\psi\in\aut(\Gamma)\mid \psi|_{\mathcal{L}_0}\in\mathrm{Stab}(\Gamma_0)\}
\]
and composing the restriction map $\aut(\Gamma)\to\aut(\Gamma_0)$ with the quotient map $\aut(\Gamma_0)\to W(\Gamma_0)$, we obtain the exact sequence
\[
1\to K\to\aut(\Gamma)\to W(\Gamma_0).
\]
Moreover, denoting by $\bar{K}$ the image of $K$ under the quotient map $\aut(\Gamma)\to W(\Gamma)$, we obtain the exact sequence
\[
1\to\bar{K}\to W(\Gamma)\to W(\Gamma_0).
\]

\begin{lemma}\label{Misha's lemma}
Let $\mathcal{L}=\mathcal{L}_0\oplus \mathcal{L}_1$ be a finite-dimensional superalgebra such that the kernel of the restriction map $\res:\aut(\mathcal{L})\to\aut(\mathcal{L}_0)$ is generated by the parity automorphism $\delta$. Let $\Gamma$ be a grading on $\mathcal{L}$ and $\Gamma_0$ its restriction to $\mathcal{L}_0$. Then the  restriction map induces a homomorphism $W(\Gamma)\rightarrow W(\Gamma_0)$, whose kernel $\bar{K}=\res^{-1}(\stab(\Gamma_0))/\stab(\Gamma)$ can be embedded in the subgroup $U_{[2]}$ of $U=U_\mathrm{ab}(\Gamma)$. Moreover, if $\mathrm{Supp}(\mathcal{L}_0)\cap\mathrm{Supp}(\mathcal{L}_1)\ne\emptyset$, then $\bar{K}$ is trivial, otherwise the parity homomorphism $p:U\to\mathbb{Z}_2$ is well defined and the image of the embedding $\bar{K}\hookrightarrow U_{[2]}$ acts on $U$ as follows:
  \[
  k.u=\begin{cases}%
  u,&\text{ if $p(u)=0$},\\%
  ku,&\text{ if $p(u)=1$}.
  \end{cases}
  \]
\end{lemma}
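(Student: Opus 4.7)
The plan is to encode each $\psi \in K$ by its commutator action on the quasitorus $Q := \mathrm{Diag}(\Gamma)$, and then identify the resulting character with an element of $U_{[2]}$ via Pontryagin duality.

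Given $\psi \in K$ and $q \in Q$, I would first show $[\psi, q] := \psi q \psi^{-1} q^{-1} \in Q \cap \langle \delta \rangle$. Membership in $Q$ holds because $\psi \in \aut(\Gamma) = N_{\aut(\mathcal{L})}(Q)$. On $\mathcal{L}_0$, the element $q|_{\mathcal{L}_0}$ lies in $\mathrm{Diag}(\Gamma_0)$ while $\psi|_{\mathcal{L}_0} \in \stab(\Gamma_0) = C(\mathrm{Diag}(\Gamma_0))$, so they commute; hence $[\psi, q]|_{\mathcal{L}_0} = \mathrm{id}$, and the hypothesis $\ker(\res) = \langle \delta \rangle$ gives $[\psi, q] \in \langle \delta \rangle$. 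Because $Q$ is abelian and $\delta$ is central in $\aut(\mathcal{L})$, the map $\phi_\psi := [\psi, -] : Q \to Q \cap \langle \delta \rangle$ is a homomorphism, and so is $\Phi : \psi \mapsto \phi_\psi$ viewed as $K \to \mathrm{Hom}(Q, Q \cap \langle \delta \rangle)$. Its kernel consists of those $\psi \in K$ centralizing $Q$, i.e., stabilizing every $\mathcal{L}_g$, which is exactly $\stab(\Gamma)$; so $\Phi$ descends to an injection $\bar K \hookrightarrow \mathrm{Hom}(Q, Q \cap \langle \delta \rangle)$.

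The two cases now come down to whether $\delta$ lies in $Q$. If $g \in \mathrm{Supp}(\mathcal{L}_0) \cap \mathrm{Supp}(\mathcal{L}_1)$, then $\delta$ acts as $+\mathrm{id}$ on $\mathcal{L}_{0,g}$ and $-\mathrm{id}$ on $\mathcal{L}_{1,g}$, so it is not scalar on $\mathcal{L}_g$; hence $\delta \notin \mathrm{Diag}(\Gamma) = Q$, making $Q \cap \langle \delta \rangle$ trivial and $\bar K = 1$. If instead the supports are disjoint, then $p : U \to \{\pm 1\}$ is a well-defined character of $U$ which, under $\widehat U = Q$, corresponds exactly to $\delta$; so $\langle \delta \rangle \subseteq Q$, and Pontryagin duality identifies $\mathrm{Hom}(Q, \langle \delta \rangle) = \mathrm{Hom}(\widehat U, \{\pm 1\})$ with $U_{[2]}$, yielding the embedding $\bar K \hookrightarrow U_{[2]}$.

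For the action formula, let $w \in \aut(U)$ encode how $\psi$ permutes the support. A direct computation gives $[\psi, q]|_{\mathcal{L}_g} = \chi(w^{-1}(g)\, g^{-1}) \cdot \mathrm{id}$, where $q = \eta_\Gamma(\chi)$. If $\Phi(\psi)$ corresponds to $k = k_\psi \in U_{[2]}$, then $\phi_\psi(\chi) \in \langle \delta \rangle$ acts on $\mathcal{L}_g$ as $\chi(k)^{p(g)} = \chi(k^{p(g)})$; matching the two scalars and invoking that characters separate points of $U$ yields $w^{-1}(g) = k^{p(g)}\, g$. The centrality of $\delta$ gives $[\psi, \delta] = e$, forcing $p(k) = 0$ and hence $w(k) = k$; combined with $k^2 = e$, inverting produces $w(g) = g$ for $p(g) = 0$ and $w(g) = k g$ for $p(g) = 1$, as claimed. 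The main obstacle I expect is the bookkeeping in this final step: the dualities $Q \leftrightarrow \widehat U$ and $\langle \delta \rangle \leftrightarrow \langle p \rangle$ must be tracked carefully and one must be vigilant about whether $w$ or $w^{-1}$ is appearing; the earlier steps are quite direct once $[\psi, q] \in Q \cap \langle \delta \rangle$ is in place.
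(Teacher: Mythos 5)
Your proof is correct and follows essentially the same route as the paper's: both encode $\psi\in K$ by the commutator character $\eta\mapsto[\psi,\eta]$ on $\mathrm{Diag}(\Gamma)$ with values in $\langle\delta\rangle$, identify it with an element of $U_{[2]}$ via duality, and extract the action formula by evaluating on homogeneous components. The only (minor, and welcome) divergence is that you dispose of the case $\mathrm{Supp}(\mathcal{L}_0)\cap\mathrm{Supp}(\mathcal{L}_1)\ne\emptyset$ up front by observing that $[\psi,\eta]\in\mathrm{Diag}(\Gamma)\cap\langle\delta\rangle=1$ because $\delta$ is not scalar on a component of mixed parity, whereas the paper leaves this case to be read off from the final degree computation.
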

\begin{proof}
Let $\varphi\in K$, that is, $\varphi|_{\mathcal{L}_0}\in\stab(\Gamma_0)$. If $\eta\in\mathrm{Diag}(\Gamma)$, then $[\varphi,\eta]|_{\mathcal{L}_0}=[\varphi|_{\mathcal{L}_0},\eta|_{\mathcal{L}_0}]=1$, therefore $[\varphi,\eta]\in\langle\delta\rangle$. So the map $\chi_\varphi:\mathrm{Diag}(\Gamma)\to\langle\delta\rangle$, $\eta\mapsto [\varphi,\eta]$, is well defined. For any $\eta$, $\mu\in\mathrm{Diag}(\Gamma)$, we have 
\begin{align*}
\chi_\varphi(\eta\mu)&=[\varphi,\eta\mu]= \varphi\eta\mu\varphi^{-1}(\eta\mu)^{-1}= 
\varphi\eta\varphi^{-1}\eta^{-1}\eta\varphi\mu\varphi^{-1}\mu^{-1}\eta^{-1}\\
&=[\varphi,\eta]\eta[\varphi,\mu]\eta^{-1}=
\chi_\varphi(\eta)\chi_\varphi(\mu),
\end{align*}
so $\chi_\varphi$ is a group homomorphism. Since $\langle\delta\rangle\cong C_2$, we can regard $\chi_\varphi$ as a character of $\mathrm{Diag}(\Gamma)$. The group of characters of $\mathrm{Diag}(\Gamma)$ is isomorphic to $U$. Moreover, $\chi_\varphi^2=1$, that is, $\chi_\varphi\in U_{[2]}$. We claim that the map $K\to U_{[2]}$, $\varphi\mapsto\chi_\varphi$, is a group homomorphism. Indeed, for any $\varphi$, $\psi\in K$ and $\eta\in\mathrm{Diag}(\Gamma)$, we have 
\begin{align*}
\chi_{\varphi\psi}(\eta)&=[\varphi\psi,\eta]=\varphi\psi\eta\psi^{-1}\varphi^{-1}\eta^{-1}\\ 
&=\varphi\chi_{\psi}(\eta)\eta\varphi^{-1}\eta^{-1}=\chi_{\varphi}(\eta)\chi_{\psi}(\eta).
\end{align*}
Since the elements of $\mathrm{Stab}(\Gamma)$ commute with every element of $\mathrm{Diag}(\Gamma)$, this  homomorphism factors through $\bar{K}\to U_{[2]}$. Finally, suppose $\chi_\varphi=1$ for some $\varphi\in K$. This means that $[\varphi,\eta]=1$ for each $\eta\in\mathrm{Diag}(\Gamma)$, hence $\varphi\in\stab(\Gamma)$. Therefore, the homomorphism $\bar{K}\to U_{[2]}$ is injective.

Now, pick $\psi\in K$ and consider the corresponding element $k\in U_{[2]}$. By definition of $k$, for any $\eta\in\mathrm{Diag}(\Gamma)$, we have $\psi\eta\psi^{-1}=\varepsilon(\eta(k))\eta$, where $\varepsilon$ is the unique isomorphism $\{\pm1\}\to\langle\delta\rangle$. 
Then, for any nonzero homogeneous $x\in\mathcal{L}$, we have
\[
\eta(\deg x)\psi(x)=\psi\eta\psi^{-1}\psi(x)=\varepsilon(\eta(k))(\eta\circ\psi(x))=\begin{cases}%
  \eta(\deg\psi(x))\psi(x),&\text{if $x\in \mathcal{L}_0$},\\%
  \eta(k)\eta(\deg\psi(x))\psi(x),&\text{if $x\in \mathcal{L}_1$}.
  \end{cases}
\]
If $x\in \mathcal{L}_0$, we get $\eta(\deg\psi(x))=\eta(\deg x)$ for all $\eta\in\mathrm{Diag}(\Gamma)$, hence $\deg\psi(x)=\deg x$ (which also follows from the definition of $K$). If $x\in \mathcal{L}_1$, we get 
$\eta(\deg\psi(x))=\eta(k\deg x)$ for all $\eta\in\mathrm{Diag}(\Gamma)$, hence $\deg\psi(x)=k\deg x$.
\end{proof}

We note that, even if the restriction map $\aut(\mathcal{L})\to\aut(\mathcal{L}_0)$ is surjective, the induced homomorphism $W(\Gamma)\to W(\Gamma_0)$ is not necessarily so, because the restriction of $\mathrm{Diag}(\Gamma)$ to $\mathcal{L}_0$ may fail to be the entire $\mathrm{Diag}(\Gamma_0)$. We will consider a slightly more general situation. Let $Q\subseteq\aut(\mathcal{A})$ be a quasitorus and $\Gamma$ the grading on $\mathcal{A}$ defined by $Q$. Recall that $C(Q)=\mathrm{Stab}(\Gamma)$, and $N(Q)\subseteq\aut(\Gamma)$. We set
$$
W(Q):=N(Q)/C(Q)=\mathrm{Stab}_{W(\Gamma)}(Q).
$$

\begin{lemma}\label{l:weyl}
Let $\mathcal{A}$ be a finite-dimensional algebra, $\mathcal{A}_0$ a subalgebra invariant under the action of $\aut(\mathcal{A})$ such that the restriction map $\mathrm{res}:\aut(\mathcal{A})\to\aut(\mathcal{A}_0)$ is surjective. Let $\Gamma$ be an abelian group grading on $\mathcal{A}$ and $\Gamma_0$ its restriction to $\mathcal{A}_0$. Denote $Q=\mathrm{Diag}(\Gamma)$ and $\tilde{Q}=\mathrm{res}(Q)$. If the kernel of $\mathrm{res}$ is contained in $Q$, then the image of the homomorphism $W(\Gamma)\to W(\Gamma_0)$ induced by $\mathrm{res}$ is $W(\tilde{Q})$.
\end{lemma}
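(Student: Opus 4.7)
My plan is to prove the two inclusions separately, after first identifying $W(\tilde{Q})$ as a subgroup of $W(\Gamma_0)$. Since $Q=\mathrm{Diag}(\Gamma)$, each homogeneous component $\mathcal{A}_g$ is an eigenspace of $Q$ for a character $\chi_g$, and distinct $g\in\supp(\Gamma)$ yield distinct $\chi_g$ (realizing $U_\mathrm{ab}(\Gamma)$ as the character group of $Q$). Since $\ker(\res)$ acts trivially on $\mathcal{A}_0$, for every $g\in\supp(\Gamma_0)$ the character $\chi_g$ is trivial on $\ker(\res)$ and hence factors through $\tilde{Q}$. Because distinct $\chi_g$ that both factor through the surjection $Q\twoheadrightarrow\tilde{Q}$ still induce distinct characters of $\tilde{Q}$, the common eigenspaces of $\tilde{Q}$ on $\mathcal{A}_0$ are exactly the components of $\Gamma_0$. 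This yields $C(\tilde{Q})=\stab(\Gamma_0)$ and $N(\tilde{Q})\subseteq\aut(\Gamma_0)$, so $W(\tilde{Q})=N(\tilde{Q})/C(\tilde{Q})$ embeds naturally in $W(\Gamma_0)=\aut(\Gamma_0)/\stab(\Gamma_0)$.

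The forward inclusion is then immediate: for $\varphi\in\aut(\Gamma)=N(Q)$, applying $\res$ to $\varphi Q\varphi^{-1}=Q$ gives $\res(\varphi)\in N(\tilde{Q})$, so the class of $\res(\varphi)$ in $W(\Gamma_0)$ lies in $W(\tilde{Q})$.

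For the reverse inclusion, the hypothesis $\ker(\res)\subseteq Q$ enters through the identity $\res^{-1}(\tilde{Q})=Q\cdot\ker(\res)=Q$. Given $\psi\in N(\tilde{Q})$, surjectivity of $\res$ provides a lift $\varphi\in\aut(\mathcal{A})$. The computation $\res(\varphi Q\varphi^{-1})=\psi\tilde{Q}\psi^{-1}=\tilde{Q}$ forces $\varphi Q\varphi^{-1}\subseteq\res^{-1}(\tilde{Q})=Q$, and the same applied to $\varphi^{-1}$ yields $\varphi Q\varphi^{-1}=Q$, i.e.\ $\varphi\in N(Q)=\aut(\Gamma)$. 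Its image in $W(\Gamma_0)$ is the class of $\psi$, completing the argument. I expect this lifting to be the main point of the proof: without the hypothesis on $\ker(\res)$, the preimage $\res^{-1}(\tilde{Q})$ could strictly exceed $Q$, and a normalizer of $\tilde{Q}$ would not need to come from an automorphism normalizing $Q$.
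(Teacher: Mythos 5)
Your proof is correct and follows essentially the same route as the paper's: the key point in both is that the hypothesis $\ker(\res)\subseteq Q$ gives $\res^{-1}(\tilde{Q})=Q$, whence the normalizer map $N_{\aut(\mathcal{A})}(Q)\to N_{\aut(\mathcal{A}_0)}(\tilde{Q})$ is surjective, combined with $N(Q)=\aut(\Gamma)$ for an abelian group grading. Your preliminary paragraph identifying the eigenspace decomposition of $\tilde{Q}$ with $\Gamma_0$ (so that $C(\tilde{Q})=\stab(\Gamma_0)$ and $W(\tilde{Q})$ sits inside $W(\Gamma_0)$) makes explicit a step the paper leaves implicit in ``the result follows,'' which is a welcome addition rather than a deviation.
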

\begin{proof}
Since the homomorphism $\mathrm{res}:\aut(\mathcal{A})\to\aut(\mathcal{A}_0)$ is surjective, so is its restriction to the normalizers $N_{\aut(\mathcal{A})}(Q)\to N_{\aut(\mathcal{A}_0)}(\tilde{Q})$, because $Q$ is the inverse image of $\tilde{Q}$. Since $\Gamma$ is an abelian group grading and $Q=\mathrm{Diag}(\Gamma)$, we have $N_{\aut(\mathcal{A})}(Q)=\aut(\Gamma)$. The result follows.
\end{proof}

\begin{lemma}\label{l:ext_aut}
Under the hypotheses of \Cref{Misha's lemma}, assume that $\Gamma$ is an abelian group grading and $\mathrm{Supp}(\mathcal{L}_0)\cap\mathrm{Supp}(\mathcal{L}_1)=\emptyset$.
If $\psi_0\in\mathrm{Stab}(\Gamma_0)$ admits an extension $\psi\in\aut(\mathcal{L})$, then $\psi\in\aut(\Gamma)$.
\end{lemma}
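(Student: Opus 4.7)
The plan is to show that $\psi$ normalizes $\mathrm{Diag}(\Gamma)$, since then $\psi \in N(\mathrm{Diag}(\Gamma)) = \aut(\Gamma)$ by the duality description given in the preliminaries (using that $\Gamma$ is an abelian group grading and realizing it via $U_{\mathrm{ab}}(\Gamma)$).

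Fix any $\eta \in \mathrm{Diag}(\Gamma)$. The first step is to observe that $\eta|_{\mathcal{L}_0}$ lies in $\mathrm{Diag}(\Gamma_0)$: indeed, $\eta$ acts as a scalar on every homogeneous component of $\Gamma$, hence also on every component of $\Gamma_0$ (which by the disjoint-support assumption is literally a component of $\Gamma$ that happens to sit inside $\mathcal{L}_0$). Since $\psi_0 \in \mathrm{Stab}(\Gamma_0)$ preserves each such component, and $\eta|_{\mathcal{L}_0}$ is scalar on each, they commute. Therefore $[\psi,\eta]|_{\mathcal{L}_0}=1$, and by the hypothesis that the kernel of $\mathrm{res}\colon\aut(\mathcal{L})\to\aut(\mathcal{L}_0)$ equals $\langle\delta\rangle$, we conclude $[\psi,\eta]\in\langle\delta\rangle$.

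The key remaining point is that, under the disjoint-support assumption $\mathrm{Supp}(\mathcal{L}_0)\cap\mathrm{Supp}(\mathcal{L}_1)=\emptyset$, every homogeneous component $\mathcal{L}_g$ of $\Gamma$ lies entirely in $\mathcal{L}_0$ or entirely in $\mathcal{L}_1$. Consequently, $\delta$ acts as $\pm 1$ on each $\mathcal{L}_g$, so $\delta\in\mathrm{Diag}(\Gamma)$. Combining this with the previous step gives
\[
\psi\eta\psi^{-1} = [\psi,\eta]\,\eta \in \langle\delta\rangle \cdot \mathrm{Diag}(\Gamma) = \mathrm{Diag}(\Gamma).
\]
As $\eta$ was arbitrary, $\psi\in N(\mathrm{Diag}(\Gamma))$, which yields $\psi\in\aut(\Gamma)$.

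I don't expect a serious obstacle: the essential content is the commutativity of $\psi_0$ with each $\eta|_{\mathcal{L}_0}$ (which is automatic from the definitions of $\mathrm{Stab}$ and $\mathrm{Diag}$), together with the crucial role of the hypothesis $\mathrm{Supp}(\mathcal{L}_0)\cap\mathrm{Supp}(\mathcal{L}_1)=\emptyset$, which is precisely what is needed to place $\delta$ inside $\mathrm{Diag}(\Gamma)$ and thus close the argument. Without that hypothesis, one would only get that $\psi$ normalizes $\mathrm{Diag}(\Gamma)\cdot\langle\delta\rangle$, which is strictly weaker.
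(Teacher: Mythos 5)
Your proof is correct and follows essentially the same route as the paper's: show $[\psi,\eta]|_{\mathcal{L}_0}=[\psi_0,\eta|_{\mathcal{L}_0}]=1$ so that $[\psi,\eta]\in\langle\delta\rangle$, use the disjoint-support hypothesis to place $\delta$ in $\mathrm{Diag}(\Gamma)$, and conclude $\psi\in N(\mathrm{Diag}(\Gamma))=\aut(\Gamma)$. The only (harmless) difference is that you spell out why $\psi_0$ commutes with $\eta|_{\mathcal{L}_0}$, where the paper simply invokes the centrality of $\mathrm{Diag}(\Gamma_0)$ in $\mathrm{Stab}(\Gamma_0)$.
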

\begin{proof}
By the assumption on the supports, $\delta\in\mathrm{Diag}(\Gamma)$. Also, for any $\eta\in\mathrm{Diag}(\Gamma)$, we have $[\psi,\eta]|_{\mathcal{L}_0}=[\psi_0,\eta|_{\mathcal{L}_0}]=1$. Thus $[\psi,\eta]\in\langle\delta\rangle$, which implies $\psi\eta\psi^{-1}\in\{\eta,\eta\delta\}\subseteq\mathrm{Diag}(\Gamma)$. Since it holds for all $\eta\in\mathrm{Diag}(\Gamma)$, we conclude that $\psi\in N(\mathrm{Diag}(\Gamma))=\mathrm{Aut}(\Gamma)$.
\end{proof}

\begin{lemma}\label{l:exact_seq}
Under the hypotheses of %\Cref{Misha's lemma}, assume that $\Gamma$ is an abelian group grading, $\mathrm{Supp}(\mathcal{L}_0)\cap\mathrm{Supp}(\mathcal{L}_1)=\emptyset$, and  
\Cref{l:ext_aut}, assume further that $\mathrm{res}:\aut(\mathcal{L})\to\aut(\mathcal{L}_0)$ is surjective. Then there exists an exact sequence
\[
1\to\langle\delta\rangle\to\mathrm{Stab}(\Gamma)\to\mathrm{Stab}(\Gamma_0)\to\bar{K}\to 1.
\]
\end{lemma}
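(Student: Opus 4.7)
The plan is to identify each of the four arrows explicitly and then verify exactness at the two intermediate positions. The disjoint supports assumption guarantees that every homogeneous component $\mathcal{L}_g$ is either entirely in $\mathcal{L}_0$ or entirely in $\mathcal{L}_1$, so $\delta$ acts on each $\mathcal{L}_g$ by the scalar $\pm 1$ and therefore lies in $\mathrm{Diag}(\Gamma)\subseteq\mathrm{Stab}(\Gamma)$; this realizes the first arrow as inclusion. The second arrow is restriction to $\mathcal{L}_0$: if $\psi\in\mathrm{Stab}(\Gamma)$ preserves every $\mathcal{L}_g$, then in particular it preserves $(\mathcal{L}_0)_g=\mathcal{L}_g\cap\mathcal{L}_0$, so $\psi|_{\mathcal{L}_0}\in\mathrm{Stab}(\Gamma_0)$. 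Exactness at $\mathrm{Stab}(\Gamma)$ is then immediate: by hypothesis $\ker(\mathrm{res})=\langle\delta\rangle$ in $\aut(\mathcal{L})$, and since $\delta\in\mathrm{Stab}(\Gamma)$, the kernel of the restricted map is exactly $\langle\delta\rangle$.

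The substance of the argument is the construction of the connecting homomorphism $\mathrm{Stab}(\Gamma_0)\to\bar{K}$. Given $\psi_0\in\mathrm{Stab}(\Gamma_0)$, I would use the surjectivity of $\mathrm{res}$ to choose any lift $\psi\in\aut(\mathcal{L})$; by \Cref{l:ext_aut} the lift automatically satisfies $\psi\in\aut(\Gamma)$, and since $\psi|_{\mathcal{L}_0}=\psi_0\in\mathrm{Stab}(\Gamma_0)$ we have $\psi\in K$. Assign to $\psi_0$ the class of $\psi$ in $\bar{K}$. Since clearly $\mathrm{Stab}(\Gamma)\subseteq K$, one has $\bar{K}=K/\mathrm{Stab}(\Gamma)$, and this assignment is well-defined because any two lifts of $\psi_0$ differ by an element of $\ker(\mathrm{res})=\langle\delta\rangle\subseteq\mathrm{Stab}(\Gamma)$. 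It is a homomorphism because the product of two lifts is a lift of the product.

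Exactness at $\mathrm{Stab}(\Gamma_0)$ then reduces to a tautology: $\psi_0$ lies in the kernel of the connecting map precisely when some (equivalently, any) lift of $\psi_0$ lies in $\mathrm{Stab}(\Gamma)$, which is exactly the condition that $\psi_0$ is the restriction of an element of $\mathrm{Stab}(\Gamma)$. Surjectivity onto $\bar{K}$ is equally direct: any class in $\bar{K}$ is represented by some $\psi\in K$, and then $\psi|_{\mathcal{L}_0}$ lies in $\mathrm{Stab}(\Gamma_0)$ by the definition of $K$ and is mapped back to the given class. I do not anticipate a significant obstacle here: the only nontrivial input is \Cref{l:ext_aut}, which guarantees that arbitrary lifts of elements of $\mathrm{Stab}(\Gamma_0)$ already lie in $\aut(\Gamma)$; after that every step is a straightforward diagram chase powered by the identity $\ker(\mathrm{res})=\langle\delta\rangle\subseteq\mathrm{Stab}(\Gamma)$.
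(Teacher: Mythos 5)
Your proof is correct and follows essentially the same route as the paper's: both construct the connecting map $\mathrm{Stab}(\Gamma_0)\to\bar{K}$ by lifting via surjectivity of $\mathrm{res}$, invoke \Cref{l:ext_aut} to place the lift in $\aut(\Gamma)$, and use $\ker(\mathrm{res})=\langle\delta\rangle\subseteq\mathrm{Stab}(\Gamma)$ for well-definedness and the exactness checks. Your explicit verification that $\delta\in\mathrm{Diag}(\Gamma)$ via the disjoint-supports hypothesis, and of exactness at $\mathrm{Stab}(\Gamma)$, only makes explicit what the paper leaves implicit.
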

\begin{proof}
The homomorphism $\mathrm{Stab}(\Gamma_0)\to\bar{K}$ is defined as follows. If $\psi_0\in\mathrm{Stab}(\Gamma_0)$, then, by hypothesis, there exists an extension $\psi\in\aut(\mathcal{L})$. By \Cref{l:ext_aut}, we have $\psi\in\aut(\Gamma)$ and, hence, $\bar{\psi}:=\psi\,\mathrm{Stab}(\Gamma)\in W(\Gamma)$. Two distinct extensions of $\psi_0$ differ by $\delta\in\mathrm{Stab}(\Gamma)$, so the map $\psi_0\mapsto \bar{\psi}$ is well defined. Moreover, the restriction of $\psi$ to $\mathcal{L}_0$ is $\psi_0\in\mathrm{Stab}(\Gamma_0)$, so $\psi\in K$ and,  hence, $\bar{\psi}\in\bar{K}$. This gives the desired homomorphism $\mathrm{Stab}(\Gamma_0)\to\bar{K}$, which is surjective by definition of $\bar{K}$.

The homomorphism $\mathrm{Stab}(\Gamma)\to\mathrm{Stab}(\Gamma_0)$ is given by restriction, so its kernel is $\langle\delta\rangle$. For any $\psi\in\mathrm{Stab}(\Gamma)$, we have $\bar{\psi}=1$, so $\psi_0=\psi|_{\mathcal{L}_0}$ is in the kernel of the homomorphism constructed above. Conversely, if $\psi_0\in\mathrm{Stab}(\Gamma_0)$ is in the kernel, then its extension $\psi$ satisfies $\psi\in\mathrm{Stab}(\Gamma)$, which means that $\psi_0$ is in the image of $\mathrm{Stab}(\Gamma)\to\mathrm{Stab}(\Gamma_0)$. 
\end{proof}

\section{Gradings on $G(3)$\label{sec:g3}}
\subsection{The Lie superalgebra $G(3)$}

Recall that $G(3)$ is the simple Lie superalgebra $\mathcal{L}$ with $\mathcal{L}_0=\mathfrak{sl}_2\oplus\mathfrak{g}_2$ and $\mathcal{L}_1=V\otimes\mathcal{C}^0$, where $V$ is the $2$-dimensional irreducible $\mathfrak{sl}_2$-module and $\mathcal{C}^0$ is the $7$-dimensional irreducible $\mathfrak{g}_2$-module (see below).

Recall also that $\mathfrak{g}_2\cong\mathrm{Der}(\mathcal{C})$, where $\mathcal{C}$ is the Cayley algebra. Moreover, $\mathrm{Ad}:\aut(\mathcal{C})\to\aut(\mathrm{Der}(\mathcal{C}))$ is an isomorphism of algebraic groups and, hence, $\mathcal{C}$ and $\mathfrak{g}_2$ have the same classification of (abelian) group gradings (see e.g. \cite[\S 4.4, 4.5]{EK13}). Explicitly, the bijection between gradings (respectively, fine gradings) on the two algebras is as follows. A $G$-grading $\Gamma_{\mathcal{C}}$ on the algebra $\mathcal{C}$ induces a $G$-grading on the associative algebra $\mathrm{End}(\mathcal{C})$, whose Lie subalgebra $\mathrm{Der}(\mathcal{C})$ is easily seen to be graded, so we denote the resulting $G$-grading on $\mathrm{Der}(\mathcal{C})$ by $\Gamma_{\mathfrak{g}_2}$. The correspondence $\Gamma_{\mathcal{C}}\mapsto\Gamma_{\mathfrak{g}_2}$ defines a bijection between the isomorphism classes of gradings  (respectively, equivalence classes of fine gradings) on the two algebras. Furthermore, $\mathfrak{g}_2\cong\mathrm{Der}(\mathcal{C})$ acts naturally on $\mathcal{C}$, and the subspace $\mathcal{C}^0$ of zero-trace elements is the $7$-dimensional irreducible $\mathfrak{g}_2$-module. For any $\Gamma_{\mathcal{C}}$, this subspace is graded. 

The simply connected algebraic group corresponding to $\mathcal{L}_0$ is $\mathrm{SL}_2(\mathbb{F})\times\mathrm{G}_2(\mathbb{F})$, which acts on $\mathcal{L}_0$, via $\mathrm{Ad}$ (given by conjugation in $M_2(\mathbb{F})$ and $\mathrm{End}(\mathcal{C})$), and also on $\mathcal{L}_1$, via the natural action on each factor. This gives a group isomorphism $\mathrm{SL}_2(\mathbb{F})\times\mathrm{G}_2(\mathbb{F})\to\aut(G(3))$ (see \cite{S} or \cite[\S 6.8.1]{GP}). 

\subsection{Group gradings on $G(3)$}
Let $G$ be an abelian group. For any $G$-grading on $\mathcal{L}$, the simple ideals $\mathfrak{sl}_2$ and $\mathfrak{g}_2$ of $\mathcal{L}_0$ are graded. Moreover, both the $G$-grading of $\mathfrak{sl}_2$ and the action of $\mathfrak{sl}_2$ on $\mathcal{L}_1$ come by restriction from $\mathcal{Q}=M_2(\mathbb{F})$, so $\mathcal{L}_1$ is a graded $\mathcal{Q}$-module. As mentioned above, any grading on $\mathfrak{g}_2$ is induced by a grading on its module $\mathcal{C}^0$. An important observation made in \cite{DEM} is that, in this case, the grading on $\mathfrak{sl}_2$ is also induced from its module $V$. Indeed, a Pauli grading on $\mathcal{Q}$ would make it a \emph{graded-division algebra} (i.e., its nonzero homogeneous elements would be invertible), and this would force $\mathcal{L}_1$ to be a free $\mathcal{Q}$-module, which is not the case. For gradings on $V$ and $\mathcal{C}$, we will use the notation introduced in \Cref{s:examplesgr}. The tensor product of gradings is defined by setting $\deg(v\otimes w)=\deg(v)\deg(w)$.

\begin{definition}\label{def:gradG3}
 Let $g\in G$ and $\Gamma_{\mathcal{C}}$ be a $G$-grading on $\mathcal{C}$. Let $\Gamma_{\mathfrak{g}_2}$ and $\Gamma_{\mathcal{C}^0}$ be the induced gradings on $\mathfrak{g}_2$ and $\mathcal{C}^0$, respectively. We write $\Gamma_{G(3)}(g,\Gamma_{\mathcal{C}})$ to denote the $G$-grading on $G(3)$ such that
\begin{eqnarray*}
\mathcal{L}_0&=&(\mathfrak{sl}_2,\Gamma_{\mathfrak{sl}_2}(g,g^{-1}))\oplus(\mathfrak{g}_2,\Gamma_{\mathfrak{g}_2}),\\%
\mathcal{L}_1&=&(V\otimes\mathcal{C}^0,\Gamma_V(g,g^{-1})\otimes\Gamma_{\mathcal{C}^0}).
\end{eqnarray*}
If $\Gamma_\mathcal{C}=\gr{\Pa}_\mathcal{C}(T)$ or $\Gamma_\mathcal{C}=\gr{\C}_{\mathcal{C}}(h_1,h_2,h_3)$ (following \Cref{def:cayleyI,def:cayleyII}), then we shall denote the respective gradings on $G(3)$ by $\gr{\Pa}_{G(3)}(g,T)$ or $\gr{\C}_{G(3)}(g,h_1,h_2,h_3)$.
\end{definition}

We start by computing the Weyl group of the fine gradings on $G(3)$. According to \cite[Theorem 5.1]{DEM}, we have two fine gradings, up to equivalence: the Cartan grading, of type $(28,0,1)$, which in our notation is the $\mathbb{Z}^3$-grading $\gr{\C}_{G(3)} (b_0,b_1,b_2,b_3)$, where
\[
\mathbb{Z}^3\cong\langle b_0,b_1,b_2,b_3\mid b_1+b_2+b_3=0\rangle,
\]
and the $\mathbb{Z}\times\mathbb{Z}_2^3$-grading $\gr{\Pa}_{G(3)}((1,0),\{0\}\times\mathbb{Z}_2^3)$, of type $(17,7)$. Clearly, these two gradings induce, respectively, $\gr{\C}_{G(3)}(g,h_1,h_2,h_3)$ and $\gr{\Pa}_{G(3)}(g,T)$, via the homomorphisms $\mathbb{Z}^3\to G$, $b_0\mapsto g$, $b_i\mapsto h_i$ for $i=1,2,3$, and $\mathbb{Z}\times\mathbb{Z}_2^3\to G$, $(1,0)\mapsto g$, $\mathbb{Z}_2^3\overset{\sim}{\rightarrow}T$.

\begin{proposition}\label{prop:WeylG}
Let $\Gamma$ be a fine grading on $G(3)$. Then $W(\Gamma)\cong W(\Gamma_0)$:
\begin{itemize}
\item If $U(\Gamma)\cong\mathbb{Z}\times\mathbb{Z}^2$, then $W(\Gamma)\cong C_2^2\times S_3$. In matrix form,
\[
W(\Gamma)=\left\{\left(\begin{array}{cc}\varepsilon&0\\0&g\end{array}\right)\mid\varepsilon\in\{\pm1\},\, g\in C_2\times S_3\right\},
\]
where each matrix multiplies on the left the elements of $\mathbb{Z}\times\mathbb{Z}^2$ (written as columns), with $C_2$ acting on $\mathbb{Z}^2\cong\langle b_1,b_2,b_3\mid b_1+b_2+b_3=0\rangle$ by inversion and $S_3$ by permuting $b_1$, $b_2$ and $b_3$.
\item If $U(\Gamma)=\mathbb{Z}\times\mathbb{Z}_2^3$, then $W(\Gamma)\cong C_2\times\mathrm{GL}_3(2)$. In matrix form,
\[
W(\Gamma)=\left\{\left(\begin{array}{cc}\varepsilon&0\\0&A\end{array}\right)\mid\varepsilon\in\{\pm1\},\, A\in\mathrm{GL}_3(2)\right\},
\]
where each matrix multiplies the elements of $\mathbb{Z}\times\mathbb{Z}_2^3$ on the left.
\end{itemize}
\end{proposition}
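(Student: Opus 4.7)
The plan is to use the restriction map $\mathrm{res}:\aut(\mathcal{L})\to\aut(\mathcal{L}_0)$ to reduce the computation to $W(\Gamma_0)$, which decomposes as a direct product because $\mathcal{L}_0=\mathfrak{sl}_2\oplus\mathfrak{g}_2$. From the isomorphism $\aut(G(3))\cong\mathrm{SL}_2(\mathbb{F})\times\mathrm{G}_2(\mathbb{F})$ recalled above, $\mathrm{res}$ is surjective onto $\mathrm{PSL}_2\times\mathrm{G}_2=\aut(\mathcal{L}_0)$, with kernel generated by the image of $-I\in\mathrm{SL}_2$, which acts on $\mathcal{L}_1=V\otimes\mathcal{C}^0$ as the parity $\delta$. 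For both fine gradings one has $\mathrm{Supp}(\mathcal{L}_0)\cap\mathrm{Supp}(\mathcal{L}_1)=\emptyset$, so $\delta\in\mathrm{Diag}(\Gamma)$, and the hypotheses of \Cref{Misha's lemma} and \Cref{l:weyl} are satisfied. This yields an exact sequence $1\to\bar K\to W(\Gamma)\to W(\Gamma_0)$ whose image is $W(\tilde Q)$ for $\tilde Q:=\mathrm{res}(\mathrm{Diag}(\Gamma))$, together with an embedding $\bar K\hookrightarrow U_{[2]}$.

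Next I would show $\bar K=1$ by exploiting the support-preservation constraint on this embedding. In the Cartan case $U\cong\mathbb{Z}^3$ is torsion-free, so $U_{[2]}$ is trivial. In the Pauli case $U_{[2]}=\{0\}\times\mathbb{Z}_2^3$, and by \Cref{Misha's lemma} a nonzero $k=(0,s)$ would act by multiplication on $\mathrm{Supp}(\mathcal{L}_1)=\{(\pm1,t):t\in\mathbb{Z}_2^3\setminus\{0\}\}$; however, $(1,s)\mapsto(1,0)\notin\mathrm{Supp}(\mathcal{L}_1)$, a contradiction. For surjectivity onto $W(\Gamma_0)$, I would prove $\tilde Q=\mathrm{Diag}(\Gamma_0)$. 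By the module-theoretic argument recalled in the paragraph preceding \Cref{def:gradG3}, the restriction of any grading on $\mathcal{L}$ to $\mathfrak{sl}_2$ is always Cartan; thus $\mathrm{Diag}(\Gamma)=T_{\mathrm{SL}_2}\times Q_{\mathrm{G}_2}$ with $T_{\mathrm{SL}_2}$ a maximal torus of $\mathrm{SL}_2$ and $Q_{\mathrm{G}_2}$ either $T_{\mathrm{G}_2}$ or $\mathbb{Z}_2^3$, and the image $\tilde Q=T_{\mathrm{PSL}_2}\times Q_{\mathrm{G}_2}$ is a maximal quasitorus of $\aut(\mathcal{L}_0)$. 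Combining the two halves gives $W(\Gamma)\cong W(\Gamma_0)$.

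Finally, since $\mathfrak{sl}_2$ and $\mathfrak{g}_2$ are non-isomorphic simple ideals of $\mathcal{L}_0$, any self-equivalence of $\Gamma_0$ preserves each summand, and $W(\Gamma_0)=W(\Gamma_0|_{\mathfrak{sl}_2})\times W(\Gamma_0|_{\mathfrak{g}_2})$. The first factor is $C_2$ (Weyl group of the Cartan grading on $\mathfrak{sl}_2$, acting by inversion). For the second factor, the Cartan case yields the classical $W(G_2)\cong C_2\times S_3$ acting on the root lattice $\langle b_1,b_2,b_3\mid b_1+b_2+b_3=0\rangle$, whereas the Pauli case yields $\mathrm{GL}_3(2)=\aut(\mathbb{Z}_2^3)$ (the Weyl group of the Pauli grading on $\mathcal{C}$ transferred to $\mathfrak{g}_2\cong\mathrm{Der}(\mathcal{C})$; see \cite[\S 4.4--4.5]{EK13}). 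Tracking these automorphisms on the generators of $U(\Gamma)$ yields the block-diagonal matrix forms in the statement. The main obstacle I anticipate is the identification $\tilde Q=\mathrm{Diag}(\Gamma_0)$, which hinges on the structural fact that $\mathrm{SL}_2$ admits no Pauli-type quasitorus; the rest is a routine assembly of the lemmas from Section~\ref{sec:preliminaries}.
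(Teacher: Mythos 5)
Your argument is correct and follows the same skeleton as the paper's proof: reduce to $W(\Gamma_0)$ via the exact sequence of \Cref{Misha's lemma} together with \Cref{l:weyl}, then split $W(\Gamma_0)$ over the two non-isomorphic simple ideals and quote the known Weyl groups of the Cartan and Cayley--Dickson gradings on $\mathfrak{g}_2$. The two intermediate steps are handled differently, though. For $\bar{K}=1$ you argue combinatorially: torsion-freeness of $U$ in the Cartan case, and in the Pauli case the observation that a nontrivial $k\in U_{[2]}$ would translate $(1,s)\in\mathrm{Supp}(\mathcal{L}_1)$ to $(1,0)\notin\mathrm{Supp}(\Gamma)$, contradicting support preservation. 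For surjectivity onto $W(\Gamma_0)$ you identify $\tilde{Q}$ with $\mathrm{Diag}(\Gamma_0)$ by maximality of quasitori, using that every maximal quasitorus of $\mathrm{SL}_2(\mathbb{F})$ is a maximal torus (equivalently, the module-theoretic fact that the grading on $\mathfrak{sl}_2$ is always elementary). The paper instead gets both conclusions in one stroke: since $\Gamma_0$ is also fine, \Cref{l:DiagStab} gives $\mathrm{Stab}(\Gamma)=\mathrm{Diag}(\Gamma)\cong\mathbb{F}^\times\times\widehat{U_{\mathfrak{g}_2}}\cong\mathrm{Stab}(\Gamma_0)$ with restriction $(x,u)\mapsto(x^2,u)$, which is surjective over an algebraically closed field; surjectivity kills $\bar{K}$ via \Cref{l:exact_seq} and simultaneously yields $\tilde{Q}=\mathrm{Diag}(\Gamma_0)$. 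The paper's route is more economical here; your support argument for $\bar{K}$ is more elementary and is essentially the pattern the paper itself has to fall back on for $F(4)$, where $\Gamma_0$ is no longer fine and the stabilizer comparison fails. Both are complete proofs.
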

\begin{proof}
We are going to use the results and notation from \Cref{s:prelim}.
First, we shall prove that $\bar{K}$ is trivial. Write $U=\mathbb{Z}\times U_{\mathfrak{g}_2}$, where $U_{\mathfrak{g}_2}$ is either $\mathbb{Z}^2$ or $\mathbb{Z}_2^3$. Note that, in both cases, the restriction $\Gamma_0$ is fine as well. Thus, \Cref{l:DiagStab} implies that
$$
\mathrm{Stab}(\Gamma)=\mathrm{Diag}(\Gamma)\cong\mathbb{F}^\times\times\widehat{U_{\mathfrak{g}_2}}\cong\mathrm{Diag}(\Gamma_0)=\mathrm{Stab}(\Gamma_0).
$$
The restriction map $\mathrm{Stab}(\Gamma)\to\mathrm{Stab}(\Gamma_0)$ is $(x,u)\mapsto(x^2,u)$. Since $\mathbb{F}$ is algebraically closed, this map is surjective. From \Cref{l:exact_seq}, we have the exact sequence
$$
\mathrm{Stab}(\Gamma)\to\mathrm{Stab}(\Gamma_0)\to\bar{K}\to1.
$$
Since the first map is surjective, we obtain $\bar{K}=1$. Next, applying \Cref{l:weyl}, we obtain $W(\Gamma)\cong W(\Gamma_0)$.

We have $W(\Gamma_0)\cong W(\Gamma_{\mathfrak{sl}_2})\times W(\Gamma_{\mathfrak{g}_2})$,  $W(\Gamma_{\mathfrak{sl}_2})\cong C_2$, and $W(\Gamma_{\mathfrak{g}_2})\cong W(\Gamma_{\mathcal{C}})$ if $\Gamma_{\mathfrak{g}_2}$ is induced by $\Gamma_{\mathcal{C}}$. The Weyl groups $W(\Gamma_{\mathcal{C}})$ were computed in \cite{EK_Weyl} (see also \cite{EK13}). In the case of the Cartan grading, $W(\Gamma_{\mathfrak{g}_2})\cong C_2\times S_3$ (the automorphism group of the root system of type $G_2$) by \cite[Theorem 4.17]{EK13}. In the case of the grading induced by the Cayley-Dickson process, $W(\Gamma_{\mathfrak{g}_2})\cong\mathrm{GL}_3(2)$ by \cite[Theorem 4.19]{EK13}.
\end{proof}

Now we can obtain a classification of group gradings on $G(3)$. First, note that, by \Cref{thm:almostfine}, neither fine grading on $G(3)$ admits almost fine proper coarsenings. This is clear for the Cartan grading, since in this case $U(\Gamma)$ has trivial torsion. As for the Cayley-Dickson grading, the algebra of even derivations of $\mathcal{L}$ is $\mathrm{ad}(\mathcal{L}_0)\cong\mathcal{L}_0$, so its support contains all nontrivial torsion elements of $U(\Gamma)$, because they belong to $\mathrm{Supp}(\Gamma_{\mathfrak{g}_2})$, whereas $e$ belongs to $\mathrm{Supp}(\Gamma_{\mathfrak{sl}_2})$. This fact also implies, in view of \Cref{def:admissible}, that every admissible homomorphism $U(\Gamma)\to G$ must be injective on the torsion subgroup $\mathbb{Z}_2^3$. Hence, \Cref{thm:gradings} and \Cref{prop:WeylG} give us the classification of the isomorphism classes of group gradings on $G(3)$: 

\begin{corollary}\label{classG3}
Let $\Gamma$ be a $G$-grading on $G(3)$. Then $\Gamma$ is isomorphic to $\gr{\Pa}_{G(3)}(g,T)$ or $\gr{\C}_{G(3)}(g,h_1,h_2,h_3)$ of \Cref{def:gradG3}. Moreover, gradings from different families are not isomorphic to each other, and we have:
\begin{itemize}
\item $\gr{\Pa}_{G(3)}(g,T)\cong\gr{\Pa}_{G(3)}(g',T')$ if and only if $g'\in\{g,g^{-1}\}$ and $T=T'$,
\item $\gr{\C}_{G(3)}(g,h_1,h_2,h_3)\cong\gr{\C}_{G(3)}(g',h_1',h_2',h_3')$ if and only if $g'\in\{g,g^{-1}\}$ and there exists $\sigma\in S_3$ such that either $h_1'=h_{\sigma(1)}$, $h_2'=h_{\sigma(2)}$, $h_3'=h_{\sigma(3)}$, or $h_1'=h_{\sigma(1)}^{-1}$, $h_2'=h_{\sigma(2)}^{-1}$, $h_3'=h_{\sigma(3)}^{-1}$.\qed
\end{itemize}
\end{corollary}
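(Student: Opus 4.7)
The plan is to apply \Cref{thm:gradings} directly to the list of almost fine gradings, which the paragraph preceding the corollary has already identified: since neither fine grading on $G(3)$ admits proper almost fine coarsenings, the set $\{\Gamma_i\}_{i\in I}$ consists of exactly two elements, namely the Cartan grading $\gr{\C}_{G(3)}(b_0,b_1,b_2,b_3)$ with $U(\Gamma)\cong\mathbb{Z}\times\mathbb{Z}^2$, and the Pauli grading $\gr{\Pa}_{G(3)}((1,0),\{0\}\times\mathbb{Z}_2^3)$ with $U(\Gamma)\cong\mathbb{Z}\times\mathbb{Z}_2^3$. Thus every $G$-grading on $G(3)$ is induced from exactly one of these two by an admissible homomorphism, and the uniqueness part of \Cref{thm:gradings} immediately shows that gradings from different families are non-isomorphic.

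For the Cartan case, the universal group is torsion-free, so every group homomorphism $\alpha:\mathbb{Z}\times\mathbb{Z}^2\to G$ is admissible, and the induced grading is $\gr{\C}_{G(3)}(\alpha(b_0),\alpha(b_1),\alpha(b_2),\alpha(b_3))$. By \Cref{prop:WeylG}, $W(\Gamma)\cong C_2^2\times S_3$, where the first $C_2$ factor inverts $b_0$ (i.e., sends $g$ to $g^{-1}$), the second $C_2$ simultaneously inverts $b_1,b_2,b_3$ (i.e., sends $(h_1,h_2,h_3)$ to $(h_1^{-1},h_2^{-1},h_3^{-1})$), and $S_3$ permutes the indices. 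Two admissible homomorphisms yield isomorphic $G$-gradings if and only if they differ by such a Weyl element, which is exactly the stated equivalence condition for the $\C$-family.

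For the Pauli case, the torsion subgroup of $U$ is $\mathbb{Z}_2^3$, and as noted just before the statement, the support of the induced grading on $\mathrm{Der}(G(3))\cong G(3)_0$ contains all non-identity elements of $\mathbb{Z}_2^3$ (coming from $\mathfrak{g}_2$) together with the identity (from $\mathfrak{sl}_2$); hence, in view of \Cref{def:admissible}, an admissible homomorphism $\alpha:\mathbb{Z}\times\mathbb{Z}_2^3\to G$ must be injective on the $\mathbb{Z}_2^3$ summand. Such a homomorphism is thus determined by the pair $(g,\alpha|_{\mathbb{Z}_2^3})$ with $g=\alpha(1,0)$, and yields $\gr{\Pa}_{G(3)}(g,T)$, where $T=\alpha(\mathbb{Z}_2^3)$. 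By \Cref{prop:WeylG}, $W(\Gamma)\cong C_2\times\mathrm{GL}_3(2)$, with $C_2$ inverting the $\mathbb{Z}$-part and $\mathrm{GL}_3(2)$ realizing the full automorphism group of $\mathbb{Z}_2^3$. Since $\mathrm{GL}_3(2)$ acts transitively on the injective homomorphisms $\mathbb{Z}_2^3\hookrightarrow G$ with a fixed image, the $W$-orbit of $\alpha$ is determined by $(g,T)$ with $g$ taken up to inversion, giving the stated equivalence condition for the $\Pa$-family.

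The only non-routine step is the precise identification of the Weyl group action on $U(\Gamma)$, but this was the content of \Cref{prop:WeylG}; once that is in hand, the remainder is a direct bookkeeping translation between the abstract formulation of \Cref{thm:gradings} and the explicit parametrizations in \Cref{def:gradG3}. There is therefore no substantial obstacle beyond carefully tracking how each Weyl element transforms the parameters $g$ and $h_i$ (or $T$).
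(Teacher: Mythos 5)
Your proposal is correct and follows essentially the same route as the paper, which likewise deduces the corollary directly from \Cref{thm:gradings} and \Cref{prop:WeylG} after observing (via \Cref{thm:almostfine}) that the two fine gradings admit no proper almost fine coarsenings and that admissible homomorphisms in the Pauli case must be injective on the torsion subgroup. The only difference is that you spell out the bookkeeping of the Weyl group action on the parameters, which the paper leaves implicit.
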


To state the isomorphism conditions more concisely, it is convenient to introduce the \emph{multiset} $\{h_1,h_2,h_3\}$ and define its \emph{inverse} as $\{h_1^{-1},h_2^{-1},h_3^{-1}\}$. Then the $G$-gradings are classified up to isomorphism by the following parameters: a subgroup of $G$ of a certain isomorphism type (possibly trivial) and a certain configuration of elements, considered up to an equivalence relation. Table \ref{tab:G(3)} summarizes our result for $G(3)$, and we will see this pattern repeated in the following sections.

\begin{table}
\begin{tabular}{|c|c|c|c|}
\hline
Family & Subgroups & Combinatorial data & Equivalence\\ \hline 
$\gr{\C}_{G(3)}$ %(g,h_1,h_2,h_3)$ 
 & trivial & \vtop{\hbox{\strut $g\in G$ and multiset}\hbox{\strut $\{h_1,h_2,h_3\}$ in $G$}\hbox{\strut with $h_1h_2h_3=e$}}  & \vtop{\hbox{\strut independent}\hbox{\strut inversion of $g$,}\hbox{\strut $\{h_1,h_2,h_3\}$}}\\ \hline 
$\gr{\Pa}_{G(3)}$ %(g,T)$ 
 & $T\cong\mathbb{Z}_2^3$ & $g\in G$ & inversion\\
\hline 
\end{tabular}
\vspace{5pt} 
\caption{Classification of $G$-gradings on the Lie superalgebra $G(3)$}\label{tab:G(3)}
\end{table}

\section{Gradings on $F(4)$\label{sec:f4}}

\subsection{The Lie superalgebra $F(4)$ and its gradings}
Recall that $F(4)$ is the simple Lie superalgebra with
$\mathcal{L}_0=\mathfrak{sl}_2\oplus\mathfrak{so}_7$ and 
$\mathcal{L}_1=V\otimes W$,
where $V$ is the $2$-dimensional irreducible $\mathfrak{sl}_2$-module and $W$ is the $8$-dimensional irreducible spin $\mathfrak{so}_7$-module.

There is a surjective group homomorphism $\mathrm{SL}_2(\mathbb{F})\times\mathrm{Spin}_7(\mathbb{F})\to\aut(F(4))$ (see \cite{S} or \cite[\S 6.8.2]{GP}), via the adjoint action on $\mathcal{L}_0$ and the natural action on each factor of $\mathcal{L}_1$. The kernel of this homomorphism is generated by the element $(-1,-1)$.

For any $G$-grading on $\mathcal{L}$, the simple ideals $\mathfrak{sl}_2$ and $\mathfrak{so}_7$ of $\mathcal{L}_0$ are graded, the $G$-grading on $\mathfrak{sl}_2$ comes from $\mathcal{Q}=M_2(\mathbb{F})$, and $\mathcal{L}_1$ becomes a graded $\mathcal{Q}$-module. But, unlike in the case of $G(3)$, the grading on $\mathcal{Q}$ need not be elementary.

According to \cite[Theorem 4.1]{DEM}, there exist five fine gradings on $F(4)$, up to equivalence: the $\mathbb{Z}^4$-grading (Cartan grading), a $\mathbb{Z}\times\mathbb{Z}_2^3$-grading, $\mathbb{Z}^2\times\mathbb{Z}_2^2$ and $\mathbb{Z}\times\mathbb{Z}_2^3$-gradings coming from the Tits-Kantor-Koecher (TKK) construction, and a $\mathbb{Z}_2^3\times\mathbb{Z}_4$-grading. Their types are, respectively, $(36,0,0,1)$, $(19,0,7)$, $(32,4)$, $(31,0,3)$ and $(24,6,0,1)$.
To compute their Weyl groups, we will use the following specialization of \Cref{l:weyl} and \Cref{l:exact_seq} to $F(4)$, combined with \Cref{l:DiagStab}:

\begin{lemma}\label{l:seq_f(4)}
Let $\Gamma$ be a fine grading on $F(4)$ and $Q=\mathrm{Diag}(\Gamma)$. Denote by $\Gamma_0$ and $\tilde{Q}$ their restrictions to $\mathcal{L}_0$, and by $\Gamma_{\mathfrak{sl}_2}$ and $\Gamma_{\mathfrak{so}_7}$ the restrictions of $\Gamma_0$ to the simple ideals of $\mathcal{L}_0$.
Then we have the exact sequences
\begin{gather}
1\to\bar{K}\to W(\Gamma)\to W(\tilde{Q})\to 1,\label{seq_f(4)_1} \\
1\to\langle\delta\rangle\to\mathrm{Stab}(\Gamma)\to\mathrm{Stab}(\Gamma_0)\to\bar{K}\to 1.\label{seq_f(4)_2}
\end{gather}
Moreover, $\mathrm{Stab}(\Gamma)=\mathrm{Diag}(\Gamma)$ and $\mathrm{Stab}(\Gamma_0)\cong\mathrm{Stab}(\Gamma_{\mathfrak{sl}_2})\times\mathrm{Stab}(\Gamma_{\mathfrak{so}_7})$.\qed
%Moreover, $\bar{K}\cong$.
\end{lemma}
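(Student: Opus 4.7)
The plan is to realize this lemma as a direct specialization of \Cref{Misha's lemma}, \Cref{l:weyl}, \Cref{l:ext_aut} and \Cref{l:exact_seq} to $\mathcal{L}=F(4)$. The main task is therefore to verify the three hypotheses required to invoke those results for $F(4)$: (a) the kernel of $\mathrm{res}:\aut(\mathcal{L})\to\aut(\mathcal{L}_0)$ is $\langle\delta\rangle$; (b) $\mathrm{res}$ is surjective; and (c) $\mathrm{Supp}(\mathcal{L}_0)\cap\mathrm{Supp}(\mathcal{L}_1)=\emptyset$. Once (a)--(c) are established, the first exact sequence \eqref{seq_f(4)_1} comes from \Cref{l:weyl} (whose hypothesis that $\ker(\mathrm{res})\subseteq Q$ is immediate from (a) and (c), since the disjointness of supports forces $\delta\in\mathrm{Diag}(\Gamma)=Q$), and the second exact sequence \eqref{seq_f(4)_2} is exactly the conclusion of \Cref{l:exact_seq}.

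For (a) and (b), I would appeal to the presentation $\aut(F(4))=(\mathrm{SL}_2(\mathbb{F})\times\mathrm{Spin}_7(\mathbb{F}))/\langle(-1,-1)\rangle$ recalled above. A pair $(a,b)$ acts trivially on $\mathcal{L}_0=\mathfrak{sl}_2\oplus\mathfrak{so}_7$ exactly when $a\in\{\pm I\}$ and $b$ lies in the center $\{\pm 1\}$ of $\mathrm{Spin}_7$, so modulo $\langle(-1,-1)\rangle$ the kernel of $\mathrm{res}$ has order $2$, generated by the class of $(-1,1)$, which acts on $\mathcal{L}_1=V\otimes W$ as $-\mathrm{id}$, i.e., as $\delta$; this gives (a). Surjectivity (b) is then the surjectivity of $\mathrm{SL}_2\times\mathrm{Spin}_7\twoheadrightarrow\mathrm{PSL}_2\times\mathrm{SO}_7=\aut(\mathfrak{sl}_2)\times\aut(\mathfrak{so}_7)=\aut(\mathcal{L}_0)$, where the last equality uses that $\mathfrak{sl}_2$ and $\mathfrak{so}_7$ have different dimensions, so no automorphism of $\mathcal{L}_0$ can swap them. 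This dimension argument simultaneously yields the claimed splitting $\mathrm{Stab}(\Gamma_0)\cong\mathrm{Stab}(\Gamma_{\mathfrak{sl}_2})\times\mathrm{Stab}(\Gamma_{\mathfrak{so}_7})$, since stabilizing every homogeneous component of $\Gamma_0$ is equivalent to doing so on each simple ideal.

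For (c), since $\Gamma$ is fine, the refinement by the canonical $\mathbb{Z}_2$-grading (to a $\mathbb{Z}_2\times U$-grading) must be equivalent to $\Gamma$, which forces $\mathrm{Supp}(\Gamma_0)\cap\mathrm{Supp}(\Gamma_1)=\emptyset$, as noted in Section~\ref{sec:preliminaries}. Finally, $\mathrm{Stab}(\Gamma)=\mathrm{Diag}(\Gamma)$ is obtained from \Cref{l:DiagStab}: $\Gamma$ is a fine abelian group grading on the finite-dimensional algebra $F(4)$, and $\aut(F(4))$ is reductive, being (isogenous to) the direct product $\mathrm{SL}_2\times\mathrm{Spin}_7$ of two simple algebraic groups.

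I do not anticipate a genuine obstacle: the argument is essentially bookkeeping on top of the structural lemmas already proved. The only point that requires a brief check is the identification of the non-trivial kernel element of $\mathrm{res}$ with $\delta$, which is a short computation on $\mathcal{L}_1=V\otimes W$ using the description of $\aut(F(4))$.
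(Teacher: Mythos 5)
Your proposal is correct and follows exactly the route the paper intends: the lemma is stated in the paper with its proof omitted, being described as the specialization of \Cref{l:weyl}, \Cref{l:exact_seq} and \Cref{l:DiagStab} to $F(4)$, and your verification of the hypotheses (kernel of $\mathrm{res}$ equals $\langle\delta\rangle$ via the description of $\aut(F(4))$ as a quotient of $\mathrm{SL}_2\times\mathrm{Spin}_7$, surjectivity onto $\mathrm{PSL}_2\times\mathrm{SO}_7=\aut(\mathcal{L}_0)$, and disjointness of supports from fineness) is precisely the bookkeeping the authors leave to the reader. The identification of the nontrivial kernel element with $\delta$ and the observation that disjoint supports force $\delta\in\mathrm{Diag}(\Gamma)$ are both accurate.
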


We shall see that, for every fine grading on $F(4)$, we have $\bar{K}\cong U_{[2]}$, and the first exact sequence is always split. %On the other hand, the behavior of $\tilde{Q}$ varies from grading to grading. 
The Weyl groups are computed in \Cref{prop:WeylI,prop:WeylII,prop:WeylIII,prop:WeylIV,prop:WeylV}. There are no almost fine gradings apart from the fine ones, hence the constructions given in \Cref{def:I,def:II,def:III,def:IV,def:VF(4)} provide all group gradings on $F(4)$. Moreover, the isomorphism classes in each of these five families are described in \Cref{cor:I,cor:II,cor:III,cor:IV,cor:V}.
For convenience, we summarize the resulting classification here:
\begin{theorem}\label{thm:F4}
Let $G$ be an abelian group and $\Gamma$ a $G$-grading on $F(4)$. Then $\Gamma$ is isomorphic to $\gr{\FI}_{F(4)}(g,h_1,h_2,h_3,h_4)$, $\gr{\FII}_{F(4)}(g,T)$, $\gr{\FIII}_{F(4)}(g_1,g_2,a,b)$, $\gr{\FIV}_{F(4)}(g,a,b,h)$, or $\gr{\FV}_{F(4)}(g,a,b,h)$. Moreover, gradings from different families are not isomorphic, and
the isomorphism classes within each family are parametrized by certain subgroups of $G$ and equivalence classes of certain multisets as indicated in Table \ref{tab:F(4)}.

%we have:
%\begin{itemize}
%\item
%$\gr{\FI}_{F(4)}(g,h_1,h_2,h_3,h_4)\cong\gr{\FI}_{F(4)}(g',h_1',h_2',h_3',h_4')$ if and only if $g'\in\{g,g^{-1}\}$ and there exists $\sigma\in S_4$ such that either $h_i'=h_{\sigma(i)}$ for $1\le i\le 4$, or  $h_i'=h_{\sigma(i)}^{-1}$ for $1\le i\le 4$,
%\item 
%$\gr{\FII}_{F(4)}(g,T)\cong\gr{\FII}_{F(4)}(g',T')$ if and only if $T=T'$ and $g'\in gT\cup g^{-1}T$,
%\item 
%$\gr{\FIII}_{F(4)}(g_1,g_2,a,b)\cong\gr{\FIII}_{F(4)}(g_1',g_2',a',b')$ if and only if $\langle a,b\rangle=\langle a',b'\rangle$ and there exist $\sigma\in S_2$ and $t\in \langle a,b\rangle$ such that $g_{i}'\in\{g_{\sigma(i)}t, g_{\sigma(i)}^{-1}t\}$ for $i=1,2$, 
%\item 
%$\gr{\FIV}_{F(4)}(g,a,b,h)\cong\gr{\FIV}_{F(4)}(g',a',b',h')$ if and only if $\langle a,b\rangle=\langle a',b'\rangle$, $\langle a,b,h\rangle=\langle a',b',h'\rangle$, and $g'\in gH\cup g^{-1}H$, where $H=\langle a,b,h\rangle$,
%\item 
%$\gr{\FV}_{F(4)}(g,a,b,h)\cong\gr{\FV}_{F(4)}(g',a',b',h')$ if and only if $\langle h,g^2\rangle=\langle h',g'^2\rangle$ and $\langle ah,b,g^2\rangle=\langle a'h',b',g'^2\rangle$.
%\end{itemize}

\end{theorem}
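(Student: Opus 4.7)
The plan is to apply the machinery of \Cref{thm:gradings}: start from the five fine gradings on $F(4)$ classified up to equivalence in \cite[Theorem 4.1]{DEM}, verify that there are no \emph{new} almost fine gradings arising as proper coarsenings, compute the Weyl group $W(\Gamma)$ for each of the five fine gradings, and finally parametrize the admissible homomorphisms from the universal group $U_{\mathrm{ab}}(\Gamma)$ to $G$ modulo the action of $W(\Gamma)$. Each of the five resulting admissible orbit spaces gives the corresponding family $\Gamma_{F(4)}^{(i)}$ listed in the statement, together with its description as an equivalence class of a tuple of subgroups and multisets. The non-isomorphism between distinct families is the uniqueness part of \Cref{thm:gradings}, provided one can show that the five fine gradings really are pairwise non-equivalent---but this is already part of \cite[Theorem 4.1]{DEM}, so it needs no further argument.

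The elimination of extra almost fine gradings proceeds exactly as in the $G(3)$ case: by \Cref{thm:almostfine} it suffices to show that, for every fine grading $\Gamma$ on $F(4)$, every nontrivial torsion element of $U=U_{\mathrm{ab}}(\Gamma)$ already belongs to the support $\Sigma$ of the induced grading on $\mathrm{Der}(\mathcal{L})=\mathrm{ad}(\mathcal{L})$. Since $\mathcal{L}$ is simple, $\mathrm{ad}(\mathcal{L})\cong\mathcal{L}$, and the support of $\Gamma$ itself must therefore cover all torsion classes. This follows by inspection from the explicit forms of the five fine gradings; each torsion cyclic or $2$-elementary direct factor of $U$ is accounted for inside the support of one of the simple ideals of $\mathcal{L}_0$ or of $\mathcal{L}_1$.

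The core step is the computation of $W(\Gamma)$ via \Cref{l:seq_f(4)}. I first use \Cref{l:DiagStab} (valid because $\aut(F(4))$ is reductive) to identify $\mathrm{Stab}(\Gamma)=\mathrm{Diag}(\Gamma)$, and similarly $\mathrm{Stab}(\Gamma_0)=\mathrm{Diag}(\Gamma_{\mathfrak{sl}_2})\times\mathrm{Diag}(\Gamma_{\mathfrak{so}_7})$. The restriction map $\mathrm{Diag}(\Gamma)\to\mathrm{Diag}(\Gamma_0)$ is known on characters: it corresponds to the dualized inclusion $U\hookrightarrow U\times U$ given by $u\mapsto(p(u)g_{\mathfrak{sl}_2}^{-1}u|_{\mathfrak{sl}_2},u|_{\mathfrak{so}_7})$, where $p$ is the parity homomorphism. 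By exact sequence \eqref{seq_f(4)_2}, the cokernel is $\bar{K}\subseteq U_{[2]}$, and I expect to verify case by case that this map covers everything except a copy of $U_{[2]}$, producing $\bar{K}\cong U_{[2]}$ in each of the five cases, and that the sequence \eqref{seq_f(4)_1} splits, so $W(\Gamma)$ is the semidirect product of $U_{[2]}$ with $W(\tilde{Q})$. The image $W(\tilde{Q})$ is computed inside $W(\Gamma_0)\cong W(\Gamma_{\mathfrak{sl}_2})\times W(\Gamma_{\mathfrak{so}_7})$, using the known Weyl groups of gradings on $\mathfrak{sl}_2$ (trivial or $C_2$) and on $\mathfrak{so}_7$ (from \cite{EK13}). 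The restriction will be a proper subgroup of the product whenever a nontrivial self-equivalence of $\Gamma_{\mathfrak{so}_7}$ fails to extend compatibly to the $\mathfrak{sl}_2$ factor; identifying this constraint is the main technical obstacle, and will be done grading by grading in \Cref{prop:WeylI,prop:WeylII,prop:WeylIII,prop:WeylIV,prop:WeylV}.

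With the five Weyl groups in hand, I read off admissible homomorphisms $\alpha\colon U\to G$ using \Cref{def:admissible}: these are the group homomorphisms whose restriction to $\mathrm{Supp}(\Gamma)$ stays injective after projection to $G\times U/t(U)$. For each family this parametrization amounts to choosing a list of elements $(g, g_i, h_j, a, b, T, \dots)$ of $G$, with certain subgroups (images of the torsion part of $U$) forced to be free $\mathbb{Z}_2^k$ or $\mathbb{Z}_4$ as dictated by the torsion of $U$ and the injectivity condition on $\Sigma$. Dividing by the Weyl group then produces the equivalence relations in Table~\ref{tab:F(4)} (multiset permutations, simultaneous inversions, $\mathrm{GL}$-action on $\mathbb{Z}_2^k$, etc.). Packaging these five parametrizations yields the isomorphism classification claimed in the theorem, while the fineness-equivalence distinction from \cite{DEM} guarantees non-isomorphism between the families.
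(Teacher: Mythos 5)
Your overall strategy coincides with the paper's: take the five fine gradings of \cite[Theorem 4.1]{DEM}, rule out proper almost fine coarsenings via \Cref{thm:almostfine}, compute the Weyl groups through the two exact sequences of \Cref{l:seq_f(4)}, and then parametrize admissible homomorphisms modulo $W(\Gamma)$. However, one step in your Weyl group computation would fail as written: you invoke \Cref{l:DiagStab} to identify $\mathrm{Stab}(\Gamma_0)$ with $\mathrm{Diag}(\Gamma_{\mathfrak{sl}_2})\times\mathrm{Diag}(\Gamma_{\mathfrak{so}_7})$. That lemma applies only to \emph{fine} gradings, and for three of the five fine gradings on $F(4)$ (the one from the Cayley--Dickson process, the second TKK grading, and the $\mathbb{Z}_4\times\mathbb{Z}_2^3$-grading) the restriction $\Gamma_{\mathfrak{so}_7}$ is \emph{not} fine: it admits a proper fine refinement (with universal group $\mathbb{Z}_2^6$, respectively $\mathbb{Z}_2^4\times\mathbb{Z}$, respectively $\mathbb{Z}_2^6$), so $\mathrm{Stab}(\Gamma_{\mathfrak{so}_7})$ strictly contains $\mathrm{Diag}(\Gamma_{\mathfrak{so}_7})$ — it contains the maximal quasitorus of that refinement. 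This excess is precisely what makes the cokernel $\bar{K}$ of $\mathrm{Stab}(\Gamma)\to\mathrm{Stab}(\Gamma_0)$ large. With your identification, the restriction map in those three cases would be onto (the $\mathbb{F}^\times$ factor surjects by squaring and the finite factors are hit), giving $\bar{K}=1$ instead of $\bar{K}\cong U_{[2]}$, in contradiction with the $\bar{K}\cong U_{[2]}$ you (correctly) expect. The error is not cosmetic: $\bar{K}$ supplies the translations by $U_{[2]}$ in $W(\Gamma)$, which is exactly what produces the coset conditions in Table~\ref{tab:F(4)} (e.g.\ $g'\in gT\cup g^{-1}T$ in \Cref{cor:II} rather than $g'\in\{g,g^{-1}\}$). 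The correct middle term is $\mathrm{Stab}(\Gamma_0)\cong\mathrm{Stab}(\Gamma_{\mathfrak{sl}_2})\times\mathrm{Stab}(\Gamma_{\mathfrak{so}_7})$, computed from the fine refinements of the (possibly non-fine) restricted gradings.

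A second, smaller point: you propose to compute the image $W(\tilde{Q})$ inside $W(\Gamma_{\mathfrak{sl}_2})\times W(\Gamma_{\mathfrak{so}_7})$ and correctly flag that the constraint linking the two factors is the technical crux, but you give no mechanism for it. For the two TKK gradings and the $\mathbb{Z}_4\times\mathbb{Z}_2^3$-grading, the quasitorus $\tilde{Q}$ is \emph{not} the product of its projections to the two ideals (a copy of $\mathfrak{sl}_2$ with the same grading sits inside $\mathfrak{so}_7$), so $W(\tilde{Q})$ must be computed as a fiber product of stabilizers over the intersection of the supports of the two ideals; this is the content of \Cref{l:TKKmain} and cannot be read off from the two factor Weyl groups separately. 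With these two corrections your plan reproduces the paper's argument.
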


\begin{table}
\begin{tabular}{|c|c|c|c|}
\hline
Family & Subgroups & Combinatorial data & Equivalence\\ \hline 
$\gr{\FI}_{F(4)}$ %(g,h_1,\ldots,h_4)$ 
 & trivial & \vtop{\hbox{\strut $g\in G$ and multiset}\hbox{\strut $\{h_1,\ldots,h_4\}$ in $G$}\hbox{\strut with $h_1h_2h_3h_4=e$}}  & \vtop{\hbox{\strut independent}\hbox{\strut inversion of $g$,}\hbox{\strut $\{h_1,\ldots,h_4\}$}}\\ \hline 
$\gr{\FII}_{F(4)}$ %(g,T)$ 
 & $T\cong\mathbb{Z}_2^3$ & $gT\in G/T$ & inversion\\ \hline 
$\gr{\FIII}_{F(4)}$ %(g_1,g_2,a,b)$ 
 & $T=\langle a,b\rangle\cong\mathbb{Z}_2^2$ & $\{g_1^{\pm 1},g_2^{\pm 1}\}$ in $G$ & $T$-translation\\ \hline 
$\gr{\FIV}_{F(4)}$ %(g,a,b,h)$ 
 & \vtop{\hbox{\strut $H=\langle a,b,h\rangle\cong\mathbb{Z}_2^3$,}\hbox{\strut $T=\langle a,b\rangle\leq H$}} & $gH\in G/H$ & inversion\\ \hline
$\gr{\FV}_{F(4)}$ %(g,a,b,h)$ 
 & \vtop{\hbox{\strut $H=\langle g\rangle\times\langle a,b,h\rangle\cong\mathbb{Z}_4\times\mathbb{Z}_2^3$,}\hbox{\strut $S=\langle ah,b,g^2\rangle$, $T=\langle h,g^2\rangle\leq H$}} & & \\ \hline
\end{tabular}
\vspace{5pt} 
\caption{Classification of $G$-gradings on the Lie superalgebra $F(4)$}\label{tab:F(4)}
\end{table}

Before considering each of the five families, we make some general remarks. First, we recall gradings on $\mathfrak{so}_n$ for an odd $n\in\mathbb{N}$ (see e.g. \cite[\S 3.4]{EK13}).
\begin{definition}\label{def:typeB}
Let $G$ be an abelian group, and consider a sequence
\[
\gamma=(g_1,\ldots,g_q;\,g_1',g_1'',\ldots,g_s',g_s'')
\]
of elements of $G$ satisfying
\[
g_1^2=\cdots=g_q^2=g_1'g_1''=\cdots=g_s'g_s'',\quad q+2s=n.
\]
This $n$-tuple determines the elementary $G$-grading $\Gamma_{M_n}(\gamma)$ on $M_n(\mathbb{F})$, whose components are invariant under the involution $\varphi(X)=\Phi^{-1}X^T\Phi$, where
\[
\Phi=\mathrm{id}_q\oplus\sum_{i=1}^s\begin{pmatrix}0&1\\1&0\end{pmatrix}.
\]
In other words, $\varphi$ is the adjunction with respect to the bilinear form on $\mathbb{F}^n$ represented by the matrix $\Phi$. Thus, we obtain a $G$-grading $\Gamma_{\mathfrak{so}_n}(\gamma)$ on $\mathfrak{so}_n$ if we put
\[
\left(\mathfrak{so}_n\right)_g=M_n(\mathbb{F})_g\cap\mathcal{K}(M_n,\varphi),\quad g\in G,
\]
where $\mathcal{K}(M_n,\varphi):=\{X\in M_n(\mathbb{F})\mid\varphi(X)=-X\}\cong\mathfrak{so}_n$.
In particular, if we take the entries of $\gamma$ as group generators subject only to the relations stated above, then we obtain a fine grading on $\mathfrak{so}_n$ with universal group $\mathbb{Z}_2^{q-1}\times\mathbb{Z}^s$.
\end{definition}

Now, if we have a $G$-grading on $F(4)$, the action of the simple ideal $\mathfrak{so}_7$ of $\mathcal{L}_0$ on $\mathcal{L}_1$ gives a homomorphism of graded Lie algebras $\mathfrak{so}_7\hookrightarrow\mathrm{End}_{\mathcal{Q}}(\mathcal{L}_1)$. To find the septuple $\gamma$, we need to identify a $G$-graded vector space $\mathcal{U}$ with a homogeneous symmetric bilinear form so that $\mathrm{End}_{\mathcal{Q}}(\mathcal{L}_1)$ becomes isomorphic to the even part of the Clifford algebra of $\mathcal{U}$. For the first two families, the gradings on $\mathcal{Q}=\mathrm{End}(V)$ and $\mathrm{End}_{\mathcal{Q}}(\mathcal{L}_1)\cong\mathrm{End}(W)$ are elementary, and it is convenient to take for $W$ the underlying space of the Cayley algebra $\mathcal{C}$ to describe a grading on $W$ which induces the one on the algebra $\mathrm{End}(W)$, which in this case is isomorphic to the even Clifford algebra of $\mathcal{C}^0$. For the remaining three families, the gradings are not elementary, so we will use different models.

\subsection{Gradings from Cayley algebra}
It is well known that every element $x\in\mathcal{C}$ satisfies the equation
\[
x^2-n(x,1)x+n(x)1=0,
\]
where $n:\mathcal{C}\to\mathbb{F}$ is a nondegenerate quadratic form, called the \emph{norm}, and $n(x,y):=n(x+y)-n(x)-n(y)$ is its polarization. In particular, $n(x,1)$ is the \emph{trace} of $x$, hence any $x\in\mathcal{C}^0$ satisfies $x^2=-n(x)1$. Since $\mathcal{C}$ is an alternative algebra, the operators of left multiplication satisfy the identity $\ell_x^2=\ell_{x^2}$ and, in particular, $\ell_x^2=-n(x)\mathrm{id}$ for any $x\in\mathcal{C}^0$.
It follows that the mapping $x\mapsto\ell_x$ defines a homomorphism from the Clifford algebra $\mathfrak{Cl}(\mathcal{C}^0,-n)$ to $\mathrm{End}(\mathcal{C})$, which restricts to an isomorphism on the even part of the Clifford algebra, $\mathfrak{Cl}_0(\mathcal{C}^0,-n)\to\mathrm{End}(\mathcal{C})$. This allows us to identify the spin $\mathfrak{so}_7$-module $W$ with $\mathcal{C}$, by letting $\mathfrak{so}(\mathcal{C}^0,-n)$ act via the composition of the standard embedding $\mathfrak{so}(\mathcal{C}^0,-n)\to\mathfrak{Cl}_0(\mathcal{C}^0,-n)$, sending $yn(x,\cdot)-xn(y,\cdot)\mapsto\frac12[x,y]$, and the above isomorphism.

\subsubsection{The Cartan grading}
The first family of group gradings are coarsenings of the Cartan grading, which has universal group $\mathbb{Z}^4$. Since this group has trivial torsion, it follows that there is no almost fine proper coarsening, and every homomorphism from the universal group is admissible. 

Let $\{e_1,e_2,u_1,u_2,u_3,v_1,v_2,v_3\}$ be a `good basis' of $\mathcal{C}$. Then the Cartan grading on $\mathcal{L}$ can be obtained as the eigenspace decomposition defined by the adjoint action of the Cartan subalgebra of $\mathcal{L}_0$ spanned by the standard Cartan subalgebra of $\mathfrak{sl}_2$ and the elements $d_i:=u_i n(v_i,\cdot)-v_i n(u_i,\cdot)$ of $\mathfrak{so}(\mathcal{C}^0,n)=\mathfrak{so}(\mathcal{C}^0,-n)$, $i=1,2,3$, which are represented by diagonal matrices with respect to the basis $\{u_1,u_2,u_3,v_1,v_2,v_3,e_1-e_2\}$, namely, $1$ in position $i$, $-1$ in position $3+i$, and $0$ elsewhere. The element $d_i$ acts by $-\frac12[\ell_{u_i},\ell_{v_i}]$ on the spin module $\mathcal{C}$, and a straightforward computation shows that the elements of the `good basis' are eigenvectors: $e_1$ of weight $\frac12(\varepsilon_1+\varepsilon_2+\varepsilon_3)$, $e_2$ of weight $-\frac12(\varepsilon_1+\varepsilon_2+\varepsilon_3)$, $u_1$ of weight $\frac12(\varepsilon_1-\varepsilon_2-\varepsilon_3)$, $v_1$ of weight $\frac12(-\varepsilon_1+\varepsilon_2+\varepsilon_3)$, etc., where $\{\varepsilon_1,\varepsilon_2,\varepsilon_3\}$ is the dual basis of $\{d_1,d_2,d_3\}$. Note that these weights are the vertices of a cube on which the automorphism group $C_2^3\rtimes S_3$ of the root system of type $B_3$ acts as the full group of isometries of the cube. Hence, it is convenient to present
\[
\mathbb{Z}^4\cong\mathbb{Z}\times\langle b_1,b_2,b_3,b_4\mid b_1+b_2+b_3+b_4=0\rangle,
\]
which leads to the following:
\begin{definition}\label{def:I}
Given $g\in G$ and a quadruple $(h_1,h_2,h_3,h_4)$ of elements of $G$ satisfying $h_1h_2h_3h_4=e$, we define a $G$-grading $\Gamma_W(h_1,h_2,h_3,h_4)$ on the vector space $W=\mathcal{C}$ by assigning degrees as follows:
\begin{align*}
&\deg e_1=h_4,\,\deg e_2=h_4^{-1},\\
&\deg u_i=h_i,\,\deg v_i=h_i^{-1}\quad (i=1,2,3).
\end{align*}
Taking $\gamma=(e;\,h_1h_4,h_1^{-1}h_4^{-1},h_2h_4,h_2^{-1}h_4^{-1},h_3h_4,h_3^{-1}h_4^{-1})$ in \Cref{def:typeB}, we obtain a $G$-grading $\Gamma_{\mathfrak{so}_7}(\gamma)$ on the algebra $\mathfrak{so}(\mathcal{C}^0,n)\cong\mathfrak{so}_7$, relative to the basis 
\[
\{e_1-e_2,u_1,v_1,u_2,v_2,u_3,v_3\}.
\]
Using these two, we define the grading $\gr{\FI}_{F(4)}(g,h_1,h_2,h_3,h_4)$ on $F(4)$ as follows:
\begin{eqnarray*}
\mathcal{L}_0&=&(\mathfrak{sl}_2,\Gamma_{\mathfrak{sl}_2}(g,g^{-1}))\oplus(\mathfrak{so}_7,\Gamma_{\mathfrak{so}_7}(\gamma)),\\%
\mathcal{L}_1&=&(V\otimes W,\Gamma_V(g,g^{-1})\otimes\Gamma_W(h_1,h_2,h_3,h_4)).
\end{eqnarray*}
\end{definition}

%The computation of the Weyl group follows the same steps as in the case of the Cartan grading on $G(3)$:
\begin{proposition}\label{prop:WeylI}
Let $\Gamma$ be the fine grading on $F(4)$ with universal group $\mathbb{Z}^4$ (the Cartan grading). Then $W(\Gamma)\cong C_2^2\times S_4$. In matrix form,
\[
W(\Gamma)=\left\{\left(\begin{array}{cc}\varepsilon&0\\0&g\end{array}\right)\mid\varepsilon\in\{\pm1\}, g\in C_2\times S_4\right\},
\]
where each matrix multiplies on the left the elements of $\mathbb{Z}\times\mathbb{Z}^3$, and $C_2\times S_4$ acts on $\mathbb{Z}^3\cong\langle b_1,b_2,b_3,b_4\mid b_1+b_2+b_3+b_4=0\rangle$ similarly to \Cref{prop:WeylG}.
\end{proposition}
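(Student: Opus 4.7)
The plan is to mirror the argument of \Cref{prop:WeylG}, reducing the problem from $F(4)$ to its even part $\mathcal{L}_0 = \mathfrak{sl}_2 \oplus \mathfrak{so}_7$ via the exact sequences of \Cref{l:seq_f(4)}, and then invoking the known Weyl groups of the Cartan gradings on the two simple summands.

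First I would use \Cref{l:DiagStab} to write $\mathrm{Stab}(\Gamma) = \mathrm{Diag}(\Gamma) \cong (\mathbb{F}^\times)^4$ (since $U(\Gamma) \cong \mathbb{Z}^4$) and, likewise, $\mathrm{Stab}(\Gamma_0) = \mathrm{Diag}(\Gamma_0) \cong (\mathbb{F}^\times)^4$. The restriction map $\mathrm{Stab}(\Gamma) \to \mathrm{Stab}(\Gamma_0)$ has finite kernel $\langle\delta\rangle$, and both sides are $4$-dimensional algebraic tori over the algebraically closed field $\mathbb{F}$, so this map is surjective. The second exact sequence of \Cref{l:seq_f(4)} thus yields $\bar{K} = 1$ (equivalently, the embedding $\bar{K} \hookrightarrow U_{[2]} = \{e\}$ from \Cref{Misha's lemma} would give this directly, as $U$ is torsion-free). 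Moreover, the surjectivity above means $\tilde{Q} := \mathrm{res}(\mathrm{Diag}(\Gamma)) = \mathrm{Diag}(\Gamma_0)$, so $W(\tilde{Q}) = W(\Gamma_0)$; the hypothesis $\ker(\mathrm{res}) = \langle\delta\rangle \subseteq \mathrm{Diag}(\Gamma)$ of \Cref{l:weyl} holds because, by fineness, the supports of $\Gamma_0$ and $\Gamma_1$ are disjoint and $\delta$ acts as a scalar on every homogeneous component. The first exact sequence of \Cref{l:seq_f(4)} then collapses to $W(\Gamma) \cong W(\Gamma_0)$.

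Next I would compute $W(\Gamma_0) = W(\Gamma_{\mathfrak{sl}_2}) \times W(\Gamma_{\mathfrak{so}_7})$ summand by summand. The first factor is $C_2$, acting by inversion on $U(\Gamma_{\mathfrak{sl}_2}) = \mathbb{Z}$. The second is the Weyl group of the root system $B_3$, which has order $48$ and admits the standard accidental isomorphism $W(B_3) \cong C_2 \times S_4$. Translated to the presentation $\mathbb{Z}^3 \cong \langle b_1, b_2, b_3, b_4 \mid b_1 + b_2 + b_3 + b_4 = 0\rangle$ of \Cref{def:I}, $S_4$ acts by arbitrary permutations of $\{b_1, \ldots, b_4\}$ and $C_2$ by simultaneous inversion. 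Stacking the two factors produces the asserted matrix form $W(\Gamma) \cong C_2 \times (C_2 \times S_4) = C_2^2 \times S_4$.

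The only substantive step is matching the usual signed-permutation description of $W(B_3)$ (on $\{\varepsilon_1, \varepsilon_2, \varepsilon_3\}$) with its realization as $C_2 \times S_4$ acting on $\{b_1, \ldots, b_4\}$. This correspondence is dictated by the cube of weights of the spin module: its $8$ vertices are precisely the degrees $h_i^{\pm 1}$, $i = 1, \ldots, 4$, of the basis elements of $W = \mathcal{C}$ listed in \Cref{def:I}, and the $4$ pairs of antipodal vertices are what $S_4$ permutes, while $C_2$ is the central inversion of the cube. This ``weight-cube'' realization is precisely why the presentation of $\mathbb{Z}^3$ with four symmetric generators and a single relation was chosen in \Cref{def:I}.
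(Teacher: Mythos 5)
Your proposal is correct and follows essentially the same route as the paper: it deduces $\bar{K}=1$ from \Cref{Misha's lemma} (or equivalently from the torsion-freeness of $U$), uses the surjectivity of $\mathrm{Stab}(\Gamma)\to\mathrm{Stab}(\Gamma_0)$ to identify $\tilde{Q}$ with $\mathrm{Diag}(\Gamma_0)$ and hence $W(\Gamma)$ with $W(\Gamma_0)$, and then factors $W(\Gamma_0)\cong W(\Gamma_{\mathfrak{sl}_2})\times W(\Gamma_{\mathfrak{so}_7})\cong C_2\times(C_2\times S_4)$, with the weight-cube identification of $W(B_3)$ already set up in the discussion preceding \Cref{def:I}.
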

\begin{proof}
Using the notation of \Cref{l:seq_f(4)}, we have $\bar{K}=1$ by \Cref{Misha's lemma} and, hence, $W(\Gamma)\cong W(\tilde{Q})$ and the restriction map $\mathrm{Stab}(\Gamma)\to\mathrm{Stab}(\Gamma_0)$ is surjective. In fact, the restriction $\Gamma_0$ of $\Gamma$ is the Cartan grading on $\mathcal{L}_0$, so we have $\mathrm{Stab}(\Gamma)=\mathrm{Diag}(\Gamma)\cong(\mathbb{F}^{\times})^4\cong\mathrm{Diag}(\Gamma_0)=\mathrm{Stab}(\Gamma_0)$.  Thus, $\tilde{Q}=\mathrm{Diag}(\Gamma_0)$, so we get $W(\Gamma)\cong W(\Gamma_0)$. Finally, $W(\Gamma_0)\cong W(\Gamma_{\mathfrak{sl}_2})\times W(\Gamma_{\mathfrak{so}_7})\cong C_2\times(C_2\times S_4)$. 
\end{proof}

As a consequence, we obtain the isomorphism classes of group gradings in the family given by \Cref{def:I}.
\begin{corollary}\label{cor:I}
$\gr{\FI}_{F(4)}(g,h_1,h_2,h_3)\cong\gr{\FI}_{F(4)}(g',h_1',h_2',h_3')$ if and only if $g'\in\{g,g^{-1}\}$ and there is $\sigma\in S_4$ such that either $h_i'=h_{\sigma(i)}$ for all $i$, or $h_i'=h_{\sigma(i)}^{-1}$ for all $i$.
\qed
\end{corollary}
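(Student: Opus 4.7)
The plan is to apply \Cref{thm:gradings} with the Cartan grading as the unique almost fine representative in this family, using the Weyl group already computed in \Cref{prop:WeylI}.

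First, I would argue that no proper almost fine coarsening exists: the universal group of the Cartan grading is $U \cong \mathbb{Z}\times\mathbb{Z}^3$, which is torsion-free, so \Cref{thm:almostfine} forces any almost fine coarsening to correspond to $E\subseteq t(U)=\{e\}$, i.e., to be the Cartan grading itself. For the same reason, the admissibility condition of \Cref{def:admissible} reduces to a statement about the map $U\to U/t(U)=U$, which is satisfied automatically; hence \emph{every} homomorphism $\alpha:U\to G$ is admissible. By construction (\Cref{def:I}), the grading $\gr{\FI}_{F(4)}(g,h_1,h_2,h_3,h_4)$ is exactly the grading $^{\alpha}\Gamma$ induced by the homomorphism sending $(1,0)\mapsto g$ and $b_i\mapsto h_i$, so \Cref{thm:gradings} tells us that all $G$-gradings in this family arise in this way, and two such parameter tuples yield isomorphic gradings if and only if the corresponding homomorphisms $\alpha,\alpha':U\to G$ satisfy $\alpha=\alpha'\circ w$ for some $w\in W(\Gamma)$.

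Next, I would translate the Weyl group action of \Cref{prop:WeylI} into the effect on the parameters $(g,h_1,\ldots,h_4)$. The block-diagonal description shows that $W(\Gamma)$ splits as a direct product acting independently on the $\mathbb{Z}$-factor (generated by the parameter for $g$) and on the sublattice $\langle b_1,\ldots,b_4 \mid b_1+b_2+b_3+b_4=e\rangle$. On the first factor, the $C_2$ acts by inversion, which replaces $g$ by $g^{-1}$; on the second, the $C_2\times S_4$ consists of simultaneous permutations $b_i\mapsto b_{\sigma(i)}$ with an optional simultaneous inversion $b_i\mapsto b_i^{-1}$. Applying such an element to $\alpha$ replaces $(h_1,\ldots,h_4)$ either by $(h_{\sigma(1)},\ldots,h_{\sigma(4)})$ or by $(h_{\sigma(1)}^{-1},\ldots,h_{\sigma(4)}^{-1})$, while the choice of $\varepsilon\in\{\pm 1\}$ independently toggles $g\leftrightarrow g^{-1}$. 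Combining these two independent orbits gives exactly the equivalence stated in the corollary.

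There is no real obstacle here: all the hard work was done in establishing \Cref{prop:WeylI} (via the split exact sequence with trivial $\bar{K}$) and in observing that $U$ is torsion-free so that every homomorphism is admissible. The corollary is then a direct unpacking of \Cref{thm:gradings}.
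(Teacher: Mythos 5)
Your proposal is correct and follows exactly the route the paper intends: the corollary is stated as an immediate consequence of \Cref{thm:gradings} together with \Cref{prop:WeylI}, with the observations that the universal group $\mathbb{Z}^4$ is torsion-free (hence no proper almost fine coarsening and every homomorphism is admissible) already made in the paragraph preceding \Cref{def:I}. Your unpacking of the block-diagonal Weyl group action into the conditions on $(g,h_1,\ldots,h_4)$ is accurate, so there is nothing to add.
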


\subsubsection{The grading from Cayley-Dickson doubling process}

We will again realize the spin module $W$ on the vector space $\mathcal{C}$ via the algebra homomorphism $\mathfrak{Cl}(\mathcal{C}^0,-n)\to\mathrm{End}(\mathcal{C})$ defined by the mapping $x\mapsto\ell_x$ for all $x\in\mathcal{C}^0$. One checks (see \cite[\S 2.3]{DEM}) that this is a homomorphism of algebras with involution if we take the bar involution on the Clifford algebra and the adjunction with respect to $n$ on $\mathrm{End}(\mathcal{C})$. Now, any algebra automorphism $\psi$ of $\mathcal{C}$ preserves $n$ and satisfies $\psi\ell_x\psi^{-1}=\ell_{\psi(x)}$. It follows that the corresponding element of $\mathfrak{Cl}_0(\mathcal{C}^0,-n)$ belongs to the spin group. Thus, we obtain an embedding $\aut(\mathcal{C})\to\mathrm{Spin}(\mathcal{C}^0,-n)$ that, together with the restriction map $\aut(\mathcal{C})\to\mathrm{SO}(\mathcal{C}^0,-n)$, fits the following commutative diagram:
\[
\begin{tikzcd}
&\mathrm{Spin}(\mathcal{C}^0,-n)\arrow[d, two heads]\\%
\mathrm{Aut}(\mathcal{C}) \arrow[ru,hook]\arrow[r,hook]&\mathrm{SO}(\mathcal{C}^0,-n)
\end{tikzcd}
\]
Therefore, if $\Gamma_\mathcal{C}$ is a $G$-grading on the algebra $\mathcal{C}$ and $\gr{\mathcal{C}}_{\mathfrak{so}_7}$ is the grading of $\mathfrak{so}(\mathcal{C}^0,n)$ induced by the restriction of $\Gamma_{\mathcal{C}}$ to $\mathcal{C}^0$, then $W=\mathcal{C}$ becomes a graded $\mathfrak{so}_7$-module. 

\begin{definition}\label{def:II}
Given $g\in G$ and a subgroup $T\subseteq G$ isomorphic to $\mathbb{Z}_2^3$, consider a fine $G$-grading $\gr{\Pa}_\mathcal{C}(T)$ with support $T$ on $\mathcal{C}$ obtained from the Cayley-Dickson process (defined up to isomorphism, see \Cref{def:cayleyII}). Let $\gr{\mathcal{C}}_{\mathfrak{so}_7}(T)$ be the induced grading on $\mathfrak{so}(\mathcal{C}^0,n)$. Using these two, we define the grading $\gr{\FII}_{F(4)}(g,T)$ on $F(4)$ as follows:
\begin{eqnarray*}
\mathcal{L}_0&=&(\mathfrak{sl}_2,\Gamma_{\mathfrak{sl}_2}(g,g^{-1}))\oplus(\mathfrak{so}_7,\gr{\mathcal{C}}_{\mathfrak{so}_7}(T))\\%
\mathcal{L}_1&=&(V\otimes\mathcal{C},\Gamma_V(g,g^{-1})\otimes\gr{\mathrm{P}}_\mathcal{C}(T)).
\end{eqnarray*}
\end{definition}

From \cite[\S 3]{DEM}, the grading $\Gamma=\gr{\FII}_{F(4)}((1,0),\mathbb{Z}_2^3)$ is fine with universal group $U=\mathbb{Z}\times\mathbb{Z}_2^3$. Moreover, all torsion elements of $U$ belong to the support of $\mathrm{Der}(\mathcal{L})_0\cong\mathcal{L}_0$. It follows that there is no almost fine proper coarsening and also that every admissible homomorphism $U\to G$ must be injective on the torsion subgroup.  
To compute $W(\Gamma)$, consider the exact sequence \eqref{seq_f(4)_2} of \Cref{l:seq_f(4)}:
\[
1\to\langle\delta\rangle\to\mathrm{Stab}(\Gamma)\to\mathrm{Stab}(\Gamma_0)\to\bar{K}\to 1.
\]
Since $\Gamma$ is fine, we have $\mathrm{Stab}(\Gamma)=\mathrm{Diag}(\Gamma)\cong\mathbb{F}^\times\times C_2^3$. However, $\Gamma_0$ is not fine, because the $\mathbb{Z}_2^3$-grading $\Gamma_{\mathfrak{so}_7}$ admits a fine refinement with universal group  $\mathbb{Z}_2^6$ (take $q=7$, $s=0$ in \Cref{def:typeB}). Hence, $\mathrm{Stab}(\Gamma_{\mathfrak{so}_7})$ contains the maximal quasitorus corresponding to the fine refinement: $C_2^6\subseteq\mathrm{Stab}(\Gamma_{\mathfrak{so}_7})$. 
In view of the exact sequence above, $\bar{K}$ contains the cokernel of the homomorphism 
$
\mathbb{F}^\times\times C_2^3\to\mathbb{F}^\times\times C_2^6
$
and, hence, $|\bar{K}|\ge 2^3$. Then, \Cref{Misha's lemma} implies that $\bar{K}\cong U_{[2]}$.

\begin{proposition}\label{prop:WeylII}
Let $\Gamma$ be the fine grading on $F(4)$ with universal group $U=\mathbb{Z}\times\mathbb{Z}_2^3$ induced from the $\mathbb{Z}_2^3$-grading $\Gamma_{\mathcal{C}}$ on the Cayley algebra. Then
\[
W(\Gamma)=\aut(U)\cong C_2\times\mathrm{AGL}_3(2)\cong C_2\times\left(U_{[2]}\rtimes\mathrm{GL}_3(2)\right).
\]
In matrix form,
\[
W(\Gamma)=\left\{\left(\begin{array}{cc}\varepsilon&0\\k&A\end{array}\right)\mid\varepsilon\in\{\pm1\},A\in\mathrm{GL}_3(2),k\in U_{[2]}\right\},
\]
where each matrix multiplies on the left the elements of $\mathbb{Z}\times\mathbb{Z}_2^3$.
\end{proposition}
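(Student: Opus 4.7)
The plan is to combine two ingredients: the exact sequence from \Cref{l:seq_f(4)} (which yields a count for $|W(\Gamma)|$) and the universal embedding $W(\Gamma)\hookrightarrow\aut(U)$ (which supplies an upper bound), and then verify that these meet. First I would describe $\aut(U)$ explicitly. Since $U=\mathbb{Z}\times\mathbb{Z}_2^3$, every group automorphism preserves the torsion subgroup $U_{[2]}=\mathbb{Z}_2^3$, acting on it by some $A\in\mathrm{GL}_3(2)$, and induces $\varepsilon\in\{\pm1\}$ on the torsion-free quotient $\mathbb{Z}$. The image of the free generator may then be any element of the form $(\varepsilon,k)\in\mathbb{Z}\times\mathbb{Z}_2^3$, so $\aut(U)$ has the claimed matrix form and $|\aut(U)|=2\cdot 8\cdot|\mathrm{GL}_3(2)|=2688$.

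Next, I would count $|W(\Gamma)|$. The discussion preceding the proposition already establishes $\bar{K}\cong U_{[2]}$ of order $8$, so from the exact sequence $1\to\bar{K}\to W(\Gamma)\to W(\tilde{Q})\to 1$ it remains to prove $|W(\tilde{Q})|=336$. Since $\mathfrak{sl}_2$ and $\mathfrak{so}_7$ are non-isomorphic simple ideals of $\mathcal{L}_0$, every automorphism of $\mathcal{L}_0$ preserves each factor, so $W(\tilde{Q})\cong W(\tilde{Q}|_{\mathfrak{sl}_2})\times W(\tilde{Q}|_{\mathfrak{so}_7})$. The first factor is $C_2$, the Weyl group of the Cartan grading on $\mathfrak{sl}_2$. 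For the second, the induced $\mathbb{Z}_2^3$-grading on $\mathfrak{so}(\mathcal{C}^0,n)\cong\mathfrak{so}_7$ comes from the Pauli grading $\gr{\Pa}_\mathcal{C}(T)$ on $\mathcal{C}$, whose Weyl group is $\mathrm{GL}_3(2)$ by \cite[Theorem 4.19]{EK13}. Since $\aut(\mathcal{C})$ acts orthogonally on $\mathcal{C}^0$ and hence embeds into $\aut(\mathfrak{so}_7)$, I obtain $\mathrm{GL}_3(2)\hookrightarrow W(\tilde{Q}|_{\mathfrak{so}_7})$. For the reverse inclusion, $W(\tilde{Q}|_{\mathfrak{so}_7})=N(\tilde{Q}|_{\mathfrak{so}_7})/C(\tilde{Q}|_{\mathfrak{so}_7})$ acts faithfully by conjugation on $\tilde{Q}|_{\mathfrak{so}_7}\cong C_2^3$, giving an injection into $\aut(C_2^3)=\mathrm{GL}_3(2)$.

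Combining the counts yields $|W(\Gamma)|=8\cdot 336=2688=|\aut(U)|$, so the embedding $W(\Gamma)\hookrightarrow\aut(U)$ is surjective, establishing $W(\Gamma)=\aut(U)$. For the matrix form, I would identify each piece explicitly: $\varepsilon$ comes from the $\mathfrak{sl}_2$-Weyl group, $A$ from $\mathrm{GL}_3(2)$ realized via $\aut(\mathcal{C})$, and the bottom-left column $k$ from the action of $\bar{K}$ on $U$ described in \Cref{Misha's lemma} as $u\mapsto u+p(u)k$, which in coordinates $(n,v)\in\mathbb{Z}\times\mathbb{Z}_2^3$ reads $(n,v)\mapsto(n,v+nk)$. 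The main obstacle I anticipate is ensuring that no extra automorphism of $\mathfrak{so}_7$ outside the image of $\aut(\mathcal{C})$ contributes to $W(\tilde{Q}|_{\mathfrak{so}_7})$; the faithful conjugation action on $\tilde{Q}|_{\mathfrak{so}_7}$ settles this cleanly, without requiring a direct analysis of the larger group $\aut(\mathfrak{so}_7)$.
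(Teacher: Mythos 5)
Your proposal is correct, but it reaches the conclusion by an order-counting argument where the paper uses a generation argument. The paper's proof exhibits explicit elements of $W(\Gamma)$ --- the copy of $\mathrm{GL}_3(2)$ coming from $\aut(\mathcal{C})\hookrightarrow\mathrm{Spin}(\mathcal{C}^0,-n)$, the inversion $\varepsilon=-1$ realized by an $\mathrm{SL}_2$-element, and the translations by $k\in U_{[2]}$ coming from $\bar{K}$ --- and observes that these generate all of $\aut(U)$, which is the a priori upper bound for $W(\Gamma)$. You instead compute $|W(\Gamma)|=|\bar{K}|\cdot|W(\tilde{Q})|=8\cdot 336=2688=|\aut(U)|$ and compare orders; this requires the additional upper bound $W(\tilde{Q}|_{\mathfrak{so}_7})\hookrightarrow\aut(C_2^3)\cong\mathrm{GL}_3(2)$, which you obtain cleanly from the faithfulness of the conjugation action of $N/C$ on the quasitorus --- a step the paper does not need, since it only requires the lower bound. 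Both routes rest on the same structural inputs ($\bar{K}\cong U_{[2]}$ from the discussion preceding the proposition, the exact sequence of \Cref{l:seq_f(4)}, and the realization of $\mathrm{GL}_3(2)$ via $\aut(\mathcal{C})$), so the difference is one of bookkeeping rather than substance; your version has the merit of not having to check that the exhibited elements generate $\aut(U)$. One point to tighten: the factorization $W(\tilde{Q})\cong W(\tilde{Q}|_{\mathfrak{sl}_2})\times W(\tilde{Q}|_{\mathfrak{so}_7})$ does not follow merely from the fact that automorphisms of $\mathcal{L}_0$ preserve the two simple ideals (compare the TKK families, where that fact holds but the factorization fails); you also need $\tilde{Q}$ to be the direct product of its two projections. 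That holds here because the subgroups of $U$ generated by $\supp(\Gamma_{\mathfrak{sl}_2})$ and $\supp(\Gamma_{\mathfrak{so}_7})$ are $2\mathbb{Z}\times 0$ and $0\times\mathbb{Z}_2^3$, which intersect trivially, so the fiber product in \Cref{l:TKKmain} degenerates to a direct product.
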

\begin{proof}
Recall that $W(\Gamma_\mathcal{C})=\aut(\mathbb{Z}_2^3)\cong\mathrm{GL}_3(2)$ and that  $\aut(\mathcal{C})\hookrightarrow\mathrm{Spin}(\mathcal{C}^0,-n)$ via $\mathfrak{Cl}_0(\mathcal{C}^0,-n)\cong\mathrm{End}(\mathcal{C})$. So $W(\Gamma)$ contains $\aut(\mathbb{Z}_2^3)$. Also, we have the automorphism of $\mathcal{L}$ given by the element
\[
\left(\left(\begin{array}{cc}0&1\\-1&0\end{array}\right),1\right)\in\mathrm{SL}_2(\mathbb{F})\times\mathrm{Spin}_7(\mathbb{F}).
\]
This automorphism swaps the two homogeneous components of $V$ in $\mathcal{L}_1$, so the corresponding element of $W(\Gamma)$ inverts the generator of $\mathbb{Z}$ and fixes the elements $\mathbb{Z}_2^3$. Finally, since the kernel of the parity homomorphism $p:U\to\mathbb{Z}_2$ is $2\mathbb{Z}\times\mathbb{Z}_2^3$, $\bar{K}$ gives all automorphisms of $\mathbb{Z}\times\mathbb{Z}_2^3$ that add an element of $\mathbb{Z}_2^3$ to the generator of $\mathbb{Z}$ and fix all elements of $\mathbb{Z}_2^3$.
\end{proof}

Hence, the isomorphism classes of group gradings given by \Cref{def:II} are as follows:
\begin{corollary}\label{cor:II}
$\gr{\FII}_{F(4)}(g,T)\cong\gr{\FII}_{F(4)}(g',T')$ if and only if $T=T'$ and $g'\in gT\cup g^{-1}T$.\qed
\end{corollary}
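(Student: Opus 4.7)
The plan is to combine \Cref{thm:gradings} with the explicit description of $W(\Gamma)$ from \Cref{prop:WeylII}, and then unpack the resulting Weyl group equivalence in terms of the pair $(g,T)$. By construction, every grading in the family $\gr{\FII}_{F(4)}$ is of the form ${}^\alpha\Gamma$, where $\Gamma=\gr{\FII}_{F(4)}((1,0),\mathbb{Z}_2^3)$ is the fine grading with universal group $U=\mathbb{Z}\times\mathbb{Z}_2^3$ and $\alpha\colon U\to G$ is an admissible homomorphism. As noted in the paragraph preceding \Cref{prop:WeylII}, every nontrivial torsion element of $U$ lies in $\mathrm{Supp}(\Gamma)$, so admissibility is equivalent to injectivity of $\alpha$ on the torsion subgroup $U_{[2]}\cong\mathbb{Z}_2^3$. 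Thus an admissible $\alpha$ is encoded, up to the auxiliary choice of an isomorphism $U_{[2]}\cong T$, by the pair $(g,T)$ with $g=\alpha(1,0)$ and $T=\alpha(U_{[2]})$.

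By \Cref{thm:gradings}, the gradings ${}^\alpha\Gamma$ and ${}^{\alpha'}\Gamma$ are isomorphic if and only if $\alpha=\alpha'\circ w$ for some $w\in W(\Gamma)$. I would then plug in the matrix form from \Cref{prop:WeylII}: such a $w$ sends $(1,0)\mapsto(\varepsilon,k)$ with $\varepsilon\in\{\pm1\}$ and $k\in U_{[2]}$, and acts on $U_{[2]}$ by an arbitrary $A\in\mathrm{GL}_3(2)$. Evaluating $\alpha'\circ w$ on $U_{[2]}$ yields $\alpha(U_{[2]})=\alpha'(U_{[2]})$, i.e.\ $T=T'$. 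Evaluating at $(1,0)$ gives $g=(g')^\varepsilon\,\alpha'(k)$ with $\alpha'(k)\in T$; since $T$ has exponent $2$, this is equivalent to $g'\in gT\cup g^{-1}T$.

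For the converse, I would construct $w$ by hand from the data $T=T'$ and $g'\in gT\cup g^{-1}T$. The $\mathrm{GL}_3(2)$-factor of $W(\Gamma)$ acts as the full automorphism group of $U_{[2]}$ and therefore realizes any bijection between the parametrizations of $T$ used by $\alpha$ and $\alpha'$; the values of $\varepsilon\in\{\pm1\}$ and $k\in U_{[2]}$ are then dictated by which coset of $T$ contains $g'$ relative to $g$. No genuine obstacle arises here: the substantive work has already been carried out in \Cref{prop:WeylII}, and the corollary is a mechanical translation of that description into the language of the parameters $(g,T)$.
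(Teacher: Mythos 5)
Your proposal is correct and follows exactly the route the paper intends: the corollary is stated with \qed precisely because it is the mechanical consequence of \Cref{thm:gradings}, the admissibility discussion preceding \Cref{prop:WeylII}, and the matrix form of $W(\Gamma)$ given there. Your unpacking of the action (the $\mathrm{GL}_3(2)$ block forcing $T=T'$, and the $(\varepsilon,k)$ entries giving $g'\in gT\cup g^{-1}T$ since $T$ has exponent $2$) matches the paper's implicit argument in both directions.
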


\subsection{Gradings from TKK construction}
In \cite[\S 3.2]{DEM}, the authors obtain two fine gradings on $F(4)$ using the model of $F(4)$ given by the TKK construction applied to the Kac's ten-dimensional simple Jordan superalgebra $\mathcal{K}_{10}$:
\[
\mathcal{L}=\mathfrak{tkk}(\mathcal{K}_{10})=(\mathfrak{sl}_2\otimes\mathcal{K}_{10})\oplus\mathrm{Der}(\mathcal{K}_{10}).
\]
The bracket is given by
\begin{align*}
&[a\otimes x,b\otimes y]=[a,b]\otimes xy-2n(a,b)[\ell_x,\ell_y],\\%
&[d,a\otimes x]=a\otimes d(x),
\end{align*}
for any $a,b\in\mathfrak{sl}_2$, $x,y\in\mathcal{K}_{10}$, and $d\in\mathrm{Der}(\mathcal{K}_{10})$, where $n$ is the polarization of the quadratic form $n(a)=\det(a)$, which is the norm of $M_2(\mathbb{F})$ as a quaternion algebra.

Recall that $\mathcal{K}_{10}$ can be described in terms of the smaller Kaplansky's superalgebra $\mathcal{K}_3$, which is the three-dimensional Jordan superalgebra
\[
\mathcal{K}_3=\mathbb{F}e\oplus\mathcal{V},
\]
with the even part $\mathbb{F}e$, the odd part $\mathcal{V}$ a two-dimensional vector space endowed with a nonzero symplectic form $(\cdot\,|\,\cdot)$, and the multiplication defined as follows:
$$
e^2=e,\quad ex=xe=\frac12x,\quad xy=(x|y)e,\quad x,y\in V.
$$
The symplectic form extends to a supersymmetric bilinear form on $\mathcal{K}_3$ by means of $(e|e)=\frac12$ and $(e|\mathcal{V})=0$. Then $\mathrm{Der}(\mathcal{K}_3)$ is the orthosymplectic Lie superalgebra of the super vector space $\mathcal{K}_3$ relative to this form, that is, $\mathrm{Der}(\mathcal{K}_3)=\mathfrak{sp}(\mathcal{V})\oplus\mathcal{V}$.
Now, 
\[
\mathcal{K}_{10}=\mathbb{F}1\oplus(\mathcal{K}_3\otimes\mathcal{K}_3),
\]
with unity element $1$ and with product determined by
\[
(x\otimes y)(x'\otimes y')=(-1)^{|y||x'|}\left(xx'\otimes yy'-\frac34(x|x')(y|y')1\right),
\]
for homogeneous elements $x,x',y,y'\in\mathcal{K}_3$. The Lie superalgebra of derivations is
\[
\mathrm{Der}(\mathcal{K}_{10})=(\mathrm{Der}(\mathcal{K}_3)\otimes\mathrm{id})\oplus(\mathrm{id}\otimes\mathrm{Der}(\mathcal{K}_3))\cong (\mathfrak{sp}(\mathcal{V})\oplus\mathfrak{sp}(\mathcal{V}))\oplus(\mathcal{V}\oplus\mathcal{V}),
\]
where the action on $1$ is trivial. 

The algebra $(\mathcal{K}_{10})_0$ is a direct sum of two simple ideals: a copy of $\mathbb{F}$ and the Jordan algebra $\mathcal{J}=\mathbb{F}\oplus(\mathcal{V}\otimes\mathcal{V})$ of the symmetric form on $\mathcal{V}\otimes\mathcal{V}$ defined by 
\[
(v_1\otimes v_2,v_1'\otimes v_2')=\frac12(v_1|v_1')(v_2|v_2').
\]
Accordingly, $\mathcal{L}_0$ is the direct sum of the ideals $\mathfrak{sl}_2\otimes\mathbb{F}\cong\mathfrak{sl}_2$ and $(\mathfrak{sl}_2\otimes\mathcal{J})\oplus\mathrm{Der}(\mathcal{K}_{10})_0$. By \cite[Lemma 3.1]{DEM}, this latter is isomorphic to $\mathfrak{so}(\mathcal{U},q)$, where 
\[
\mathcal{U}=\mathfrak{sl}_2\oplus(\mathcal{V}\otimes\mathcal{V}),\quad q(a+w)=n(a)+(w,w),\quad a\in\mathfrak{sl}_2,\,w\in\mathcal{V}\otimes\mathcal{V}.
\]
In summary, the Lie superalgebra $F(4)$ is identified with
\begin{align*}
&\mathcal{L}_0=(\mathfrak{sl}_2\otimes(\mathcal{K}_{10})_0)\oplus\mathrm{Der}(\mathcal{K}_{10})_0\cong(\mathfrak{sl}_2\otimes\mathbb{F})\oplus\big((\mathfrak{sl}_2\otimes\mathcal{J})\oplus(\mathfrak{sp}(\mathcal{V})\oplus \mathfrak{sp}(\mathcal{V}))\big),\\%
&\mathcal{L}_1=(\mathfrak{sl}_2\otimes(\mathcal{K}_{10})_1)\oplus\mathrm{Der}(\mathcal{K}_{10})_1\cong M_2(\mathbb{F})\otimes(\mathcal{V}\oplus\mathcal{V}).
\end{align*}

Now, let $G$ be an abelian group. Any $G$-grading $\Gamma_{\mathcal{K}_{10}}$ on the superalgebra $\mathcal{K}_{10}$ restricts to the simple ideal $\mathcal{J}$ of its even part and, hence, to $\mathcal{V}\otimes\mathcal{V}$ (see e.g. \cite[Theorem 5.42]{EK13}). It also induces a $G$-grading on $\mathrm{Der}(\mathcal{K}_{10})$. If we have, in addition, any $G$-grading $\Gamma_{\mathfrak{sl}_2}$ on $\mathfrak{sl}_2$, then the tensor product $\mathfrak{sl}_2\otimes\mathcal{K}_{10}$ becomes $G$-graded and, hence, we get a $G$-grading on the superalgebra $\mathcal{L}=\mathfrak{tkk}(\mathcal{K}_{10})$. Moreover, the isomorphism $(\mathfrak{sl}_2\otimes\mathcal{J})\oplus\mathrm{Der}(\mathcal{K}_{10})_0\cong\mathfrak{so}(\mathcal{U},q)$ of \cite[Lemma 3.1]{DEM} preserves degrees.

Note that $\Gamma_{\mathfrak{sl}_2}$ is responsible not only for the grading on the simple ideal $\mathfrak{sl}_2$ of $\mathcal{L}_0$, but also the other simple ideal, $\mathfrak{so}_7$, contains a copy of $\mathfrak{sl}_2$ with the same grading, namely, $\mathfrak{sl}_2\otimes 1_{\mathcal{J}}$. This fact will have a bearing on the quasitorus $\tilde{Q}$ in \Cref{l:seq_f(4)}: it will not be the direct product of its restrictions to $\mathfrak{sl}_2$ and $\mathfrak{so}_7$, as was the case for the two fine gradings considered before. The following result will be applied to $\mathcal{L}_0=\mathfrak{sl}_2\oplus\mathfrak{so}_7$ to find $W(\tilde{Q})$, but it may be of independent interest.

\begin{lemma}\label{l:TKKmain}
Let $\mathcal{A}$ be a finite-dimensional algebra that is the direct sum of subalgebras $\mathcal{A}_1$ and $\mathcal{A}_2$ such that $\aut(\mathcal{A})=\aut(\mathcal{A}_1)\times\aut(\mathcal{A}_2)$ and suppose $\mathcal{A}$ is graded by an abelian group $G$. Let $Q$ be the image of $\widehat{G}$ in $\aut(\mathcal{A})$, $Q_i$ its projection to $\aut(\mathcal{A}_i)$, and $G_i$ the subgroup of $G$ generated by the support of the corresponding grading on $\mathcal{A}_i$, $i=1,2$. Then $W(Q_i)$ is embedded in the automorphism group of $G_i=\mathfrak{X}(Q_i)$ and 
\[
W(Q)=\mathrm{Stab}_{W(Q_1)}(G_0)\times_{\aut(G_0)}\mathrm{Stab}_{W(Q_2)}(G_0),
\]
where $G_0=G_1\cap G_2$ and the fiber product is taken with respect to the restriction maps $\mathrm{Stab}_{W(Q_i)}(G_0)\to\aut(G_0)$. 
In particular, if $G_1\subseteq G_2$ then
\[
W(Q)\cong\{\alpha\in\mathrm{Stab}_{W(Q_2)}(G_1)\mid\alpha|_{G_1}\in W(Q_1)\}.
\]
%Let $\Gamma$ be a gradings on $F(4)$ arising from the TKK construction. Let $Q$ be the corresponding quasitorus, $\tilde{Q}$ be its image under the restriction map $\mathrm{Stab}(\Gamma)\to\mathrm{Stab}(\Gamma_0)$, and $Q_{\mathfrak{sl}_2}$ and $Q_{\mathfrak{so}_7}$ be the images of $\tilde{Q}$ under the restriction maps  $\mathrm{Stab}(\Gamma_0)\to\mathrm{Stab}(\Gamma_{\mathfrak{sl}_2})$ and $\mathrm{Stab}(\Gamma_0)\to\mathrm{Stab}(\Gamma_{\mathfrak{so}_7})$, respectively. 
%Then there is a surjective group homomorphism $\theta:Q_{\mathfrak{so}_7}\to Q_{\mathfrak{sl}_2}$ such that
%\[
%\tilde{Q}=\{(\theta(q),q)\mid q\in Q_{\mathfrak{so}_7}\}.
%\]
%Moreover, if we denote by $\bar{\alpha}$ the automorphism of $Q_{\mathfrak{sl}_2}$ induced by an automorphism $\alpha$ of $Q_{\mathfrak{so}_7}$ that leaves the kernel of $\theta$ invariant (i.e., $\bar{\alpha}\theta=\theta\alpha$), then 
%\[
%W(\tilde{Q})\cong\{\alpha\in\mathrm{Stab}_{W(Q_{\mathfrak{so}_7})}(\ker\theta)\mid\bar{\alpha}\in W(Q_{\mathfrak{sl}_2})\}.
%\]
\end{lemma}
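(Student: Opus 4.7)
The plan is to use the hypothesis $\aut(\mathcal{A}) = \aut(\mathcal{A}_1) \times \aut(\mathcal{A}_2)$ to split the centralizer and embed the normalizer into a product, and then translate the condition ``normalizes $Q$'' into a compatibility condition on the character groups $G_i = \mathfrak{X}(Q_i)$ via duality.

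Since conjugation in a direct product group is computed componentwise, an element $(\varphi_1,\varphi_2)$ centralizes $Q$ if and only if each $\varphi_i$ centralizes $Q_i$, whence $C(Q) = C(Q_1) \times C(Q_2)$. Projecting conjugation to each factor also shows $N(Q) \subseteq N(Q_1) \times N(Q_2)$. Combining these, one obtains a canonical embedding
\[
W(Q) \hookrightarrow W(Q_1) \times W(Q_2),
\]
and each $W(Q_i)$ embeds faithfully into $\aut(Q_i) = \aut(G_i)$ by the standard duality between quasitori and their character groups. The task is then to identify the image of $W(Q)$ inside this product.

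For this I would use duality applied to the inclusion $Q \hookrightarrow Q_1 \times Q_2$: this dualizes to the surjection $G_1 \oplus G_2 \twoheadrightarrow \mathfrak{X}(Q) = G_1 G_2$, $(g_1,g_2) \mapsto g_1 g_2$, whose kernel is the ``anti-diagonal'' subgroup $K = \{(g,g^{-1}) \mid g \in G_0\}$. Normalizing $Q$ by $(\varphi_1,\varphi_2)$ corresponds precisely to the induced automorphism of $G_1 \oplus G_2$ stabilizing $K$. A short check on a generator $(g,g^{-1})$ of $K$ with $g \in G_0$ shows that this is equivalent to $\varphi_i^*(g) \in G_0$ for $i=1,2$ together with $\varphi_1^*(g) = \varphi_2^*(g)$. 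Hence the image of $W(Q)$ consists exactly of the pairs $(w_1,w_2)$ such that each $w_i$ stabilizes $G_0$ and $w_1|_{G_0} = w_2|_{G_0}$ in $\aut(G_0)$, which is the required fiber product.

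The special case $G_1 \subseteq G_2$ is then immediate: here $G_0 = G_1$, so $w_1$ is determined by $w_2$ as $w_2|_{G_1}$, and the only nontrivial constraint is $w_2|_{G_1} \in W(Q_1) \subseteq \aut(G_1)$. The main (though small) technical point is the unravelling of ``preserves $K$'' into the pointwise agreement condition via the explicit description of $K$; everything else is formal once the identification $\mathfrak{X}(Q_i) = G_i$ and the duality for quasitori have been made explicit.
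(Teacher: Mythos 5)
Your proposal is correct and follows essentially the same route as the paper's proof: both exploit $\aut(\mathcal{A})=\aut(\mathcal{A}_1)\times\aut(\mathcal{A}_2)$ to get $C(Q)=C(Q_1)\times C(Q_2)$ and $N(Q)\subseteq N(Q_1)\times N(Q_2)$, and then use the duality between $Q_i$ and $G_i=\mathfrak{X}(Q_i)$ to translate the normalizing condition into the requirement that the induced automorphisms stabilize $G_0$ and agree there. The only cosmetic difference is that you phrase $Q$ via the kernel (anti-diagonal) of $G_1\oplus G_2\twoheadrightarrow\mathfrak{X}(Q)$, whereas the paper works directly with the dual pull-back description $Q=\{(\chi_1,\chi_2)\mid\chi_1|_{G_0}=\chi_2|_{G_0}\}$.
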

\begin{proof}
Without loss of generality, we may assume that $G$ is generated by the support of the grading on $\mathcal{A}$, so we may identify $G$ with $\mathfrak{X}(Q)$. Then $G$ is the push-forward of the inclusion maps $G_0\to G_i$ and, hence, $Q$ is the pull-back $Q_1\times_{Q_0}Q_2$ of the restriction maps $Q_i\to Q_0$. The normalizer of $Q_i$ in $\aut(\mathcal{A}_i)$ acts on $Q_i$ and, hence, on its dual $G_i$, which gives us a homomorphism $\rho_i:N(Q_i)\to\aut(G_i)$ with kernel $C(Q_i)$ and, hence, an embedding $W(Q_i)\hookrightarrow\aut(G_i)$.

Now, the normalizer of $Q$ in $\aut(\mathcal{A})=\aut(\mathcal{A}_1)\times\aut(\mathcal{A}_2)$ projects to $N(Q_i)$, so it is a subgroup of $N(Q_1)\times N(Q_2)$. Since $Q=\{(\chi_1,\chi_2)\in Q_1\times Q_2\mid\chi_1|_{G_0}=\chi_2|_{G_0}\}$ and, by definition of $\rho_i$, we have $\chi_i(\rho_i(
\psi_i)(g))=(\psi_i^{-1}\chi_i\psi_i)(g)$ for all $\chi_i\in Q_i=\widehat{G}_i$, $\psi_i\in N(Q_i)$, and $g\in G_0$, it is easy to check that 
\[
N(Q)=\{(\psi_1,\psi_2)\in N(Q_1)\times N(Q_2)\mid
\rho_1(\psi_1)|_{G_0}=\rho_2(\psi_2)|_{G_0}\}.
\]
The condition on $(\psi_1,\psi_2)$ implies  that $\rho_i(\psi_i)$ leaves $G_0$ invariant. Passing modulo $C(Q)=C(Q_1)\times C(Q_2)$, we obtain the result.
%
%Since $\mathfrak{so}_7$ contains a graded subalgebra isomorphic to $\mathfrak{sl}_2$, the projection $\tilde{Q}\to Q_{\mathfrak{so}_7}$ is an isomorphism. Hence, the composition of its inverse  $Q_{\mathfrak{so}_7}\to\tilde{Q}$ and the projection $\tilde{Q}\to Q_{\mathfrak{sl}_2}$ is a homomorphism  $\theta:Q_{\mathfrak{so}_7}\to Q_{\mathfrak{sl}_2}$ with the desired property. 
%
%Using the projections from $\tilde{Q}$ to each of its components, we see that $N(\tilde{Q})\subseteq N(Q_{\mathfrak{sl}_2})\times N(Q_{\mathfrak{so}_7})$, and the description of $\tilde{Q}$ implies that 
%\[
%N(\tilde{Q})=\{(\psi,\varphi)\in N(Q_{\mathfrak{sl}_2})\times N(Q_{\mathfrak{so}_7})\mid\forall q\in Q_{\mathfrak{so}_7},\;\psi\theta(q)\psi^{-1}=\theta(\varphi q\varphi^{-1})\}.
%\]
%If $q\in\ker\theta$, then the condition in the description of $N(\tilde{Q})$ becomes $1=\theta(\varphi q\varphi^{-1})$, hence the kernel of $\theta$ is invariant under the action of $W(\tilde{Q})$. Finally, it is clear that the projection $N(\tilde{Q})\to N(Q_{\mathfrak{so}_7})$ induces a homomorphism $W(\tilde{Q})\to W(Q_{\mathfrak{so}_7})$, and the description of $N(\tilde{Q})$ implies that this map is injective and has the image as indicated. 
\end{proof}

The fine gradings on $\mathcal{K}_{10}$ were found in \cite{CDM}: there are two, up to equivalence, with universal groups $\mathbb{Z}^2$ and $\mathbb{Z}\times\mathbb{Z}_2$. When combined with the fine $\mathbb{Z}_2^2$-grading on $\mathfrak{sl}_2$, they give rise to two non-equivalent fine gradings on $F(4)$. We shall consider them in the following two subsections.

\subsubsection{The $\mathbb{Z}^2\times\mathbb{Z}_2^2$-grading\label{sec:tkk_gr1}}
%The fine $\mathbb{Z}^2$-grading on $\mathcal{K}_{10}$ provides the following family of $G$-gradings via homomorphisms $\mathbb{Z}^2\to G$. 
First we define a $G$-grading on $\mathcal{K}_3$ by letting its even part to have degree $e$ and putting a $G$-grading on the odd part $\mathcal{V}$ such that $(\cdot\,|\,\cdot)$ becomes homogeneous of degree $e$. Up to isomorphism, the grading on $\mathcal{V}=\mathbb{F}^2$ has the form $\Gamma_\mathcal{V}(g,g^{-1})$ for some $g\in G$, and we will denote the resulting grading on $\mathcal{K}_3$ by $\Gamma_{\mathcal{K}_3}(g)$. 
(Since $\aut(\mathcal{K}_3)\cong\mathrm{Sp}(\mathcal{V})=\mathrm{SL}(\mathcal{V})$, it is clear that $\Gamma_{\mathcal{K}_3}(g)\cong\Gamma_{\mathcal{K}_3}(g')$ if and only if $g'\in\{g,g^{-1}\}$.)

\begin{definition}\label{def:III}
Let $g_1$, $g_2$, $a$, $b\in G$ be such that $\langle a,b\rangle\cong\mathbb{Z}_2^2$. Then $\Gamma_{\mathcal{K}_3}(g_1)$ and $\Gamma_{\mathcal{K}_3}(g_2)$ are two $G$-gradings on $\mathcal{K}_3$, whose tensor product gives a $G$-grading $\Gamma_{\mathcal{K}_{10}}(g_1,g_2)$ on $\mathcal{K}_{10}$ (with $1$ having degree $e$). Combined with $\gr{\Pa}_{\mathfrak{sl}_2}(a,b)$ on $\mathfrak{sl}_2$, it gives rise to a $G$-grading on $\mathcal{L}=\mathfrak{tkk}(\mathcal{K}_{10})$, which we denote by $\gr{\FIII}_{F(4)}(g_1,g_2,a,b)$. 
\end{definition}

\begin{remark}\label{rm:TKK1}
On the algebra $\mathcal{L}_0$ and vector space $\mathcal{L}_1$, this grading looks as follows:
\begin{eqnarray*}
\mathcal{L}_0&\cong&(\mathfrak{sl}_2,\gr{\Pa}_{\mathfrak{sl}_2}(a,b))\oplus(\mathfrak{so}_7,\Gamma_{\mathfrak{so}_7}(\gamma)),\\%
\mathcal{L}_1&\cong&(M_2,\gr{\Pa}_{M_2}(a,b))\otimes\big((\mathcal{V},\Gamma_\mathcal{V}(g_1,g_1^{-1}))\oplus(\mathcal{V},\Gamma_\mathcal{V}(g_2,g_2^{-1}))\big),
\end{eqnarray*}
where $\gamma=(a,b,ab;\,g_1g_2,g_1^{-1}g_2^{-1},g_1g_2^{-1},g_1^{-1}g_2)$.
\end{remark}

Taking for $g_1,g_2$ and $a,b$ the standard bases of $\mathbb{Z}^2$ and $\mathbb{Z}_2^2$, we obtain a fine grading $\Gamma$ with universal group $U=\mathbb{Z}^2\times\mathbb{Z}_2^2$. Moreover, the support of $\mathcal{L}_0$ contains all torsion elements of $U$, since they all belong to $\mathrm{Supp}(\mathfrak{so}_7)$. Hence, the fine grading $\Gamma$ admits no almost fine proper coarsening, and every admissible homomorphism $U\to G$ is injective on the torsion subgroup. 

\begin{proposition}\label{prop:WeylIII}
Let $\Gamma$ be the fine grading on $F(4)$ with universal group $U=\mathbb{Z}^2\times\mathbb{Z}_2^2$. Then
$W(\Gamma)\cong D_4\times\mathrm{AGL}_2(2)\cong D_4\times\left(U_{[2]}\rtimes\mathrm{GL}_2(2)\right)$. In matrix form,
\[
W(\Gamma)=\left\{\left(\begin{array}{cc}
g&0\\k&A
\end{array}\right)\mid g\in D_4, A\in\mathrm{GL}_2(2),k\in U_{[2]}\right\},
\]
where each matrix multiplies on the left the elements of $\mathbb{Z}^2\times\mathbb{Z}_2^2$, and $D_4\cong C_2^2\rtimes C_2$ acts on $\mathbb{Z}^2$ by inversion and permutation of the standard basis elements.
\end{proposition}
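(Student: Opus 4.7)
The plan is to follow the same template as \Cref{prop:WeylI,prop:WeylII}: exploit the two exact sequences of \Cref{l:seq_f(4)} to compute $\bar{K}$ and $W(\tilde{Q})$, and then split the resulting extension.

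Reading off \Cref{rm:TKK1}, the restricted grading $\Gamma_{\mathfrak{sl}_2}$ is the Pauli grading (fine, universal group $\langle a,b\rangle\cong\mathbb{Z}_2^2$) and $\Gamma_{\mathfrak{so}_7}(\gamma)$ is a fine type~B grading in the sense of \Cref{def:typeB} with $(q,s)=(3,2)$. Its universal group $L$ embeds in $U$ as the index-$2$ subgroup $\langle g_1g_2,g_1g_2^{-1}\rangle\oplus\langle a,b\rangle\cong\mathbb{Z}^2\times\mathbb{Z}_2^2$; concretely $L=\ker p$, where $p\colon U\to\mathbb{Z}_2$ is the parity homomorphism. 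Since all three gradings are fine, \Cref{l:DiagStab} gives $\mathrm{Stab}(\Gamma)=\mathrm{Diag}(\Gamma)\cong\widehat{U}$ and $\mathrm{Stab}(\Gamma_0)\cong\widehat{\langle a,b\rangle}\times\widehat{L}$. The restriction map $\chi\mapsto(\chi|_{\langle a,b\rangle},\chi|_L)$ has kernel $\langle\delta\rangle$, and on component groups its image is the diagonal $\mathbb{Z}_2^2$ inside $\pi_0(\mathrm{Stab}(\Gamma_0))\cong\mathbb{Z}_2^2\times\mathbb{Z}_2^2$. Hence the cokernel of \eqref{seq_f(4)_2} has order $4$, and combined with the embedding $\bar{K}\hookrightarrow U_{[2]}\cong\mathbb{Z}_2^2$ of \Cref{Misha's lemma}, this forces $\bar{K}=U_{[2]}$.

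Next, I compute $W(\tilde{Q})$ via \Cref{l:TKKmain} with $\mathcal{A}_1=\mathfrak{sl}_2$, $\mathcal{A}_2=\mathfrak{so}_7$. Both projections of $\tilde{Q}$ are full diagonal groups, $G_1=\langle a,b\rangle\subseteq G_2=L$, and $W(\tilde{Q}_1)=\mathrm{GL}_2(2)$. The main technical step is to identify $W(\tilde{Q}_2)=W(\Gamma_{\mathfrak{so}_7}(\gamma))\cong D_4\times\mathrm{GL}_2(2)$, with the $D_4$-factor acting hyperoctahedrally on the two hyperbolic pairs $(g_1g_2,g_1^{-1}g_2^{-1})$, $(g_1g_2^{-1},g_1^{-1}g_2)$ of $\gamma$ and the $\mathrm{GL}_2(2)$-factor permuting the anisotropic triple $\{a,b,ab\}$. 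This is the step I expect to be the principal obstacle; I would handle it either by a direct normalizer calculation of the associated maximal quasitorus in $\mathrm{O}_7(\mathbb{F})$ (along the lines of \cite{EK_Weyl}) or by transporting the grading through the isomorphism $(\mathfrak{sl}_2\otimes\mathcal{J})\oplus\mathrm{Der}(\mathcal{K}_{10})_0\cong\mathfrak{so}_7$ of \cite{DEM}. Granting this, every $\alpha\in W(\tilde{Q}_2)$ stabilizes $G_1$ (the $D_4$-factor fixes it pointwise) and restricts on $G_1$ to an element of $\mathrm{GL}_2(2)=W(\tilde{Q}_1)$, so the lemma yields $W(\tilde{Q})=D_4\times\mathrm{GL}_2(2)$.

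Finally, I lift $W(\tilde{Q})$ to $W(\Gamma)$ and compute the induced conjugation action on $\bar{K}$. The $\mathrm{GL}_2(2)$-factor lifts through normalizer elements of the Pauli quasitorus inside the $\mathrm{SL}_2(\mathbb{F})$-factor of $\mathrm{SL}_2(\mathbb{F})\times\mathrm{Spin}_7(\mathbb{F})\twoheadrightarrow\aut(F(4))$; the $D_4$-factor lifts via the automorphisms of $\mathcal{K}_{10}=\mathbb{F}1\oplus(\mathcal{K}_3\otimes\mathcal{K}_3)$ that swap the two tensor factors and that exchange the symplectic basis of $\mathcal{V}$ within one factor, extended through the TKK construction to $\aut(\mathcal{L})$. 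These lifts split \eqref{seq_f(4)_1}. By \Cref{Misha's lemma}, the image of $\bar{K}$ in $\aut(U)$ consists of the matrices $\bigl(\begin{smallmatrix}I&0\\K&I\end{smallmatrix}\bigr)$ with $K=\vec{k}\,(1,1)$ and $\vec{k}\in U_{[2]}$, and conjugation by $\bigl(\begin{smallmatrix}g&0\\0&A\end{smallmatrix}\bigr)$ sends such a matrix to one with $K'=AKg^{-1}=(A\vec{k})\bigl((1,1)g^{-1}\bigr)$. Since the row $(1,1)$ represents $p$ and every $g\in D_4$ preserves parity, the $D_4$-factor centralizes $\bar{K}$, while $\mathrm{GL}_2(2)$ acts on $\bar{K}=U_{[2]}\cong\mathbb{Z}_2^2$ by its standard representation. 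Therefore
\[
W(\Gamma)=U_{[2]}\rtimes(D_4\times\mathrm{GL}_2(2))\cong D_4\times(U_{[2]}\rtimes\mathrm{GL}_2(2))=D_4\times\mathrm{AGL}_2(2),
\]
and the stated matrix description follows by assembling the actions.
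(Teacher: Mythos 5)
Your proposal is correct and follows essentially the same route as the paper: both arguments use the two exact sequences of \Cref{l:seq_f(4)} together with \Cref{Misha's lemma} to pin down $\bar{K}\cong U_{[2]}$ from the cokernel count, identify the quotient with $W(\Gamma_{\mathfrak{so}_7})\cong S_3\times D_4\cong\mathrm{GL}_2(2)\times D_4$ via \Cref{l:TKKmain}, and split the extension by lifting the $S_3$-part through $\aut(\Gamma_{\mathfrak{sl}_2})$ and the $D_4$-part through $\aut(\mathcal{K}_{10})\cong\mathrm{Sp}(\mathcal{V})^2\rtimes C_2$. The one step you single out as the principal obstacle, the computation of $W(\Gamma_{\mathfrak{so}_7}(\gamma))$ for the fine type~$B$ grading with $(q,s)=(3,2)$, is a known result that the paper simply cites from \cite[\S 3.4]{EK13}, so no normalizer calculation is needed there; your explicit conjugation computation showing that $D_4$ centralizes $\bar{K}$ while $\mathrm{GL}_2(2)$ acts standardly makes precise what the paper leaves implicit in ``the result follows.''
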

\begin{proof}
Since in this case both $\Gamma_{\mathfrak{sl}_2}$ and $\Gamma_{\mathfrak{so}_7}$ are fine (see \Cref{rm:TKK1}), with universal groups $\mathbb{Z}_2^2$ and $\mathbb{Z}_2^2\times\mathbb{Z}^2$, the exact sequence \eqref{seq_f(4)_2} of \Cref{l:seq_f(4)} gives
\[
(\mathbb{F}^\times)^2\times C_2^2\to C_2^2\times((\mathbb{F}^\times)^2\times C_2^2)\to\bar{K}\to 1,
\]
so $|\bar{K}|\ge2^2$. Then, \Cref{Misha's lemma} implies that $\bar{K}\cong U_{[2]}$. The kernel of the parity homomorphism $U\to\mathbb{Z}_2$ is
\[
\{(x,y)\in\mathbb{Z}^2\mid x+y\equiv0\!\!\!\!\pmod{2}\}\times\mathbb{Z}_2^2,
\]
so $\bar{K}$ acts on $U$ as indicated.

Now, from \Cref{l:TKKmain} and the exact sequence \eqref{seq_f(4)_1} of \Cref{l:seq_f(4)}, we get 
\[
1\to\bar{K}\to W(\Gamma)\to W(\Gamma_{\mathfrak{so}_7}),
\]
where the last map is induced by restriction. Since $\Gamma_{\mathfrak{so}_7}$ is fine, we know that $W(\Gamma_{\mathfrak{so}_7})\cong S_3\times D_4$ (see e.g. \cite[\S 3.4]{EK13}), where $S_3$ permutes the first three entries of the septuple $\gamma$ and $D_4\cong C_2^2\rtimes C_2$ permutes the last four entries while keeping their pairing. Finally, any permutation in $S_3$ is realized by some $\psi\in\aut(\Gamma_{\mathfrak{sl}_2})$, which lifts canonically to $\mathfrak{tkk}(\mathcal{K}_{10})$. On the other hand, any permutation in $D_4$ is realized by some $\varphi\in\aut(\mathcal{K}_{10})\cong\mathrm{Sp}(\mathcal{V})^2\rtimes C_2$, which also lifts canonically to $\mathfrak{tkk}(\mathcal{K}_{10})$. The result follows.
\end{proof}

This solves the isomorphism problem for the family of gradings in \Cref{def:III}:
\begin{corollary}\label{cor:III}
%Let $G$ be an abelian group, and $a$, $a'$, $b$, $b'$, $g_1$, $g_1'$, $g_2$, $g_2'\in G$ like in \Cref{def:III}.
$\gr{\FIII}_{F(4)}(g_1,g_2,a,b)\cong\gr{\FIII}_{F(4)}(g_1',g_2',a',b')$ if and only if $\langle a,b\rangle=\langle a',b'\rangle$ and there exist $\sigma\in S_2$ and $t\in\langle a,b\rangle$ such that $g_{i}'\in\{g_{\sigma(i)}t, g_{\sigma(i)}^{-1}t\}$ for $i=1,2$. \qed
\end{corollary}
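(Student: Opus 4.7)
The approach is a direct application of \Cref{thm:gradings}, using the description of $W(\Gamma)$ obtained in \Cref{prop:WeylIII}. As already observed just before the statement of that proposition, the fine grading $\Gamma$ with universal group $U=\mathbb{Z}^2\times\mathbb{Z}_2^2$ admits no proper almost fine coarsening, and every admissible homomorphism $\alpha:U\to G$ is injective on the torsion subgroup $t(U)=\{0\}\times\mathbb{Z}_2^2$. A grading in the family of \Cref{def:III} is precisely the coarsening of $\Gamma$ by the homomorphism sending the standard generators of $U$ to $(g_1,g_2,a,b)$, and admissibility of this homomorphism corresponds exactly to the requirement $\langle a,b\rangle\cong\mathbb{Z}_2^2$. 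Consequently, by \Cref{thm:gradings}, two such quadruples determine isomorphic gradings if and only if they lie in the same orbit of the natural action of $W(\Gamma)$ on the set of admissible homomorphisms.

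It then remains to spell out this orbit by translating each block of a general element $w=\left(\begin{smallmatrix}g&0\\k&A\end{smallmatrix}\right)\in W(\Gamma)=D_4\times(U_{[2]}\rtimes\mathrm{GL}_2(2))$ into a move on the quadruple $(g_1,g_2,a,b)$. The block $A\in\mathrm{GL}_2(2)=\aut(\mathbb{Z}_2^2)$ acts only on the torsion coordinates and produces precisely the change of ordered generating pair $(a,b)\mapsto(a',b')$ with $\langle a',b'\rangle=\langle a,b\rangle$. The block $g\in D_4\cong\{\pm1\}^2\rtimes S_2$ acts on $\mathbb{Z}^2$ by independent inversion and permutation of the two standard generators, yielding the substitution $(g_1,g_2)\mapsto(g_{\sigma(1)}^{\varepsilon_1},g_{\sigma(2)}^{\varepsilon_2})$ for some $\sigma\in S_2$ and $\varepsilon_i\in\{\pm1\}$. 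Finally, the block $k=(0,0,\alpha,\beta)\in U_{[2]}$ acts via \Cref{Misha's lemma}: as recorded in the proof of \Cref{prop:WeylIII}, both generators of the $\mathbb{Z}^2$-factor of $U$ are odd under the parity homomorphism $p:U\to\mathbb{Z}_2$, so $k$ adds the \emph{same} element $t:=a^\alpha b^\beta\in\langle a,b\rangle$ to each of $g_1$ and $g_2$, while fixing $a$ and $b$.

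Composing these three contributions gives precisely the orbit description $g_i'=g_{\sigma(i)}^{\varepsilon_i}\cdot t$ for a common $t\in\langle a,b\rangle=\langle a',b'\rangle$, which is equivalent to the condition $g_i'\in\{g_{\sigma(i)}t,g_{\sigma(i)}^{-1}t\}$ stated in the corollary. The only genuinely delicate point is recognising that the $U_{[2]}$-block contributes the \emph{same} translation to both $g_1$ and $g_2$, rather than independent ones; this rests on the fact that both generators of $\mathbb{Z}^2$ have the same parity and is where the explicit form of the parity homomorphism from the proof of \Cref{prop:WeylIII} is used. No further obstacle is expected.
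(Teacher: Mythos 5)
Your proposal is correct and follows exactly the route the paper intends: the corollary is stated with no separate proof because it is the direct translation of \Cref{thm:gradings} combined with the Weyl group action computed in \Cref{prop:WeylIII}, and you carry out that translation accurately, including the key point that the $U_{[2]}$-block translates $g_1$ and $g_2$ by the \emph{same} element of $\langle a,b\rangle$ since both generators of the $\mathbb{Z}^2$-factor are odd under the parity homomorphism.
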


\begin{remark}
$\Gamma_{\mathcal{K}_{10}}(g_1,g_2)\cong\Gamma_{\mathcal{K}_{10}}(g_1',g_2')$ if and only if there exists $\sigma\in S_2$ such that $g_{i}'\in\{g_{\sigma(i)}, g_{\sigma(i)}^{-1}\}$ for $i=1,2$.
\end{remark}

\subsubsection{The $\mathbb{Z}\times\mathbb{Z}_2^3$-grading} 
The super vector space $\mathcal{K}_3\otimes\mathcal{K}_3$ has an involution $\tau$ defined by
\[
\tau(x\otimes y)=(-1)^{|x||y|}y\otimes x,
\]
which gives an automorphism of $\mathcal{K}_{10}$ by setting $\tau(1)=1$.

\begin{definition}\label{def:IV}
Let $a$, $b$, $g$, $h\in G$ such that $\langle a,b,h\rangle\cong\mathbb{Z}_2^3$. We can refine the $G$-grading $\Gamma_{\mathcal{K}_{10}}(g,g)$ using $\tau$ and the element $h$ as follows: we keep the degree of $\tau$-symmetric homogeneous elements as is and shift the degree of $\tau$-skew-symmetric ones by $h$. We will denote by $\Gamma^{\tau}_{\mathcal{K}_{10}}(g,h)$ the resulting $G$-grading on $\mathcal{K}_{10}$ and by $\gr{\FIV}_{F(4)}(g,h,a,b)$ the $G$-grading on $\mathcal{L}=\mathfrak{tkk}(\mathcal{K}_{10})$ obtained by combining it with $\gr{\Pa}_{\mathfrak{sl}_2}(a,b)$. 
\end{definition}

\begin{remark}\label{rm:TKK2}
On the algebra $\mathcal{L}_0$ and vector space $\mathcal{L}_1$, this grading looks as follows:
\begin{eqnarray*}
\mathcal{L}_0&=&(\mathfrak{sl}_2,\gr{\Pa}_{\mathfrak{sl}_2}(a,b))\oplus(\mathfrak{so}_7,\Gamma_{\mathfrak{so}_7}(\gamma)),\\%
\mathcal{L}_1&=&(M_2,\gr{\Pa}_{M_2}(a,b))\otimes(\mathcal{V}\oplus\mathcal{V},\Gamma_{\mathcal{V}\oplus\mathcal{V}}(g,h)),
\end{eqnarray*}
where $\gamma=(a,b,ab,e,h;\,g^2h,g^{-2}h)$ and $\Gamma_{\mathcal{V}\oplus\mathcal{V}}(g,h)$ is the refinement of $\Gamma_{\mathcal{V}}(g,g^{-1})\oplus\Gamma_{\mathcal{V}}(g,g^{-1})$ using the swap $(v,w)\mapsto(w,v)$ and $h$. 
\end{remark}

Taking $1\in\mathbb{Z}$ for $g$ and the standard basis of $\mathbb{Z}_2^3$ for $h,a,b$, we obtain a fine grading $\Gamma$ with universal group $U=\mathbb{Z}\times\mathbb{Z}_2^3$. Once again, all torsion elements of $U$ belong to  $\mathrm{Supp}(\Gamma_{\mathfrak{so}_7})$. Hence, there is no almost fine proper coarsening of $\Gamma$, and every admissible homomorphism $U\to G$ is injective on the torsion subgroup. 

\begin{proposition}\label{prop:WeylIV}
Let $\Gamma$ be the fine grading on $F(4)$ with universal group $U=\mathbb{Z}\times\mathbb{Z}_2^3$ obtained from the TKK construction. 
Then $W(\Gamma)\cong C_2\times(U_{[2]}\rtimes\mathrm{AGL}_2(2))$. In matrix form,
\[
W(\Gamma)=\left\{\begin{pNiceArray}{c|cc}%
\varepsilon&0&0\\\hline%
\Block{2-1}{k}&1&0\\%
&t&A%
\end{pNiceArray}\mid\varepsilon\in\{\pm1\}, A\in\mathrm{GL}_2(2),k\in U_{[2]}=\mathbb{Z}_2^2\times\mathbb{Z}_2,t\in\mathbb{Z}_2^2\right\},
\]
where each matrix multiplies on the left the elements of $\mathbb{Z}\times(\mathbb{Z}_2\times\mathbb{Z}_2^2)$.
\end{proposition}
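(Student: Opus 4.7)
The plan follows the template of \Cref{prop:WeylII,prop:WeylIII}, combining the exact sequences of \Cref{l:seq_f(4)} with \Cref{Misha's lemma} and \Cref{l:TKKmain}.

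First, I would apply \eqref{seq_f(4)_2} to compute $\bar K$. As $\Gamma$ is fine, $\mathrm{Stab}(\Gamma)=\mathrm{Diag}(\Gamma)\cong\mathbb F^\times\times C_2^3$. On $\mathcal L_0$, the Pauli grading $\Gamma_{\mathfrak{sl}_2}$ is fine with $\mathrm{Stab}(\Gamma_{\mathfrak{sl}_2})=C_2^2$, and the grading $\Gamma_{\mathfrak{so}_7}$ with septuple $\gamma=(a,b,ab,e,h;g^2h,g^{-2}h)$ admits a fine refinement by the Type B grading with $q=5,s=1$, whose diagonal group $\mathbb F^\times\times C_2^4$ is contained in $\mathrm{Stab}(\Gamma_{\mathfrak{so}_7})$. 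Comparing component groups in the exact sequence (using that $\delta$ lies in the connected component $\mathbb F^\times$ and so kills one factor on the image side) yields $|\bar K|\geq 2^3$, and \Cref{Misha's lemma} forces the equality $\bar K\cong U_{[2]}=\mathbb Z_2^3$. Since $\mathrm{Supp}(\mathcal L_0)\cap\mathrm{Supp}(\mathcal L_1)=\emptyset$ and the parity homomorphism $p:U\to\mathbb Z_2$ has kernel $2\mathbb Z\times\mathbb Z_2^3$ (so $g$ is the only generator of odd parity), $\bar K$ acts on $U$ by $g\mapsto g+k$ while fixing $h,a,b$, producing the first column of the stated matrix.

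Next, I would apply \Cref{l:TKKmain} with $\mathcal A_1=\mathfrak{sl}_2$ and $\mathcal A_2=\mathfrak{so}_7$ to determine $W(\tilde Q)$. Here $G_1=\langle a,b\rangle\cong\mathbb Z_2^2\subseteq G_2=\langle a,b,h,g^2\rangle=2\mathbb Z\times\mathbb Z_2^3$, and $W(Q_1)=\mathrm{GL}(G_1)\cong S_3$. For $W(Q_2)$: the quasitorus $Q_2$ has character group $G_2$, which is the quotient of $\mathbb Z_2^4\times\mathbb Z=U(\Gamma_{\mathfrak{so}_7}^{\mathrm{fine}})$ by a $\mathbb Z_2$-subgroup. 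Among the fine Weyl group $W(\Gamma_{\mathfrak{so}_7}^{\mathrm{fine}})\cong S_5\times C_2$, only those elements preserving this kernel descend to $\aut(G_2)$; the stabilizer is $S_4\times C_2$, where $S_4$ fixes the index corresponding to $g_5=h$. Via the exceptional isomorphism $S_4\cong\mathrm{AGL}_2(2)$ (the $S_4$-action on the $4$-element set $G_1=\{e,a,b,ab\}$), the subgroup $S_4$ preserves $G_1$ and its restriction realizes the entire $W(Q_1)$, while $C_2$ fixes $G_1$ pointwise and acts on $G_2$ by inversion on the $\mathbb Z$-part. Hence $W(\tilde Q)\cong\mathrm{AGL}_2(2)\times C_2$, of order $48$.

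To finish, I would verify that every element of $W(\tilde Q)$ lifts to $W(\Gamma)$ via automorphisms of $\mathcal K_{10}$ coming from $\aut(\mathcal K_3)^2\rtimes C_2$, automorphisms of $\mathfrak{sl}_2$ realizing $\mathrm{GL}(G_1)$ on the Pauli grading, and the element of $\mathrm{SL}_2(\mathbb F)$ swapping the two copies of $V$ in $\mathcal L_1$ (as in \Cref{prop:WeylII}); this provides a splitting of $W(\Gamma)\to W(\tilde Q)$. The conjugation action of $W(\tilde Q)$ on $\bar K\cong U_{[2]}$ factors through its action on the torsion part of $U$: $C_2$ acts trivially, while $\mathrm{AGL}_2(2)$ acts via the block $\bigl(\begin{smallmatrix}1 & 0\\ t & A\end{smallmatrix}\bigr)$. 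Assembling gives $W(\Gamma)\cong C_2\times(U_{[2]}\rtimes\mathrm{AGL}_2(2))$ together with the matrix description in the basis $g,h,a,b$ of $U$.

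The main obstacle is the $W(Q_2)$ step: $\Gamma_{\mathfrak{so}_7}$ is a genuine coarsening of its fine Type B refinement (three pairs of fine components merge into $2$-dimensional $G_2$-components at degrees $a,b,ab$), so one must carefully distinguish $Q_2$ from $\mathrm{Diag}(\Gamma_{\mathfrak{so}_7})$ and identify precisely which elements of the fine Weyl group $S_5\times C_2$ normalize $Q_2$, equivalently, preserve the $\mathbb Z_2$-kernel of the quotient $U(\Gamma_{\mathfrak{so}_7}^{\mathrm{fine}})\twoheadrightarrow G_2$.
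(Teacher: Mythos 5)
Your proposal follows the same skeleton as the paper's proof: the exact sequence \eqref{seq_f(4)_2} plus the fine refinement of $\Gamma_{\mathfrak{so}_7}$ (with $q=5$, $s=1$) to get $|\bar K|\ge 2^3$, \Cref{Misha's lemma} to conclude $\bar K\cong U_{[2]}$ with the stated action, and \Cref{l:TKKmain} to reduce $W(\tilde Q)$ to $\mathrm{Stab}_{W(Q_{\mathfrak{so}_7})}(T)$ with $T=\langle a,b\rangle$ (the condition $\alpha|_{G_1}\in W(Q_1)$ being vacuous since $W(Q_1)=\aut(T)$). The one place you diverge is the computation of that stabilizer: the paper works directly with $\Gamma_{\mathfrak{so}_7}(\gamma)$, observing that since the seven entries of $\gamma$ are distinct every element of $N(Q_{\mathfrak{so}_7})$ is conjugation by a monomial matrix preserving the pairing $\{6,7\}$, and then checks that preserving the sum of the components with degrees in $T$ is equivalent to the permutation fixing position $5$; you instead dualize, viewing $G_2$ as the quotient of $U(\Gamma^{\mathrm{fine}}_{\mathfrak{so}_7})=\mathbb Z_2^4\times\mathbb Z$ by the $\mathbb Z_2$ generated by $(\gamma_1\gamma_2^{-1})(\gamma_3\gamma_4^{-1})$ and computing the stabilizer of that kernel inside $S_5\times C_2$. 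Both routes give $S_4\times C_2\cong\mathrm{AGL}_2(2)\times C_2$, and your identification of the $S_4$-action on $\{e,a,b,ab\}$ with $\mathrm{AGL}_2(2)$ matches the paper's conclusion. The obstacle you flag at the end — why every element of $W(Q_2)$ is visible inside the fine Weyl group — is exactly what the paper's monomial-matrix remark settles: because the $\gamma_i$ are pairwise distinct elements of $G_2$, the centralizer of $Q_{\mathfrak{so}_7}$ in $\mathrm{GL}_7$ is the diagonal torus, so $N(Q_{\mathfrak{so}_7})$ lands in the monomial subgroup and, modulo $C(Q_{\mathfrak{so}_7})$, is precisely the permutation group you analyze; with that observation inserted, your argument closes and agrees with the paper's.
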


\begin{proof}
In this case $\Gamma_{\mathfrak{so}_7}$ is not fine and admits a fine refinement with universal group  $\mathbb{Z}_2^4\times\mathbb{Z}$ (see \Cref{rm:TKK2}). It means that $\mathrm{Stab}(\Gamma_{\mathfrak{so}_7})$ contains a copy of $\mathbb{F}^\times\times C_2^4$. Hence, the exact sequence \eqref{seq_f(4)_2} of 
\Cref{l:seq_f(4)} gives
\[
\mathbb{F}^\times\times C_2^3\to C_2^2\times(\mathbb{F}^\times\times C_2^4)\to\bar{K},
\]
so $|\bar{K}|\ge2^3$, which implies $\bar{K}\cong U_{[2]}$. The kernel of the parity homomorphism is $2\mathbb{Z}\times\mathbb{Z}_2^3$, so $\bar{K}$ acts on $U$ as indicated.

Now, consider the projections $Q_{\mathfrak{sl}_2}$ and $Q_{\mathfrak{s0}_7}$ of $\tilde{Q}$, and 
%$Q_{\mathfrak{sl}_2}\cong C_2^2$, $Q_{\mathfrak{s0}_7}\cong\mathbb{F}^\times\times C_2^3$, and the restriction map $\theta:Q_{\mathfrak{so}_7}\to Q_{\mathfrak{sl}_2}$ from \Cref{l:TKKmain}. Clearly, $\mathrm{Stab}_{W(Q_{\mathfrak{so}_7})}(\ker\theta)=\mathrm{Stab}_{W(Q_{\mathfrak{so}_7})}(T)$, where 
let $T$ be the subgroup generated by $\mathrm{Supp}(\Gamma_{\mathfrak{sl}_2})$, i.e., $T=\langle a,b\rangle$ with $a=(0,0,1,0)$ and $b=(0,0,0,1)$. Since $W(Q_{\mathfrak{sl}_2})$ is the entire group of automorphisms of $T$, \Cref{l:TKKmain} tells us that $W(\tilde{Q})\cong\mathrm{Stab}_{W(Q_{\mathfrak{so}_7})}(T)$. Combining this with the exact sequence \eqref{seq_f(4)_1} of \Cref{l:seq_f(4)}, we obtain 
\[
1\to\bar{K}\to W(\Gamma)\to\mathrm{Stab}_{W(Q_{\mathfrak{so}_7})}(T)\to 1.
\]
Since the entries of the septuple $\gamma$ defining $\Gamma_{\mathfrak{so}_7}$ are all distinct, any automorphism of this grading is the conjugation by a monomial matrix, which must keep the pairing of the last two entries of $\gamma$ (see e.g. \cite[p.~99]{EK13}). It is easy to check that if the conjugation by such a matrix leaves invariant the sum of the homogeneous components of $\Gamma_{\mathfrak{so}_7}$ with degrees in $T$, then the corresponding permutation fixes position $5$ (which is occupied by $h=(0,1,0,0)$) and, conversely, for any permutation fixing $5$ and leaving $\{6,7\}$ invariant, the conjugation by the corresponding permutation matrix moves the components of $\Gamma_{\mathfrak{so}_7}$ according to an automorphism of the subgroup generated by $\mathrm{Supp}(\Gamma_{\mathfrak{so}_7})$ that leaves $T$ invariant. The result follows.
\end{proof}

This solves the isomorphism problem for the family of gradings in \Cref{def:IV}:
\begin{corollary}\label{cor:IV}
%Let $G$ be an abelian group and $a$, $a'$, $b$, $b'$, $g$, $g'$, $h$, $h'\in G$ as in \Cref{def:IV}. Then,
$\gr{\FIV}_{F(4)}(g,a,b,h)\cong\gr{\FIV}_{F(4)}(g',a',b',h')$ if and only if $\langle a,b\rangle=\langle a',b'\rangle$, $\langle a,b,h\rangle=\langle a',b',h'\rangle$, and $g'\in gH\cup g^{-1}H$, where $H=\langle a,b,h\rangle$.\qed
\end{corollary}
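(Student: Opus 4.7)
The plan is to apply \Cref{thm:gradings} to the fine grading $\Gamma$ on $F(4)$ with universal group $U = \mathbb{Z} \times \mathbb{Z}_2 \times \mathbb{Z}_2^2$ described in \Cref{def:IV} and \Cref{rm:TKK2}. By construction, $\gr{\FIV}_{F(4)}(g,a,b,h)$ is the $G$-grading induced from $\Gamma$ by the homomorphism $\alpha_{(g,a,b,h)} : U \to G$ sending the four standard generators to $g$, $h$, $a$, $b$, respectively. Since the support of $\Gamma$ restricted to $\mathcal{L}_0$ already contains all nonzero torsion elements of $U$ (as noted before \Cref{prop:WeylIV}), the admissibility of $\alpha_{(g,a,b,h)}$ is equivalent to its injectivity on $\mathbb{Z}_2^3 = \langle h, a, b\rangle$, which is ensured by the hypothesis $\langle a,b,h\rangle \cong \mathbb{Z}_2^3$ built into \Cref{def:IV}; in particular, $h \notin T := \langle a, b\rangle$. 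By \Cref{thm:gradings}, the isomorphism problem reduces to comparing admissible homomorphisms modulo precomposition with elements of $W(\Gamma)$, for which we have the explicit matrix form from \Cref{prop:WeylIV}.

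For necessity, suppose $\alpha_{(g,a,b,h)} = \alpha_{(g',a',b',h')} \circ w$ for some $w \in W(\Gamma)$ written as in \Cref{prop:WeylIV}. Reading the images of the four standard generators of $U$ off the columns of $w$: the last two columns (parametrized by $A \in \mathrm{GL}_2(2)$) send the generators of $\mathbb{Z}_2^2$ to a new basis of $T' := \langle a', b'\rangle$, forcing $\langle a, b\rangle = T = T' = \langle a', b'\rangle$; the second column (parametrized by $t \in \mathbb{Z}_2^2$) sends the $h$-generator into $h'T'$, so $h \in h'T'$, which combined with the previous yields $H := \langle a,b,h\rangle = \langle a', b', h'\rangle$; the first column (parametrized by $\varepsilon \in \{\pm 1\}$ and $k \in U_{[2]}$) sends the $\mathbb{Z}$-generator into $g'^{\,\varepsilon} H$, giving $g \in g'H \cup g'^{-1}H$, which is equivalent, since $H = H^{-1}$, to the stated condition $g' \in gH \cup g^{-1}H$.

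For sufficiency, assume the three conditions hold; we build $w \in W(\Gamma)$ of the form of \Cref{prop:WeylIV} realizing $\alpha_{(g,a,b,h)} = \alpha_{(g',a',b',h')} \circ w$. Admissibility of $\alpha_{(g',a',b',h')}$ identifies the torsion subgroup $\mathbb{Z}_2^3 \subseteq U$ with $H$ as a group. Take $A \in \mathrm{GL}_2(2)$ to be the change-of-basis matrix from $\{a',b'\}$ to $\{a,b\}$ inside $T = T'$; since $h, h' \in H \setminus T$ (again by admissibility), $h(h')^{-1}$ lies in $T = T'$, and we let $t \in \mathbb{Z}_2^2$ be its coordinates in $\{a', b'\}$; choose $\varepsilon \in \{\pm 1\}$ with $g(g')^{-\varepsilon} \in H$ and take $k \in U_{[2]}$ to be the preimage of $g(g')^{-\varepsilon}$ under the above identification. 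A direct check on the four generators then yields the desired identity. The main bookkeeping task is to match the columns of $w$ to the block decomposition of \Cref{prop:WeylIV} and to track which projection of $k$ lands in which row; the underlying algebra is straightforward given the explicit Weyl group description.
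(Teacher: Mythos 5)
Your proposal is correct and follows exactly the route the paper intends: the corollary is stated with no separate proof because it is the direct translation of \Cref{thm:gradings} applied to the fine grading of \Cref{def:IV}, using the admissibility criterion (injectivity on the torsion subgroup $\langle h,a,b\rangle$) and the explicit matrix description of $W(\Gamma)$ in \Cref{prop:WeylIV}. Your column-by-column reading of the Weyl group action and the converse construction of $w$ from the three conditions are both accurate.
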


\begin{remark}
$\Gamma^\tau_{\mathcal{K}_{10}}(g,h)\cong\Gamma^\tau_{\mathcal{K}_{10}}(g',h')$ if and only if $h=h'$ and $g'\in g\langle h\rangle\cup g^{-1}\langle h\rangle$. 
\end{remark}

\subsection{Gradings from the fine $\mathbb{Z}_4\times\mathbb{Z}_2^3$-grading}

%In this section we shall use additive notation for the group $\mathbb{Z}_4\times\mathbb{Z}_2^3$, as well as its copies inside an abelian group. 
We now turn to the fifth and final fine grading in \cite[Theorem 4.1]{DEM}, which is constructed as follows (see \cite[\S 2.4, 4.3]{DEM} for more details).

Let $a,b,h\in G$ be elements of order $2$ and $g\in G$ an element of order $4$ that generate a subgroup isomorphic to $\mathbb{Z}_4\times\mathbb{Z}_2^3$. We will again regard $\mathcal{Q}=M_2(\mathbb{F})$ as a quaternion algebra, endowed with its standard involution: $\bar{x}=n(x,1)1-x$ (of symplectic type). We equip $\mathcal{Q}$ with the grading $\gr{\Pa}_{M_2}(h,g^2)$ and let $\mathcal{Q}'$ be a copy of $\mathcal{Q}$, but equipped with the grading $\gr{\Pa}_{M_2}(a,b)$. Then $\mathcal{Q}'\otimes\mathcal{Q}$ becomes a graded right $\mathcal{Q}$-module via $(x\otimes y)z=x\otimes(yz)$. Also, the following map is an isomorphism of graded algebras:
\begin{align*}
\Phi:\mathcal{Q}'\otimes\mathcal{Q}'\otimes\mathcal{Q}&\to\mathrm{End}_{\mathcal{Q}}(\mathcal{Q}'\otimes\mathcal{Q})\\
q_1\otimes q_2\otimes q_3&\mapsto\Phi_{q_1\otimes q_2\otimes q_3}:x\otimes y\mapsto(q_1x\bar{q}_2)\otimes(q_3y).
\end{align*}
One can find a subspace $\mathcal{U}$ in $\mathcal{Q}'\otimes\mathcal{Q}'\otimes\mathcal{Q}$ of dimension $7$ that has a basis consisting of homogeneous mutually anticommuting elements whose squares are nonzero scalars. Using the notation of \Cref{examples}, we can take:
\[
\mathcal{U}=\mathrm{span}\{A\otimes I\otimes I, %w_1
C\otimes I\otimes I, %w_2
B\otimes I\otimes A, %w_3
B\otimes C\otimes B, %w_6
B\otimes A\otimes B, %w_5
B\otimes I\otimes C, %w_4
B\otimes B\otimes B %w_7
\}.
\]
If we define a quadratic form $q$ on $\mathcal{U}$ by declaring this basis orthogonal and $b^2=q(b)1$ for each basis element, then the Clifford algebra $\mathfrak{Cl}(\mathcal{U},q)$ becomes graded, and we get a homomorphism of graded algebras $\mathfrak{Cl}(\mathcal{U},q)\to\mathcal{Q}'\otimes\mathcal{Q}'\otimes\mathcal{Q}$, which restricts to an isomorphism on $\mathfrak{Cl}_0(\mathcal{U},q)$. Composing the standard embedding $\mathfrak{so}(\mathcal{U},q)\hookrightarrow\mathfrak{Cl}_0(\mathcal{U},q)$ with this isomorphism and the isomorphism $\Phi$, we make $\mathcal{Q}'\otimes\mathcal{Q}$ a graded $\mathfrak{so}_7$-module, which also has a compatible (left) action of $\mathfrak{sl}_2\subset\mathcal{Q}$ via $z(x\otimes y):=x\otimes (y\bar{z})=-x\otimes (yz)$ for all $z\in\mathfrak{sl_2}$. Thus, we can take $\mathcal{Q}'\otimes\mathcal{Q}$ as the graded $\mathcal{L}_0$-module $\mathcal{L}_1$, but there is one further consideration. The bracket $\mathcal{L}_1\otimes\mathcal{L}_1\to\mathcal{L}_0$ is determined up to a nonzero scalar, so it is automatically homogeneous. It turns out that in this case the degree is $g^2$, so we need to shift the grading on $\mathcal{L}_1$ by $g$ to make $\mathcal{L}$ a graded superalgebra.
%\begin{align*}
%\deg(w_1)=a_1,\\
%\deg(w_2)=a_1+a_2,\\
%\deg(w_3)=a_2+a_3,\\
%\deg(w_4)=a_2+a_3+2a_4,\\
%\deg(w_5)=a_1+a_2+2a_4,\\
%\deg(w_6)=a_1+2a_4,\\
%\deg(w_7)=2a_4.
%\end{align*}

\begin{definition}\label{def:VF(4)}
Let $g,h,a,b\in G$ be elements as above: $\langle g^2,h,a,b\rangle\cong\mathbb{Z}_2^4$. Then, taking 
$\mathcal{L}_0=\mathcal{Q}^0\oplus\mathfrak{so}(\mathcal{U},q)$ and $\mathcal{L}_1=(\mathcal{Q}'\otimes\mathcal{Q})^{[g]}$, we obtain a $G$-grading on $F(4)$, which we denote by $\gr{\FV}_{F(4)}(g,a,b,h)$. 
\end{definition}

\begin{remark}\label{rem:Z4Z23}
On the algebra $\mathcal{L}_0$ and vector space $\mathcal{L}_1$, this grading looks as follows:
\begin{eqnarray*}
\mathcal{L}_0&=&(\mathfrak{sl}_2,\gr{\Pa}_{\mathfrak{sl}_2}(h,g^2))\oplus(\mathfrak{so}_7,\Gamma_{\mathfrak{so}_7}(\gamma)),\\
\mathcal{L}_1&=&\left(M_2\otimes M_2,\gr{\Pa}_{M_2}(a,b)\otimes\gr{\Pa}_{M_2}(h,g^2)\right)^{[g]},
\end{eqnarray*}
where $\gamma=(a,ab,bh,ag^2,abg^2,bhg^2,g^2)$.
\end{remark}

Taking $1\in\mathbb{Z}_4$ for $g$ and the standard basis of $\mathbb{Z}_2^3$ for $h$, $a$, $b$, we get a fine grading $\Gamma$ with universal group $U=\mathbb{Z}_4\times\mathbb{Z}_2^3$. The support of $\Gamma_{\mathfrak{sl}_2}$ consists of the nontrivial elements of the subgroup $T=\langle h,g^2\rangle$. To see the support of $\Gamma_{\mathfrak{so}_7}$, it is convenient to partition the set $\{1,\ldots,7\}$ into blocks $\{1,4\}$, $\{2,5\}$, $\{3,6\}$, and $\{7\}$. Then the degree of the basis element $E_{ij}-E_{ji}$ ($i<j$) is $g^2$ if $i$ and $j$ are in the same block, an element of the subgroup $S=\langle ah,b,g^2\rangle$ different from $e$ and $g^2$ (each occurring twice) if $i,j\ne 7$ belong to different blocks, and otherwise an element of the set $\{a,ab,bh\}\langle g^2\rangle$ (each occurring once). 
%The support of the restriction of the grading to $\mathfrak{so}_7$ is
%\begin{equation}\label{eq:defV}
%\mathrm{Supp}\,\Gamma_{\mathfrak{so}_7}=\underbrace{\langle a_1+a_3,a_2,2a_4\rangle}_{S}\setminus\{0\}\cup\left(\%{a_1,a_1+a_2,a_2+a_3\}+\langle2a_4\rangle\right)
%\end{equation}
Thus, the subgroups $S$ and $T$ are special for this grading: we have $S\cap T=\langle g^2\rangle$, the component of $\Gamma_0$ with degree $g^2$ has dimension $4$ ($1$ in $\mathfrak{sl}_2$ and $3$ in $\mathfrak{so}_7$), the components with degree in $S\smallsetminus T$ have dimension $2$ (contained in $\mathfrak{so}_7$), and the components with degrees in $U_{[2]}\smallsetminus S$ have dimension $1$. 
In particular, the nontrivial elements of $U_{[2]}$ constitute the support of $\mathcal{L}_0$. It follows that $\Gamma$ has no almost fine proper coarsening and that every admissible homomorphism $U\to G$ must be injective. 

\begin{proposition}\label{prop:WeylV}
Let $\Gamma$ be the fine grading on $F(4)$ with universal group $U=\mathbb{Z}_4\times\mathbb{Z}_2^3$ and let $T=\langle h,g^2\rangle$ and $S=\langle ah,b,g^2\rangle$. Then 
$
W(\Gamma)=\mathrm{Stab}_{\aut(U)}(S,T)\cong U_{[2]}\rtimes \mathrm{Stab}_{\aut(U_{[2]})}(S,T).
$
In matrix form, we have
\[
W(\Gamma)=\left\{\begin{pNiceArray}{c|cc}%
\varepsilon&\Block{1-2}{\ell}\\\hline%
\Block{2-1}{k}&1&0\\%
&0&A%
\end{pNiceArray}\mid\varepsilon\in\{\pm1\},k\in\mathbb{Z}_2^3,\ell\in\left(2\mathbb{Z}_4\right)^3,A\in\mathrm{GL}_2(2)\right\},
\]
where each matrix multiplies on the left the elements of $\mathbb{Z}_4\times\mathbb{Z}_2^3$, with the elements of $\mathbb{Z}_2^3$ being represented relative to the ordered basis $\{h,ah,b\}$.
\end{proposition}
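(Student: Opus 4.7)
The plan is to follow the pattern of \Cref{prop:WeylII,prop:WeylIII,prop:WeylIV}: first apply the exact sequence \eqref{seq_f(4)_2} together with \Cref{Misha's lemma} to identify the kernel $\bar K$ of $W(\Gamma)\to W(\tilde Q)$, then use \Cref{l:TKKmain} to describe $W(\tilde Q)$, and finally combine the two to exhibit $W(\Gamma)$ as a subgroup of $\aut(U)$ in the stated matrix form.

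For $\bar K$: since $\Gamma$ is fine, $\mathrm{Stab}(\Gamma)=\mathrm{Diag}(\Gamma)\cong\widehat U\cong C_4\times C_2^3$, and $\Gamma_{\mathfrak{sl}_2}$ is the fine Pauli grading, so $\mathrm{Stab}(\Gamma_{\mathfrak{sl}_2})\cong C_2^2$. The septuple $\gamma$ in \Cref{rem:Z4Z23} has seven distinct entries in $U_{[2]}$, so the induced quasitorus separates the standard basis of $\mathbb F^7$, and its centralizer in $\aut(\mathfrak{so}_7)\cong\mathrm{SO}_7$ is the image of the diagonal $\pm 1$ matrices modulo $\pm I$, giving $\mathrm{Stab}(\Gamma_{\mathfrak{so}_7})\cong C_2^6$. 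Plugging $\mathrm{Stab}(\Gamma_0)\cong C_2^8$ into \eqref{seq_f(4)_2}, the image of $\mathrm{Stab}(\Gamma)/\langle\delta\rangle$ has order $16$, hence $|\bar K|=16$; together with $\bar K\hookrightarrow U_{[2]}$ from \Cref{Misha's lemma} this forces $\bar K=U_{[2]}$. Since the parity homomorphism has kernel $U_{[2]}$, $\bar K$ acts on $U$ by fixing $U_{[2]}$ pointwise and sending $g\mapsto gk$.

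For $W(\tilde Q)$, I would apply \Cref{l:TKKmain} to $\mathcal L_0=\mathfrak{sl}_2\oplus\mathfrak{so}_7$ (its hypothesis holds because the two simple ideals have different dimensions). With $G_1=T\subseteq G_2=U_{[2]}$, the lemma reduces $W(\tilde Q)$ to $\{\alpha\in W(Q_{\mathfrak{so}_7}):\alpha|_T\in W(Q_{\mathfrak{sl}_2})\}$, and the last condition is vacuous since $W(Q_{\mathfrak{sl}_2})=\aut(T)$. To identify $W(Q_{\mathfrak{so}_7})\subseteq\aut(U_{[2]})$ I would use the multiplicity function $m$ of $\Gamma_{\mathfrak{so}_7}$: a short computation from \Cref{rem:Z4Z23} gives $m^{-1}(3)=\{g^2\}$, $m^{-1}(2)=S\smallsetminus\{e,g^2\}$, $m^{-1}(1)=U_{[2]}\smallsetminus(S\cup T)$ and $m^{-1}(0)=\{e,h,hg^2\}$, so preservation of $m$ is the same as joint preservation of $S$ and $T$. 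Conversely, every such $\varphi\in\aut(U_{[2]})$ is realized by the permutation matrix $P_\sigma\in\mathrm O_7$ effecting the induced permutation of $\gamma$ (passing modulo $\pm I$ if necessary), so $W(\tilde Q)=W(Q_{\mathfrak{so}_7})=\mathrm{Stab}_{\aut(U_{[2]})}(S,T)$.

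For the final identification, any $w\in W(\Gamma)\subseteq\aut(U)$ must preserve $U_{[2]}$ (parity), $T\smallsetminus\{e\}$ (support of $\Gamma_{\mathfrak{sl}_2}$) and the set $S\smallsetminus\{e,g^2\}$ of degrees of $\Gamma_{\mathfrak{so}_7}$ with multiplicity $2$, hence $W(\Gamma)\subseteq\mathrm{Stab}_{\aut(U)}(S,T)$. Conversely, an element of $\mathrm{Stab}_{\aut(U)}(S,T)$ is determined by independent choices of $\varphi(g)\in gU_{[2]}$ (sixteen options, since $(gk)^2=g^2$ for every $k\in U_{[2]}$) and $\varphi|_{U_{[2]}}\in\mathrm{Stab}_{\aut(U_{[2]})}(S,T)$; the resulting count matches $|\bar K|\cdot|W(\tilde Q)|$ from \eqref{seq_f(4)_1}, giving $W(\Gamma)=\mathrm{Stab}_{\aut(U)}(S,T)$. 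The section $\psi\mapsto\tilde\psi$ with $\tilde\psi(g)=g$ and $\tilde\psi|_{U_{[2]}}=\psi$ splits the sequence, yielding the claimed semidirect product. Writing an element in the basis $(g,h,ah,b)$ then produces the matrix form: the block $A\in\mathrm{GL}_2(2)$ is the action of $\psi$ on $\langle ah,b\rangle$, the row $\ell\in(2\mathbb Z_4)^3$ records the $\langle g^2\rangle$-shift in $\psi|_{U_{[2]}}$, $\varepsilon=\pm1$ selects $\varphi(g)$ modulo $\langle a,b,h\rangle$, and $k\in\mathbb Z_2^3$ is the $\langle a,b,h\rangle$-component of $g^{-\varepsilon}\varphi(g)$. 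The main obstacle I expect is the identification $W(Q_{\mathfrak{so}_7})=\mathrm{Stab}_{\aut(U_{[2]})}(S,T)$: both the multiplicity partition of $U_{[2]}$ and the lifting of each stabilizing automorphism to $N_{\mathrm{SO}_7}(Q_{\mathfrak{so}_7})$ via a permutation matrix must be verified case by case.
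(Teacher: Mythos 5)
Your proposal is correct and takes essentially the same route as the paper: the exact sequences of \Cref{l:seq_f(4)} together with \Cref{Misha's lemma} to get $\bar K\cong U_{[2]}$ and its action on $U$, then \Cref{l:TKKmain} plus realization by (signed) permutation matrices to identify $W(\tilde Q)$ with $\mathrm{Stab}_{\aut(U_{[2]})}(S,T)$ of order $2^3\cdot 6$, and finally a count and an explicit splitting. The only differences are presentational: you characterize $W(Q_{\mathfrak{so}_7})$ via the multiplicity function of $\Gamma_{\mathfrak{so}_7}$ and realize each stabilizing automorphism by the permutation it induces on the septuple $\gamma$, whereas the paper exhibits the $2^3\cdot 6$ block-preserving permutations and matches the count.
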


\begin{proof}
$\Gamma_{\mathfrak{so}_7}$ admits a fine refinement with universal group $\mathbb{Z}_2^6$, so $\mathrm{Stab}(\Gamma_{\mathfrak{so}_7})$ contains a copy of $C_2^6$. Then, the exact sequence \eqref{seq_f(4)_2} of \Cref{l:seq_f(4)} gives 
\[
1\to C_2\to C_2^3\times C_4\to C_2^2\times C_2^6\to\bar{K},
\]
so $|\bar{K}|\ge2^4$, which implies $\bar{K}\cong U_{[2]}$. The kernel of the parity homomorphism is $U_{[2]}$, %$(2\mathbb{Z}_4)\times\mathbb{Z}_2^3$ 
so the action of $\bar{K}$ gives both $\varepsilon$ and $k$ in the matrix above. It follows that $W(\Gamma)\cong\bar{K}\rtimes\mathrm{Stab}_{W(\Gamma)}(g)$ and, hence, the exact sequence \eqref{seq_f(4)_1} of \Cref{l:seq_f(4)},
\[
1\to\bar{K}\to W(\Gamma)\to W(\tilde{Q})\to 1,
\]
is split. From the discussion above, it is clear that $W(\tilde{Q})\subset\mathrm{Stab}_{\aut(U_{[2]})}(S,T)$. From the matrix representation relative to the basis $\{g^2,h,ah,b\}$, it is clear that this stabilizer has order $2^3\cdot 6$. 
%On the other hand, since the subgroup generated by the support of $\mathfrak{so}_7$ contains the support of $\mathfrak{sl}_2$.
It remains to observe that, for any permutation that moves the blocks $\{1,4\}$, $\{2,5\}$, $\{3,6\}$ and fixes $7$, the conjugation by the corresponding permutation matrix moves the components of $\Gamma_{\mathfrak{so}_7}$ according to an automorphism of $U_{[2]}$ that leaves $T$ invariant and, hence, yields an element of $W(\tilde{Q})$ by \Cref{l:TKKmain}. Since there are $2^3\cdot 6$ such permutations, the result follows.
\end{proof}

%\Cref{def:VF(4)}:
\begin{corollary}\label{cor:V}
$\gr{\FV}_{F(4)}(g,a,b,h)\cong\gr{\FV}_{F(4)}(g',a',b',h')$ if and only if $\langle h,g^2\rangle=\langle h',g'^2\rangle$ and $\langle ah,b,g^2\rangle=\langle a'h',b',g'^2\rangle$.\qed
\end{corollary}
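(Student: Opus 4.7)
The plan is to derive the corollary from \Cref{thm:gradings} applied to the fine grading $\Gamma$ on $F(4)$ with universal group $U=\mathbb{Z}_4\times\mathbb{Z}_2^3$, combined with the explicit description of $W(\Gamma)$ supplied by \Cref{prop:WeylV}. Both gradings arise as $^\alpha\Gamma$ and $^{\alpha'}\Gamma$ for admissible (hence, as already noted, injective) homomorphisms $\alpha,\alpha':U\to G$ sending the standard generators $(g_U,a_U,b_U,h_U)$ of $U$ to $(g,a,b,h)$ and $(g',a',b',h')$, respectively. Since $\Gamma$ admits no proper almost fine coarsening, the theorem reduces the problem to deciding when $\alpha=\alpha'\circ w$ for some $w\in W(\Gamma)$.

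For the necessity, I would use the identification $W(\Gamma)=\mathrm{Stab}_{\aut(U)}(S_U,T_U)$ from \Cref{prop:WeylV}, where $S_U=\langle(ah)_U,b_U,g_U^2\rangle$ and $T_U=\langle h_U,g_U^2\rangle$. Every $w\in W(\Gamma)$ fixes these subgroups, so the equality $\alpha=\alpha'\circ w$ immediately yields $\alpha(S_U)=\alpha'(w(S_U))=\alpha'(S_U)$ and $\alpha(T_U)=\alpha'(T_U)$ in $G$; translated into parameters, this is precisely $\langle ah,b,g^2\rangle=\langle a'h',b',g'^2\rangle$ and $\langle h,g^2\rangle=\langle h',g'^2\rangle$.

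For the sufficiency, I would construct $w$ in two stages. First, the subgroup equalities give $\alpha(U_{[2]})=\langle\alpha(S_U),\alpha(T_U)\rangle=\langle\alpha'(S_U),\alpha'(T_U)\rangle=\alpha'(U_{[2]})$, so $w_{[2]}:=\alpha'^{-1}\!\circ\alpha|_{U_{[2]}}$ is a well-defined automorphism of $U_{[2]}$ stabilizing $S_U$ and $T_U$. Reading off the matrix form in \Cref{prop:WeylV}, the restriction $W(\Gamma)\to\aut(U_{[2]})$ surjects onto $\mathrm{Stab}_{\aut(U_{[2]})}(S_U,T_U)$, so $w_{[2]}$ admits a lift $\tilde w\in W(\Gamma)$. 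Second, I would use the kernel of this restriction---parametrised in the matrix form by the free $\varepsilon\in\{\pm1\}$ and $k\in\mathbb{Z}_2^3$ acting only on $g_U$---to adjust $\tilde w$ into a $w$ satisfying $\alpha'\circ w(g_U)=g$.

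The delicate step is this final adjustment: one must verify that $g$ lies in the kernel-orbit of $g'$, which from the matrix form equals $\pm g'+\langle a',b',h'\rangle\subseteq G$. The subgroup hypotheses easily force $g^2=g'^2$ (the unique nontrivial element of $\alpha(S_U\cap T_U)=\langle g^2\rangle$), and hence $g\pm g'\in G_{[2]}$; the main obstacle I anticipate is to confirm, using the equality $\langle ah,b,g^2\rangle=\langle a'h',b',g'^2\rangle$ together with $\langle h,g^2\rangle=\langle h',g'^2\rangle$, that one of $g-g'$ or $g+g'$ actually lies in the codimension-one subgroup $\langle a',b',h'\rangle\subseteq\alpha'(U_{[2]})$. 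Once this arithmetic is pinned down, the parameters $\varepsilon$ and $k$ can be chosen accordingly to enforce $\alpha'\circ w=\alpha$, and the corollary follows.
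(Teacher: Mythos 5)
Your overall strategy coincides with the one the paper implicitly relies on (the corollary is stated with no argument beyond \Cref{thm:gradings} and \Cref{prop:WeylV}), your necessity argument is correct, and so is the reduction of sufficiency to lifting $w_{[2]}$ from $\mathrm{Stab}_{\aut(U_{[2]})}(S_U,T_U)$ and then adjusting by the kernel acting on $g_U$. However, the ``delicate step'' you flag at the end is not merely delicate: it cannot be carried out, because the stated hypotheses do not imply it. From $\langle h,g^2\rangle=\langle h',g'^2\rangle$ and $\langle ah,b,g^2\rangle=\langle a'h',b',g'^2\rangle$ one only gets $g^2=g'^2$ (the unique nontrivial element of $S\cap T$) and $\langle a,b,h,g^2\rangle=\langle a',b',h',g'^2\rangle$; nothing forces $gg'^{-1}$ into that subgroup. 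Concretely, take $G=\mathbb{Z}_4\times\mathbb{Z}_2^3\times\mathbb{Z}_2$, let $(g,a,b,h)$ be the standard generators of the first two factors, and set $g'=gz$ with $z$ the generator of the last factor, $a'=a$, $b'=b$, $h'=h$. Both tuples are admissible parameters and $S=S'$, $T=T'$, yet the supports of the odd parts are the distinct cosets $g\langle g^2,a,b,h\rangle$ and $gz\langle g^2,a,b,h\rangle$, so the two $G$-gradings are not isomorphic. Thus the ``if'' direction of the corollary, as literally stated, fails.

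What your analysis actually uncovers is that the statement is missing the condition $\langle g,a,b,h\rangle=\langle g',a',b',h'\rangle$ (equivalently, $g'\in g\langle a,b,h\rangle\cup g^{-1}\langle a,b,h\rangle$); note that $\alpha=\alpha'\circ w$ forces $\alpha(U)=\alpha'(U)$, so this is a necessary condition not implied by $S=S'$ and $T=T'$, and it is precisely the subgroup $H$ that Table \ref{tab:F(4)} does list among the classifying data for $\gr{\FV}_{F(4)}$. Once you add it, your argument closes: $H=H'$ together with $g^2=g'^2$ places $g$ in $g'^{\pm1}\langle a',b',h'\rangle$ (these are exactly the order-$4$ elements of $H'$), and the kernel of the restriction $W(\Gamma)\to\aut(U_{[2]})$, given by the parameters $\varepsilon$ and $k$ in \Cref{prop:WeylV}, acts transitively on the order-$4$ elements of $U$ while fixing $U_{[2]}$ pointwise, so the required $w$ exists. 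In short, your instinct about where the difficulty sits was exactly right; the fix is to amend the statement, not the argument.
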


\section{Gradings on $D(\alpha)$\label{sec:d}}

\subsection{The Lie superalgebras $D(\alpha)=D(2,1;\alpha)$}

We let $\mathcal{L}=\mathcal{L}_0\oplus\mathcal{L}_1$ with $\mathcal{L}_0=\mathfrak{s}_1\oplus\mathfrak{s}_2\oplus\mathfrak{s}_3$ and $\mathcal{L}_1=V_1\otimes V_2\otimes V_3$, where $\mathfrak{s}_i$ are copies of $\mathfrak{sl}_2$ and $V_i$ are copies of the irreducible 2-dimensional $\mathfrak{sl}_2$-module $V$. Fix an alternating (invariant) bilinear form $\psi:V\times V \rightarrow \mathbb{F}$ and a symplectic basis $\{u,v\}$ of $V$ (i.e., $\psi(u,v)=1$). Consider the invariant symmetric bilinear map  $p_i:V_i\times V_i\rightarrow \mathfrak{s}_i$ defined by $p_i(x,y)= x\psi_i(y,\cdot)+y\psi_i(x,\cdot)$, where $\psi_i$ is a copy of $\psi$. For any $\sigma_1,\sigma_2,\sigma_3\in \mathbb{F}$, we can define a symmetric $\mathcal{L}_0$-invariant bilinear map $[\,,]:\mathcal{L}_1\otimes\mathcal{L}_1\to\mathcal{L}_0$ by 
	\[ [\,,]=\sigma_1p_1\otimes\psi_2\otimes\psi_3+ \sigma_2\psi_1\otimes p_2\otimes\psi_3 + \sigma_3\psi_1\otimes\psi_2\otimes p_3.
        \]
	This bracket satisfies the Jacobi identity if and only if $\sigma_1+\sigma_2+\sigma_3=0$. We will temporarily denote the resulting Lie superalgebra by $D(\sigma_1,\sigma_2,\sigma_3)$. For each $\pi\in S_3$ and $\lambda\in \mathbb{F}^\times$, we have the isomorphism $D(\sigma_1,\sigma_2,\sigma_3)\cong D(\lambda\sigma_{\pi^{-1}(1)},\lambda\sigma_{\pi^{-1}(2)},\lambda\sigma_{\pi^{-1}(3)})$ given by \begin{gather*}
		(x_1,x_2,x_3)\in\mathcal{L}_0 \mapsto (x_{\pi(1)},x_{\pi(2)},x_{\pi(3)}), \\
		x_1\otimes x_2\otimes x_3 \in \mathcal{L}_1 \mapsto \lambda^{-\frac{1}{2}}x_{\pi(1)}\otimes x_{\pi(2)} \otimes x_{\pi(3)}. 
	\end{gather*}
	Thus, excluding the trivial case $\sigma_1=\sigma_2=\sigma_3=0$, it is sufficient to consider the algebras $D(\alpha)=D(1,\alpha,-1-\alpha)$, where $\alpha=\sigma_2/\sigma_1$ is an element of the projective line $\mathbb{P}^1(\mathbb{F})=\mathbb{F}\cup\{\infty\}$. Moreover, we have $D(\alpha)\cong D(\alpha')$ if and only if $\alpha'=\pi.\alpha$ for some $\pi\in S_3$, where the action of $S_3$ is defined by $(12).\alpha = \alpha^{-1}$ and $(123).\alpha=-(1+\alpha)^{-1}$. 
 
	Let $\stab(\alpha)=\{\pi\in S_3\mid\pi.\alpha=\alpha\}$. The only orbits with fewer than six elements are $\{\omega,\omega^2\}$, with $\omega$ a primitive cube root of unity (so $|\stab(\alpha)|=3$), and the two orbits with $|\stab(\alpha)|=2$: $\{1,-2,-\frac{1}{2}\}$ and $\{-1,0,\infty\}$. Note that $D(-1)$ is not simple; its unique nontrivial ideal is isomorphic to $\mathfrak{psl}(2\,|\,2)$ and will be used in Section \ref{sec:a}. Moreover, we have $D(1)\cong \mathfrak{osp}(4\,|\,2)$, which is the Lie superalgebra of type $D(2,1)$. For this reason the algebras $D(\alpha)$ are usually denoted by $D(2,1;\alpha)$ and considered as deformations of the orthosymplectic algebra $\mathfrak{osp}(4\,|\,2)$. Here we use the shorter notation, since there is no risk of ambiguity.
 
    Let $\mathcal{L}=D(\alpha)$ and define a homomorphism $\iota: \slg_2(\mathbb{F})^3\rightarrow \aut(D(\alpha))$ as follows:
    %that, for each $f,g,h\in \slg_2(\mathbb{F})$, acts as 
    \begin{equation} \begin{split}        
            \iota(f,g,h) :\mathcal{L} &\to \mathcal{L}, \\
            (x_1,x_2,x_3)\in \mathcal{L}_0 &\mapsto (fx_1f^{-1},gx_2g^{-1},hx_3h^{-1}), \\
            x_1\otimes x_2\otimes x_3\in \mathcal{L}_1 &\mapsto  fx_1\otimes gx_2\otimes hx_3.
    \end{split}\end{equation}
    Note that $\ker(\iota)=\{(\varepsilon_1,\varepsilon_2,\varepsilon_3)\in\slg_2(\mathbb{F})\mid\varepsilon_i\in\{\pm1\},\,\varepsilon_1\varepsilon_2\varepsilon_3=1\}$.
    By \cite[Theorem~1]{S} (see also \cite[Theorem~4.1]{GP}), the image of $\iota$ is the subgroup of the inner automorphisms $\inn(D(\alpha))$. Moreover, the group of outer automorphisms $\out(D(\alpha))=\aut(D(\alpha))/\inn(D(\alpha))$ is isomorphic to $\stab(\alpha)$.

	To simplify the exposition, we introduce the following notation. For any $x\in\mathfrak{sl}_2$, we set 
    \[ 
    x_1=(x,0,0)\in \mathfrak{s}_1\subseteq \mathcal{L}_0,\ x_2=(0,x,0)\in\mathfrak{s}_2\subseteq \mathcal{L}_0,\ x_3=(0,0,x)\in\mathfrak{s}_3\subseteq \mathcal{L}_0. 
    \]
    We will also use the following basis of $\mathcal{L}_1$:  
    \begin{gather*}
    w_0= v\otimes v\otimes v,\quad w_1= v\otimes u\otimes u,\quad w_2= u\otimes v\otimes u,\quad w_3= u\otimes u\otimes v, \\
    w'_0= u\otimes u\otimes u,\quad w'_1= u\otimes v\otimes v,\quad w'_2= v\otimes u\otimes v,\quad w'_3= v\otimes v\otimes u.
    \end{gather*}

    To give a description of the Weyl groups of fine gradings, we fix some matrices in $\slg_2(\mathbb{F})$ that give generators for the Weyl groups of the two fine grading on $\mathfrak{sl}_2$ (see \Cref{examples}). The Weyl group of the Cartan grading is generated by the conjugation by $C$. For each permutation $\pi$ of the three elements $\{a,b,c\}$ in the support of the Pauli grading, we fix a matrix $Y_\pi$ so that the conjugation by it acts as $\pi$ on the homogeneous components. For example, we can take
    \[ 
    Y_{(a c)}=\frac{1}{i\sqrt{2}}\begin{pmatrix}1 & 1 \\ 1 & -1 \end{pmatrix},\quad Y_{(a b c)}=\frac{1+i}{2} \begin{pmatrix}1 & 1 \\ i & -i \end{pmatrix},\quad\text{where }i^2=-1. 
    \]
 
\subsection{Generic orbits}

    If $|\stab(\alpha)|=1$ then there are no outer automorphisms, so $\iota$ is surjective and describes all automorphisms of $D(\alpha)$.

    By \cite[Theorem~8.5]{DEM}, there are five fine gradings in this case (up to equivalence): one with universal group $\mathbb{Z}^3$ and type $(14,0,1)$ (the Cartan grading), one with universal group $\mathbb{Z}_4\times\mathbb{Z}_2^2$ and type $(14,0,1)$, and three with universal group $\mathbb{Z}\times\mathbb{Z}_2^2$ and type $(11,3)$. For each of these gradings, we will show that there are no almost fine proper coarsenings, determine the admissible homomorphisms $U\to G$ for any abelian group $G$, and compute the Weyl group.

    \begin{definition}[Gradings of type $\IC$] \label{def:IC}
        Let $h_1,h_2,h_3,h_4\in G$ be elements satisfying $h_1h_2h_3h_4=e$. We define $\gr{\IC}_D(h_1,h_2,h_3,h_4)$ to be the grading which is $\gr{\C}_{\mathfrak{sl}_2}(h_2h_3)$ on $\mathfrak{s}_1$, $\gr{\C}_{\mathfrak{sl}_2}(h_1h_3)$ on $\mathfrak{s}_2$, $\gr{\C}_{\mathfrak{sl}_2}(h_1h_2)$ on $\mathfrak{s}_3$, while on $\mathcal{L}_1$ it is defined by $\deg(w_0)=h_4$, $\deg(w_1)=h_1$, $\deg(w_2)=h_2$, $\deg(w_3)=h_3$ and $\deg(w'_i)=\deg(w_i)^{-1}$ for each $i=0,1,2,3$.
    \end{definition}

    \begin{proposition} \label{prop:IC_fine}
        A fine grading on $D(\alpha)$ with universal group $\mathbb{Z}^3$ is equivalent to $\gr{\IC}_D(e_1,e_2,e_3,e_4)$ where $e_1=(1,0,0)$, $e_2=(0,1,0)$, $e_3=(0,0,1)$, and $e_4=(-1,-1,-1)$. It has no almost fine proper coarsenings. Moreover, the $G$-gradings of type $\IC$ are precisely the gradings induced by admissible homomorphisms $\mathbb{Z}^3\rightarrow G$.
    \end{proposition}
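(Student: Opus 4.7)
The plan is to dispatch the three assertions separately, using the classification of fine gradings up to equivalence from \cite{DEM}, the characterization of almost fine gradings in \Cref{thm:almostfine}, and \Cref{def:admissible,thm:gradings}.

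For the first assertion, I would verify that $\gr{\IC}_D(e_1,e_2,e_3,e_4)$, as described in \Cref{def:IC}, is a well-defined $\mathbb{Z}^3$-grading on $D(\alpha)$. The bracket $\mathcal{L}_0\otimes\mathcal{L}_1\to\mathcal{L}_1$ is graded because the Cartan grading on each $\mathfrak{s}_i$ is compatible with the chosen degree assignment on $V_i$ (with $u$ and $v$ of complementary degree). The bracket $\mathcal{L}_1\otimes\mathcal{L}_1\to\mathcal{L}_0$ requires a short case-by-case check using the formula $[\,,\,]=\sigma_1 p_1\otimes\psi_2\otimes\psi_3+\sigma_2\psi_1\otimes p_2\otimes\psi_3+\sigma_3\psi_1\otimes\psi_2\otimes p_3$; most brackets vanish thanks to $\psi_i$-factors with repeated arguments, and the surviving ones, e.g.\ $[w_1,w_2]\in\mathfrak{s}_3$ being a scalar multiple of $E_3$ with degree $h_1h_2$, match the expected product of the degrees. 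The grading has type $(14,0,1)$, the only $3$-dimensional component being the $e$-component $\mathbb{F}H_1\oplus\mathbb{F}H_2\oplus\mathbb{F}H_3$. Since \cite[Theorem~8.5]{DEM} asserts the existence of a unique equivalence class of fine gradings with universal group $\mathbb{Z}^3$, our grading must realize it.

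For the second assertion, I would invoke \Cref{thm:almostfine} directly with $U=\mathbb{Z}^3$. Since $\mathbb{Z}^3$ is torsion-free, $t(U)=0$, so any subgroup $E\subseteq t(U)$ defining an almost fine coarsening must be trivial. Hence $\Delta$ itself is the only almost fine coarsening.

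For the third assertion, I would observe that, because $t(\mathbb{Z}^3)=0$, the natural map $\pi_\Delta:\mathbb{Z}^3\to\mathbb{Z}^3/t(\mathbb{Z}^3)$ is the identity, making $(\alpha,\pi_\Delta)$ injective on the whole of $\mathbb{Z}^3$ for every homomorphism $\alpha$. Thus every $\alpha\in\mathrm{Hom}(\mathbb{Z}^3,G)$ is admissible in the sense of \Cref{def:admissible}, and by \Cref{thm:gradings} the induced grading is $^\alpha\Delta=\gr{\IC}_D(\alpha(e_1),\alpha(e_2),\alpha(e_3),\alpha(e_4))$. Since every quadruple $(h_1,h_2,h_3,h_4)$ with $h_1h_2h_3h_4=e$ corresponds uniquely to such a homomorphism (sending $e_i\mapsto h_i$), these induced gradings account for exactly the gradings of type $\IC$. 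The main---and mildly tedious---obstacle is the bracket verification in the first stage; the remaining steps are essentially bookkeeping given the abstract machinery already in place.
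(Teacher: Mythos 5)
Your treatment of the second and third assertions (torsion-freeness of $\mathbb{Z}^3$ kills almost fine proper coarsenings and makes every homomorphism admissible) coincides with the paper's. For the first assertion, however, you argue in the opposite direction: you build $\gr{\IC}_D(e_1,e_2,e_3,e_4)$ from scratch, check the bracket is compatible, and then invoke the uniqueness of the equivalence class from \cite[Theorem~8.5]{DEM}. The paper instead starts from an arbitrary fine grading with universal group $\mathbb{Z}^3$, identifies it as the eigenspace decomposition of the adjoint action of the Cartan subalgebra spanned by $H_1,H_2,H_3$, and then shows that the relabeling of the joint eigenvalues $(\lambda_1,\lambda_2,\lambda_3)$ (which satisfy $\lambda_1\equiv\lambda_2\equiv\lambda_3\pmod 2$) by $n_i=(\lambda_j+\lambda_k)/2$ reproduces exactly Definition~\ref{def:IC}; this simultaneously exhibits the isomorphism of the universal group with $\mathbb{Z}^3$ and avoids any bracket verification. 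Your route is legitimate but, as written, has a small logical gap: to conclude from \cite{DEM} that your grading ``realizes'' the unique class, you must first know that $\gr{\IC}_D(e_1,e_2,e_3,e_4)$ is \emph{fine} and has universal group $\mathbb{Z}^3$, neither of which you establish. Both can be supplied cheaply: the fifteen degrees you list are pairwise distinct and the support generates $\mathbb{Z}^3$, so the diagonal group is a three-dimensional torus, which is maximal in $\aut(D(\alpha))$; since $|\stab(\alpha)|=1$ makes this group connected, the centralizer of a maximal torus is the torus itself, so it is a maximal quasitorus and the grading is fine. (Alternatively: your grading is a coarsening of some fine grading whose universal group surjects onto $\mathbb{Z}^3$, which by \cite{DEM} forces that fine grading to be the one of type $(14,0,1)$, and a coarsening with the same number of components must coincide with it.) With that one step added, your argument is complete.
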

    
    \begin{proof}
    Any fine $\mathbb{Z}^3$-grading on $D(\alpha)$ is given by the eigenspace decomposition of the adjoint action of a Cartan subalgebra of $\mathcal{L}_0$. Without loss of generality, we can choose the one spanned by $H_1$,$H_2$,$H_3$. Then the homogeneous elements of Definition~\ref{def:IC} are common eigenvectors for these three elements. Fix $x\in\mathcal{L}$ in a common eigenspace of $H_1$,$H_2$ and $H_3$, and say $[H_i,x]=\lambda_ix$ for some $\lambda_i\in\mathbb{Z}$, $i=1,2,3$. Then, it is easy to check that $\lambda_1 \equiv \lambda_2\equiv \lambda_3 \pmod{2}$, so $n_1=(\lambda_2+\lambda_3)/2$, $n_2=(\lambda_1+\lambda_3)/2$, $n_3=(\lambda_1+\lambda_2)/2$ are integers, and setting $\deg(x)=(n_1,n_2,n_3)$ produces exactly the grading $\gr{\IC}_D(e_1,e_2,e_3,e_4)$.
    %The supports of the even and odd part are $\supp(\Gamma_0)=\{(0,0,0),\pm(0,1,1),\pm(1,0,1),\allowbreak\pm(1,1,0)\}$ and $\supp(\Gamma_1)=\{(\pm1,0,0), (0,\pm1,0),(0,0,\pm1),\pm(1,1,1)\}$. 
    The universal group has no torsion, so there are no almost fine proper coarsenings, and any homomorphism $\alpha:\mathbb{Z}^3\rightarrow G$ is admissible. 
    %since $(\alpha,\pi):\mathbb{Z}^3\rightarrow G\times\mathbb{Z}^3$ is always injective. 
    The grading $\gr{\IC}_D(h_1,h_2,h_3,h_4)$ is induced by setting $\alpha(e_i)=h_i$, $i=1,2,3$.
    \end{proof}

    \begin{proposition}\label{prop:IC_weyl}
        If $|\stab(\alpha)|=1$, then the Weyl group of a fine grading on $D(\alpha)$ with universal group $\mathbb{Z}^3$ is isomorphic to $C_2^3$ and acts on $\mathbb{Z}^3$ by changing signs of the elements $\alpha_1=e_2+e_3$, $\alpha_2=e_1+e_3$ and $\alpha_3=e_1+e_2$.
    \end{proposition}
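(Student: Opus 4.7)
The strategy exploits the hypothesis $|\stab(\alpha)|=1$: then $\out(D(\alpha))$ is trivial, so $\aut(D(\alpha))=\iota(\slg_2(\mathbb{F})^3)\cong\slg_2(\mathbb{F})^3/\ker(\iota)$ is a reductive group, and the Weyl group computation reduces to a classical normalizer-mod-centralizer calculation in this group.

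First, since $\aut(D(\alpha))$ is reductive and $\Gamma$ is fine, \Cref{l:DiagStab} yields $\stab(\Gamma)=\mathrm{Diag}(\Gamma)$. Because $\Gamma$ is the eigenspace decomposition of the adjoint action of $H_1,H_2,H_3$, the diagonal group is the image under $\iota$ of $T\times T\times T$, where $T\subset\slg_2(\mathbb{F})$ is the standard maximal torus; this is a maximal torus of $\aut(D(\alpha))$. Its connectedness gives $\aut(\Gamma)=N(\mathrm{Diag}(\Gamma))$. The normalizer of $T^3$ in $\slg_2(\mathbb{F})^3$ is $N_{\slg_2}(T)^3$, with $N_{\slg_2}(T)/T\cong C_2$ generated by the class of $C$; since $\ker(\iota)\subset T^3$, we obtain $W(\Gamma)\cong C_2^3$, generated by the images of $\iota(C,1,1)$, $\iota(1,C,1)$, and $\iota(1,1,C)$.

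It remains to describe the action on $U(\Gamma)=\mathbb{Z}^3$. Consider $\iota(C,1,1)$: it fixes $\mathfrak{s}_2$ and $\mathfrak{s}_3$ pointwise, so the homogeneous components of $\mathfrak{s}_j$ (of degrees $0$ and $\pm\alpha_j$ for $j=2,3$) are fixed individually, hence $\alpha_2$ and $\alpha_3$ are fixed; meanwhile, conjugation by $C$ on $\mathfrak{s}_1$ is the $\mathfrak{sl}_2$ Weyl group element, swapping $E_1$ and $F_1$ up to sign and hence sending $\alpha_1\mapsto-\alpha_1$. Analogous statements for $\iota(1,C,1)$ and $\iota(1,1,C)$ show that the three generators of $C_2^3$ act by independent sign changes of $\alpha_1,\alpha_2,\alpha_3$. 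The main point of care is that the $\alpha_i$'s generate only an index-$2$ sublattice of $\mathbb{Z}^3$, so a priori the action on the root sublattice does not determine the action on $\mathbb{Z}^3$; however, since $\alpha_1,\alpha_2,\alpha_3$ remain $\mathbb{Q}$-linearly independent, the $\mathbb{Z}$-linear lift is unique, and a direct check (for instance, using $e_1=\frac{1}{2}(-\alpha_1+\alpha_2+\alpha_3)$ over $\mathbb{Q}$) confirms that the prescribed sign changes do extend to $\mathbb{Z}$-linear automorphisms of $\mathbb{Z}^3$ and thus give the matrix form in the statement.
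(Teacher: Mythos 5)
Your proof is correct, and it reaches the result by a route that differs from the paper's in how the upper bound on $W(\Gamma)$ is obtained. The paper invokes the exact sequence $1\to\bar{K}\to W(\Gamma)\to W(\Gamma_0)$ of \Cref{Misha's lemma}, kills $\bar{K}$ because $U_{[2]}$ is trivial for $U=\mathbb{Z}^3$, and then argues that the image in $W(\Gamma_0)\cong C_2^3\rtimes S_3$ avoids the $S_3$ factor because automorphisms of $\mathcal{L}$ cannot permute the three ideals; you instead use the hypothesis $|\stab(\alpha)|=1$ to identify $\aut(D(\alpha))$ with $\iota(\slg_2(\mathbb{F})^3)$ and carry out the classical computation $N(T^3)/C(T^3)\cong(N_{\slg_2}(T)/T)^3\cong C_2^3$ directly, using that $\ker(\iota)\subseteq T^3$ and \Cref{l:DiagStab} to identify $\mathrm{Diag}(\Gamma)=\iota(T^3)$ with $\stab(\Gamma)=C(\mathrm{Diag}(\Gamma))$. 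Your approach is more self-contained (the impossibility of permuting the ideals is absorbed into the product structure of the normalizer) and makes the reductive-group mechanism explicit, whereas the paper's argument reuses the machinery of Section~\ref{s:prelim} uniformly across all the fine gradings, including those where $\bar{K}$ is nontrivial. Both proofs then exhibit the same generators $\iota(C,1,1)$, $\iota(1,C,1)$, $\iota(1,1,C)$ and the same action; your extra care about lifting the sign changes from the index-$2$ root sublattice $\langle\alpha_1,\alpha_2,\alpha_3\rangle$ to all of $\mathbb{Z}^3$ is a point the paper leaves implicit (it is confirmed by the remark following the proposition, where the action on $\pm e_1,\dots,\pm e_4$ is identified with $V_4$ combined with a global sign change), and your verification of integrality is accurate.
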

    
    \begin{proof}
        Consider the exact sequence $1\rightarrow \bar{K}\rightarrow W(\Gamma)\rightarrow W(\Gamma_0)$ induced by the restriction map as in \Cref{Misha's lemma}. The Weyl group $W(\Gamma_0)$ is isomorphic to $C_2^3\rtimes S_3$ where the $S_3$ subgroup acts by permuting the three ideals. But the automorphisms of $\mathcal{L}$ cannot permute the ideals, so the image of the restriction of $W(\Gamma)$ is contained $C_2^3$. Moreover, $\bar{K}$ is trivial, since $\mathbb{Z}^3$ does not have elements of order $2$. Therefore, $W(\Gamma)$ is generated by the images of $\iota(C,1,1),\iota(1,C,1),\iota(1,1,C)\in\aut(\mathcal{L})$. Their action inverts each of the roots $\alpha_1,\alpha_2,\alpha_3$ of the three copies of $\mathfrak{sl}_2$, which determines the action on $\mathbb{Z}^3$.
    \end{proof}

    \begin{remark}
        Since $e_1=\frac12(-\alpha_1+\alpha_2+\alpha_3)$, $e_2=\frac12(\alpha_1-\alpha_2+\alpha_3)$, $e_3=\frac12(\alpha_1+\alpha_2-\alpha_3)$, the above action of $C_2^3$ on $\mathbb{Z}^3$, when restricted to the elements $\pm e_1,\pm e_2,\pm e_3,\pm e_4$, corresponds to the permutation action on their indices by the Klein group $V_4\triangleleft S_4$, combined with the simultaneous change of sign.
    \end{remark}

    \begin{definition}[Gradings of type $\IP$] \label{def:IP}
        Let $a,b\in G$ be elements of order $2$ and $s\in G$ an element of order $4$ such that $\langle s,a,b\rangle\cong\mathbb{Z}_4\times\mathbb{Z}_2^2$. We define $\gr{\IP}_D(s,a,b)$ to be the grading which is $\gr{\Pa}_{\mathfrak{sl}_2}(a,s^2a)$ on $\mathfrak{s}_1$, $\gr{\Pa}_{\mathfrak{sl}_2}(b,s^2b)$ on $\mathfrak{s}_2$, and $\gr{\Pa}_{\mathfrak{sl}_2}(ab,s^2ab)$ on $\mathfrak{s}_3$. To define it on $\mathcal{L}_1$, consider the subgroup $\langle a,b\rangle\cong\mathbb{Z}_2^2$ which has the four characters $\chi_0,\chi_1,\chi_2,\chi_3$, where $\chi_0$ is the trivial character, $\chi_1(a)=\chi_2(b)=1$, $\chi_1(b)=\chi_2(a)=-1$, and $\chi_3=\chi_1\chi_2$. Then, for each $h\in\langle a,b\rangle$, take as homogeneous elements the vectors $\sum_{i=0}^3 \chi_i(h)w_i$ of degree $sh$ and $\sum_{i=0}^3 \chi_i(h)w'_i$ of degree $s^{-1}h$.
    \end{definition}

    \begin{proposition} \label{prop:IP_fine}
        A fine grading on $D(\alpha)$ with universal group $\mathbb{Z}_4\times\mathbb{Z}_2^2$ is equivalent to $\gr{\IP}_D(e_0,e_1,e_2)$ where $e_0=(1,0,0)$, $e_1=(0,1,0)$, $e_2=(0,0,1)$. It has no almost fine proper coarsenings. Moreover, the $G$-grading of type $\IP$ are precisely the gradings induced by admissible homomorphisms $\mathbb{Z}_4\times\mathbb{Z}_2^2\rightarrow G$.
    \end{proposition}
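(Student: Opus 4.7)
The plan is to establish the three assertions of the proposition in order. First, I will check that the prescription of \Cref{def:IP} specialized to $(s,a,b)=(e_0,e_1,e_2)$ defines a $U$-grading on $\mathcal{L}=D(\alpha)$, where $U=\mathbb{Z}_4\times\mathbb{Z}_2^2$. The compatibility within $\mathcal{L}_0$ is immediate from the classification of Pauli gradings on $\mathfrak{sl}_2$ applied to each ideal $\mathfrak{s}_i$ independently. For $[\mathcal{L}_0,\mathcal{L}_1]\subseteq\mathcal{L}_1$, each Pauli-homogeneous element of $\mathfrak{s}_i$ acts on the basis $\{w_j,w'_j\}$ by operators whose matrix entries shift degrees in the expected way. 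The crucial compatibility is the bracket $\mathcal{L}_1\otimes\mathcal{L}_1\to\mathcal{L}_0$, which is a sum of three trilinear forms involving the $p_k$ and $\psi_k$; I would pair the character sums $\sum_i\chi_i(h)w_i$ (of degree $sh$) and $\sum_j\chi_j(h')w'_j$ (of degree $s^{-1}h'$) and use orthogonality of the characters of $\langle a,b\rangle$ to show that the resulting element of $\mathfrak{s}_k$ sits in the Pauli component of degree $hh'$, for each $k$. Given this, the grading has universal group $\mathbb{Z}_4\times\mathbb{Z}_2^2$ and type $(14,0,1)$, so by \cite[Theorem 8.5]{DEM} it must be equivalent to the unique fine grading on $D(\alpha)$ with this universal group.

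Next, I would apply \Cref{thm:almostfine} to rule out almost fine proper coarsenings. Since $|\stab(\alpha)|=1$, the map $\iota$ is surjective and $\aut(D(\alpha))$ is the image of $\slg_2(\mathbb{F})^3$ under $\iota$; hence $\mathrm{Der}(\mathcal{L})\cong\mathcal{L}_0$ as graded $\Omega$-algebras. A direct inventory of the Pauli supports on the three ideals $\mathfrak{s}_i$ shows
\[
\Sigma=\mathrm{Supp}(\Gamma_0)=\{s^2,a,s^2a,b,s^2b,ab,s^2ab\}=U_{[2]}\smallsetminus\{e\}.
\]
Since $U$ is a finite $2$-group, every nontrivial subgroup $E\subseteq U$ contains a nontrivial element of order $2$, which then lies in $E\cap\Sigma$. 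By \Cref{thm:almostfine}, $\Gamma$ admits no almost fine proper coarsening.

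For the third claim, note that $t(U)=U$, so \Cref{def:admissible} reduces to requiring $\alpha:U\to G$ to be injective on the support $S=U\smallsetminus\{e\}$. A short argument shows this is in fact equivalent to $\alpha$ being injective on all of $U$: for any nontrivial $k\in\ker\alpha$, pick any $s\in S\smallsetminus\{k^{-1}\}$ (possible since $|S|=15$); then $s\ne sk$ both lie in $S$ with $\alpha(s)=\alpha(sk)$, contradicting injectivity on $S$. Hence admissibility is equivalent to $\langle\alpha(e_0),\alpha(e_1),\alpha(e_2)\rangle\cong\mathbb{Z}_4\times\mathbb{Z}_2^2$ inside $G$, which is exactly the condition on $(s,a,b)$ in \Cref{def:IP}. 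A direct inspection of the homogeneous components identifies the induced grading $^\alpha\Gamma$ with $\gr{\IP}_D(\alpha(e_0),\alpha(e_1),\alpha(e_2))$, which concludes the correspondence.

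The main obstacle I expect is the verification in the first step that the bracket $\mathcal{L}_1\otimes\mathcal{L}_1\to\mathcal{L}_0$ respects the $U$-grading: the character-sum expressions for the new homogeneous vectors couple nontrivially with the three trilinear summands $\sigma_1 p_1\otimes\psi_2\otimes\psi_3$, $\sigma_2\psi_1\otimes p_2\otimes\psi_3$, $\sigma_3\psi_1\otimes\psi_2\otimes p_3$, so one must systematically check that all degree-mismatched cross terms cancel by orthogonality of the characters of $\langle a,b\rangle$.
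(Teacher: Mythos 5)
Your handling of the second and third assertions is essentially the paper's: you compute $\supp(\Gamma_0)=U_{[2]}\smallsetminus\{e\}$ for $U=\mathbb{Z}_4\times\mathbb{Z}_2^2$, note that every nontrivial subgroup of the $2$-group $U$ meets it, and observe that since $t(U)=U$ admissibility reduces to injectivity on the support, which you correctly upgrade to injectivity on all of $U$ (a small step the paper asserts without comment). The divergence, and the problem, is in the first assertion. The paper does not build the grading of \Cref{def:IP} and then try to match it against the classification; it starts from the maximal quasitorus $Q=\langle\iota(A,A,A),\iota(A,B,B),\iota(B,B,A)\rangle$ that \cite[\S 7.1]{DEM} identifies as representing this equivalence class of fine gradings, and simply diagonalizes $Q$, finding that the joint eigenvectors are exactly the homogeneous elements of \Cref{def:IP}. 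Equivalence with the fine grading is then automatic, and the bracket compatibilities you propose to verify via character orthogonality come for free.

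Your route has a genuine gap at its final inference. Even granting the (correct but laborious) verification that \Cref{def:IP} defines a $U$-grading of type $(14,0,1)$, you have not shown that this grading is \emph{fine}, nor that its universal group is $U$ rather than a group properly covering $U$; and "same universal group and same type" does not in general imply equivalence. Worse, by \cite[Theorem 8.5]{DEM} there are \emph{two} inequivalent fine gradings of type $(14,0,1)$ on $D(\alpha)$ --- the Cartan grading, with universal group $\mathbb{Z}^3$, has that type as well --- so the type cannot by itself complete the identification. The gap is fillable: (i) no fine type in the list of \cite[Theorem 8.5]{DEM} properly refines $(14,0,1)$, so any grading of that type is automatically fine; (ii) the universal group of a fine grading is then read off from that list; (iii) the Cartan class is excluded because its $3$-dimensional component is the identity component, whereas your grading has trivial identity component, and an equivalence of gradings realized over their universal groups induces an isomorphism of those groups and hence matches identity components. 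But as written, the step ``universal group $\mathbb{Z}_4\times\mathbb{Z}_2^2$ and type $(14,0,1)$, hence equivalent to the unique fine grading with this universal group'' is not justified.
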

    
    \begin{proof}
        Up to equivalence, the fine $\mathbb{Z}_4\times\mathbb{Z}_2^2$-grading corresponds to the maximal quasitorus $Q\subseteq \aut(D(\alpha))$ generated by $\iota(A,A,A),\iota(A,B,B),\iota(B,B,A)$ (see \cite[\S 7.1]{DEM}). Then, diagonalizing $Q$, we obtain as eigenvectors the homogeneous elements in Definition~\ref{def:IP} for the grading $\gr{\IP}_D(e_0,e_1,e_2)$. 
        Moreover, $\supp(\mathcal{L}_0)=(\{0,2\}\times \mathbb{Z}_2^2) \smallsetminus \{(0,0,0)\}$ and $\supp(\mathcal{L}_1)=\{1,3\}\times \mathbb{Z}_2^2$.
        For any nontrivial subgroup of $\mathbb{Z}_4\times\mathbb{Z}_2^2$, the intersection with $\supp(\mathcal{L}_0)$ is not contained in $\{e\}$, so there are no almost fine proper coarsenings.
        Finally, a homomorphism  $\mathbb{Z}_4\times\mathbb{Z}_2^2\rightarrow G$ is admissible if and only if it is injective. 
        %By construction of the grading $\gr{\IP}_D(g_0,g_1,g_2)$, the map given by $\alpha(e_i)=g_i$, $i=0,1,2$, is injective, therefore is admissible and induces the $G$ grading.
    \end{proof}

    \begin{proposition}\label{prop:IP_weyl}
        If $|\stab(\alpha)|=1$, then the Weyl group of a fine grading on $D(\alpha)$ with universal group $\mathbb{Z}_4\times\mathbb{Z}_2^2$ is isomorphic to $C_2^5$ and acts on $\mathbb{Z}_4\times\mathbb{Z}_2^2$ as the set of all automorphisms that preserve the three subgroups $\langle 2e_0, e_1\rangle$, $\langle 2e_0, e_2\rangle$, $\langle 2e_0, e_1+e_2\rangle$.
    \end{proposition}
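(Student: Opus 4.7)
Following the strategy used in \Cref{prop:WeylI}--\Cref{prop:WeylV}, I would first compute the kernel $\bar{K}$ of the restriction homomorphism $W(\Gamma)\to W(\Gamma_0)$ and then determine the image. The supports of $\mathcal{L}_0$ and $\mathcal{L}_1$ are disjoint by \Cref{prop:IP_fine}, so \Cref{Misha's lemma} embeds $\bar{K}\hookrightarrow U_{[2]}\cong\mathbb{Z}_2^3$. While $\aut(\mathcal{L})\to\aut(\mathcal{L}_0)$ is not surjective when $|\stab(\alpha)|=1$, its image is the identity component $\mathrm{PSL}_2^3$; since the three subgroups $T_i=\langle\mathrm{Supp}(\Gamma|_{\mathfrak{s}_i})\rangle$ are pairwise distinct, no non-trivial permutation of the $\mathfrak{s}_i$ lies in $\mathrm{Stab}(\Gamma_0)$, so $\mathrm{Stab}(\Gamma_0)\subseteq\mathrm{PSL}_2^3$ and the proof of \Cref{l:exact_seq} applies to give
\[
1\to\langle\delta\rangle\to\mathrm{Stab}(\Gamma)\to\mathrm{Stab}(\Gamma_0)\to\bar{K}\to 1.
\]
By \Cref{l:DiagStab}, $|\mathrm{Stab}(\Gamma)|=|\mathrm{Diag}(\Gamma)|=16$, whereas $\mathrm{Stab}(\Gamma_0)\cong\prod_{i=1}^{3}\mathrm{Diag}(\Gamma|_{\mathfrak{s}_i})\cong(\mathbb{Z}_2^2)^3$ has order $64$; hence $|\bar{K}|=8$ and $\bar{K}$ fills $U_{[2]}$.

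Next, any element of $W(\Gamma)\subseteq\aut(U)$ must stabilize every $T_i$ (since the ideals $\mathfrak{s}_i$ cannot be permuted) and therefore fix $T_1\cap T_2=\langle 2e_0\rangle$. A direct enumeration of the automorphisms of $U=\mathbb{Z}_4\times\mathbb{Z}_2^2$ satisfying these constraints yields exactly $32$ maps, parameterized by $\phi(e_0)=\varepsilon e_0+v$ and $\phi(e_i)=f_i\cdot 2e_0+e_i$ with $(\varepsilon,v,f_1,f_2)\in\{1,3\}\times\mathbb{Z}_2^2\times\mathbb{F}_2\times\mathbb{F}_2$. A routine calculation shows this group is abelian and $2$-elementary, hence isomorphic to $C_2^5$; in particular, $|W(\Gamma)|\leq 32$.

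For the reverse inequality, $\bar{K}$ accounts for the $8$ elements with $(f_1,f_2)=(0,0)$, so it suffices to realize one representative in each of the three non-trivial cosets $(f_1,f_2)\in\{(1,0),(0,1),(1,1)\}$. I would use $\iota(Y_{(ab)},1,Y_{(ab)})$, $\iota(1,Y_{(ab)},Y_{(ab)})$, and $\iota(Y_{(ab)},Y_{(ab)},1)$ respectively, where $Y_{(ab)}$ is the $\slg_2(\mathbb{F})$-matrix that swaps the $a$- and $b$-components of the Pauli grading on $\mathfrak{sl}_2$. Using that $\tilde{Q}=\mathrm{res}(\mathrm{Diag}(\Gamma))\subset\mathrm{PSL}_2^3$ is generated by $(A,A,A)$, $(A,B,B)$, $(B,B,A)$ and that $ABA=B$ in $\mathrm{PSL}_2$, a direct check confirms that conjugation by each of these maps normalizes $\tilde{Q}$, so they lie in $\aut(\Gamma)$ by \Cref{l:weyl}. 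The main subtlety is that the naive choice $\iota(Y_{(ab)},1,1)$ does not work: it sends $(A,A,A)\mapsto(B,A,A)\notin\tilde{Q}$, so the transposition must be applied simultaneously on two ideals, which is precisely the constraint $f_3=f_1+f_2$.
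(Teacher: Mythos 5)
Your overall strategy is the same as the paper's: bound $W(\Gamma)$ above by the automorphisms of $U=\mathbb{Z}_4\times\mathbb{Z}_2^2$ preserving the three supports $\langle 2e_0,e_1\rangle$, $\langle 2e_0,e_2\rangle$, $\langle 2e_0,e_1+e_2\rangle$ (a set of $32$ maps), and then realize all of them. You obtain the order-$8$ kernel via \Cref{Misha's lemma} and the exact sequence of \Cref{l:exact_seq} (with a correct justification that $\mathrm{Stab}(\Gamma_0)\subseteq\mathrm{PSL}_2(\mathbb{F})^3$ lies in the image of the restriction map, so the lemma applies even though $\res$ is not surjective), whereas the paper realizes the same eight maps directly by $\iota(A,1,1)$, $\iota(B,1,1)$, $\iota(1,1,A)$; both routes are fine. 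Your choice of $\iota(Y_{(ab)},1,Y_{(ab)})$ and $\iota(1,Y_{(ab)},Y_{(ab)})$ for the remaining cosets, and the observation that $\iota(Y_{(ab)},1,1)$ fails to normalize $\tilde{Q}$, also match the paper (your third generator $\iota(Y_{(ab)},Y_{(ab)},1)$ is redundant modulo $\mathrm{Stab}(\Gamma)$, but harmless).

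There is, however, one step that fails: the claim that "a routine calculation shows this group is abelian and $2$-elementary." It is not. The group of all $32$ automorphisms preserving the three subgroups contains, on one hand, the translation $n\colon e_0\mapsto e_0+e_1$, $e_1\mapsto e_1$, $e_2\mapsto e_2$ (an element of $\bar{K}$), and on the other hand $\sigma_1\colon e_0\mapsto e_0$, $e_1\mapsto e_1+2e_0$, $e_2\mapsto e_2$; these do not commute, since $\sigma_1 n(e_0)=\sigma_1(e_0+e_1)=3e_0+e_1$ while $n\sigma_1(e_0)=e_0+e_1$. In fact $(n\sigma_1)^2$ sends $e_0$ to $3e_0$, so the group has elements of order $4$ and is isomorphic to $U_{[2]}\rtimes C_2^2$, not to $C_2^5$. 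To be fair, this error is not yours alone: the paper's own proof concludes "$W(\Gamma)\cong C_2^3\times C_2^2$" even though the two displayed subgroups do not centralize each other, so the isomorphism type asserted in \Cref{prop:IP_weyl} is itself incorrect. The part of the proposition that actually feeds into \Cref{thm:D1} — the identification of $W(\Gamma)$, as a subgroup of $\aut(U)$, with the full stabilizer of the three subgroups — is established correctly by your argument (and by the paper's), so the downstream classification is unaffected; but you should not assert abelianness, and the "routine calculation" as described would not go through.
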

    \begin{proof}
        The Weyl group must fix the degree, $2e_0$, of the unique 3-dimensional homogeneous component $\langle C_1,C_2,C_3\rangle$. Since automorphisms cannot permute the three ideals, the Weyl group must leave invariant their supports, which are exactly the three subgroups $\langle 2e_0, e_1\rangle$, $\langle 2e_0, e_2\rangle$, $\langle 2e_0, e_1+e_2\rangle$ with $0$ removed.
        The subgroup generated by the elements $\iota(A,1,1)$, $\iota(B,1,1)$ and $\iota(1,1,A)$ fix $e_1$ and $e_2$ and replace $e_0$ with an arbitrary element of order 4, while the automorphisms $\iota(Y_{ab},1,Y_{ab})$ and $\iota(1,Y_{ab},Y_{ab})$ fix $e_0$ and respectively swap $e_1$ and $2e_0+e_1$ or $e_2$ and $2e_0+e_2$.
        We conclude that $W(\Gamma)\cong C_2^3\times C_2^2\cong C_2^5$.
	%Consider the exact sequence $1\rightarrow \bar{K}\rightarrow W(\Gamma)\rightarrow W(\Gamma_0)$ induced by the restriction map. The elements $\iota(A,1,1)$, $\iota(B,1,1)$, $\iota(1,1,A)$ are in $\bar{K}$ and span a subgroup of order eight. Moreover $\mathbb{Z}_4\times\mathbb{Z}_2^2$ has only eight elements of order two so, by lemma~\ref{Misha's lemma}, $\bar{K}$ has order eight. Furthermore the Weyl group must preserve the unique 3-dimensional subspace $\langle C_1,C_2,C_3\rangle$ of degree $e_0^2$ and cannot permute the three ideals, so it must preserve their supports as sets and they are exactly the three subgroups $\langle e_0^2, e_1\rangle$, $\langle e_0^2, e_2\rangle$, $\langle e_0^2, e_1e_2\rangle$ minus the identity. Under this constraints $e_1$ can only be swapped with $e_0^2e_1$ and $e_2$ with $e_0^2e_2$. This permutations are realized respectively by the automorphisms $\iota(Y_{ab},1,Y_{ab})$, $\iota(1,Y_{ab},Y_{ab})$.\\	
	%In conclusion $W(\Gamma)\cong\mathbb{Z}_2^3\times\mathbb{Z}_2^2\cong\mathbb{Z}_2^5$ and is generated by the images of $\iota(A,1,1)$, $\iota(B,1,1)$, $\iota(1,1,A)$, $ \iota(Y_{ab},Y_{ab},1)$, $\iota(1,Y_{ab},Y_{ab})$. By the previous discussion its action is the one required.
    \end{proof}

    \begin{definition}[Gradings of type $\IM{i}$] \label{def:IM}
        Let $g\in G$ and $T=\langle a,b\rangle\cong\mathbb{Z}_2^2$ be a subgroup of $G$. For each $i\in\{1,2,3\}$, we define the grading $\gr{\IM{i}}_D(g,T)$ as follows. The value of $i$ indicates which of the three simple ideals of $\mathcal{L}_0$ has an elementary grading. If, say, $i=3$, then the grading on the distinguished ideal $\mathfrak{s}_3$ is  $\gr{\mathrm{C}}_{\mathfrak{sl}_2}(g^2)$ and the grading on each of $\mathfrak{s}_1$ and $\mathfrak{s}_2$ is $\gr{\Pa}_{\mathfrak{sl}_2}(a,b)$. To define the grading on $\mathcal{L}_1$, note that, since $\psi$ gives an isomorphism $V\cong V^*$ as $\mathfrak{sl}_2$-modules, we also have $V_1\otimes V_2 \cong\mathrm{End}(V)$ as $\mathfrak{s}_1\oplus\mathfrak{s}_2$-modules, via $x\otimes y\mapsto x\psi(y,\cdot)$, where the action is given by $(X,Y).M=XM-MY$ for all $X,Y\in \mathfrak{sl}_2$ and $M\in \mathrm{End}(V)\cong M_2(\mathbb{F})$. Then the grading on $\mathcal{L}_1\cong M_2(\mathbb{F})\otimes V_3$ is the tensor product of $\gr{\Pa}_{M_2}(a,b)$ and $\Gamma_V(g,g^{-1})$ on $V_3$ (i.e., $\deg(u_3)=g$ and $\deg(v_3)=g^{-1}$).
    \end{definition}

    We remark that we slightly abused  notation by denoting the grading $\gr{\IM{i}}_D(g,T)$ instead of $\gr{\IM{i}}_D(g,a,b)$, since it depends on the choice of generators of $T$. However, we will see later that different choices give isomorphic gradings.

    \begin{proposition}\label{prop:IM_fine}
        A fine grading on $D(\alpha)$ with universal group $\mathbb{Z}\times\mathbb{Z}_2^2$ is of type $\IM{i}$ for some $i=1,2,3$, depending on which ideal of $\mathcal{L}_0$ has Cartan grading, and is equivalent to $\gr{\IM{i}}_D(e_0,E)$ for the appropriate $i$, where $E=\langle e_1,e_2\rangle$ and $e_0=(1,0,0)$, $e_1=(0,1,0)$, $e_2=(0,0,1)$. It has no almost fine proper coarsenings. Moreover, the $G$-gradings of type $\IM{i}$ are precisely the ones induced by admissible homomorphisms $\mathbb{Z}\times\mathbb{Z}_2^2\rightarrow G$.
    \end{proposition}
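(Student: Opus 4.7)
The plan is to mirror the strategy used for Propositions~\ref{prop:IC_fine} and \ref{prop:IP_fine}. By \cite[Theorem~8.5]{DEM}, up to equivalence there are exactly three fine gradings on $D(\alpha)$ with universal group $\mathbb{Z}\times\mathbb{Z}_2^2$, and since each has type $(11,3)$ it must carry a Cartan grading on exactly one simple ideal of $\mathcal{L}_0$ and Pauli gradings on the remaining two. Because $|\stab(\alpha)|=1$, every automorphism of $\mathcal{L}$ lies in the image of $\iota$ and hence acts componentwise on $\mathfrak{s}_1\oplus\mathfrak{s}_2\oplus\mathfrak{s}_3$, so it cannot permute the three ideals. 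Consequently the index $i$ of the Cartan-graded ideal is an intrinsic invariant of the equivalence class, yielding the three cases labelled $\IM{1},\IM{2},\IM{3}$.

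For each $i$, I would exhibit a representative by taking the maximal quasitorus $Q\subseteq\aut(D(\alpha))$ obtained, for $i=3$ (the other two being analogous), as the image under $\iota$ of the product $\mathrm{Diag}(\gr{\Pa}_{\mathfrak{sl}_2}(e_1,e_2))\times\mathrm{Diag}(\gr{\Pa}_{\mathfrak{sl}_2}(e_1,e_2))\times\mathrm{Diag}(\gr{\C}_{\mathfrak{sl}_2}(2e_0))$, and diagonalising its action on $\mathcal{L}_0$ and $\mathcal{L}_1$. On $\mathcal{L}_0$ this recovers the prescribed Pauli gradings on $\mathfrak{s}_1,\mathfrak{s}_2$ and the Cartan grading on $\mathfrak{s}_3$; on $\mathcal{L}_1$, using the identification $V_1\otimes V_2\cong\mathrm{End}(V)$ furnished by $\psi$, the common eigenspace decomposition is precisely the tensor product of $\gr{\Pa}_{M_2}(e_1,e_2)$ and $\Gamma_{V_3}(e_0,-e_0)$. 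This matches \Cref{def:IM}, showing that the grading equals $\gr{\IM{3}}_D(e_0,E)$.

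For the absence of almost fine proper coarsenings, I would apply \Cref{thm:almostfine}. Here $t(U)=E$, and the support $\Sigma$ of the induced grading on $\mathrm{Der}(D(\alpha))$ contains $\mathrm{Supp}(\Gamma_0)$, which in turn contains the nontrivial support $\{e_1,e_2,e_1+e_2\}$ of the Pauli grading on either of the two non-Cartan ideals. Hence every nonzero subgroup of $E$ meets $\Sigma$ nontrivially, ruling out any proper almost fine coarsening. For the remaining assertion, I would compute the fibres of $\pi_\Delta\colon U\to U/t(U)\cong\mathbb{Z}$ intersected with the support: $\{e_1,e_2,e_1+e_2\}$ over $0$, a single element over each of $\pm 2e_0$, and the four-element cosets $\pm e_0+E$ over $\pm e_0$. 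The admissibility condition of \Cref{def:admissible} then reduces to the injectivity of $\alpha$ on $E$, i.e.\ $\alpha(E)\cong\mathbb{Z}_2^2$. Setting $g=\alpha(e_0)$ and $T=\alpha(E)$, the induced grading is precisely $\gr{\IM{i}}_D(g,T)$, and every grading in this family arises in this way.

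The main obstacle I anticipate is notational bookkeeping rather than mathematics: one must carefully track how the common eigenvectors of $Q$ on $\mathcal{L}_1$, after the identification $V_1\otimes V_2\cong\mathrm{End}(V)$, correspond to the tensor-product basis of $M_2\otimes V_3$ fixed in \Cref{def:IM}, so that the shifts by elements of $E$ and the appearance of $g^2$ (rather than $g$) on $\mathfrak{s}_3$ come out consistently. Once this identification is unpacked, the rest of the proof runs along the template already established for types $\IC$ and $\IP$.
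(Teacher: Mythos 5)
Your overall strategy is the paper's: quote \cite{DEM} for the three equivalence classes of fine $\mathbb{Z}\times\mathbb{Z}_2^2$-gradings, exhibit the corresponding maximal quasitorus and diagonalize it to recover \Cref{def:IM}, then read off the supports to rule out almost fine proper coarsenings and to see that admissibility amounts to injectivity on the torsion subgroup. Those last two steps are carried out correctly and agree with the paper (note only that the fibre of $\pi_\Delta$ over $0$ meets the support in all of $\{0\}\times\mathbb{Z}_2^2$, including $(0,0,0)$, which carries $H_3$; this does not affect the conclusion).

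The one step that fails as written is the construction of the quasitorus $Q$. You take ``the image under $\iota$ of the product $\mathrm{Diag}(\gr{\Pa}_{\mathfrak{sl}_2}(e_1,e_2))\times\mathrm{Diag}(\gr{\Pa}_{\mathfrak{sl}_2}(e_1,e_2))\times\mathrm{Diag}(\gr{\C}_{\mathfrak{sl}_2}(2e_0))$''. These diagonal groups live in $\aut(\mathfrak{sl}_2)=\psl_2(\mathbb{F})$, while $\iota$ is defined on $\slg_2(\mathbb{F})^3$; lifting each Pauli factor to $\slg_2(\mathbb{F})$ produces the quaternion group $\{\pm I,\pm A,\pm B,\pm C\}$ in that slot, and the image under $\iota$ of the resulting direct product is \emph{not} abelian: $\iota(A,I,I)$ and $\iota(B,I,I)$ do not commute, since their commutator is $\iota(-I,I,I)$ and $(-I,I,I)\notin\ker(\iota)$ (the kernel consists of sign triples with product $1$). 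Such a set of automorphisms has no simultaneous eigenspace decomposition, so it does not define a grading at all, let alone the Pauli grading on $V_1\otimes V_2\cong\mathrm{End}(V)$. What makes the construction work --- and what the paper, following \cite[\S 7.4]{DEM}, actually uses --- is the \emph{synchronized} Klein group generated by $\iota(A,A,I)$ and $\iota(B,B,I)$, which commute precisely because $(-I,-I,I)\in\ker(\iota)$, together with the torus $\{\iota(1,1,d_\mu)\}$; under the identification $x\otimes y\mapsto x\psi(y,\cdot)$ these synchronized elements act on $\mathrm{End}(V)$ by conjugation, which is exactly what produces $\gr{\Pa}_{M_2}(e_1,e_2)$ on the first tensor factor of $\mathcal{L}_1$. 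The eigenspace decomposition of $\mathcal{L}_1$ you state is the correct one, but it is the decomposition under this smaller, genuinely abelian subgroup, not under the product you wrote down; the argument should be repaired by replacing your $Q$ with the quasitorus generated by $\{\iota(1,1,d_\mu)\mid\mu\in\mathbb{F}^\times\}\cup\{\iota(A,A,I),\iota(B,B,I)\}$.
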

    \begin{proof}
        We prove the proposition for $\IM{3}$. The other two cases are analogous.

        Up to equivalence, the fine $\mathbb{Z}\times\mathbb{Z}_2^2$-grading corresponds to the maximal quasitorus $Q\subseteq \aut(D(\alpha))$ generated by $\{\iota(1,1,d_\mu)\mid\mu\in\mathbb{F}^{\times}\}\cup\{\iota(A,A,1),\iota(B,B,1)\}$ where $d_\mu = \diag(\mu,\mu^{-1})$ (see \cite[\S 7.4]{DEM}). Then, diagonalizing $Q$, we obtain as eigenvectors the homogeneous elements in Definition~\ref{def:IM} for the grading $\gr{\IM{3}}_D(e_0,E)$. 
        The supports are $\supp(\Gamma_0)=\{(\pm2,0,0)\}\cup(\{0\}\times\mathbb{Z}_2^2)$ and $\supp(\Gamma_1)=\{\pm1\}\times \mathbb{Z}_2^2$. Since the torsion subgroup is contained in $\supp(\Gamma_0)$, there are no almost fine proper coarsenings.
        Finally, a homomorphism $\mathbb{Z}\times\mathbb{Z}_2^2\rightarrow G$ is admissible if and only if its restriction to the torsion subgroup is injective. %By construction of the grading $\gr{\IM{3}}_D(g,H)$, the map given by $\alpha(e_0)=g$, $\alpha(e_i)=h_i$, $i=1,2$, is injective, therefore is admissible and induces the $G$ grading.
    \end{proof}

    \begin{proposition}\label{prop:IM_weyl}
        The Weyl group of a fine grading on $D(\alpha)$ with universal group $\mathbb{Z}\times\mathbb{Z}_2^2$ is $\aut(\mathbb{Z}\times\mathbb{Z}_2^2)\cong C_2\times(C_2^2 \rtimes S_3)$.
    \end{proposition}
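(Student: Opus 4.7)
The plan is to emulate the arguments of \Cref{prop:IC_weyl} and \Cref{prop:IP_weyl}: take the exact sequence $1\to\bar{K}\to W(\Gamma)\to W(\Gamma_0)$ from \Cref{Misha's lemma} and bound its image and kernel separately, then exhibit enough explicit inner automorphisms of $\mathcal{L}$ via $\iota\colon\slg_2(\mathbb{F})^3\to\inn(D(\alpha))$ to hit every element of $\aut(U)\cong C_2\times(C_2^2\rtimes S_3)$. Since $W(\Gamma)\hookrightarrow\aut(U)$ and $|\aut(U)|=48$, the equality will then follow by a count. Without loss of generality I treat the case $i=3$, where $\mathfrak{s}_3$ carries the Cartan grading and $\mathfrak{s}_1,\mathfrak{s}_2$ both carry the Pauli grading $\gr{\Pa}_{\mathfrak{sl}_2}(a,b)$.

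For the kernel, $U_{[2]}=\{0\}\times\mathbb{Z}_2^2$, so \Cref{Misha's lemma} gives $\bar{K}\hookrightarrow\mathbb{Z}_2^2$, with a non-zero $k=(0,t)\in\bar{K}$ acting by shifting degrees of $\mathcal{L}_1$ by $t$ (since the parity homomorphism $p\colon U\to\mathbb{Z}_2$ sends $(n,s)\mapsto n\bmod 2$). The two generators of $\bar{K}$ will be realised by $\iota(A,1,1)$ and $\iota(B,1,1)$: conjugation by $A$ (respectively $B$) acts diagonally on the Pauli grading of $\mathfrak{s}_1$, so both lie in $K$; however, under the identification $V_1\otimes V_2\cong M_2$ via $x\otimes y\mapsto x\psi(y,\cdot)$, their action on the $M_2$-factor of $\mathcal{L}_1$ becomes \emph{left} multiplication by $A$ (resp.~$B$), which shifts the Pauli degree by $a$ (resp.~$b$). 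These give independent shifts $e_0\mapsto e_0+e_1$ and $e_0\mapsto e_0+e_2$, so $\bar{K}=U_{[2]}\cong\mathbb{Z}_2^2$.

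For the image in $W(\Gamma_0)$, the inversion $e_0\mapsto-e_0$ will be realised by $\iota(1,1,C)$, which inverts the Cartan grading on $\mathfrak{s}_3$ (via $\mathrm{Ad}(C)$: $H\mapsto-H$, $E\leftrightarrow F$) and swaps $u\leftrightarrow v$ on $V_3$, while fixing $\mathfrak{s}_1$, $\mathfrak{s}_2$, and the $M_2$-factor. The $S_3$-action on $\mathbb{Z}_2^2$ will be realised by $\iota(Y_\pi,Y_\pi,1)$ for $\pi\in S_3$: conjugation by $Y_\pi$ permutes the Pauli components of $\mathfrak{s}_1$, $\mathfrak{s}_2$, and (by the same identification as above) the $M_2$-factor of $\mathcal{L}_1$ simultaneously by $\pi$, while $\mathfrak{s}_3$ and $V_3$ are untouched. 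Combined with $\bar{K}$, these twelve elements yield $4\cdot 12=48$ distinct elements of $W(\Gamma)$, which saturates $\aut(U)$.

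The main obstacle is the observation underlying Step 2: under the identification $V_1\otimes V_2\cong M_2$, the automorphism $\iota(A,1,1)$ acts on the $M_2$-factor by \emph{left} multiplication by $A$ rather than by conjugation, and hence \emph{shifts} the Pauli grading on $\mathcal{L}_1$ by $a$. It is this asymmetry between how the action manifests on $\mathcal{L}_0$ (conjugation, diagonal) and on $\mathcal{L}_1$ (left multiplication, shift) that produces a nontrivial $\bar{K}$ here, in contrast with the Cartan case of \Cref{prop:IC_weyl}. Care must also be taken to check that $\iota(Y_\pi,Y_\pi,1)$ acts compatibly on both factors $V_1$ and $V_2$ so that the induced permutation on the $M_2$-factor is by conjugation and hence really is an element of the Pauli symmetric group $S_3$.
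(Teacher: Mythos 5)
Your proposal is correct and follows essentially the same route as the paper: the paper's proof likewise exhibits explicit inner automorphisms (it uses $\iota(1,A,1)$, $\iota(1,B,1)$, $\iota(1,1,C)$ and the subgroup $\{\iota(Y_\sigma,Y_\sigma,1)\}$, acting on the $M_2$-factor by right rather than left multiplication, which realizes the same translations of $e_0$ by torsion elements) and concludes that they generate all of $\aut(\mathbb{Z}\times\mathbb{Z}_2^2)$. Your added bookkeeping via the exact sequence of \Cref{Misha's lemma} and the order count $4\cdot 12=48$ is a harmless elaboration of the paper's closing remark that the listed elements generate the whole automorphism group.
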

    \begin{proof}
        Using the automorphisms $\iota(1,A,1)$, $\iota(1,B,1)$ and $\iota(1,1,C)$, we can move $e_0$ to any element whose image generates the quotient by the torsion subgroup. Also, the subgroup $\{\iota(Y_\sigma,Y_\sigma,1)\mid\sigma\in S_3\}$ acts as the full automorphism group of the torsion subgroup. It easy to check that the described elements generate the entire $\aut(\mathbb{Z}\times\mathbb{Z}_2^2)\cong C_2\times(C_2^2 \rtimes S_3)$.
        %Consider the exact sequence $1\rightarrow \bar{K}\rightarrow W(\Gamma)\rightarrow W(\Gamma_0)$ induced by the restriction map. $K\cong \mathbb{Z}_2^2$ since it contains the elements $\iota(1,A,1),\iota(1,B,1)$ and by lemma~\ref{Misha's lemma} they generate the whole group. Moreover the ideal preserving subgroup of $W(\Gamma_0)$ is isomorphic to $S_3\times S_3\times\mathbb{Z}_2$. But the first two ideals have the same grading, so their homogeneous spaces must be permuted in the same way. Therefore the restriction of $W(\Gamma)$ to $(\Gamma_0)$ is generated by $\iota(1,1,S)$ and the set $\{\iota(Y_\sigma,Y_\sigma,1) | \sigma\in S_3\}$. It easy to check that the described elements generate $\aut(\mathbb{Z}\times\mathbb{Z}_2^2)$.
    \end{proof}

    \begin{theorem}\label{thm:D1}
        Let $\mathcal{L}=D(\alpha)$ with $|\stab(\alpha)|=1$ and let $G$ be an abelian group. Then every $G$-grading on $\mathcal{L}$ is of one of the types $\IC,\IP,\IM{1},\IM{2},\IM{3}$ (see Definitions \ref{def:IC}, \ref{def:IP}, \ref{def:IM}). Gradings of different types are not isomorphic and, within each type, the isomorphism classes are as follows (summarized in Table \ref{tab:D(alpha)}):
		\begin{itemize}
			\item $\gr{\IC}_D(h_1,h_2,h_3,h_4)\cong \gr{\IC}_D(h'_1,h'_2,h'_3,h'_4)$ if and only if there exist  $\sigma\in V_4\triangleleft S_4$ and $\varepsilon\in\{\pm1\}$ such that $h'_i=h_{\sigma(i)}^\varepsilon$ for all $i$,
			\item $\gr{\IP}_D(s,a,b)\cong \gr{\IP}_D(s',a',b')$ if and only if $\langle s,a,b \rangle=\langle s',a',b'\rangle$, $a'\in a\langle s^2\rangle$ and $b'\in b\langle s^2\rangle$,
			\item $\gr{\IM{n}}_D(g,T)\cong \gr{\IM{n}}_D(g',T')$ %, $n\in\{1,2,3\}$, 
            if and only if $T'=T$ and $g'\in gT\cup g^{-1}T$. 
		\end{itemize}
    \end{theorem}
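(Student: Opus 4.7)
The plan is to combine \Cref{thm:gradings} with the computations already carried out in Propositions \ref{prop:IC_fine}--\ref{prop:IM_weyl}. Each of the five fine gradings $\gr{\IC}_D, \gr{\IP}_D, \gr{\IM{1}}_D, \gr{\IM{2}}_D, \gr{\IM{3}}_D$ has been shown to admit no almost fine proper coarsenings, so together they form a complete set of representatives for the equivalence classes of almost fine abelian group gradings on $\mathcal{L}$. Hence every $G$-grading arises from exactly one of them via an admissible homomorphism from its universal group, uniquely determined up to the action of the corresponding Weyl group. The admissibility conditions already recorded in the cited propositions produce precisely the families of \Cref{def:IC,def:IP,def:IM}.

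For the non-isomorphism between different families, the universal groups $\mathbb{Z}^3$, $\mathbb{Z}_4\times\mathbb{Z}_2^2$ and $\mathbb{Z}\times\mathbb{Z}_2^2$ are pairwise non-isomorphic, so the families $\gr{\IC}$, $\gr{\IP}$ and $\gr{\IM{*}}$ cannot intersect. To separate $\gr{\IM{1}}$, $\gr{\IM{2}}$ and $\gr{\IM{3}}$, I would invoke the hypothesis $|\stab(\alpha)|=1$: by the discussion preceding this subsection, $\aut(\mathcal{L})=\inn(\mathcal{L})$ equals the image of $\iota$, hence every automorphism preserves each ideal $\mathfrak{s}_i$, so the index $n$ of the ideal carrying the elementary grading is an isomorphism invariant.

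For the equivalence relations within each family, I would translate the Weyl group action on the universal group into a condition on the parameters. For $\gr{\IP}_D$, \Cref{prop:IP_weyl} identifies the Weyl group as the stabilizer in $\aut(\mathbb{Z}_4\times\mathbb{Z}_2^2)$ of the three subgroups generated by $\{2e_0, e_1\}$, $\{2e_0, e_2\}$ and $\{2e_0, e_1+e_2\}$; combined with admissibility (injectivity), the orbit of $(e_0,e_1,e_2)\mapsto(s,a,b)$ consists exactly of triples $(s',a',b')$ with the same generated subgroup, $a'\in a\langle s^2\rangle$ and $b'\in b\langle s^2\rangle$. For $\gr{\IM{n}}_D$, the Weyl group is the full $\aut(\mathbb{Z}\times\mathbb{Z}_2^2)$ by \Cref{prop:IM_weyl}; together with admissibility (injectivity on torsion), this yields exactly the stated condition $T'=T$ and $g'\in gT\cup g^{-1}T$.

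The main obstacle is the $\gr{\IC}_D$ case, where \Cref{prop:IC_weyl} describes the Weyl group $C_2^3$ via its action on the basis $\alpha_1,\alpha_2,\alpha_3$ of $\mathbb{Z}^3$, not on the $e_i$. Using the change of basis $e_1=\frac12(-\alpha_1+\alpha_2+\alpha_3)$, $e_2=\frac12(\alpha_1-\alpha_2+\alpha_3)$, $e_3=\frac12(\alpha_1+\alpha_2-\alpha_3)$ recorded in the remark after \Cref{prop:IC_weyl}, I would verify that the $C_2^3$-action restricts on the configuration $\{\pm e_1,\pm e_2,\pm e_3,\pm e_4\}$ to the action of $V_4\times C_2$ by the Klein-group permutation of the four indices combined with simultaneous sign change. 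Since the parameters $(h_1,h_2,h_3,h_4)$ of $\gr{\IC}_D$ are the images of $(e_1,e_2,e_3,e_4)$ under the admissible homomorphism and every homomorphism $\mathbb{Z}^3\to G$ is admissible, the stated equivalence $h'_i=h_{\sigma(i)}^\varepsilon$ with $\sigma\in V_4$ and $\varepsilon\in\{\pm1\}$ follows at once.
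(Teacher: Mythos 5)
Your proposal is correct and follows essentially the same route as the paper: invoke \Cref{thm:gradings} together with Propositions \ref{prop:IC_fine}--\ref{prop:IM_weyl}, which already establish that the five fine gradings are the only almost fine ones, identify the admissible homomorphisms, and compute the Weyl groups whose actions translate into the stated parameter conditions (for $\gr{\IC}_D$ via the change of basis in the remark after \Cref{prop:IC_weyl}). Your aside that the families are separated because the universal groups of the fine gradings differ is, on its own, not a valid argument (an induced $G$-grading does not remember the universal group of the fine grading it comes from), but it is harmless since the uniqueness assertion of \Cref{thm:gradings}, which you already invoke, is what actually separates the families --- exactly as in the paper.
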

    \begin{proof}
    %By \Cref{thm:gradings}, each $G$-grading is induced by an admissible homomorphism $\alpha:U(\Gamma)\rightarrow G$, where $\Gamma$ is an almost fine grading, and two $G$ gradings are isomorphic if and only if their are induced by the same almost fine grading and are in the same orbit of its Weyl group.
        Recall that, by \cite{DEM}, if $|\stab(\alpha)|=1$ then any fine grading on $D(\alpha)$ is equivalent to a $\mathbb{Z}^3$-grading, a $\mathbb{Z}_4\times\mathbb{Z}_2^2$-grading, or one of the three  $\mathbb{Z}\times\mathbb{Z}_2^2$-gradings. Therefore, by \Cref{thm:gradings} and Propositions \ref{prop:IC_fine}, \ref{prop:IP_fine} and \ref{prop:IM_fine}, any $G$-grading is isomorphic to one of the types $\IC,\IP,\IM{1},\IM{2},\IM{3}$, and gradings of different types are not isomorphic. By the same theorem and Propositions \ref{prop:IC_weyl}, \ref{prop:IP_weyl} and \ref{prop:IM_weyl}, the action of the Weyl group in each type leads to the stated conditions.
    \end{proof}

    \begin{table}
    \begin{tabular}{|c|c|c|c|}
    \hline
    Family & Subgroups & Combinatorial data & Equivalence\\ \hline 
    $\gr{\IC}_D$ 
    & trivial & \vtop{\hbox{\strut $(h_1,h_2,h_3,h_4)\in G^4$}\hbox{\strut with $h_1h_2h_3h_4=e$}} & \vtop{\hbox{\strut inversion and}\hbox{\strut permutation} \hbox{\strut by $V_4\triangleleft S_4$}} \\ \hline 
    $\gr{\IP}_D$ 
    &  $H=\langle s\rangle\times\langle a,b\rangle\cong\mathbb{Z}_4\times\mathbb{Z}_2^2$ & \vtop{\hbox{\strut $(aS,bS)\in (H_{[2]}/S)^2$}\hbox{\strut where $S=\langle s^2\rangle$}} & \\ \hline 
     $\gr{\IM{n}}_D$ 
    & $T\cong \mathbb{Z}_2^2$ & $gT\in G/T$ & inversion \\ $n=1,2,3$ & & & \\ \hline 
    \end{tabular}
    \vspace{5pt} 
    \caption{Classification of $G$-gradings on the Lie superalgebra $D(\alpha)$ with $\alpha\notin\{-2,-1,0,\frac{1}{2},1,\omega,\omega^2\}$}\label{tab:D(alpha)}
    \end{table}
	
\subsection{Orbit of size $2$}
	
    The only orbit of size $2$ is $\{\omega,\omega^2\}$, where $\omega$ a primitive cube root of unity, so without loss of generality we can assume $\alpha=\omega$. In this case, we have the outer automorphism $\varphi:\mathcal{L}\to \mathcal{L}$ defined by $(x_1,x_2,x_3)\in \mathcal{L}_0 \mapsto (x_2,x_3,x_1)$ and $x_1\otimes x_2\otimes x_3\in \mathcal{L}_1\mapsto \omega^{-1}x_2\otimes x_3\otimes x_1$. Note that $\varphi^3=1$.
    
    By \cite{S}, $\aut(D(\omega))\cong\inn(D(\omega))\rtimes C_3$, and $\langle\varphi\rangle\cong C_3$ gives a section of $\aut(\mathcal{L})\to\out(\mathcal{L})$. For any $f,g,h\in\slg_2(\mathbb{F})$, we have $\varphi\iota(f,g,h)\varphi^{-1}=\iota(g,h,f)$.

    We already described the inner gradings (i.e., the $G$-gradings for which the image of $\widehat{G}$ in $\aut(\mathcal{L})$ consists of inner automorphisms) in Definitions \ref{def:IC}, \ref{def:IP} and \ref{def:IM}, but the outer automorphisms can induce some additional isomorphisms between them. In fact, for the fine gradings of type $\IC$ and $\IP$ (Cartan and Pauli), we obtain larger Weyl groups, since in both cases $\varphi$ permutes the homogeneous components, so the Weyl group is enlarged by $\langle\varphi\rangle\cong C_3$. This gives the following two results:

    \begin{proposition}\label{prop:IC_weyl3}
        The Weyl group of a fine grading on $D(\omega)$ with universal group $\mathbb{Z}^3$ is isomorphic to $C_2^3\rtimes C_3$ and acts on $\mathbb{Z}^3$ by changing signs of the elements $\alpha_1=e_2+e_3$, $\alpha_2=e_1+e_3$ and $\alpha_3=e_1+e_2$ or permuting them cyclically.\qed 
    \end{proposition}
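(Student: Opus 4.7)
The strategy is to build on \Cref{prop:IC_weyl} and incorporate the new contribution from the outer automorphism $\varphi$. The proof of \Cref{prop:IC_weyl} nowhere used that $|\mathrm{Stab}(\alpha)|=1$ when producing the $C_2^3$ subgroup: the inner automorphisms $\iota(C,1,1)$, $\iota(1,C,1)$, $\iota(1,1,C)$ all preserve $\Gamma$ and act by inverting $\alpha_1$, $\alpha_2$, $\alpha_3$ independently. So that subgroup of $W(\Gamma)$ is present for $\alpha=\omega$ as well.

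Next, I would show that $\varphi\in\aut(\Gamma)$ and compute its induced action on $U=\mathbb{Z}^3$. On $\mathcal{L}_0$, the map $\varphi$ cyclically permutes the ideals $\mathfrak{s}_1,\mathfrak{s}_2,\mathfrak{s}_3$, whose homogeneous components have supports $\{\pm\alpha_1\},\{\pm\alpha_2\},\{\pm\alpha_3\}$ respectively, so $\varphi$ automatically permutes the homogeneous components of $\Gamma_0$ and hence $\varphi\in\aut(\Gamma)$. To pin down the induced permutation, apply $\varphi(x_1\otimes x_2\otimes x_3)=\omega^{-1}x_2\otimes x_3\otimes x_1$ to the basis $\{w_i,w'_i\}$ of $\mathcal{L}_1$; the scalar $\omega^{-1}$ is irrelevant to the degree, and one reads off $\varphi(w_1)=\omega^{-1}w_3$, $\varphi(w_2)=\omega^{-1}w_1$, $\varphi(w_3)=\omega^{-1}w_2$, $\varphi(w_0)=\omega^{-1}w_0$. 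Hence $\varphi$ induces $e_1\mapsto e_3$, $e_2\mapsto e_1$, $e_3\mapsto e_2$ (and fixes $e_4$) on the generators of $U$, which amounts to the cyclic permutation $\alpha_1\mapsto\alpha_3\mapsto\alpha_2\mapsto\alpha_1$. Together with the $C_2^3$ above, this produces a subgroup of $W(\Gamma)$ isomorphic to $C_2^3\rtimes C_3$ acting exactly as stated.

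For the reverse inclusion, apply \Cref{Misha's lemma}: the kernel $\bar{K}$ of the restriction map $W(\Gamma)\to W(\Gamma_0)$ embeds in $U_{[2]}=0$, so it is trivial and $W(\Gamma)\hookrightarrow W(\Gamma_0)$. The Weyl group $W(\Gamma_0)$ of the restricted grading on $\mathcal{L}_0=\mathfrak{s}_1\oplus\mathfrak{s}_2\oplus\mathfrak{s}_3$ is $C_2^3\rtimes S_3$, but not all of it is realized: since $\aut(D(\omega))=\inn(D(\omega))\rtimes\langle\varphi\rangle$ with $\langle\varphi\rangle\cong C_3$, every automorphism of $\mathcal{L}$ restricts to an automorphism of $\mathcal{L}_0$ that either fixes each ideal $\mathfrak{s}_i$ or permutes them cyclically, never by a transposition. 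Consequently, the image of $W(\Gamma)$ in $W(\Gamma_0)$ lies in $C_2^3\rtimes C_3$, matching the subgroup already constructed.

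The only real delicate point I anticipate is bookkeeping: the cycle $\varphi$ acts on the ideals of $\mathcal{L}_0$ in one direction and on the generators $e_i$ of $U$ in the ``opposite'' direction (because $\alpha_i$ is indexed by the ideal $\mathfrak{s}_i$ but equals the sum $e_j+e_k$ with $\{i,j,k\}=\{1,2,3\}$), so one must be careful not to flip the cycle. The rest is then a direct combination of \Cref{prop:IC_weyl}, \Cref{Misha's lemma}, and the explicit form of $\mathrm{Out}(D(\omega))$.
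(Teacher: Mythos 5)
Your proposal is correct and follows the same route as the paper, which simply notes (without a formal proof) that the inner part $C_2^3$ from \Cref{prop:IC_weyl} persists and that $\varphi$ permutes the homogeneous components, enlarging the Weyl group by $\langle\varphi\rangle\cong C_3$. Your explicit computation of $\varphi$'s action on $w_0,\dots,w_3$ (giving the cycle $\alpha_1\mapsto\alpha_3\mapsto\alpha_2\mapsto\alpha_1$) and the upper-bound argument via $\bar{K}\hookrightarrow U_{[2]}=0$ together with $\out(D(\omega))\cong C_3$ correctly fill in the details the paper leaves implicit.
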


    \begin{remark}
        The above action, when restricted to the elements $\pm e_1, \pm e_2, \pm e_3, \pm e_4$, corresponds to the permutation action on their indices by the alternating group $A_4\triangleleft S_4$, combined with the  simultaneous change of sign.
    \end{remark}
    
    \begin{proposition}\label{prop:IP_weyl3}
        The Weyl group of a fine grading on $D(\omega)$ with universal group $\mathbb{Z}_4\times\mathbb{Z}_2^2$ is isomorphic to $C_2^5\rtimes C_3$ and acts on $\mathbb{Z}_4\times\mathbb{Z}_2^2$ as the set of all automorphisms that permute cyclically the three subgroups $\langle 2e_0, e_1\rangle$, $\langle 2e_0, e_2\rangle$, $\langle 2e_0, e_1+e_2\rangle$.\qed 
    \end{proposition}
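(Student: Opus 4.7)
The plan is to extend \Cref{prop:IP_weyl} by accounting for the outer automorphism $\varphi$ available when $\alpha=\omega$. The key structural fact is that $\aut(D(\omega))=\inn(D(\omega))\rtimes\langle\varphi\rangle$ with $\langle\varphi\rangle\cong C_3$, and inner automorphisms never permute the three simple ideals $\mathfrak{s}_1,\mathfrak{s}_2,\mathfrak{s}_3$ of $\mathcal{L}_0$. Therefore the restriction map $\aut(\Gamma)\to S_3$ (permutations of the ideals) has image contained in $\langle\varphi\rangle\cong C_3$, its kernel intersects $\aut(\Gamma)$ exactly in the inner part, and the inner part contributes $C_2^5$ to $W(\Gamma)$ by the same argument as in \Cref{prop:IP_weyl}. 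Hence $W(\Gamma)$ fits in a short exact sequence $1\to C_2^5\to W(\Gamma)\to C'\to 1$ with $C'\subseteq C_3$, and to pin down $W(\Gamma)$ I only need to show $\varphi\in\aut(\Gamma)$.

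To verify this, I would first look at $\mathcal{L}_0$: since $\varphi$ sends $\mathfrak{s}_1\to\mathfrak{s}_3\to\mathfrak{s}_2\to\mathfrak{s}_1$ via the identity on the underlying matrices, comparing the three Pauli gradings $\gr{\Pa}_{\mathfrak{sl}_2}(a,s^2a)$, $\gr{\Pa}_{\mathfrak{sl}_2}(b,s^2b)$, $\gr{\Pa}_{\mathfrak{sl}_2}(ab,s^2ab)$ shows that $\varphi$ permutes the homogeneous components of $\Gamma_0$ according to $a\mapsto ab$, $b\mapsto a$, $ab\mapsto b$ (and fixes $s^2$). This is consistent with an automorphism $\sigma$ of $U=\mathbb{Z}_4\times\mathbb{Z}_2^2$ restricted to $U_{[2]}=\langle s^2,a,b\rangle$. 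Next, on $\mathcal{L}_1$, the formula $\varphi(x\otimes y\otimes z)=\omega^{-1}y\otimes z\otimes x$ gives $\varphi(w_0)=\omega^{-1}w_0$ together with the three-cycle $w_1\to w_3\to w_2\to w_1$ (each scaled by $\omega^{-1}$); applying this to the homogeneous elements $\sum_i\chi_i(h)w_i$ of degree $sh$ (for $h\in\langle a,b\rangle$) shows that the degree-$sh$ component goes to the degree-$s\sigma(h)$ component. This forces $\sigma(s)=s$ and verifies consistency, so $\sigma\in\aut(U)$ is a well-defined order-$3$ automorphism and $\varphi\in\aut(\Gamma)$.

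The remainder is bookkeeping: $\sigma$ cyclically permutes the three distinguished subgroups, $\langle 2e_0,e_1\rangle\to\langle 2e_0,e_1+e_2\rangle\to\langle 2e_0,e_2\rangle\to\langle 2e_0,e_1\rangle$, so $W(\Gamma)\cong C_2^5\rtimes\langle\sigma\rangle\cong C_2^5\rtimes C_3$. The identification of $W(\Gamma)$ with the subgroup of $\aut(U)$ of automorphisms cyclically permuting the three subgroups follows from the size check $|C_2^5\rtimes C_3|=96$, matching the preimage of $A_3\subseteq S_3$ inside the setwise stabilizer of the three subgroups (whose kernel is exactly $C_2^5$ by \Cref{prop:IP_weyl}). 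The main obstacle is the $\mathcal{L}_1$ compatibility check: the value $\sigma(s)$ is determined only by the action on $\mathcal{L}_1$, and the two computations on $\mathcal{L}_0$ and on $\mathcal{L}_1$ must be checked to produce the same $\sigma\in\aut(U)$ — a short but fiddly calculation involving the characters $\chi_i$ of $\langle a,b\rangle$ and the cyclic permutation of the $w_i$.
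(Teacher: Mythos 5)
Your proposal is correct and follows essentially the same route as the paper, which states this result with no separate proof beyond the preceding remark that the outer automorphism $\varphi$ permutes the homogeneous components of the Pauli-type fine grading, so the Weyl group of \Cref{prop:IP_weyl} is enlarged by $\langle\varphi\rangle\cong C_3$. Your explicit verification that $\varphi\in\aut(\Gamma)$ (the induced $\sigma$ fixing $s$ and cycling $a\mapsto ab\mapsto b\mapsto a$, checked consistently on $\mathcal{L}_0$ and $\mathcal{L}_1$) and the order count $32\cdot 3=96$ supply exactly the details the paper leaves implicit.
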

    
    In the case of gradings of mixed type, $\varphi$ induces isomorphisms between the three fine gradings of Proposition~\ref{prop:IM_fine}. Therefore, there exists only one mixed fine grading up to equivalence, say $\IM{1}$. Its Weyl group is the same as described in Proposition~\ref{prop:IM_weyl}. We now turn to outer gradings. In the next definition, the Roman numeral III represents the size of the image of $\widehat{G}$ in $\out(\mathcal{L})$.
	
    \begin{definition}[Gradings of type $\III$] \label{def:IIIP}
        Let $g,h\in G$ with $h$ of order 3. We define the grading $\gr{\III}_D(g,h)$ in the following way. For each $X\in \mathfrak{sl}_2$ and $i=0,1,2$, we set $X^{(i)}=(X,\omega^iX,\omega^{2i}X)\in\mathcal{L}_0$. Then the grading on $\mathcal{L}_0$ is given by $\deg(H^{(i)})= h^i$, $\deg(E^{(i)})= g^2h^i$ and $\deg(F^{(i)})= g^{-2}h^i$. In $\mathcal{L}_1$, for each $i=0,1,2$, we define the vectors $w^{(i)}=w_1+\omega^iw_2+\omega^{2i}w_3$ and $w'^{(i)}=w'_1+\omega^iw'_2+\omega^{2i}w'_3$. Then the grading on $\mathcal{L}_1$ is given by $\deg(w_0)=g^{-3}h^2$, $\deg(w'_0)=g^{3}h^2$, $\deg(w^{(i)})=gh^{i-1}$ and $\deg(w'^{(i)})=g^{-1}h^{i-1}$.
    \end{definition}

    \begin{proposition} \label{prop:III_fine}
        A fine grading on $D(\omega)$ with universal group $\mathbb{Z}\times\mathbb{Z}_3$ is equivalent to $\gr{\III}_D(e_0,e_1)$, of type $(17)$, where $e_0=(1,0)$, $e_1=(0,1)$. It has no almost fine proper coarsenings. Moreover, the $G$-gradings of type $\III$ are precisely the ones induced by admissible homomorphisms $\mathbb{Z}\times\mathbb{Z}_3\rightarrow G$.
    \end{proposition}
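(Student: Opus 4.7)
The plan is to mirror the approach of Propositions \ref{prop:IC_fine}, \ref{prop:IP_fine} and \ref{prop:IM_fine}, but now exploiting the outer automorphism $\varphi$ of order $3$ that exists only for $\alpha=\omega$. First, I would introduce the quasitorus
\[
Q=\bigl\{\iota(d_\mu,d_\mu,d_\mu)\mid\mu\in\mathbb{F}^\times\bigr\}\cdot\langle\varphi\rangle\subseteq\aut(D(\omega)),
\]
where $d_\mu=\diag(\mu,\mu^{-1})$. Using the identity $\varphi\iota(f,g,h)\varphi^{-1}=\iota(g,h,f)$, the diagonal elements $\iota(d_\mu,d_\mu,d_\mu)$ commute with $\varphi$, so $Q\cong\mathbb{F}^\times\times C_3$, and its character group is $\mathbb{Z}\times\mathbb{Z}_3$.

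Next, I would diagonalize $Q$. Since $\varphi$ cyclically permutes the three copies $\mathfrak{s}_i$, a short computation shows that the elements $X^{(i)}=(X,\omega^iX,\omega^{2i}X)$ are $\varphi$-eigenvectors with eigenvalue $\omega^i$. Similarly, $\varphi$ acts on the basis $\{w_j,w_j'\}$ of $\mathcal{L}_1$ by cyclically permuting $w_1,w_2,w_3$ (up to the scalar $\omega^{-1}$) and fixing $w_0,w_0'$ (again up to $\omega^{-1}$), so the vectors $w^{(i)}=w_1+\omega^iw_2+\omega^{2i}w_3$ and their primed counterparts are $\varphi$-eigenvectors. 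Combining these eigenvalues with the weights of the diagonal one-parameter subgroup $\iota(d_\mu,d_\mu,d_\mu)$ (which is $\pm 2$ on $E^{(i)},F^{(i)}$, zero on $H^{(i)}$, $\pm 1$ on $w^{(i)},w'^{(i)}$, and $\pm 3$ on $w_0,w_0'$) gives a decomposition into $17$ one-dimensional common eigenspaces whose degrees are exactly those listed in \Cref{def:IIIP}. This proves that $\gr{\III}_D(e_0,e_1)$ is a $\mathbb{Z}\times\mathbb{Z}_3$-grading of type $(17)$ on $D(\omega)$, and that its universal group is $\mathbb{Z}\times\mathbb{Z}_3$.

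For uniqueness up to equivalence, I would appeal to \cite[Theorem 8.5]{DEM}, which lists the fine gradings on $D(\omega)$: the only one with universal group $\mathbb{Z}\times\mathbb{Z}_3$ corresponds (under the bijection between equivalence classes of fine gradings and conjugacy classes of maximal quasitori) to the class of $Q$ above. Alternatively, one can argue directly: a fine $\mathbb{Z}\times\mathbb{Z}_3$-grading must be defined by a maximal quasitorus $Q'$ containing a one-dimensional torus $T'$ and an element $\varphi'$ of order $3$ whose image in $\out(\mathcal{L})\cong C_3$ is a generator (otherwise the grading would be inner and its universal group would have no $\mathbb{Z}_3$-factor by the analysis carried out in the $|\stab(\alpha)|=1$ case); conjugating within $\inn(\mathcal{L})$, one may assume $\varphi'=\varphi$, and then $T'$ lies in the centralizer of $\varphi$, which is precisely $\iota(d_\mu,d_\mu,d_\mu)$.

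Finally, I would verify the two remaining assertions using the results from \Cref{s:prelim}. From \Cref{def:IIIP}, the degrees $h$ and $h^2=\deg(H^{(1)})$, $\deg(H^{(2)})$ of the two nontrivial torsion elements of $U=\mathbb{Z}\times\mathbb{Z}_3$ both lie in $\mathrm{Supp}(\Gamma_0)\subseteq\mathrm{Supp}(\mathrm{Der}(\mathcal{L}))$, so \Cref{thm:almostfine} implies that $\Gamma$ has no almost fine proper coarsenings, and \Cref{def:admissible} tells us that a homomorphism $\alpha:\mathbb{Z}\times\mathbb{Z}_3\to G$ is admissible if and only if its restriction to the torsion subgroup is injective. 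Any such admissible homomorphism induces the grading $\gr{\III}_D(\alpha(e_0),\alpha(e_1))$, and \Cref{thm:gradings} then yields precisely the gradings of type $\III$. The main obstacle I anticipate is the verification that $Q$ is maximal (equivalently, that the listed $17$ eigenspaces are all one-dimensional); this is a direct but slightly tedious weight-and-eigenvalue bookkeeping on the chosen basis of $\mathcal{L}_1$.
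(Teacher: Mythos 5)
Your proposal is correct and follows essentially the same route as the paper: it identifies the same maximal quasitorus $Q$ generated by $\{\iota(d_\mu,d_\mu,d_\mu)\}$ and $\varphi$ (citing \cite{DEM} for maximality), diagonalizes it to recover the homogeneous elements of \Cref{def:IIIP}, and then reads off from the supports that the torsion subgroup meets $\supp(\Gamma_0)$ nontrivially, which rules out almost fine proper coarsenings and yields the stated admissibility criterion. The extra eigenvalue bookkeeping and the sketched direct argument for uniqueness of the quasitorus are elaborations of steps the paper handles by citation, not a different method.
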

    \begin{proof}
        Up to equivalence, the fine $\mathbb{Z}\times\mathbb{Z}_3$-grading corresponds to the maximal quasitorus $Q\subseteq \aut(D(\alpha))$ generated by $\{ \iota(d_\mu,d_\mu,d_\mu)\mid \mu\in\mathbb{F}^\times\}$ and $\varphi$ (see \cite[\S 8.1]{DEM}). Then, diagonalizing $Q$, we obtain as eigenvectors the homogeneous elements in Definition~\ref{def:III} for the grading $\gr{\III}_D(e_0,e_1)$. 
        Moreover, $\supp(\Gamma_0)=\{0,\pm2\}\times\mathbb{Z}_3$ and $\supp(\Gamma_1)=(\{\pm 1\}\times\mathbb{Z}_3)\cup\{(\pm3,2)\}$, so there are no almost fine proper coarsenings.
        A homomorphism $\mathbb{Z}\times\mathbb{Z}_3\rightarrow G$ is admissible if and only if its restriction to the torsion subgroup is injective.% By construction of the grading $\gr{\III}_D(g,h)$, the map given by $\alpha(e_0)=g$ and $\alpha(e_1)=h$ is injective, therefore is admissible and induces the $G$ grading. 
    \end{proof}
    
    \begin{proposition}\label{prop:III_weyl}
        The Weyl group of a fine grading on $D(\omega)$ with universal group $\mathbb{Z}\times\mathbb{Z}_3$ is isomorphic to $C_3\rtimes C_2\cong S_3$ and acts on $\mathbb{Z}\times\mathbb{Z}_3$ as the set of all  automorphisms that fix the torsion subgroup point-wise.
    \end{proposition}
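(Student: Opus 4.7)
The plan is to establish matching upper and lower bounds on $W(\Gamma) \subseteq \aut(U)$, where $U = \mathbb{Z} \times \mathbb{Z}_3$. Denote by $\aut_0(U) \subseteq \aut(U)$ the subgroup of automorphisms fixing $\mathbb{Z}_3$ pointwise, consisting of maps $w(e_0) = \varepsilon e_0 + b e_1$, $w(e_1) = e_1$ with $\varepsilon \in \{\pm 1\}$ and $b \in \mathbb{Z}_3$. A short composition check shows $\aut_0(U) \cong \mathbb{Z}_3 \rtimes C_2 \cong S_3$, the inversion $\varepsilon = -1$ conjugating translation by $b$ to translation by $-b$. Proving $W(\Gamma) = \aut_0(U)$ will therefore suffice.

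For the upper bound I will use that, by \Cref{prop:III_fine}, the elements $(\pm 3, 2)$ are the only degrees in $\mathrm{Supp}(\Gamma_1)$ whose $\mathbb{Z}$-coordinate has absolute value $3$. Any $w \in W(\Gamma)$ must permute this pair. Writing $w(e_0) = \varepsilon e_0 + b e_1$ and $w(e_1) = d e_1$ with $d \in \mathbb{Z}_3^\times$ (from torsion invariance), computing $w(3,2) = (3\varepsilon, 2d)$ and imposing $w(3,2) \in \{\pm(3,2)\}$ forces $2d \equiv 2 \pmod 3$, i.e.\ $d = 1$. Hence $w \in \aut_0(U)$, giving $W(\Gamma) \subseteq \aut_0(U)$.

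For the lower bound I will exhibit two elements of $N(Q)$ whose images in $W(\Gamma) = N(Q)/C(Q)$ together generate $\aut_0(U)$. The inner automorphism $\psi_1 := \iota(C,C,C)$ satisfies $\psi_1(H^{(i)}) = -H^{(i)}$ and $\psi_1(E^{(i)}) = F^{(i)}$ (and swaps components of opposite $\mathbb{Z}$-degree on $\mathcal{L}_1$), so it lies in $N(Q)$ and induces the inversion $(m,k) \mapsto (-m,k)$ on $U$. For a translation I will take $\psi_2 := \iota(1, d_{\omega^2}, d_{\omega})$, where $d_\mu := \diag(\mu, \mu^{-1})$. Since $\psi_2$ commutes with every $\iota(d_\mu, d_\mu, d_\mu)$, the only nontrivial thing to check is that $\psi_2$ normalises $\varphi$: using $\omega^{-1} = \omega^2$, a direct computation yields $\psi_2\varphi\psi_2^{-1} = \iota(d_{\omega^2}, d_{\omega^2}, d_{\omega^2})\varphi \in Q$. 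The identity $\psi_2(E^{(i)}) = E^{(i+1)}$ then identifies the induced action on $U$ as $(m,k) \mapsto (m, k+2m)$, a nontrivial element of order $3$ in $\aut_0(U)$.

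Since a reflection together with a nontrivial rotation generate $S_3$, the classes $[\psi_1]$ and $[\psi_2]$ already generate $\aut_0(U)$ inside $W(\Gamma)$, completing the proof. The main technical obstacle will be the normalisation check $\psi_2\varphi\psi_2^{-1} \in Q$, which hinges essentially on $\omega^{-1} = \omega^2$; this coincidence is precisely what distinguishes the present case from the generic orbits and causes the Weyl group to be strictly larger than a Weyl group of a purely Cartan-type grading.
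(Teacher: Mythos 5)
Your proof is correct and follows essentially the same strategy as the paper: a distinguished pair of odd homogeneous components forces every Weyl group element to fix the torsion generator, and explicit inner automorphisms (the same $\iota(I,d_{\omega^2},d_\omega)$ together with a simultaneous Weyl reflection of the three $\mathfrak{sl}_2$'s) realize all of $S_3$. The only cosmetic deviations are that you detect the distinguished pair $\{(\pm3,2)\}$ directly from $\supp(\Gamma_1)$ rather than via the eigenvalues of $\operatorname{ad}H^{(0)}$ and the bracket $[w_0,w'_0]$, and that your formula $\psi_2\varphi\psi_2^{-1}=\iota(d_{\omega^2},d_{\omega^2},d_{\omega^2})\varphi$ is really the value of $\psi_2^{-1}\varphi\psi_2$ (conjugating the other way gives $\iota(d_{\omega},d_{\omega},d_{\omega})\varphi$), which does not affect the conclusion that the result lies in $Q$.
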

    \begin{proof}
        The degree $0$ component must be invariant under the automorphism group of the grading. Since it is spanned by $H^{(0)}$, the eigenspaces defined by the adjoint action of $H^{(0)}$ must be preserved or interchanged with their opposites. This gives a distinguished pair of components, namely, the ones spanned by $w_0$ and $w'_0$, which correspond to the extreme eigenvalues of $H^{(0)}$. %lowest and highest weights of the $\mathcal{L}_0$-module $\mathcal{L}_1$. 
        Since $[w_0,w'_0]$ is a nonzero element of degree $e_1=(0,1)$, we conclude that $e_1$ is fixed by the Weyl group. 

        Conversely, every automorphism of $\mathbb{Z}\times\mathbb{Z}_3$ fixing $e_1$ is determined by the image of $e_0=(1,0)$. The automorphism $\iota(I,d_{\omega^2},d_\omega)$ sends $e_0$ to $(1,1)$ and $\iota(S,S,S)$ sends $e_0$ to $(-1,0)$. These automorphisms generate the desired subgroup, since $e_1$ generates the torsion subgroup.
    \end{proof}

    \begin{theorem}\label{thm:D2}
        Let $\mathcal{L}=D(\omega)$ and let $G$ be an abelian group. Then every $G$-grading on $\mathcal{L}$ is of one of the types $\IC,\IP,\IM{1},\III$ (see Definitions \ref{def:IC}, \ref{def:IP}, \ref{def:IM}, \ref{def:IIIP}). Gradings of different types are not isomorphic and, within each type, the isomorphism classes are as follows (summarized in Table \ref{tab:D(omega)}):
	\begin{itemize}
            \item $\gr{\IC}_D(h_1,h_2,h_3,h_4)\cong \gr{\IC}_D(h'_1,h'_2,h'_3,h'_4)$ if and only if there exist $\sigma\in A_4\triangleleft S_4$ and $\varepsilon\in\{\pm1\}$ such that $h'_i=h_{\sigma(i)}^\varepsilon$ for all $i$,
			
            \item $\gr{\IP}_D(s,a,b)\cong \gr{\IP}_D(s',a',b')$ if and only if $\langle s,a,b \rangle=\langle s',a',b'\rangle$ and there exists a cyclic permutation $\sigma$ of the triple $(a,b,ab)$ such that $a'\in \sigma(a)\langle s^2\rangle$ and $b'\in\sigma(b)\langle s^2\rangle$, 
			
            \item $\gr{\IM{1}}_D(g,T)\cong \gr{\IM{1}}_D(g',T')$ if and only if $T'=T$ and $g'\in gT\cup g^{-1}T$, 
		
            \item $\gr{\III}_D(g,h)\cong \gr{\III}_D(g',h')$ if and only if $h'=h$ and $g'\in g\langle h\rangle\cup g^{-1}\langle h\rangle$.
	\end{itemize}
    \end{theorem}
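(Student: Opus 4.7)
The strategy mirrors the proof of \Cref{thm:D1}. By \cite[\S 8]{DEM}, the fine abelian group gradings on $D(\omega)$ are, up to equivalence, exactly four: the $\mathbb{Z}^3$-grading, the $\mathbb{Z}_4\times\mathbb{Z}_2^2$-grading, the single $\mathbb{Z}\times\mathbb{Z}_2^2$-grading (the three fine gradings from \Cref{prop:IM_fine} collapse to one equivalence class in this case, because the outer automorphism $\varphi$ cyclically permutes the ideals of $\mathcal{L}_0$), and the $\mathbb{Z}\times\mathbb{Z}_3$-grading coming from the outer $C_3$-symmetry. By Propositions \ref{prop:IC_fine}, \ref{prop:IP_fine}, \ref{prop:IM_fine} and \ref{prop:III_fine}, none of these admits an almost fine proper coarsening. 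Therefore, \Cref{thm:gradings} tells us that every $G$-grading on $D(\omega)$ is isomorphic to one induced from one of these four fine gradings by an admissible homomorphism, i.e., lies in one of the four families $\IC$, $\IP$, $\IM{1}$, or $\III$.

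Gradings from different families cannot be isomorphic for two reasons: first, the types (the tuple of dimensions of homogeneous components) differ between families; second, \Cref{thm:gradings} guarantees the uniqueness of the equivalence class of the underlying fine grading. So it remains to determine, within each family, when two gradings are isomorphic. By \Cref{thm:gradings} this is controlled by the action of the corresponding Weyl group (computed in Propositions \ref{prop:IC_weyl3}, \ref{prop:IP_weyl3}, \ref{prop:IM_weyl} and \ref{prop:III_weyl}) on the set of admissible homomorphisms from the universal group to $G$.

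The remaining task is to translate each Weyl group action into the stated combinatorial condition. For $\IC$, the Weyl group $C_2^3\rtimes C_3$ acts on $\mathbb{Z}^3$ by simultaneous sign changes of the roots $\alpha_1,\alpha_2,\alpha_3$ and their cyclic permutations; using the identities $e_1=\frac12(-\alpha_1+\alpha_2+\alpha_3)$, etc., and $e_4=-e_1-e_2-e_3$, one checks that this restricts, on $\{\pm e_1,\pm e_2,\pm e_3,\pm e_4\}$, to permutations of indices by $A_4\triangleleft S_4$ combined with a global sign change, giving the claim. For $\IP$, the Weyl group permutes the three index-two subgroups $\langle 2e_0,e_1\rangle,\langle 2e_0,e_2\rangle,\langle 2e_0,e_1+e_2\rangle$ cyclically; pulling this back to the parameters $(s,a,b)$ yields the cyclic permutations of $(a,b,ab)$ modulo $\langle s^2\rangle$. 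For $\IM{1}$, the Weyl group is the full automorphism group of $\mathbb{Z}\times\mathbb{Z}_2^2$, so the orbit of $(e_0,e_1,e_2)$ under admissible maps precisely encodes $T'=T$ together with $g'\in gT\cup g^{-1}T$. For $\III$, the Weyl group is $S_3$ fixing the torsion subgroup point-wise and acting on the torsion-free part; this gives $h'=h$ and $g'\in g\langle h\rangle\cup g^{-1}\langle h\rangle$.

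The main bookkeeping obstacle is the $\IC$ case: one must carefully re-express the action on the $\alpha_i$-basis in terms of the $e_i$-basis used in \Cref{def:IC}, verifying that the twelve orientation-preserving signed permutations correspond to the $A_4$-action with a simultaneous sign toggle. Once this is done, all four isomorphism statements follow directly from \Cref{thm:gradings}.
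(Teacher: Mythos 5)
Your proposal is correct and follows essentially the same route as the paper: cite the classification of fine gradings on $D(\omega)$ from \cite{DEM}, verify via Propositions \ref{prop:IC_fine}, \ref{prop:IP_fine}, \ref{prop:IM_fine}, \ref{prop:III_fine} that there are no proper almost fine coarsenings, and then invoke \Cref{thm:gradings} together with the Weyl group computations in Propositions \ref{prop:IC_weyl3}, \ref{prop:IP_weyl3}, \ref{prop:IM_weyl}, \ref{prop:III_weyl}. One small caveat: your first reason for non-isomorphism across families (that the types differ) fails for $\IC$ versus $\IP$, whose fine gradings both have type $(14,0,1)$, but your second reason --- the uniqueness of $i\in I$ guaranteed by \Cref{thm:gradings} --- is exactly what the paper relies on and suffices.
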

    \begin{proof}
        By \cite{DEM}, any fine gradings on $D(\omega)$ is equivalent to one of the above fine gradings with universal groups $\mathbb{Z}^3$, $\mathbb{Z}_4\times\mathbb{Z}_2^2$, $\mathbb{Z}\times\mathbb{Z}_2^2$ and $\mathbb{Z}\times\mathbb{Z}_3$. Therefore, by \Cref{thm:gradings} and Propositions \ref{prop:IC_fine}, \ref{prop:IP_fine}, \ref{prop:IM_fine} and \ref{prop:III_fine}, any $G$-grading is isomorphic to one of the types $\IC,\IP,\IM{1},\III$, %since $\IM{2}$ and $\IM{3}$ are isomorphic to $\IM{1}$ through outer automorphisms, 
        and gradings of different types are not isomorphic. By the same theorem and Propositions \ref{prop:IC_weyl3}, \ref{prop:IP_weyl3}, \ref{prop:IM_weyl} and \ref{prop:III_weyl}, the action of the Weyl group in each type leads to the stated conditions.
    \end{proof}

    \begin{table}
    \begin{tabular}{|c|c|c|c|}
    \hline
    Family & Subgroups & Combinatorial data & Equivalence\\ \hline 
    $\gr{\IC}_D$ 
    & trivial & \vtop{\hbox{\strut $(h_1,h_2,h_3,h_4)\in G^4$}\hbox{\strut with $h_1h_2h_3h_4=e$}} & \vtop{\hbox{\strut inversion and}\hbox{\strut permutation} \hbox{\strut by $A_4\triangleleft S_4$}}  \\ \hline 
    $\gr{\IP}_D$ 
    &  $H=\langle s\rangle\times\langle a,b\rangle\cong\mathbb{Z}_4\times\mathbb{Z}_2^2$ & \vtop{\hbox{\strut cyclic order on the three}\hbox{\strut nontrivial elements of}\hbox{\strut $H_{[2]}/\langle s^2\rangle\cong\mathbb{Z}_2^2$}} &
    %$(a\langle s^2\rangle,b\langle s^2\rangle, ab\langle s^2\rangle)\in (H_{[2]}/\langle s^2\rangle)^3$ & cyclic permutation 
    \\ \hline 
    $\gr{\IM{1}}_D$ 
    & $T\cong \mathbb{Z}_2^2$ & $gT\in G/T$ & inversion \\ \hline 
    $\gr{\III}_D$ 
    & $H=\langle h\rangle \cong \mathbb{Z}_3$ & $gH\in G/H$ & inversion \\ \hline
    \end{tabular}
    \vspace{5pt} 
    \caption{Classification of $G$-gradings on the Lie superalgebra $D(\omega)$}\label{tab:D(omega)}
    \end{table}
	
\subsection{Orbits of size $3$}

    There are two orbits of size $3$, $\{1,-2,-\frac{1}{2}\}$ and $\{-1,0,\infty\}$, so without loss of generality we may assume $\alpha=\pm1$. Then the group of outer automorphisms is isomorphic to $C_2$ and permutes the first two ideals of $\mathcal{L}_0$. As mentioned before, the Lie superalgebra $D(-1)$ is not simple, so we postpone its discussion until the next section. Here we focus on $D(1)\cong\mathfrak{osp}(4\,|\,2)$.
    
    In this case, we have the outer automorphism $\varphi:\mathcal{L}\to \mathcal{L}$ defined by $(x_1,x_2,x_3)\in \mathcal{L}_0 \mapsto (x_2,x_1,x_3)$ and $x_1\otimes x_2\otimes x_3\in \mathcal{L}_1\mapsto  x_2\otimes x_1\otimes x_3$. Note that $\varphi^2=1$, so $\langle\varphi\rangle\cong C_2$ gives a section of $\aut(\mathcal{L})\to\out(\mathcal{L})$. Also, $\varphi\iota(f,g,h)\varphi^{-1}=\iota(g,f,h)$ for any $f,g,h\in\slg_2(\mathbb{F})$.
    
    Again, the inner gradings are given in Definitions \ref{def:IC}, \ref{def:IP} and \ref{def:IM}, and we need to check the additional isomorphisms between them induced by the outer automorphisms. Since $\varphi$ permutes the homogeneous components for the fine gradings of type $\IC$ and $\IP$, we obtain the following Weyl groups.

    \begin{proposition}\label{prop:IC_weyl2}
        The Weyl group of a fine grading on $D(1)$ with universal group $\mathbb{Z}^3$ is isomorphic to $C_2^3\rtimes C_2$ and acts on $\mathbb{Z}^3$ by changing signs of the elements $\alpha_1=e_2+e_3$, $\alpha_2=e_1+e_3$ and $\alpha_3=e_1+e_2$ or interchanging $\alpha_1$ and $\alpha_2$.\qed 
    \end{proposition}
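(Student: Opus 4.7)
The plan is to adapt the proof of \Cref{prop:IC_weyl} by incorporating the contribution of the outer automorphism $\varphi$. Because $U(\Gamma)=\mathbb{Z}^3$ is torsion-free, \Cref{Misha's lemma} immediately yields $\bar{K}=1$, so the restriction map embeds $W(\Gamma)$ into $W(\Gamma_0)$. Since $\Gamma_0$ is the Cartan grading of $\mathcal{L}_0\cong\mathfrak{sl}_2^3$, its Weyl group equals $C_2^3\rtimes S_3$, with the $C_2^3$ factor inverting the positive roots $\alpha_1,\alpha_2,\alpha_3$ independently and $S_3$ permuting the three simple ideals $\mathfrak{s}_1,\mathfrak{s}_2,\mathfrak{s}_3$.

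Next I would pin down the image of $W(\Gamma)$ inside $W(\Gamma_0)$. Because $\aut(D(1))=\inn(D(1))\rtimes\langle\varphi\rangle$ and the only ideal permutation of $\mathcal{L}_0$ realizable by an automorphism of $\mathcal{L}$ is the transposition $\mathfrak{s}_1\leftrightarrow\mathfrak{s}_2$ induced by $\varphi$, the image of $W(\Gamma)$ is contained in $C_2^3\rtimes\langle(\alpha_1\,\alpha_2)\rangle$, a subgroup of order $16$.

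Conversely, the inner automorphisms $\iota(C,1,1)$, $\iota(1,C,1)$, $\iota(1,1,C)$ realize the full $C_2^3$ of independent sign changes on the $\alpha_i$, exactly as in the proof of \Cref{prop:IC_weyl}. For the outer part, one observes that $\varphi$ normalizes the Cartan subalgebra $\mathfrak{h}=\mathbb{F} H_1\oplus\mathbb{F} H_2\oplus\mathbb{F} H_3$ (swapping $H_1$ and $H_2$), hence normalizes $\mathrm{Diag}(\Gamma)$; thus $\varphi\in\aut(\Gamma)$, and its class in the Weyl group is precisely the transposition of $\alpha_1$ and $\alpha_2$ (fixing $\alpha_3$). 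Together these $4$ generators realize all $16$ elements of $C_2^3\rtimes C_2$, matching the upper bound.

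I do not foresee any real obstacle. The more delicate parts—the triviality of $\bar{K}$ and the realization of the $C_2^3$ sign-change subgroup by inner automorphisms—are inherited verbatim from \Cref{prop:IC_weyl}; the only new content is the verification that $\varphi$ preserves $\Gamma$, which is immediate from its action on $\mathfrak{h}$.
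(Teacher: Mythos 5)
Your proposal is correct and follows essentially the same route as the paper: the paper obtains this result by combining the inner computation of \Cref{prop:IC_weyl} (where $\bar{K}=1$ and the inner automorphisms $\iota(C,1,1)$, $\iota(1,C,1)$, $\iota(1,1,C)$ give the sign changes $C_2^3$) with the observation that the outer automorphism $\varphi$ permutes the homogeneous components of the Cartan grading, contributing exactly the transposition of $\alpha_1$ and $\alpha_2$. Your explicit verification that $\varphi$ normalizes $\mathrm{Diag}(\Gamma)$ and your upper bound via the image in $W(\Gamma_0)=C_2^3\rtimes S_3$ are just slightly more detailed versions of what the paper leaves implicit.
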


    \begin{remark}
        The above action, when restricted to the elements $\pm e_1, \pm e_2, \pm e_3, \pm e_4$, corresponds to the  permutation action by the dihedral group $D_4\leq S_4$ preserving the partition $\{\{1,2\},\{3,4\}\}$, combined with the simultaneous change of sign.
    \end{remark}
    
    \begin{proposition}\label{prop:IP_weyl2}
        The Weyl group of a fine grading on $D(1)$ with universal group $\mathbb{Z}_4\times\mathbb{Z}_2^2$ is isomorphic to $C_2^5\rtimes C_2$ and acts on $\mathbb{Z}_4\times\mathbb{Z}_2^2$ as the set of all automorphisms that either preserve the three subgroups $\langle 2e_0, e_1\rangle$, $\langle 2e_0, e_2\rangle$, $\langle 2e_0, e_1+e_2\rangle$ or interchange the first two.\qed 
    \end{proposition}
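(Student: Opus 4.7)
The plan is to build on \Cref{prop:IP_weyl}: in the generic case ($|\stab(\alpha)|=1$), the contribution of inner automorphisms to $W(\Gamma)$ was computed to be $C_2^5$, realized as the full group of automorphisms of $\mathbb{Z}_4\times\mathbb{Z}_2^2$ preserving each of the three subgroups $S_1=\langle 2e_0,e_1\rangle$, $S_2=\langle 2e_0,e_2\rangle$, $S_3=\langle 2e_0,e_1+e_2\rangle$. The same inner computation is valid for $D(1)$, so the only additional work is to track the contribution of the outer automorphism $\varphi$.

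First I would verify that $\varphi$ normalizes the quasitorus $Q$ defining $\Gamma$. Applying $\varphi$ directly to the basis $\{w_i,w'_i\}$ of $\mathcal{L}_1$ from \Cref{def:IP} shows that $\varphi$ swaps $w_1\leftrightarrow w_2$ and $w'_1\leftrightarrow w'_2$ while fixing the remaining four basis vectors; a short computation with the characters $\chi_i$ of $\langle a,b\rangle$ then shows that $\varphi$ permutes the homogeneous components of $\Gamma$. Hence $\bar\varphi\in W(\Gamma)$, and its action on $\mathbb{Z}_4\times\mathbb{Z}_2^2$ is the involution $\sigma$ that fixes $e_0$ and swaps $e_1\leftrightarrow e_2$. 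This $\sigma$ exchanges $S_1$ with $S_2$ and preserves $S_3$, as required.

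Finally, the decomposition $\aut(D(1))=\inn(D(1))\rtimes\langle\varphi\rangle$, combined with the fact that $\varphi$ merely transposes the ideals $\mathfrak{s}_1$ and $\mathfrak{s}_2$, forces every element of $W(\Gamma)$ either to preserve all three subgroups $S_i$ or to swap $S_1\leftrightarrow S_2$ while fixing $S_3$. This already identifies $W(\Gamma)$ as the set of automorphisms described in the statement, of order $2\cdot|C_2^5|=64$. To see that the extension $1\to C_2^5\to W(\Gamma)\to\langle\bar\varphi\rangle\to 1$ splits as a proper semidirect product $C_2^5\rtimes C_2$ rather than a direct product, one observes that conjugation by $\sigma$ transposes pairs of elements inside the inner $C_2^5$: for instance, any generator that alters $e_0$ by a multiple of $e_1$ is sent to one that alters $e_0$ by the corresponding multiple of $e_2$. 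The main step needing care is the basis calculation identifying the action of $\bar\varphi$ precisely as the swap $e_1\leftrightarrow e_2$, since any subtle twisting there would change the isomorphism type of the extension.
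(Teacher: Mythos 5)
Your proof is correct and follows the same route as the paper, whose entire justification for this proposition is the one-line remark that the outer automorphism $\varphi$ permutes the homogeneous components of the fine grading of type $\IP$, so that the inner Weyl group computed in \Cref{prop:IP_weyl} is enlarged by $\langle\bar\varphi\rangle\cong C_2$. Your explicit verification that $\varphi$ swaps $w_1\leftrightarrow w_2$ and $w_1'\leftrightarrow w_2'$, hence acts on $\mathbb{Z}_4\times\mathbb{Z}_2^2$ by fixing $e_0$ and interchanging $e_1$ with $e_2$ (thus $\langle 2e_0,e_1\rangle$ with $\langle 2e_0,e_2\rangle$), is exactly the computation the paper leaves implicit, and your upper bound via $\aut(D(1))=\inn(D(1))\rtimes\langle\varphi\rangle$ matches the paper's reasoning.
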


    \begin{proposition}\label{prop:IM_fine2}
        A fine grading on $D(1)$ with universal group $\mathbb{Z}\times\mathbb{Z}_2^2$ is equivalent to $\gr{\IM{1}}_D(e_0,E)$ where $E=\langle e_1,e_2\rangle$ and $e_0=(1,0,0)$, $e_1=(0,1,0)$, $e_2=(0,0,1)$. It has no almost fine proper coarsenings. Moreover, the $G$-gradings of type $\IM{1}$ are precisely the ones induced by admissible homomorphisms $\mathbb{Z}\times\mathbb{Z}_2^2\rightarrow G$.
    \end{proposition}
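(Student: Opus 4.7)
The plan is to mimic the proof of \Cref{prop:IM_fine}, making small adjustments to account for the outer automorphism of $D(1)$. Since $\iota:\slg_2(\mathbb{F})^3\to\inn(D(\alpha))$ is defined uniformly in $\alpha$, the quasitorus
\[
Q=\langle\iota(1,1,d_\mu),\iota(A,A,1),\iota(B,B,1)\mid\mu\in\mathbb{F}^\times\rangle\subseteq\inn(D(1))
\]
that was used in the generic case still lies in $\aut(D(1))$. Diagonalizing $Q$ will produce exactly the homogeneous decomposition of $\gr{\IM{1}}_D(e_0,E)$ described in \Cref{def:IM}, with universal group $U=\mathbb{Z}\times\mathbb{Z}_2^2$ and supports
\[
\supp(\Gamma_0)=\{(\pm2,0,0)\}\cup(\{0\}\times\mathbb{Z}_2^2),\qquad\supp(\Gamma_1)=\{\pm1\}\times\mathbb{Z}_2^2,
\]
since none of these computations depends on the scalars $\sigma_i$ in the bracket $\mathcal{L}_1\otimes\mathcal{L}_1\to\mathcal{L}_0$.

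To establish that every fine $\mathbb{Z}\times\mathbb{Z}_2^2$-grading on $D(1)$ is equivalent to $\gr{\IM{1}}_D(e_0,E)$, I would invoke the DEM classification together with the outer automorphism $\varphi$, which swaps $\mathfrak{s}_1$ and $\mathfrak{s}_2$. Any fine grading with this universal group is equivalent, in the inner classification, to one of $\gr{\IM{1}}_D$, $\gr{\IM{2}}_D$, or $\gr{\IM{3}}_D$; the action of $\varphi$ immediately identifies the first two. The main obstacle is to check that $\gr{\IM{3}}_D$ is also equivalent to $\gr{\IM{1}}_D$ within $D(1)$ (via a composition of $\varphi$ with a suitable inner isomorphism exploiting the symmetry $\sigma_1=\sigma_2=1$ when $\alpha=1$), or alternatively to verify directly from DEM's list for $\alpha=1$ that no separate equivalence class of fine $\mathbb{Z}\times\mathbb{Z}_2^2$-gradings occurs. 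This is the only point at which the argument departs in substance from the proof of \Cref{prop:IM_fine}.

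The remaining two assertions follow formally from the support computation above. The nontrivial torsion elements of $U$ lie entirely in $\{0\}\times\mathbb{Z}_2^2\subseteq\supp(\Gamma_0)\subseteq\supp(\mathrm{Der}(\mathcal{L}))$, so \Cref{thm:almostfine} rules out any proper almost fine coarsening. By \Cref{def:admissible}, admissibility of a homomorphism $\alpha:U\to G$ then reduces to injectivity on the torsion subgroup $\mathbb{Z}_2^2$, and \Cref{thm:gradings} identifies the $G$-gradings of type $\IM{1}$ with those induced by admissible homomorphisms $U\to G$.
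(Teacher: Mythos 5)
There is a genuine gap at the crux of your argument, namely in how you dispose of type $\IM{3}$. You propose either to show that $\gr{\IM{3}}_D$ is equivalent to $\gr{\IM{1}}_D$ within $D(1)$, or to ``verify directly from DEM's list'' that no separate class occurs. The first option cannot work: an equivalence between $\gr{\IM{3}}_D(e_0,E)$ and $\gr{\IM{1}}_D(e_0,E)$ would require an automorphism of $D(1)$ carrying the Cartan-graded ideal $\mathfrak{s}_3$ onto $\mathfrak{s}_1$, but $\out(D(1))\cong\stab(1)=\langle(12)\rangle$ only allows swapping $\mathfrak{s}_1$ and $\mathfrak{s}_2$; the ideal $\mathfrak{s}_3$ is intrinsically distinguished (it corresponds to $\sigma_3=-2$ while $\sigma_1=\sigma_2=1$). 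The correct resolution, which is what the paper's one-line proof records, is that $\gr{\IM{3}}_D(e_0,E)$ is simply \emph{not fine} on $D(1)$: since $\varphi\iota(f,g,h)\varphi^{-1}=\iota(g,f,h)$, the outer automorphism $\varphi$ centralizes the quasitorus $Q=\langle\iota(1,1,d_\mu),\iota(A,A,1),\iota(B,B,1)\rangle$, so $\langle Q,\varphi\rangle$ is a strictly larger quasitorus whose eigenspace decomposition is the proper refinement $\gr{\IIM}_D(e_0,e_1,E)$ with universal group $\mathbb{Z}\times\mathbb{Z}_2^3$ (cf.\ \Cref{prop:IIM_fine}). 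Hence the only fine grading with universal group $\mathbb{Z}\times\mathbb{Z}_2^2$ on $D(1)$ is the one of type $\IM{1}$, which is identified with type $\IM{2}$ via $\varphi$, and type $\IM{3}$ drops out of the list of fine gradings rather than being absorbed by equivalence.

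A related slip in your first step: the quasitorus $Q$ you write down is the one from the $\IM{3}$ case of \Cref{prop:IM_fine} (Cartan torus acting on $\mathfrak{s}_3$, Pauli group acting diagonally on $\mathfrak{s}_1\oplus\mathfrak{s}_2$), so its diagonalization yields $\gr{\IM{3}}_D(e_0,E)$, not $\gr{\IM{1}}_D(e_0,E)$; and, as explained above, this $Q$ is not maximal in $\aut(D(1))$, so it cannot be used to exhibit the fine grading in question. The quasitorus you want is $\langle\iota(d_\mu,1,1),\iota(1,A,A),\iota(1,B,B)\rangle$, which $\varphi$ does \emph{not} centralize, consistent with $\gr{\IM{1}}_D(e_0,E)$ remaining fine. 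Once the correct fine grading is in hand, your support computation and the deductions about almost fine coarsenings and admissible homomorphisms go through as stated.
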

    \begin{proof}
        Follows from Proposition \ref{prop:IM_fine}, taking into account that $\varphi$ is an equivalence between the mixed gradings of types $\IM{1}$ and $\IM{2}$, while type $\IM{3}$ is not fine.
    \end{proof}

    The outer automorphism $\varphi$ refines $\gr{\IM{3}}_D(e_0,E)$, leading to the following family of gradings (in fact, the union of two families: outer for $h\ne e$ and inner for $h=e$):
    
    \begin{definition}[Gradings of type $\IIM$] \label{def:IIM}
        Let $T=\{e,a,b,c\}\cong\mathbb{Z}_2^2$ be a subgroup of $G$ and let $g,h\in G$ such that $h^2=e$ and $h\notin\{a,b,c\}$. For each $X\in \mathfrak{sl}_2$ and $i=0,1$, we set $X^{(i)}=(X,(-1)^iX)\in\mathfrak{sl}_2\oplus\mathfrak{sl}_2$. Then we define $\gr{\IIM}_D(g,h,T)$ as follows. On the distinguished ideal $\mathfrak{s}_3$ of $\mathcal{L}_0$, we put the grading $\gr{\mathrm{C}}_{\mathfrak{sl}_2}(g^2)$ while, on $\mathfrak{s}_1\oplus\mathfrak{s}_2$, we set $\deg(A^{(i)})=ah^i$, $\deg(B^{(i)})=bh^i$ and $\deg(C^{(i)})=ch^i$. To define the grading on $\mathcal{L}_1$, we use the isomorphism $\mathcal{L}_1\cong M_2(\mathbb{F})\otimes V_3$ described in Definition \ref{def:IM} and take the tensor product of $\Gamma_V(g,g^{-1})$ on $V_3$ %given by $\deg(u)=g$ and $\deg(v)=g^{-1}$.
        and the grading on $M_2(\mathbb{F})$ given by $\deg(I)=h$, $\deg(A)=a$, $\deg(B)=b$, $\deg(C)=c$. 
    \end{definition}

    \begin{proposition}\label{prop:IIM_fine}
        A fine grading on $D(1)$ with universal group $\mathbb{Z}\times\mathbb{Z}_2^3$ is equivalent to $\gr{\IIM}_D(e_0,e_1,E)$, of type $(17)$, where $E=\langle e_2,e_3\rangle$ and $e_0=(1,0,0,0)$, $e_1=(0,1,0,0)$, $e_2=(0,0,1,0)$, $e_3=(0,0,0,1)$.
        It has an almost fine proper coarsening isomorphic to $\gr{\IIM}_D(e_0,0,E) = \gr{\IM{3}}_D(e_0,E)$, with universal group $\mathbb{Z}\times\mathbb{Z}_2^2$.
        Moreover, the $G$-gradings of type $\IIM$ are precisely the ones induced by admissible homomorphisms $\mathbb{Z}\times\mathbb{Z}_2^3\rightarrow G$ and $\mathbb{Z}\times\mathbb{Z}_2^2\rightarrow G$.
    \end{proposition}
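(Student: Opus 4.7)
The plan is to follow the strategy used in Propositions \ref{prop:IM_fine} and \ref{prop:III_fine}: identify a maximal quasitorus $Q\subseteq\aut(D(1))$ whose diagonalization yields the grading, and then invoke \Cref{thm:almostfine} to locate its almost fine coarsenings. I would take $Q$ to be the quasitorus of $\gr{\IM{3}}_D(e_0,E)$, namely $\{\iota(1,1,d_\mu)\mid\mu\in\mathbb{F}^\times\}\cup\{\iota(A,A,1),\iota(B,B,1)\}$, enlarged by the outer automorphism $\varphi$. The identity $\varphi\iota(f,g,h)\varphi^{-1}=\iota(g,f,h)$ shows that $\varphi$ commutes with each of these generators, so $Q\cong\mathbb{F}^\times\times C_2^3$ is a quasitorus and the corresponding grading has universal group $\mathbb{Z}\times\mathbb{Z}_2^3$. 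Diagonalizing $Q$ on $\mathfrak{s}_3$ recovers the Cartan grading $\gr{\C}_{\mathfrak{sl}_2}(g^2)$; on $\mathfrak{s}_1\oplus\mathfrak{s}_2$ it refines each Pauli component of the $\gr{\IM{3}}$-grading by the $\pm1$-eigenspaces of $\varphi$, producing the one-dimensional components spanned by $X^{(i)}=(X,(-1)^iX)$ of \Cref{def:IIM}; and an analogous computation on $\mathcal{L}_1\cong M_2\otimes V_3$ yields the remaining components. A dimension count then confirms type $(17)$.

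For the coarsening step, I would compute $\mathrm{Supp}(\Gamma_0)=\{e,\pm 2e_0\}\cup\{a,b,c,ah,bh,ch\}$, whose torsion part is $\mathbb{Z}_2^3\smallsetminus\{e_1\}$. Since $\mathcal{L}$ is simple, the algebra of even derivations is $\mathrm{ad}(\mathcal{L}_0)\cong\mathcal{L}_0$, so the support $\Sigma$ appearing in \Cref{thm:almostfine} equals $\mathrm{Supp}(\Gamma_0)$. The only nontrivial subgroup $E'\subseteq\mathbb{Z}_2^3$ disjoint from $\Sigma\smallsetminus\{e\}$ is $\langle e_1\rangle$, giving a unique almost fine proper coarsening with universal group $\mathbb{Z}\times\mathbb{Z}_2^2$, obtained by merging the pairs $X^{(0)},X^{(1)}$; this is exactly $\gr{\IIM}_D(e_0,0,E)=\gr{\IM{3}}_D(e_0,E)$.

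Finally, by \Cref{def:admissible}, admissibility of $\alpha:\mathbb{Z}\times\mathbb{Z}_2^3\to G$ is equivalent to $(\alpha,\pi_\Delta)$ being injective on $\mathrm{Supp}(\Gamma)$. The torsion support lies in the fiber $\pi_\Delta^{-1}(0)$, so the condition reduces to injectivity of $\alpha$ on $\mathbb{Z}_2^3\smallsetminus\{e_1\}$, equivalently $\ker(\alpha|_{\mathbb{Z}_2^3})\subseteq\langle e_1\rangle$ (with equality forcing $\alpha$ to factor through the coarsening). The corresponding admissibility criterion for $\mathbb{Z}\times\mathbb{Z}_2^2\to G$ is furnished by \Cref{prop:IM_fine}, and \Cref{thm:gradings} combines these into the stated classification. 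The most delicate point is the identification of almost fine coarsenings in the second paragraph; this becomes routine once one verifies that $\mathrm{Der}(\mathcal{L})_0=\mathrm{ad}(\mathcal{L}_0)$, which holds because $D(1)$ is simple and classical.
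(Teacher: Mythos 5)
Your identification of the fine grading (diagonalizing the quasitorus of $\gr{\IM{3}}_D(e_0,E)$ enlarged by $\varphi$, which is the same as refining $\gr{\IM{3}}_D(e_0,E)$ by the eigenspaces of $\varphi$), your computation of $\mathrm{Supp}(\Gamma_0)$, and your use of \Cref{thm:almostfine} with $\Sigma=\mathrm{Supp}(\Gamma_0)$ to single out $\langle e_1\rangle$ as the only possible quotient all match the paper's argument. (One small caveat: ``$D(1)$ is simple and classical'' does not by itself give $\mathrm{Der}(\mathcal{L})_0=\mathrm{ad}(\mathcal{L}_0)$ --- the simple classical superalgebras $A(n,n)$ have outer derivations --- but the equality does hold for $D(\alpha)$, e.g.\ because $\dim\aut(D(\alpha))=9=\dim\mathcal{L}_0$.)

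The last step, however, contains a genuine error. You claim that injectivity of $\alpha$ on the set $\mathbb{Z}_2^3\smallsetminus\{e_1\}$ is ``equivalently $\ker(\alpha|_{\mathbb{Z}_2^3})\subseteq\langle e_1\rangle$.'' This equivalence is false: the level-$0$ fiber of the support is the seven-element set $\mathbb{Z}_2^3\smallsetminus\{e_1\}$, and it contains pairs differing by $e_1$ (e.g.\ $e_2$ and $e_1+e_2$), so injectivity of $\alpha$ on this set already forces $e_1\notin\ker\alpha$; since every nonzero element of $\mathbb{Z}_2^3$ occurs as a difference of two elements of this set, the correct condition is $\ker(\alpha|_{\mathbb{Z}_2^3})=\{e\}$, i.e.\ $\alpha$ must be injective on the whole torsion subgroup --- which is what the paper states. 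You appear to have conflated the almost-fineness criterion of \Cref{thm:almostfine} ($E\cap\Sigma\subseteq\{e\}$) with the admissibility criterion of \Cref{def:admissible} (injectivity of $(\alpha,\pi_\Delta)$ on the support). The distinction is not cosmetic: under your condition, a homomorphism $\mathbb{Z}\times\mathbb{Z}_2^3\to G$ with kernel exactly $\langle e_1\rangle$ would count as admissible for the fine grading, yet it induces a grading of type $\IM{3}$ that is also induced by an admissible homomorphism from the coarsening $\mathbb{Z}\times\mathbb{Z}_2^2\to G$. This double-counting is exactly the redundancy that admissibility is designed to eliminate, and it would invalidate the uniqueness assertion of \Cref{thm:gradings} when the proposition is used to prove \Cref{thm:D3}.
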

    \begin{proof}
        In \cite[\S 8.2]{DEM}, a fine grading with universal group $\mathbb{Z}\times\mathbb{Z}_2^3$ is obtained as a refinement of a grading of type $\IM{3}$. If we refine $\gr{\IM{3}}_D(e_0,E)$ according to the eigenspace decomposition of $\varphi$, we get exactly $\gr{\IIM}_D(e_0,e_1,E)$.

        The supports are $\supp(\Gamma_0)=\{(\pm2,0,0,0)\}\cup(\{0\}\times\mathbb{Z}_2^3)\smallsetminus \{(0,1,0,0)\}$ and $\supp(\Gamma_1)=\{\pm1\}\times \{(1,0,0),(0,1,0),(0,0,1),(0,1,1)\}$. Thus, the only nontrivial subgroup of the torsion part whose intersection with $\supp(\Gamma_0)$ is contained in $\{0\}$ is $\langle e_1\rangle$. Taking the quotient by this subgroup, we obtain $\gr{\IM{3}}_D(e_0,E)$ as the only proper almost fine coarsening. 

        A homomorphism $\alpha:\mathbb{Z}\times\mathbb{Z}_2^3\rightarrow G$ or $\bar{\alpha}:\mathbb{Z}\times\mathbb{Z}_2^2\rightarrow G$ is admissible if and only if its restriction to the torsion part is injective. The gradings $\gr{\IIM}_D(g,h,H)$ are induced by such homomorphisms: $\alpha$ if $h\ne e$ and $\bar{\alpha}$ if $h=e$.
    \end{proof}

    \begin{proposition}\label{prop:IIM_weyl}
        Let $W(\IIM)$ be the Weyl group of a fine grading on $D(1)$ with universal group $\mathbb{Z}\times\mathbb{Z}_2^3$ and let $W(\IM{3})$ be the Weyl group of its almost fine coarsening induced by the quotient map $\mathbb{Z}\times\mathbb{Z}_2^3\to\mathbb{Z}\times\mathbb{Z}_2^2$. 
        Then $W(\IIM)$ leaves the kernel of this map invariant, and the resulting map $W(\IIM)\rightarrow W(\IM{3})$ is an isomorphism. Moreover,  $W(\IM{3})$ acts on $\mathbb{Z}\times\mathbb{Z}_2^2$ as the full automorphism group, which is isomorphic to $C_2\times(C_2^2 \rtimes S_3)$.
    \end{proposition}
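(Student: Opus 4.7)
The plan is to use the support of $\Gamma$ to identify $\langle e_1\rangle$ as an invariant subgroup under $W(\IIM)$, and then to prove that the induced homomorphism $W(\IIM) \to \aut(\bar U)$, where $\bar U = U/\langle e_1\rangle = \mathbb{Z}\times\mathbb{Z}_2^2$, is an isomorphism onto $W(\IM{3}) = \aut(\bar U)$.

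For the invariance I would invoke the support description in \Cref{prop:IIM_fine}: the element $e_1$ is the unique nontrivial element of $U_{[2]}$ not lying in $\mathrm{Supp}(\Gamma)$. Since every $w \in W(\IIM) \subseteq \aut(U)$ permutes the support and sends 2-torsion to 2-torsion, it must fix $e_1$, and in particular preserve $\langle e_1\rangle$. For the injectivity of the induced map I would parameterize any element $w$ of the kernel as $w(e_i) = e_i + k_i e_1$ for $i\in\{0,2,3\}$ with $k_i\in\{0,1\}$ and $w(e_1) = e_1$. Applying $w$ to the elements $\pm e_0 + e_2$, $\pm e_0 + e_3$, and $\pm e_0 + e_1$ of $\mathrm{Supp}(\Gamma_1)$, and requiring the images to remain in the support (again using \Cref{prop:IIM_fine}), will force $k_0 + k_2 = 0$, $k_0 + k_3 = 0$, and $k_0 = 0$ respectively, so $w = 1$.

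The main obstacle is surjectivity onto $W(\IM{3})$, which simultaneously will establish $W(\IM{3}) = \aut(\bar U)$. I plan to lift each generator of $\aut(\bar U) \cong C_2 \times (C_2^2 \rtimes S_3)$ to an element of $\aut(\IIM)$. The inversion $\bar e_0 \mapsto -\bar e_0$ lifts via $\iota(1,1,C)$, and the $S_3$-action permuting $\{\bar a,\bar b,\bar c\}$ lifts via $\iota(Y_\sigma, Y_\sigma, 1)$; both commute with the outer involution $\varphi$ and so trivially preserve the $\IIM$-refinement. The delicate part — and the anticipated obstacle — is lifting the shifts $\bar e_0 \mapsto \bar e_0 + \bar a$ and $\bar e_0 \mapsto \bar e_0 + \bar b$ via the inner automorphisms $\iota(A,1,1)$ and $\iota(B,1,1)$, which do not commute with $\varphi$. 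The key point is that membership in $\aut(\IIM)$ requires only that $\IIM$-components be permuted, not that the $\varphi$-eigenspaces be preserved pointwise. A direct computation on the basis $\{A^{(i)}, B^{(i)}, C^{(i)}\}$ of $\mathfrak{s}_1 \oplus \mathfrak{s}_2$ will show that $\iota(A,1,1)$ sends, e.g., $B^{(0)} = B_1 + B_2$ of $\IIM$-degree $b$ to $-B_1 + B_2 = -B^{(1)}$ of $\IIM$-degree $b + e_1$, and the analogous action on $\mathcal{L}_1 \cong M_2 \otimes V_3$ — left multiplication by $A$ on the $M_2$ factor — produces compatible shifts of $\mathcal{L}_1$-degrees. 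Assembling these yields a single group automorphism of $U$ whose image in $\aut(\bar U)$ is exactly the shift by $\bar a$; the analysis for $\iota(B,1,1)$ is symmetric. Since these lifts generate $\aut(\bar U)$, surjectivity follows, and we obtain both the isomorphism $W(\IIM) \cong W(\IM{3})$ and the equality $W(\IM{3}) = \aut(\bar U)$.
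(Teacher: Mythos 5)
Your proposal is correct and follows essentially the same route as the paper: establish that the kernel $\langle e_1\rangle$ of the quotient map is invariant, check injectivity of $W(\IIM)\to W(\IM{3})$ using the support, and obtain surjectivity by observing that the generators of $\aut(\mathbb{Z}\times\mathbb{Z}_2^2)$ exhibited in \Cref{prop:IM_weyl} lift to automorphisms that also permute the components of the finer grading (including the ones that do not commute with $\varphi$). The paper phrases injectivity more briefly (an automorphism is determined by its action on $\mathcal{L}_1$, where the two gradings coincide), but your explicit computation with the support of $\Gamma_1$ establishes the same point.
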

    \begin{proof}
        By Proposition \ref{prop:IM_weyl}, $W(\IM{3})$ is the full automorphism group of $\mathbb{Z}\times\mathbb{Z}_2^2$.
        An automorphism of the algebra stabilizing a grading also stabilizes all its coarsenings, so the map $W(\IIM)\rightarrow W(\IM{3})$ is well defined. It is surjective since the automorphisms described in Proposition \ref{prop:IM_weyl} are in $\aut(\IIM)$, too. It is injective since an automorphism is completely determined by its action on $\mathcal{L}_1$ and on this space the two gradings coincide.
    \end{proof}	

    \begin{definition}[Gradings of type $\IIC$] \label{def:IIC}
        Let $g_1,g_2,h\in G$ with $h$ of order $2$. For each $X\in \mathfrak{sl}_2$ and $i=0,1$, we set $X^{(i)}=(X,(-1)^iX)\in\mathfrak{sl}_2\oplus\mathfrak{sl}_2$. Then we define $\gr{\IIC}_D(g_1,g_2,h)$ as follows. On the distinguished ideal $\mathfrak{s}_3$ of $\mathcal{L}_0$, we put the grading $\gr{\mathrm{C}}_{\mathfrak{sl}_2}(g_1^2)$ while, on $\mathfrak{s}_1\oplus\mathfrak{s}_2$, we set $\deg(H^{(i)})=h^i$, $\deg(E^{(i)})=g_2h^i$ and $\deg(F^{(i)})=g_2^{-1}h^i$. To define the grading on $\mathcal{L}_1$, we use the isomorphism $\mathcal{L}_1\cong M_2(F)\otimes V_3$ described in Definition \ref{def:IM} and take the tensor product of $\Gamma_V(g_1,g_1^{-1})$ on $V_3$ %is defined by $\deg(u_3)=g_1$ and $\deg(v_3)=g_1^{-1}$
        and the grading on $M_2(\mathbb{F})$ given by $\deg(I)=h$, $\deg(H)=e$, $\deg(E)=g_2$, $\deg(F)=g_2^{-1}$.
    \end{definition}

    \begin{proposition} \label{prop:IIC_fine}
        A fine grading on $D(1)$ with universal group $\mathbb{Z}^2\times\mathbb{Z}_2$ is equivalent to $\gr{\IIC}_D(e_1,e_2,e_3)$, of type $(15,1)$, where $e_1=(1,0,0)$, $e_2=(0,1,0)$, $e_3=(0,0,1)$. It has no almost fine proper coarsenings. Moreover, the $G$-gradings of type $\IIC$ are precisely the ones induced by admissible homomorphisms $\mathbb{Z}^2\times\mathbb{Z}_2\rightarrow G$.
    \end{proposition}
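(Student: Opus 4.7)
The plan is to mirror the pattern of \Cref{prop:IM_fine,prop:III_fine,prop:IIM_fine}. First I would invoke \cite[\S 8.2]{DEM} to identify the maximal quasitorus $Q\subseteq\aut(D(1))$ whose character group is $\mathbb{Z}^2\times\mathbb{Z}_2$: since $D(1)$ has the outer involution $\varphi$ swapping $\mathfrak{s}_1$ and $\mathfrak{s}_2$, and $\varphi$ commutes with the two-dimensional subtorus $\{\iota(d_\mu,d_\mu,d_\nu)\mid\mu,\nu\in\mathbb{F}^\times\}$, I expect $Q$ to be generated by these. Simultaneously diagonalizing $Q$ on $\mathcal{L}$, the $d_\nu$-action separates the Cartan weight spaces of $\mathfrak{s}_3$, the $d_\mu$-action separates those of the pair $\mathfrak{s}_1\oplus\mathfrak{s}_2$, and $\varphi$ further splits the two-dimensional joint weight spaces that arise from the swap. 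Matching the resulting eigenvectors with the homogeneous elements $H_3,E_3,F_3$ and $H^{(i)},E^{(i)},F^{(i)}$ on $\mathcal{L}_0$, and with $X\otimes u_3,X\otimes v_3$ for $X\in\{I,H,E,F\}$ on $\mathcal{L}_1\cong M_2\otimes V_3$, should recover \Cref{def:IIC} for $\gr{\IIC}_D(e_1,e_2,e_3)$. I expect the main obstacle to be the bookkeeping of $\varphi$ on $\mathcal{L}_1$ under the isomorphism $V_1\otimes V_2\cong\mathrm{End}(V)$ of \Cref{def:IM}, because swapping the two tensor factors translates on $M_2(\mathbb{F})$ into the symplectic adjunction (up to sign) determined by $\psi$, so that $\varphi$ fixes $H,E,F$ and negates $I$; this is exactly what forces the shift by $h$ on the degree of $I$, and a misplaced sign here would derail the identification.

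Next I would read off the supports directly. The degree $e$ component is spanned by $H_3$ and $H^{(0)}$, and all other components are one-dimensional, yielding type $(15,1)$. A short computation gives
\[
\supp(\Gamma_0)=\{(0,0,0),(\pm2,0,0),(0,0,1)\}\cup\bigl(\{0\}\times\{\pm1\}\times\mathbb{Z}_2\bigr),
\]
so the unique non-trivial torsion element $(0,0,1)=e_3$ of $U=\mathbb{Z}^2\times\mathbb{Z}_2$ lies in $\supp(\Gamma_0)$. Then \Cref{thm:almostfine} forces the only subgroup $E\subseteq t(U)$ producing an almost fine coarsening to be trivial, so there is no almost fine proper coarsening.

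Finally, for the second claim, I would apply \Cref{def:admissible}. The natural map $\pi_\Delta:U\to U/t(U)$ has kernel exactly $\langle e_3\rangle$, and $\supp(\Gamma_0)$ already contains pairs differing by $e_3$ (for instance $e$ and $e_3$ itself), so the admissibility of $\alpha:U\to G$ reduces to injectivity on $t(U)$, i.e.\ to $\alpha(e_3)$ having order $2$ in $G$. A routine check then shows that $\alpha$ induces precisely $\gr{\IIC}_D(\alpha(e_1),\alpha(e_2),\alpha(e_3))$ and, conversely, every $\gr{\IIC}_D(g_1,g_2,h)$ arises from such an $\alpha$.
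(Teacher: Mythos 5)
Your proposal is correct and follows essentially the same route as the paper: identify the maximal quasitorus generated by $\{\iota(d_\lambda,d_\lambda,d_\mu)\mid\lambda,\mu\in\mathbb{F}^\times\}$ and $\varphi$ via \cite[\S 8.2]{DEM}, diagonalize it to recover \Cref{def:IIC}, then read off the supports to exclude almost fine proper coarsenings and to characterize admissible homomorphisms as those injective on the torsion subgroup. Your sign bookkeeping for $\varphi$ on $M_2(\mathbb{F})$ and your support computation are correct (indeed, where the paper's displayed $\supp(\Gamma_0)$ shows $(0,\pm2)$, your $(0,\pm1)$ is the accurate value, though this does not affect the conclusion).
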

    \begin{proof}         
        Up to equivalence, the fine $\mathbb{Z}^2\times\mathbb{Z}_2$-grading corresponds to the maximal quasitorus $Q\subseteq \aut(D(1))$ generated by $\{ \iota(d_\lambda,d_\lambda,d_\mu)\mid \lambda,\mu\in\mathbb{F}^\times\}$ and $\varphi$ (see \cite[\S 8.2]{DEM}). Then, diagonalizing $Q$, we obtain as eigenvectors the homogeneous elements in Definition~\ref{def:IIC} for the grading $\gr{\IIC}_D(e_1,e_2,e_3)$.
        Moreover, $\supp(\Gamma_0)=\{(\pm2,0,0)\}\cup(\{(0,0),(0,\pm2)\}\times\mathbb{Z}_2)$ and $\supp(\Gamma_1)=\{\pm1\}\times \{(0,1),(0,0),(\pm1,0)\}$, so there are no almost fine proper coarsenings.
        A homomorphism $\mathbb{Z}^2\times\mathbb{Z}_2\rightarrow G$ is admissible if and only if its restriction to the torsion subgroup is injective.
    \end{proof}

    \begin{proposition}\label{prop:IIC_weyl}
        The Weyl group of a fine grading on $D(1)$ with universal group $\mathbb{Z}^2\times\mathbb{Z}_2$ is isomorphic to $C_2^3$ and acts on $\mathbb{Z}^2\times\mathbb{Z}_2$ as the set of all automorphisms preserving the sets $\{\pm 2e_1\}$ and $\{\pm e_1\pm e_2\}$. Explicitly, it can change the sign of the first two components or add their sum to the third one (modulo 2).
    \end{proposition}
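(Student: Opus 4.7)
The plan is to match an upper bound on $|W(\Gamma)|$ coming from the combinatorics of the support with a lower bound obtained by exhibiting three explicit automorphisms, in the spirit of the proofs of \Cref{prop:IC_weyl2,prop:IP_weyl2}. I would view any $w\in W(\Gamma)$ as an automorphism of $U=\mathbb{Z}^2\times\mathbb{Z}_2$; since $e_3$ is the unique nontrivial element of order $2$ in $U$, one has $w(e_3)=e_3$, so $w$ is determined by $w(e_1)$ and $w(e_2)$.

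For the upper bound, I would exploit two distinguished features of the grading. First, the simple ideal $\mathfrak{s}_3$ is preserved by every automorphism of $\mathcal{L}$, because the outer involution only swaps $\mathfrak{s}_1$ and $\mathfrak{s}_2$ while fixing $\mathfrak{s}_3$; hence $w$ preserves $\supp(\Gamma_{\mathfrak{s}_3})\smallsetminus\{e\}=\{\pm 2e_1\}$, forcing $w(e_1)=\sigma_1 e_1+\tau_1 e_3$ with $\sigma_1\in\{\pm 1\}$ and $\tau_1\in\{0,1\}$. An analogous argument using that $w(2e_2)$ must lie in $\supp(\Gamma_0)$ yields $w(e_2)=\sigma_2 e_2+\tau_2 e_3$. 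The final constraint comes from requiring $w(e_1+e_2)\in\supp(\Gamma_1)$: since this image has nonzero second coordinate, it must lie in $\{\pm e_1\pm e_2\}$, which is precisely the subset of $\supp(\Gamma_1)$ with nonzero second coordinate, and this forces $\tau_1=\tau_2$. Thus $|W(\Gamma)|\le 8$ and every element has the form claimed in the statement.

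For the lower bound, I would exhibit three commuting involutions. The inner automorphisms $\iota(1,1,C)$ and $\iota(C,C,1)$ realize $\sigma_1=-1$ and $\sigma_2=-1$ respectively, since conjugation by $C$ inverts the relevant Cartan directions. For the element with $\tau=1$, which shifts the $e_3$-coordinate of both $e_1$ and $e_2$, I would combine the outer involution with an inner twist: the automorphism $\iota(\diag(i,-i),1,1)\circ\varphi$ does the job, sending $E^{(0)}\mapsto -E^{(1)}$ on $\mathfrak{s}_1\oplus\mathfrak{s}_2$ and, via the identification $\mathcal{L}_1\cong M_2\otimes V_3$, interchanging the $H$- and $I$-components of $M_2\cong V_1\otimes V_2$ up to scalars. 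Together, these three elements generate a subgroup of order $8$ in $W(\Gamma)$, which by the upper bound is all of $W(\Gamma)$.

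The main obstacle is constructing the $\tau=1$ automorphism. Purely inner automorphisms from $\iota$ cannot achieve the required shift of the $e_3$-coordinate, because this shift interchanges homogeneous components of $M_2$ whose degrees differ by $e_3$ (the components spanned by $H$ and $I$), and this swap is not induced by any conjugation of $V_1\otimes V_2$ alone. The transposition of the first two tensor factors afforded by the outer involution $\varphi$ is essential, and the additional diagonal twist by $\diag(i,-i)$ on the first factor is precisely what corrects the scalars so that the composition permutes homogeneous components of $\Gamma$ rather than mixing them; verifying this by direct computation on $\mathcal{L}_0$ and on $\mathcal{L}_1$ is the one nontrivial calculation in the argument.
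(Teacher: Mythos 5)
Your overall strategy is the same as the paper's: an upper bound of order $8$ from the action of $W(\Gamma)$ on the supports, matched by three explicit commuting involutions, and your first two generators $\iota(1,1,C)$ and $\iota(C,C,1)$ are exactly the ones the paper uses. The proof does go through, and your derivation of $\tau_1=\tau_2$ from $w(e_1+e_2)\in\supp(\Gamma_1)$ is a valid (arguably cleaner) substitute for the paper's eigenspace argument for the invariance of $\{\pm e_1\pm e_2\}$. However, your discussion of the third generator rests on a false premise. You assert that no purely inner automorphism can realize $e_1\mapsto e_1+e_3$, $e_2\mapsto e_2+e_3$ because the swap of the $I$- and $H$-components of $M_2\cong V_1\otimes V_2$ ``is not induced by any conjugation of $V_1\otimes V_2$ alone.'' But $\iota(f,g,h)$ acts on $V_1\otimes V_2\cong\mathrm{End}(V)$ by $M\mapsto fMg^{-1}$, which is a conjugation only when $f=g$. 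Left multiplication by $A=\diag(i,-i)=iH$ sends $I\mapsto iH$ and $H\mapsto iI$ while preserving $\mathbb{F}E$ and $\mathbb{F}F$, and on $\mathfrak{s}_1\oplus\mathfrak{s}_2$ it sends $E^{(i)}\mapsto -E^{(i+1)}$, so the purely inner $\iota(A,1,1)$ already realizes the required element of $W(\Gamma)$ --- this is precisely the generator used in the paper. Your candidate $\iota(A,1,1)\circ\varphi$ also works, but not because $\varphi$ is ``essential'': by \Cref{prop:IIC_fine} the outer involution $\varphi$ belongs to the maximal quasitorus $\mathrm{Diag}(\Gamma)$ defining this grading, hence lies in $\mathrm{Stab}(\Gamma)$ and contributes nothing to the induced element of $W(\Gamma)$. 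A minor further point: by \Cref{def:IIC} the support of $\mathfrak{s}_1\oplus\mathfrak{s}_2$ is $\{0,e_3,\pm e_2,\pm e_2+e_3\}$, so $2e_2$ is not a degree occurring in $\Gamma_0$; the constraint on $w(e_2)$ should instead be extracted directly from the fact that $w$ permutes this set and that $w(e_2)$ must complete $\{w(e_1),e_3\}$ to a generating set of $U$, which again forces $w(e_2)=\pm e_2+\tau_2 e_3$.
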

    \begin{proof}
        The set $\{\pm 2e_1\}$ is preserved because the Weyl group must preserve the support of $\mathfrak{s}_3$.
        The degree $0$ component must be invariant, and it is spanned by $H^{(0)}=H_1+H_2\in\mathfrak{s}_1\oplus\mathfrak{s}_2$ and $H_3\in\mathfrak{s}_3$. Therefore, the eigenspaces defined by the adjoint action of each of these elements must be preserved or interchanged with their opposites. %The spaces with extreme weights are exactly the ones of degree in $\{\pm e_1\pm e_2\}$.
        It follows that the set $\{\pm e_1\pm e_2\}$ is preserved.

        The automorphisms $\iota(1,1,C)$ and $\iota(C,C,1)$ change the signs of $e_1$ and $e_2$, respectively, and $\iota(A,1,1)$ sends $e_1$ to $e_1+e_3$ and $e_2$ to $e_2+e_3$.
    \end{proof}	

    \begin{definition}[Gradings of type $\IIP$] \label{def:IIP}
        Let $a,b\in G$ with $a^4=b^4=e$ and $\langle a,b\rangle \cong \mathbb{Z}_4^2$. We define $\gr{\IIP}_D(a,b)$ in the following way. On $\mathcal{L}_0$, we set $\deg(A_3)=a^2$, $\deg(B_3)=\deg(B_1+B_2)=b^2$, $\deg(C_3)=\deg(B_1-B_2)=a^2b^2$, $\deg(A_1\pm iA_2)=a^{\pm 1}$ and $\deg(C_1\pm iC_2)=a^{\pm 1}b^2$, where $i^2=-1$. To define the grading on $\mathcal{L}_1$, consider the subgroup $\langle a\rangle\cong\mathbb{Z}_4$ and its four characters, $\chi_0, \chi_1, \chi_2, \chi_3$, defined by $\chi_k(a)=i^k$. Then, for each $h\in\langle a\rangle$, we set $\deg(\chi_0(h)w_0-\chi_2(h)w_3+\chi_1(h)w_1+\chi_3(h)w_2)=bh$ and $\deg(\chi_0(h)w'_0-\chi_2(h)w'_3+\chi_1(h)w'_1+\chi_3(h)w'_2)=b^{-1}h$.
    \end{definition}

    \begin{proposition} \label{prop:IIP_fine}
        A fine grading on $D(1)$ with universal group $\mathbb{Z}_4^2$ is equivalent to $\gr{\IIP}_D(e_0,e_1)$, of type $(13,2)$, where $e_0=(1,0)$ and $e_1=(0,1)$. It has no almost fine proper coarsenings. Moreover, the $G$-gradings of type $\IIP$ are precisely the ones induced by admissible homomorphisms $\mathbb{Z}_4^2\rightarrow G$.
    \end{proposition}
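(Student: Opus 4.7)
The plan is to follow closely the pattern of Proposition~\ref{prop:IIC_fine}. First, I would identify the fine $\mathbb{Z}_4^2$-grading on $D(1)$ with the eigenspace decomposition of a maximal quasitorus $Q\subseteq\aut(D(1))$ with $\mathfrak{X}(Q)\cong\mathbb{Z}_4^2$, as described in \cite[\S 8.2]{DEM}. This $Q$ is built from inner automorphisms $\iota(f,g,h)$ together with the outer involution $\varphi$, so the $\varphi$-eigenvectors $B_1\pm B_2$, $A_1\pm iA_2$, $C_1\pm iC_2$ appear naturally in $\mathfrak{s}_1\oplus\mathfrak{s}_2$, combined with $\gr{\Pa}_{\mathfrak{sl}_2}(a^2,b^2)$ on $\mathfrak{s}_3$. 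On $\mathcal{L}_1$, the $\langle a\rangle\cong\mathbb{Z}_4$ symmetry produces the combinations indexed by the characters $\chi_k$. Diagonalizing $Q$ thus yields exactly the homogeneous elements in \Cref{def:IIP}. Counting components gives type $(13,2)$: the two $2$-dimensional components, of degrees $(0,2)$ and $(2,2)$, pair an element of $\mathfrak{s}_3$ with an element of $\mathfrak{s}_1\oplus\mathfrak{s}_2$, while the remaining $13$ components (five in $\mathcal{L}_0$ and eight in $\mathcal{L}_1$) are one-dimensional; this agrees with $\dim D(1)=17$.

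For the absence of proper almost fine coarsenings, I would apply \Cref{thm:almostfine}. Since $U=\mathbb{Z}_4^2$ is pure torsion, the condition $E\subseteq t(U)$ is automatic. From the explicit description in \Cref{def:IIP}, a direct computation gives
\[
\mathrm{Supp}(\Gamma_0)=\{(1,0),(3,0),(2,0),(0,2),(2,2),(1,2),(3,2)\},\quad
\mathrm{Supp}(\Gamma_1)=\mathbb{Z}_4\times\{1,3\},
\]
so $\mathrm{Supp}(\Gamma)=\mathbb{Z}_4^2\smallsetminus\{(0,0)\}$. Since the support $\Sigma$ of the induced grading on $\mathrm{Der}(\mathcal{L})$ contains $\mathrm{Supp}(\Gamma)$ via the injection $\mathrm{ad}$, any nontrivial subgroup $E\subseteq U$ satisfies $E\cap\Sigma\supsetneq\{e\}$, forcing $E=\{e\}$.

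For the last claim about admissible homomorphisms, note that because $U/t(U)$ is trivial, \Cref{def:admissible} reduces to the condition that $\alpha|_{\mathrm{Supp}(\Delta)}$ is injective. Since $\mathrm{Supp}(\Delta)=U\smallsetminus\{e\}$, this is equivalent to $\alpha:\mathbb{Z}_4^2\to G$ being injective. Unraveling \Cref{def:IIP} shows that for such an $\alpha$ the induced grading is exactly $\gr{\IIP}_D(\alpha(e_0),\alpha(e_1))$, and every $\gr{\IIP}_D(a,b)$ with $\langle a,b\rangle\cong\mathbb{Z}_4^2$ arises this way from the unique (injective) homomorphism sending $e_0\mapsto a$ and $e_1\mapsto b$. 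The main obstacle is the first step: verifying the diagonalization so as to obtain the precise combinations listed, with the correct factors of $i$ and the character coefficients $\chi_k(h)$. This is essentially bookkeeping analogous to \Cref{prop:IIC_fine}, but requires care in handling the interaction between $\varphi$ (which swaps $\mathfrak{s}_1$ and $\mathfrak{s}_2$ and explains the appearance of symmetric and antisymmetric combinations) and the $\mathbb{Z}_4$-grading encoded by $a$.
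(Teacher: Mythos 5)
Your overall route is the same as the paper's: identify the grading with the eigenspace decomposition of the maximal quasitorus from \cite[\S 8.2]{DEM} (generated by $\iota(A,A,A)$ and $\varphi\iota(I,B,B)$), read off the supports, and then apply \Cref{thm:almostfine} and \Cref{def:admissible}. Your support computation and your treatment of admissibility (triviality of $U/t(U)$ reduces admissibility to injectivity on the support, hence to injectivity of $\alpha$) agree with the paper.

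There is, however, one incorrect justification in the coarsening step. You claim that the support $\Sigma$ of the induced grading on $\mathrm{Der}(\mathcal{L})$ contains all of $\mathrm{Supp}(\Gamma)$ ``via the injection $\mathrm{ad}$.'' This is not right: in the framework of \Cref{thm:almostfine}, a Lie superalgebra is an $\Omega$-algebra whose operations include the parity automorphism $\delta$, so $\mathrm{Der}(\mathcal{L})$ is the Lie algebra of $\aut(\mathcal{L})$, i.e.\ the \emph{even} derivations; for $x\in\mathcal{L}_1$ the map $\mathrm{ad}(x)$ does not commute with $\delta$ and is not an element of $\mathrm{Der}(\mathcal{L})$ in this sense. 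Hence $\Sigma=\mathrm{Supp}(\Gamma_0)=(\mathbb{Z}_4\times\{0,2\})\smallsetminus\{(0,0)\}$, not $\mathbb{Z}_4^2\smallsetminus\{(0,0)\}$ (this is exactly how the paper argues in the analogous propositions, e.g.\ \Cref{prop:IP_fine}). The conclusion survives, but for a different reason than the one you give: every nontrivial subgroup $E$ of $\mathbb{Z}_4^2$ contains a nontrivial element of order $2$, and all three such elements $(2,0)$, $(0,2)$, $(2,2)$ lie in $\mathrm{Supp}(\Gamma_0)$, so $E\cap\Sigma\not\subseteq\{e\}$. You should replace your appeal to the odd part of the adjoint representation by this observation; as written, the step would be unsound in a situation where the even support failed to meet some subgroup that the full support does meet.
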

    \begin{proof}
         Up to equivalence, the fine $\mathbb{Z}_4^2$-grading corresponds to the maximal quasitorus $Q\subseteq \aut(D(1))$ generated by $\iota(A,A,A)$ and $\varphi\iota(I,B,B)$ (see \cite[\S 8.2]{DEM}). Diagonalizing $Q$, we obtain as eigenvectors the homogeneous elements of Definition~\ref{def:IIP} for the grading $\gr{\IIP}_D(e_0,e_1)$.
        Moreover, $\supp(\Gamma_0)=(\mathbb{Z}_4\times\{0,2\})\smallsetminus\{(0,0)\}$ and $\supp(\Gamma_1)=\mathbb{Z}_4\times\{1,3\}$, so there are no proper almost fine coarsenings.
        A homomorphism $\mathbb{Z}_4^2\rightarrow G$ is admissible if and only if it is injective.
    \end{proof}

    \begin{proposition}\label{prop:IIP_weyl}
        The Weyl group of a fine grading on $D(1)$ with universal group $\mathbb{Z}_4^2$ is isomorphic to $C_2^4\rtimes C_2$ and acts on $\mathbb{Z}_4^2$ as the set of all automorphisms fixing the element $2e_0=(2,0)$.
    \end{proposition}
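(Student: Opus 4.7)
The plan is to show that $W(\Gamma)$ equals $\mathrm{Stab}_{\aut(U)}(2e_0)$, which has order $32$ and structure $C_2^4 \rtimes C_2$. For the upper bound, I would use the component-dimension structure from \Cref{def:IIP}: the homogeneous component at degree $2e_0 = a^2$ is the one-dimensional subspace $\mathbb{F}A_3 \subseteq \mathfrak{s}_3$, whereas the other two nontrivial $2$-torsion degrees $2e_1$ and $2e_0 + 2e_1$ index two-dimensional components (each receiving one generator from $\mathfrak{s}_3$ together with one from $\mathfrak{s}_1 \oplus \mathfrak{s}_2$). Since Weyl group elements preserve dimensions, $2e_0$ must be fixed. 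A direct count of matrices $\begin{pmatrix} a & b \\ c & d \end{pmatrix} \in \mathrm{GL}_2(\mathbb{Z}_4)$ satisfying $\alpha(2,0)^T = (2,0)^T$ (equivalently $a, d$ odd and $c$ even) gives $32$ elements; the reduction modulo $2$ yields a split exact sequence $1 \to C_2^4 \to \mathrm{Stab} \to C_2 \to 1$, with kernel $\{I + 2M : M \in M_2(\mathbb{Z}_2)\}$ elementary abelian, split by the order-$2$ element $\begin{pmatrix} 1 & 1 \\ 0 & 3 \end{pmatrix}$.

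For the lower bound I would combine \Cref{Misha's lemma} with explicit constructions. The lemma embeds $\bar K$ into $U_{[2]} \cong C_2^2$, and since the kernel of the parity map $p : U \to \mathbb{Z}_2$ (second coordinate mod $2$) contains all of $U_{[2]}$, each $k \in \bar K$ acts on $U$ by fixing $U_{[2]}$ pointwise and sending $e_1 \mapsto e_1 + k$. To prove $|\bar K| = 4$ I would adapt the exact sequence of \Cref{l:exact_seq} --- which is not directly applicable here, as $\mathrm{res} : \aut(\mathcal{L}) \to \aut(\mathcal{L}_0)$ has image $\mathrm{PSL}_2^3 \rtimes \langle (12) \rangle$ of index $3$ --- by intersecting $\mathrm{Stab}(\Gamma_0)$ with $\mathrm{Im}(\mathrm{res})$. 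The restricted grading $\Gamma_0$ is not fine (the Pauli grading on the diagonal copy of $\mathfrak{sl}_2$ inside $\mathfrak{s}_1 \oplus \mathfrak{s}_2$ admits a strict refinement independent of the grading on $\mathfrak{s}_3$), so $\mathrm{Stab}(\Gamma_0) \cap \mathrm{Im}(\mathrm{res})$ is strictly larger than $\mathrm{res}(\mathrm{Stab}(\Gamma))$, yielding the required cokernel of order $4$.

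The complementary $C_2$ quotient would be realized by the inner automorphism $\iota(B, B, B)$: using $BAB^{-1} = -A$, this element inverts the generator $\iota(A, A, A)$ of $Q$ while centralizing $\varphi\iota(I, B, B)$ (a similar direct calculation), so its induced action on $U$ is $e_0 \mapsto 3e_0$, $e_1 \mapsto e_1$. Together with the four elements of $\bar K$ and a second mixed $\iota$-lift realizing the matrix $\begin{pmatrix} 1 & 1 \\ 0 & 3 \end{pmatrix}$, this exhausts the $32$ elements of the stabilizer.

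The main obstacle will be the rigorous identification of $|\bar K|$, since \Cref{l:exact_seq} is stated under a surjectivity hypothesis that fails here. I would address this either through the adapted exact sequence sketched above, or concretely by exhibiting, for each nontrivial $k \in U_{[2]}$, an explicit lift in $\aut(\mathcal{L})$ whose action on $\mathcal{L}_0$ lies in $\mathrm{Stab}(\Gamma_0)$ and whose induced $U$-action is the translation-by-$k$ automorphism of \Cref{Misha's lemma}; once these four lifts and the $C_2$-generator above are produced, matching them against the parameterization of the stabilizer completes the proof.
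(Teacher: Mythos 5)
Your upper bound is correct and essentially equivalent to the paper's: the paper fixes $2e_0$ by observing that the two $2$-dimensional components sit in degrees $(0,2)$ and $(2,2)$ and that their bracket is nonzero of degree $(2,0)$, while you use the equivalent observation that $(2,0)$ is the only order-$2$ degree carrying a $1$-dimensional component. Your identification of $\mathrm{Stab}_{\aut(U)}(2e_0)$ as a group of order $32$ isomorphic to $C_2^4\rtimes C_2$ is also correct. The lower bound, however, has genuine gaps. First, a counting problem: even granting $|\bar K|=4$, your proposed generators (the four elements of $\bar K$, the element claimed to act as $\diag(3,1)$, and an element acting as $\left(\begin{smallmatrix}1&1\\0&3\end{smallmatrix}\right)$) are all upper triangular, so they generate at most the upper-triangular subgroup of the stabilizer, which has order $16$, not $32$; you never produce an automorphism with $e_0\mapsto e_0+2e_1$ (nonzero lower-left entry), although such elements do lie in $\mathrm{Stab}_{\aut(U)}(2e_0)$. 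Second, the claimed action of $\iota(B,B,B)$ is wrong: the component of degree $e_0=a$ is $\mathbb{F}(A_1+iA_2)$, and $\iota(B,B,B)$ sends $A_1+iA_2$ to $-(A_1+iA_2)$, so it \emph{fixes} $e_0$ rather than inverting it. The error comes from assuming that $e_0$ is the coordinate dual to the generator $\iota(A,A,A)$ of $Q$; in fact $\iota(A,A,A)$ acts trivially on the degree-$e_0$ component (it pairs nontrivially with $e_1$ instead), so inverting $\iota(A,A,A)$ produces an element of $K_1=\{\eta\mid\eta(e_0)=e_0,\ \eta(2e_1)=2e_1\}$, i.e., nothing beyond $\bar K$. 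Third, your argument for $|\bar K|=4$ only yields $|\bar K|\ge 2$ (``not fine'' gives a strict containment, not an index), and the adaptation of \Cref{l:exact_seq} to the non-surjective setting is left as a sketch; you acknowledge this, but it is the load-bearing step.

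For comparison, the paper avoids $\bar K$ and the exact sequences entirely here: it filters the stabilizer as $K_1\subseteq K_2\subseteq K_3$ with $K_2=\{\eta\mid\eta(2e_0)=2e_0,\ \eta(2e_1)=2e_1\}$ and $K_3=\mathrm{Stab}_{\aut(U)}(2e_0)$, computes $|K_1|=4$, $|K_2/K_1|=4$, $|K_3/K_2|=2$, and exhibits five explicit automorphisms normalizing $Q$ whose images generate each successive quotient: $\iota(I,I,A),\iota(I,I,B)$ for $K_1$, then $\iota(B,I,I)$ and $\iota(Y_{ac},Y_{ac},I)$ for $K_2/K_1$, then $\iota(1,A,Y_{bc})$ for $K_3/K_2$. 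Note that five independent generators are needed since the stabilizer is a $2$-group of order $2^5$; your list supplies at most four independent directions. To repair your argument you would need to verify the actual action of each proposed lift on $U$ by tracking the homogeneous components of \Cref{def:IIP} (not just the conjugation action on the generators of $Q$), and to add lifts realizing $K_2/K_1$, which is where the non-upper-triangular elements live.
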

    \begin{proof}
        The grading $\gr{\IIP}_D(e_0,e_1)$ has two components of dimension $2$, namely, the ones of degrees $(0,2)$ and $(2,2)$. Their bracket is nonzero and has degree $(2,0)$, so this element is fixed by the Weyl group.

        Now we show that the Weyl group can act as every automorphism fixing $(2,0)$. In fact, consider the sequence of subgroups $K_1\subseteq K_2 \subseteq K_3\subseteq \aut(\mathbb{Z}_4^2)$ defined by $K_1=\{\eta\in\aut(\mathbb{Z}_4^2)\mid \eta(e_0)=e_0,\,\eta(2e_1)=2e_1\}$, $K_2=\{\eta\in\aut(\mathbb{Z}_4^2)\mid \eta(2e_0)=2e_0,\,\eta(2e_1)=2e_1\}$ and $K_3=\{\eta\in\aut(\mathbb{Z}_4^2)\mid \eta(2e_0)=2e_0\}$. Then $|K_1|=4$, $|K_2/K_1|=4$, $|K_3/K_2|=2$ and we want to prove that the Weyl group corresponds to $K_3$. This follows from the fact that $\iota(I,I,A),\iota(I,I,B)\in K_1$, $\iota(B,I,I),\iota(Y_{ac},Y_{ac},I)\in K_2\smallsetminus K_1$, and $\iota(1,A,Y_{bc})\in K_3\smallsetminus K_2$.
    \end{proof}

    \begin{theorem}\label{thm:D3}
		Let $\mathcal{L}=D(1)$ and let $G$ be an abelian group. Then every $G$-gradings on $\mathcal{L}$ is of one of the types $\IC,\IP,\IM{1},\IIC,\IIP,\IIM$ (see Definitions \ref{def:IC}, \ref{def:IP}, \ref{def:IM}, \ref{def:IIC}, \ref{def:IIP}, \ref{def:IIM}). Gradings of different types are not isomorphic and, within each type, the isomorphism classes are as follows (summarized in Table \ref{tab:D(1)}):
		\begin{itemize}
			\item $\gr{\IC}_D(h_1,h_2,h_3,h_4)\cong \gr{\IC}_D(h'_1,h'_2,h'_3,h'_4)$ if and only if there exist $\sigma\in D_4\leq S_4$ and $\varepsilon\in\{\pm1\}$ such that $h'_i=h_{\sigma(i)}^\varepsilon$ for all $i$, where $D_4$ is the dihedral group preserving the partition $\{\{1,2\},\{3,4\}\}$,
			
			\item $\gr{\IP}_D(s,a,b)\cong \gr{\IP}_D(s',a',b')$ if and only if $\langle s,a,b \rangle=\langle s',a',b'\rangle$ and there exists a permutation $\sigma$ of the pair $(a,b)$ such that $a'\in\sigma(a)\langle s^2\rangle$ and $b'\in\sigma(b)\langle s^2\rangle$, 
			
			\item $\gr{\IM{1}}_D(g,T)\cong \gr{\IM{1}}_D(g',T')$ if and only if $T'=T$ and $g'\in gT\cup g^{-1}T$, 
			
			\item $\gr{\IIM}_D(g,h,T)\cong \gr{\IIM}_D(g',h',T')$ if and only if $T'=T$, $h'=h$ and $g'\in gT\cup g^{-1}T$,
			
			\item  $\gr{\IIC}_D(g_1,g_2,h)\cong\gr{\IIC}_D(g'_1,g'_2,h')$ if and only if $h'=h$ and there exists $k\in\langle h\rangle$ such that $g'_i\in\{g_ik,g_i^{-1}k\}$ for $i=1,2$,
			
			\item $\gr{\IIP}_D(a,b)\cong\gr{\IIP}_D(a,b)$ if and only if $\langle a,b\rangle=\langle a',b'\rangle$ and $a^2=a'^2$.
			
		\end{itemize}
	\end{theorem}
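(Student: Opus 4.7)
The plan is to apply \Cref{thm:gradings} using the classification of fine gradings on $D(1)$ from \cite{DEM} together with the almost fine and Weyl group analysis assembled above. As a setup, by \cite[\S 8.2]{DEM} every fine abelian group grading on $D(1)$ is equivalent to one of the six gradings handled in \Cref{prop:IC_fine,prop:IP_fine,prop:IM_fine2,prop:IIC_fine,prop:IIP_fine,prop:IIM_fine}, with universal groups $\mathbb{Z}^3$, $\mathbb{Z}_4\times\mathbb{Z}_2^2$, $\mathbb{Z}\times\mathbb{Z}_2^2$ (of type $\IM{1}$, since $\varphi$ equates $\IM{1}$ with $\IM{2}$ and $\IM{3}$ is not fine here), $\mathbb{Z}^2\times\mathbb{Z}_2$, $\mathbb{Z}_4^2$, and $\mathbb{Z}\times\mathbb{Z}_2^3$. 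By the same propositions, the list of almost fine gradings differs from this only by the unique proper almost fine coarsening $\gr{\IM{3}}_D(e_0,E)$ of the last fine grading, which is absorbed into the $\IIM$ family via the case $h=e$ of \Cref{def:IIM}.

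Applying \Cref{thm:gradings}, every $G$-grading on $D(1)$ is isomorphic to one induced from a uniquely determined almost fine grading by an admissible homomorphism; the six families of \Cref{def:IC,def:IP,def:IM,def:IIC,def:IIP,def:IIM} together cover exactly these induced gradings, and gradings coming from different almost fine sources cannot be isomorphic. The isomorphism conditions within each family are then obtained by transferring the Weyl group actions computed in \Cref{prop:IC_weyl2,prop:IP_weyl2,prop:IM_weyl,prop:IIM_weyl,prop:IIC_weyl,prop:IIP_weyl} to the combinatorial parameters. For $\IC$, the $C_2^3\rtimes C_2$ action on $\mathbb{Z}^3$ translates, via the remark following \Cref{prop:IC_weyl2}, to the dihedral subgroup $D_4\le S_4$ preserving the partition $\{\{1,2\},\{3,4\}\}$ combined with simultaneous inversion of the $h_i$. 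For $\IP$, $\IM{1}$, and $\IIM$, the Weyl group essentially permutes the free parameters modulo the relevant torsion subgroup ($\langle s^2\rangle$, $T$, or $T$), yielding the stated shifts and inversions. The $\IIC$ conditions follow by decoding the three generators of $C_2^3$ as inverting $g_1$, inverting $g_2$, and shifting both by $h$, and the $\IIP$ condition reduces to the observation that an automorphism of $\mathbb{Z}_4^2$ fixes $2e_0$ if and only if it preserves $a^2$.

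The main obstacle I anticipate lies in the $\IIM$ family, where the analysis must be carried out uniformly across both the fine $\mathbb{Z}\times\mathbb{Z}_2^3$-grading and its almost fine coarsening $\IM{3}$ with universal group $\mathbb{Z}\times\mathbb{Z}_2^2$. One has to verify that the Weyl group actions on these two universal groups match through the coarsening quotient, so that a single condition $g'\in gT\cup g^{-1}T$ together with $h'=h$ (including the case $h=e$) correctly captures the isomorphism classes originating from both sources; this matching is precisely the content of \Cref{prop:IIM_weyl}, which identifies the two Weyl groups and so reduces the $\IIM$ case to the Weyl group computation already carried out for $\IM{3}$.
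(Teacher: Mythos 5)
Your proposal is correct and follows essentially the same route as the paper: cite the classification of fine gradings from \cite{DEM}, invoke \Cref{thm:gradings} together with Propositions \ref{prop:IC_fine}, \ref{prop:IP_fine}, \ref{prop:IM_fine2}, \ref{prop:IIM_fine}, \ref{prop:IIC_fine}, \ref{prop:IIP_fine} to reduce to the six families (with the almost fine coarsening $\gr{\IM{3}}_D(e_0,E)$ absorbed into the $\IIM$ family as the case $h=e$), and then read off the isomorphism conditions from the Weyl group computations in Propositions \ref{prop:IC_weyl2}, \ref{prop:IP_weyl2}, \ref{prop:IM_weyl}, \ref{prop:IIM_weyl}, \ref{prop:IIC_weyl}, \ref{prop:IIP_weyl}. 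Your extra attention to matching the Weyl group actions across the $\IIM$ coarsening is exactly the content of \Cref{prop:IIM_weyl} and poses no gap.
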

    \begin{proof}
        By \cite{DEM}, any fine grading on $D(1)$ is equivalent to one of the fine gradings discussed above, with universal groups $\mathbb{Z}^3$, $\mathbb{Z}_4\times\mathbb{Z}_2^2$, $\mathbb{Z}\times\mathbb{Z}_2^2$,  $\mathbb{Z}\times\mathbb{Z}_2^3$, $\mathbb{Z}^2\times\mathbb{Z}_2$ and $\mathbb{Z}_4^2$. Therefore, by \Cref{thm:gradings} and Propositions \ref{prop:IC_fine}, \ref{prop:IP_fine}, \ref{prop:IM_fine2}, \ref{prop:IIM_fine}, \ref{prop:IIC_fine} and \ref{prop:IIP_fine}, any $G$ grading is isomorphic to one of the types $\IC,\IP,\IM{1},\IIM,\IIC,\IIP$, and gradings of different types are not isomorphic. By the same theorem and Propositions \ref{prop:IC_weyl2}, \ref{prop:IP_weyl2}, \ref{prop:IM_weyl}, \ref{prop:IIM_weyl}, \ref{prop:IIC_weyl} and \ref{prop:IIP_weyl}, the action of the Weyl group in each type leads to the stated conditions.
    \end{proof}

    \begin{table}
    \begin{tabular}{|c|c|c|c|}
    \hline
    Family & Subgroups & Combinatorial data & Equivalence\\ \hline 
    $\gr{\IC}_D$ 
    & trivial & \vtop{\hbox{\strut $(h_1,h_2,h_3,h_4)\in G^4$}\hbox{\strut with $h_1h_2h_3h_4=e$}} & \vtop{\hbox{\strut inversion and}\hbox{\strut permutation} \hbox{\strut by $D_4\leq S_4$}}  \\ \hline 
    $\gr{\IP}_D$ 
    &  $H=\langle s\rangle\times\langle a,b\rangle\cong\mathbb{Z}_4\times\mathbb{Z}_2^2$ & \vtop{\hbox{\strut $\{aS,bS\}$ in $H_{[2]}/S$}\hbox{\strut where $S=\langle s^2\rangle$}} & \\ \hline 
    $\gr{\IM{1}}_D$ 
    & $T\cong \mathbb{Z}_2^2$ & $gT\in G/T$ & inversion \\ \hline 
    $\gr{\IIM}_D$ 
    & \vtop{\hbox{\strut $T\cong\mathbb{Z}_2^2$, $H=\langle h\rangle$}\hbox{\strut with $T\cap H=1$}} & $gH\in G/H$ & inversion \\ \hline
    $\gr{\IIC}_D$ 
    & $H=\langle h\rangle\cong\mathbb{Z}_2$ & $\{g_1^{\pm 1},g_2^{\pm 1}\}$ in $G$ & $H$-translation \\ \hline
    $\gr{\IIP}_D$ 
    & $T=\langle a,b\rangle\cong\mathbb{Z}_4^2$, $H=\langle a^2\rangle\leq T_{[2]}$ & &  \\ \hline
    \end{tabular}
    \vspace{5pt} 
    \caption{Classification of $G$-gradings on the Lie superalgebra $D(1)$}\label{tab:D(1)}
    \end{table}

    %The case $\alpha=-1$ will be discussed in the following section.
		
\section{Gradings on $A(1,1)$}\label{sec:a}

    The Lie superalgebra $A(1,1)=\mathfrak{psl}(2\,|\,2)$ has an exceptionally large automorphism group among the members of the series $A(n,n)$. Recall that 
    \[ 
    \mathfrak{gl}(2\,|\,2)=\left\{ \begin{pmatrix}
        X_{11} & X_{12} \\
        X_{21} & X_{22}
    \end{pmatrix} \mid X_{ij} \in M_2(\mathbb{F}) \right\} 
    \]
    is a Lie superalgebra with even part corresponding to the blocks $X_{11}$ and $X_{22}$, odd part to the blocks $X_{12}$ and $X_{21}$, and bracket $[X,Y]=XY+(-1)^{|X||Y|}YX$. Its derived subalgebra   $\mathfrak{sl}(2\,|\,2)=\{ (X_{ij})\in \mathfrak{gl}(2\,|\,2)\mid \tr(X_{11})=\tr(X_{22})\}$ has a nontrivial center $\mathfrak{z}$ consisting of scalar matrices, and the quotient $\mathfrak{psl}(2\,|\,2)=\mathfrak{sl}(2\,|\,2)/\mathfrak{z}$ is simple.  

    It turns out that $\mathfrak{psl}(2\,|\,2)$ is isomorphic to the unique nontrivial ideal $(\mathfrak{s}_1\oplus\mathfrak{s}_2)\oplus \mathcal{L}_1$ of $D(-1)$. To define an explicit isomorphism, recall the alternating invariant form $\psi$ on the simple $2$-dimensional $\mathfrak{sl}_2$-module $V=\langle u,v\rangle$ such that $\psi(u,v)=1$. This form defines an isomorphism $V\cong V^*$ as $\mathfrak{sl}_2$-modules and, hence, an isomorphism $V\otimes V\cong\mathrm{End}(V)$, $x\otimes y\mapsto x\psi(y,\cdot)$, as $\mathfrak{sl}_2\oplus\mathfrak{sl}_2$ modules, where the action on $\mathrm{End}(V)\cong M_2(\mathbb{F})$ is given by $(X,Y).M=XM-MY$ for all $X,Y\in\mathfrak{sl}_2$ and $M\in M_2(\mathbb{F})$. %If instead we consider $M_2(F)\cong V^*\otimes V$, we obtain the map $x\otimes y\mapsto \psi(x,\cdot)y$, which give rise to the $\mathfrak{sl}_2\oplus\mathfrak{sl}_2$-action on $M_2(\mathbb{F})$ given by $(X,Y)*M=YM-MX$. The canonical isomorphism $V\otimes V^*\cong V^*\otimes V$ give rise to 
    Under this isomorphism, the flip $x\otimes y\mapsto y\otimes x$ on $V\otimes V$ corresponds to the map $X\mapsto-\bar{X}$ on $M_2(\mathbb{F})$ %defined by $j(M)=BM^\intercal B=-\cof(M)^\intercal$. The map $j$ is an isomorphism between the two actions on $M_2$, i.e. $j( (X,Y).M)=(X,Y)*j(M)$.
    where bar denotes the adjunction with respect to $\psi$, which is the standard involution of $M_2(\mathbb{F})$ as a quaternion algebra (cf. Definitions \ref{def:IM} and \ref{def:IIM}). In particular, for any $X\in M_2(\mathbb{F})$, we have $X-\bar{X}=2X-\mathrm{tr}(X)I$. It follows that we can define
    an isomorphism $\Omega:(\mathfrak{s}_1\oplus\mathfrak{s}_2)\oplus\mathcal{L}_1\rightarrow \mathfrak{psl}(2\,|\,2)$ by
    \[ \Omega(X,Y)=  \begin{pmatrix}
        X & 0 \\
        0 & Y
    \end{pmatrix}+\mathfrak{z}, \ 
    \Omega(x\otimes y\otimes u + x'\otimes y'\otimes v)= \sqrt{2} \begin{pmatrix}
        0 & x\psi(y,\cdot) \\
        y'\psi(x',\cdot) & 0
    \end{pmatrix} \]
    for all $X,Y\in\mathfrak{sl}_2$ and $x,y,x',y'\in V=\langle u,v\rangle$.

    Restricting automorphisms of $\mathcal{L}=D(-1)$ to its unique nontrivial ideal and transporting via the isomorphism $\Omega$, we obtain a group homomorphism $\theta:\aut(\mathcal{L})\rightarrow \aut(\mathfrak{psl}(2\,|\,2))$. From \cite{S}, we see that $\theta$ is in fact an isomorphism. Explicitly, for any $f,g\in \slg_2(\mathbb{F})$, we have
    \[ 
    \theta(\iota(f,g,I)) \begin{pmatrix}
        X_{11} & X_{12} \\
        X_{21} & X_{22}
    \end{pmatrix} = \begin{pmatrix}
        fX_{11}f^{-1} & fX_{12}g^{-1} \\
        gX_{21}f^{-1} & gX_{22}g^{-1}
    \end{pmatrix}, 
    \]
    and, for any $h=(h_{ij})_{1\le i,j\le 2}\in \slg_2(\mathbb{F})$, we have
    \[ 
    \theta(\iota(I,I,h)) \begin{pmatrix}
        X_{11} & X_{12} \\
        X_{21} & X_{22}
    \end{pmatrix} = \begin{pmatrix}
        X_{11} & h_{11}X_{12}-h_{12}\overline{X}_{21} \\
        -h_{21}\overline{X}_{12}+h_{22}X_{21} & X_{22}
    \end{pmatrix}. 
    \]
    As to the outer automorphism $\varphi:(X_1,X_2,X_3)\in\mathcal{L}_0\mapsto (X_2,X_1,X_3)$, $x_1\otimes x_2\otimes x_3\in\mathcal{L}_1\mapsto ix_2\otimes x_1\otimes x_3$, we have
    \[ 
    \theta(\varphi) \begin{pmatrix}
        X_{11} & X_{12} \\
        X_{21} & X_{22}
    \end{pmatrix} = \begin{pmatrix}
        X_{22} & -i\overline{X}_{12} \\
        -i\overline{X}_{21} & X_{11}
    \end{pmatrix}. 
    \]

    Recall that a group $\tilde{G}$ is a \emph{central extension} of a group $G$ by an abelian group $A$ if there exists a short exact sequence
    \[ 
    1\rightarrow A \rightarrow \tilde{G} \rightarrow G \rightarrow 1 
    \]
    such that the image of $A$ is in the center of $\tilde{G}$. The isomorphism classes of central extensions of $G$ by $A$ (with $G$ and $A$ fixed point-wise) are in bijection with the second cohomology group $\mathrm{H}^2(G,A)$ for the trivial action of $G$ on $A$.

    \begin{definition}\label{def:perturbation}
        Let $\tilde{G}$ be a central extension of $G$ by $A$ and let $\varepsilon:G\times G\rightarrow A$ be a 2-cocycle (for trivial action). Let $\tilde{\varepsilon}:\tilde{G}\times \tilde{G}\rightarrow A$ be the composition of $\varepsilon$ with the projection $\tilde{G}\rightarrow G$.
        We define $\tilde{G}_\varepsilon$ %called the perturbation of $\tilde{G}$ by $\varepsilon$, 
        as the group with the same underlying set as $\tilde{G}$ and with multiplication 
        %$*:\tilde{G}\times \tilde{G}\rightarrow\tilde{G}$ defined as 
        perturbed by $\varepsilon$ as follows: $g*h=\tilde{\varepsilon}(g,h)gh$ for all $g,h\in \tilde{G}_\varepsilon$.
    \end{definition}

    \begin{remark}\label{rem:coboundary}
        The group $\tilde{G}_\varepsilon$ of \Cref{def:perturbation} depends, up to isomorphism, only on the cohomology class $[\varepsilon]\in\mathrm{H}^2(G,A)$. In fact, let $\varepsilon$ be a coboundary, i.e. there exists a function $\tau:G\rightarrow A$ such that $\varepsilon(g,h)=d\tau(g,h)=\tau(gh)\tau(g)^{-1}\tau(h)^{-1}$. Then the map $T:\tilde{G}\rightarrow\tilde{G}_\varepsilon$ given by $T(g)=\tilde{\tau}(g)g$, for all $g\in \tilde{G}$, is a group isomorphism.
    \end{remark}

    If $\tilde{H}$ is a subgroup of $\tilde{G}$ containing $A$ and $H$ is the projection of $\tilde{H}$ to $G$, then $\tilde{H}$ is a central extension of $H$ by $A$, so we can define $\tilde{H}_\varepsilon$ using the restriction of $\varepsilon$ to $H$ and identify $\tilde{H}_\varepsilon$ with a subgroup of $\tilde{G}_\varepsilon$.

    \begin{lemma}\label{lm:quasitori}
        Suppose that an algebraic group $\tilde{G}$ is a central extension of $G$ by a quasitorus $A$ and that $\varepsilon:G\times G\to A$ is a symmetric $2$-cocycle. Then $\tilde{Q}\mapsto\tilde{Q}_\varepsilon$ is a bijection between the quasitori of $\tilde{G}$ and $\tilde{G}_\varepsilon$ that contain $A$. Moreover, this bijection preserves conjugacy classes and Weyl groups.
    \end{lemma}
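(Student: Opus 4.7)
The plan is to verify the bijection and its properties by treating $\tilde G_\varepsilon \to \tilde G$ as the identity on underlying sets. First, I would normalize $\varepsilon$ so that $\varepsilon(e,e)=e$, which forces $\varepsilon(e,g)=\varepsilon(g,e)=e$ for all $g$ via the cocycle identity. Then $e$ remains the identity of $\tilde G_\varepsilon$, and since $\pi(A)=\{e\}$ one has $\tilde\varepsilon(a,g)=\tilde\varepsilon(g,a)=e$ for all $a\in A$, so $A$ stays central in $\tilde G_\varepsilon$. For any closed subgroup $\tilde H\subseteq\tilde G$ containing $A$, the underlying subset is a closed subgroup of $\tilde G_\varepsilon$: closure under the perturbed multiplication $g*_\varepsilon h=\tilde\varepsilon(g,h)\,gh$ follows from $\tilde\varepsilon(g,h)\in A\subseteq\tilde H$, and a similar check handles inverses. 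Applying the same construction to $\tilde G_\varepsilon$ with the cocycle $\varepsilon^{-1}$ recovers $\tilde G$, giving the bijection at the level of closed subgroups containing $A$.

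Next, I would show that this bijection restricts to one between quasitori. Symmetry of $\varepsilon$ gives $g*_\varepsilon h=h*_\varepsilon g$ iff $gh=hg$, so $\tilde Q_\varepsilon$ is commutative iff $\tilde Q$ is. Suppose $\tilde Q\supseteq A$ is a quasitorus; then $\tilde Q_\varepsilon$ is a commutative affine algebraic group whose underlying variety coincides with that of $\tilde Q$. In characteristic zero, any connected commutative affine algebraic group is the direct product of a torus and a unipotent group, and these factors are distinguished by their underlying varieties (a unipotent group is an affine space while a torus is not). Hence the identity component of $\tilde Q_\varepsilon$, sharing the variety of a torus, is itself a torus, and $\tilde Q_\varepsilon$ is an extension of a finite abelian group by a torus, i.e., a quasitorus.

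For conjugacy and Weyl groups, I would expand the perturbed conjugation and obtain $g*_\varepsilon h *_\varepsilon g^{-1_\varepsilon}=c(g,h)\,ghg^{-1}$, where the inverse on the left is taken in $\tilde G_\varepsilon$ and $c(g,h)\in A$ depends only on $\pi(g)$ and $\pi(h)$. Since $c(g,h)\in A\subseteq\tilde Q$, the sets $\{g*_\varepsilon h *_\varepsilon g^{-1_\varepsilon}:h\in\tilde Q\}$ and $g\tilde Q g^{-1}$ coincide, which transports conjugacy classes in both directions and in particular gives $N_{\tilde G_\varepsilon}(\tilde Q_\varepsilon)=N_{\tilde G}(\tilde Q)$ as sets. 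Symmetry of $\varepsilon$ further yields $C_{\tilde G_\varepsilon}(\tilde Q_\varepsilon)=C_{\tilde G}(\tilde Q)$, and because $\tilde\varepsilon(g,h)\in A\subseteq C_{\tilde G}(\tilde Q)$, the induced multiplication on $N/C$ is unaffected by the perturbation, so $W(\tilde Q_\varepsilon)=W(\tilde Q)$ as abstract groups. The only delicate point in the whole argument is the preservation of the quasitorus property, which pivots on the variety-theoretic distinction in characteristic zero between tori and unipotent groups; the remaining claims reduce to straightforward cocycle manipulations.
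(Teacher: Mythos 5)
Your proof is correct, and most of it (the bijection on subgroups containing $A$, the transport of conjugacy classes, and the identifications of normalizers, centralizers and $N/C$ via symmetry of $\varepsilon$ and the fact that $\tilde\varepsilon$ takes values in $A\subseteq C(\tilde Q)$) follows the same route as the paper, just spelled out in more detail. The one step where you genuinely diverge is the preservation of the quasitorus property. The paper disposes of it in one line by citing the structural fact that, over an algebraically closed field, a commutative extension of a quasitorus by a quasitorus is again a quasitorus, viewing $\tilde Q_\varepsilon$ as a commutative extension of $\tilde Q/A$ by $A$. You instead argue geometrically: $\tilde Q_\varepsilon^\circ$ has the same underlying variety as the torus $\tilde Q^\circ$, and in characteristic zero the decomposition of a connected commutative affine group as (torus)$\times$(unipotent) is detected by the variety, so the unipotent part must vanish. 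Your argument is more self-contained (it avoids quoting the extension lemma) but is tied to characteristic zero and leans on a slightly informal claim --- that $(\mathbb{F}^\times)^m\times\mathbb{A}^k\cong(\mathbb{F}^\times)^n$ as varieties forces $k=0$ --- which is true (compare dimensions and unit groups of the coordinate rings) but deserves that one extra sentence; you also implicitly use that a commutative group with torus identity component and finite component group is a quasitorus, which in characteristic zero follows since its unipotent part is a finite unipotent group, hence trivial. Either route is acceptable; the paper's citation is cleaner and characteristic-free, while yours makes the mechanism visible.
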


    \begin{proof}
    The mapping $\tilde{H}\mapsto\tilde{H}_\varepsilon$ is a bijection between the subgroups of $\tilde{G}$ and $\tilde{G}_\varepsilon$ that contain $A$, since $\tilde{H}$ and $\tilde{H}_\varepsilon$ correspond to the same subgroup of $G$. It also follows that this bijection preserves the conjugacy classes of such subgroups and that  $N(\tilde{H}_\varepsilon)=N(\tilde{H})_\varepsilon$. Since $\varepsilon$ is symmetric, we have $gh=hg$ if and only if $g*h=h*g$. Therefore, the bijection preserves commutativity,  $C(\tilde{H}_\varepsilon)=C(\tilde{H})_\varepsilon$, and $N(\tilde{H}_\varepsilon)/C(\tilde{H}_\varepsilon)\cong N(\tilde{H})/C(\tilde{H})$. The result now follows from the fact that, over an algebraically closed field, a commutative extension of a quasitorus by a quasitorus is itself a quasitorus.
    \end{proof}

    We will apply these considerations to the groups $\aut(D(-1))$ and $\aut(D(1))$, which are both central extensions of $\psl_2(\mathbb{F})^3\rtimes C_2$ by $\langle\delta\rangle\cong C_2$, where $\delta$ is the parity automorphism and the projection on $\psl_2(\mathbb{F})^3\rtimes C_2$ is given by the restriction to $\mathcal{L}_0$.
     
    \begin{proposition}\label{prop:perturbation}
        Let $G=\psl_2(\mathbb{F})^3\rtimes C_2$ and let $\pi:G\to\mathbb{Z}_2$ be the quotient map modulo  $\psl_2(\mathbb{F})^3$. Define  $\varepsilon:G\times G\rightarrow \langle \delta\rangle$ by $\varepsilon(g,h)=\delta^{\pi(g)\pi(h)}$, for all $g,h\in G$. %where $|\cdot|:\psl_2(\mathbb{F})^3\times\mathbb{Z}_2\rightarrow \mathbb{Z}_2$ is the canonical projection.
        Then $\varepsilon$ is a $2$-cocycle, $\aut(D(-1))\cong\aut(D(1))_\varepsilon$, and there is a bijective correspondence between maximal quasitori of $\aut(D(1))$ and $\aut(D(-1))$, which preserves conjugacy classes and Weyl groups.
    \end{proposition}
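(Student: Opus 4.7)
The plan is to prove the three assertions in order, concluding with an application of \Cref{lm:quasitori}.

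First, I verify that $\varepsilon$ is a symmetric $2$-cocycle. Since $\pi:G\to\mathbb{Z}_2$ is a homomorphism into the abelian group $\mathbb{Z}_2$, both sides of the cocycle identity $\varepsilon(g,h)\varepsilon(gh,k)=\varepsilon(g,hk)\varepsilon(h,k)$ expand to $\delta^{\pi(g)\pi(h)+\pi(g)\pi(k)+\pi(h)\pi(k)}$, and symmetry is immediate from the commutativity of $\mathbb{Z}_2$. Consequently, $\aut(D(1))_\varepsilon$ from \Cref{def:perturbation} is a well-defined group.

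Second, to construct the isomorphism $\aut(D(-1))\cong\aut(D(1))_\varepsilon$, I exploit that both $\aut(D(\pm1))$ are central extensions of the same group $G=\psl_2(\mathbb{F})^3\rtimes C_2$ by $\langle\delta\rangle$, and compare their extension cocycles relative to a common set-theoretic section $s:G\to\aut(D(\pm1))$. I build $s$ by fixing any lift $s_0:\psl_2(\mathbb{F})^3\to\mathrm{Im}(\iota)$ (the same on both sides, since $\mathrm{Im}(\iota)$ has the same formula on $\mathcal{L}_0$ and the same kernel $K$ for $\alpha=\pm1$) and extending it to the outer coset of $G$ via $g\mapsto s_0(g_{\mathrm{inner}})\varphi_\alpha$. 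The resulting extension cocycles agree on inner-inner pairs (same inner subgroup) and on inner-outer pairs, because a direct computation using the explicit formulas for $\varphi_{\pm1}$ on $\mathcal{L}_0$ and $\mathcal{L}_1$ shows that
\[
\varphi_\alpha\,\iota(f,g,h)\,\varphi_\alpha^{-1}=\iota(g,f,h)
\]
in both cases; the scalar $i$ appearing in the definition of $\varphi_{-1}$ on $\mathcal{L}_1$ is cancelled by the corresponding $-i$ in $\varphi_{-1}^{-1}=\varphi_{-1}\delta$. The two cocycles therefore differ only on outer-outer pairs, where $\varphi_1^2=1$ while $\varphi_{-1}^2=\delta$ (here the scalar $i$ appears twice, producing the scalar $-1$ on $\mathcal{L}_1$, which is the action of $\delta$). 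This discrepancy is exactly the value of $\varepsilon$ on such pairs, so the extension cocycle of $\aut(D(-1))$ differs from that of $\aut(D(1))$ precisely by $\varepsilon$, yielding the required isomorphism.

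Third, I apply \Cref{lm:quasitori} to $\tilde{G}=\aut(D(1))$, $A=\langle\delta\rangle$, and the cocycle $\varepsilon$, to obtain a conjugacy-class- and Weyl-group-preserving bijection between the quasitori of $\aut(D(1))$ and those of $\aut(D(1))_\varepsilon\cong\aut(D(-1))$ that contain $\langle\delta\rangle$. Every maximal quasitorus contains $\langle\delta\rangle$: indeed, $\delta$ is a semisimple central element, so $\langle\delta\rangle$ commutes with any quasitorus $Q$, and their product $Q\cdot\langle\delta\rangle$ is again a quasitorus, forcing $\delta\in Q$ by maximality. Since the lemma's bijection is inclusion-preserving, it restricts to the desired bijection on maximal quasitori.

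The main obstacle is the outer-outer computation in step two: checking precisely that $\varphi_{-1}^2=\delta$ while $\varphi_1^2=1$, and that conjugation by $\varphi_\alpha$ acts identically on the inner subgroup for $\alpha=\pm1$. These explicit scalar computations with $i$ are what guarantee that the discrepancy between the two extension cocycles equals exactly $\varepsilon$, rather than some other cohomologous cocycle. Once this verification is complete, the rest of the argument is routine.
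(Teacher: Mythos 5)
Your proof is correct. The only real difference from the paper lies in how the isomorphism $\aut(D(-1))\cong\aut(D(1))_\varepsilon$ is produced. The paper introduces the operator $\sqrt{\delta}$ (identity on $\mathcal{L}_0$, multiplication by $i$ on $\mathcal{L}_1$) and the function $\tau:G\to\langle\sqrt{\delta}\rangle$ with $\tau(g)=\sqrt{\delta}^{\,\pi(g)}$; then $\varepsilon=d\tau$ formally (which gives the cocycle property for free, while showing $\varepsilon$ is not a coboundary since $\tau$ does not land in $\langle\delta\rangle$), and the isomorphism is written in one line as $T(\alpha)=\tilde{\tau}(\alpha)\alpha$, i.e., ``multiply outer automorphisms by $i$ on the odd part.'' You instead verify the cocycle identity by direct expansion and compare the extension cocycles of the two central extensions relative to parallel sections, checking the inner--inner, inner--outer and outer--outer cases separately. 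The computational content is identical in both arguments --- everything reduces to $\varphi_\alpha\,\iota(f,g,h)\,\varphi_\alpha^{-1}=\iota(g,f,h)$ for both values of $\alpha$ together with $\varphi_1^2=1$ versus $\varphi_{-1}^2=\delta$ --- and indeed the abstract isomorphism you obtain from the equality of cocycles is exactly the paper's $T$ (up to a factor of $\delta$ on the outer coset). What the paper's route buys is brevity and the automatic verification that $T$ is a morphism of algebraic groups (it is multiplication by a fixed scalar operator on each connected component), which matters when feeding the result into \Cref{lm:quasitori}; what your route buys is that the cohomological mechanism is laid bare, and your justification that every maximal quasitorus contains $\langle\delta\rangle$ (via centrality and semisimplicity of $\delta$) is more complete than the paper's bare assertion.
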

    \begin{proof}
        Consider the operator $\sqrt{\delta}:\mathcal{L}\to \mathcal{L}$ that acts as the identity on $\mathcal{L}_0$ and as multiplication by $i$ on $\mathcal{L}_1$. Then define the function $\tau:G\to \langle \sqrt{\delta}\rangle$ by $\tau(g)=\mathrm{id}$ if $\pi(g)=0$ and $\tau(g)=\sqrt{\delta}$ if $\pi(g)=1$. Formally, we have $\varepsilon=d\tau$, which shows that $\varepsilon$ is a $2$-cocycle, but note that it is not a coboundary since $\tau$ does not take values in $\langle\delta\rangle$. Nevertheless, similarly to \Cref{rem:coboundary}, for any $\alpha\in\aut(D(-1))$, we can define $T(\alpha)=\tilde{\tau}(\alpha)\alpha$, and it is easy to check that $T(\alpha)$ is an automorphism of $D(1)$. Therefore, the map $T:\aut(D(-1))\rightarrow \aut(D(1))_\varepsilon$ is a well defined group isomorphism. Since any maximal quasitorus of $D(1)$ or $D(-1)$ contains $\langle\delta\rangle$, the proof is complete by \Cref{lm:quasitori}.
        %
        %Observe that for each $\alpha,\beta\in \psl_2(\mathbb{F})^3\times\mathbb{Z}_2$ we have $\varepsilon(\alpha,\beta)=\varepsilon(\beta,\alpha)$, therefore two elements commute in $\aut(D(1))$ if and only if they commute in $\aut(D(1))_\varepsilon$. Let $A\subseteq \aut(D(1))$ be an abelian subgroup containing $\delta$. Then, for each $\alpha,\beta\in A$, $\alpha*\beta$ is also in $A$, so $A$ is an abelian subgroup of $\aut(D(1))_\varepsilon$ too. Moreover, since $\sqrt{\delta}$ is diagonalizable, if $\alpha\in \aut(D(-1))$ is diagonalizable, then $T(\alpha)$ is diagonalizable too. In conclusion, let $Q\subseteq \aut(D(1))$ be a maximal quasitorus. By maximality, $Q$ contains $\delta$ therefore is a maximal diagonalizable abelian subgroup of $\aut(D(1))_\varepsilon$ as well.
        %
        %Finally, if $Q$ and $Q'$ are conjugated maximal quasitori of $\aut(D(1))$ such that $Q'=\alpha Q\alpha^{-1}$, then $\alpha* Q*\alpha^{-1}\subseteq \langle \delta\rangle Q'=Q'$, therefore $Q$ and $Q'$ are conjugated in $\aut(D(1))_\varepsilon$ too. The previous computation also shows that an element $\alpha$ is in the normalizer of $Q$ in $\aut(D(1))$ if and only if it is in the normalizer of $Q$ in $\aut(D(1))_\varepsilon$. Therefore the normalizer $N(Q)$ is well defined regardless of the perturbation. Similarly the centralizer $C(Q)$ corresponds to the same set in both groups since it is an abelian subgroup containing $\delta$. For each $\alpha,\beta\in N(Q)$ we have $\alpha*\beta\equiv\alpha\beta\mod C(Q)$, so the Weyl group $W(Q)=N(Q)/C(Q)$ does not depend on the perturbation.
    \end{proof}

    \begin{corollary}
        The fine gradings on $\mathfrak{psl}(2\,|\,2)$ or $D(-1)$ are in bijective correspondence with those on $D(1)$. Up to equivalence, they are the following: 
        \begin{itemize}
            \item $\mathbb{Z}^3 $-grading, with Weyl group isomorphic to $C_2^3\rtimes C_2$, of type $(14,0,1)$ on $D(-1)$ and of type $(12,1)$ on $\mathfrak{psl}(2\,|\,2)$;
            \item $\mathbb{Z}_4\times\mathbb{Z}_2^2 $-grading, with Weyl group isomorphic to $C_2^5\rtimes C_2$, of type $(14,0,1)$ on $D(-1)$ and of type $(12,1)$ on $\mathfrak{psl}(2\,|\,2)$;
            \item $\mathbb{Z}\times\mathbb{Z}_2^2 $-grading, with Weyl group isomorphic to $C_2\times(C_2^2 \rtimes S_3)$, of type $(11,3)$ on $D(-1)$ and of type $(14)$ on $\mathfrak{psl}(2\,|\,2)$;
            \item $\mathbb{Z}\times\mathbb{Z}_2^3 $-grading, with Weyl group isomorphic to $C_2\times(C_2^2 \rtimes S_3)$, of type $(17)$ on $D(-1)$ and of type $(14)$ on $\mathfrak{psl}(2\,|\,2)$;
            \item $\mathbb{Z}^2\times\mathbb{Z}_2 $-grading, with Weyl group isomorphic to $C_2^3$, of type $(15,1)$ on $D(-1)$ and of type $(14)$ on $\mathfrak{psl}(2\,|\,2)$;
            \item $\mathbb{Z}_4^2 $-grading, with Weyl group isomorphic to $C_2^4\rtimes C_2$, of type $(13,2)$ on $D(-1)$ and of type $(14)$ on $\mathfrak{psl}(2\,|\,2)$.
        \end{itemize}
    \end{corollary}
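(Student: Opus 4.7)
The plan is to chain two bijections, reducing the classification of fine gradings on $\mathfrak{psl}(2\,|\,2)$ to the one already obtained on $D(1)$. Since the text establishes that $\theta:\aut(D(-1))\to\aut(\mathfrak{psl}(2\,|\,2))$ is a group isomorphism, maximal quasitori correspond bijectively, with conjugacy classes, normalizers and centralizers preserved. By the duality between fine abelian group gradings and maximal quasitori recalled in \Cref{sec:preliminaries}, this yields a bijection between equivalence classes of fine gradings on $D(-1)$ and on $\mathfrak{psl}(2\,|\,2)$ which preserves universal groups and Weyl groups. Concretely, a fine grading on $D(-1)$ restricts to its unique nontrivial ideal $(\mathfrak{s}_1\oplus\mathfrak{s}_2)\oplus\mathcal{L}_1$ (a graded subspace, being characteristic) and transports via $\Omega$ to a fine grading on $\mathfrak{psl}(2\,|\,2)$.

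Next, \Cref{prop:perturbation} provides a bijection $\tilde{Q}\mapsto\tilde{Q}_\varepsilon$ between maximal quasitori in $\aut(D(1))$ and in $\aut(D(-1))$ that preserves conjugacy classes and Weyl groups. Since $D(1)$ and $D(-1)$ share the same underlying super vector space $\mathcal{L}_0\oplus\mathcal{L}_1$ (only the bracket $\mathcal{L}_1\otimes\mathcal{L}_1\to\mathcal{L}_0$ differs), and since $\tilde{Q}$ and $\tilde{Q}_\varepsilon$ coincide as sets of linear operators on this space, their eigenspace decompositions are identical. Hence corresponding fine gradings on $D(1)$ and $D(-1)$ have the same universal group, Weyl group and type. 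The fine gradings on $D(1)$ are read off from \Cref{thm:D3}: there are six, namely $\gr{\IC}_D$, $\gr{\IP}_D$, $\gr{\IM{1}}_D$, $\gr{\IIC}_D$, $\gr{\IIP}_D$, $\gr{\IIM}_D$, with the universal groups and Weyl groups computed in \Cref{prop:IC_weyl2,prop:IP_weyl2,prop:IM_weyl,prop:IIC_weyl,prop:IIP_weyl,prop:IIM_weyl}. Transporting these to $D(-1)$ gives the data stated in the corollary, and in particular the types $(14,0,1)$, $(14,0,1)$, $(11,3)$, $(17)$, $(15,1)$, $(13,2)$ listed for $D(-1)$.

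Finally, to obtain the types on $\mathfrak{psl}(2\,|\,2)$, note that the restriction to the ideal removes precisely the $\mathfrak{s}_3$-contribution from each homogeneous component. The explicit \Cref{def:IC,def:IP,def:IM,def:IIC,def:IIP,def:IIM} specify the grading on $\mathfrak{s}_3$ as a Cartan or Pauli grading of $\mathfrak{sl}_2$ with a known three-element support, so a routine case analysis — enumerate the three nonzero components of $\mathfrak{s}_3$ in each family and subtract a dimension from the corresponding component of the grading on $D(-1)$ — yields the types $(12,1)$, $(12,1)$, $(14)$, $(14)$, $(14)$, $(14)$ respectively. The main (minor) obstacle is that in the $\gr{\IIP}_D$ and $\gr{\IIC}_D$ cases some $\mathfrak{s}_3$-components share their degree with higher-dimensional components supplied by $\mathfrak{s}_1\oplus\mathfrak{s}_2$, so one must distinguish between components whose dimension merely drops by one and those that disappear entirely; the bookkeeping works out in each case.
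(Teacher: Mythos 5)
Your proposal follows the same route as the paper's proof: use the isomorphism $\theta$ to identify fine gradings on $D(-1)$ with those on $\mathfrak{psl}(2\,|\,2)$, use \Cref{prop:perturbation} to transport maximal quasitori (hence fine gradings, with their Weyl groups and conjugacy classes) from $\aut(D(1))$ to $\aut(D(-1))$, observe that the homogeneous components are the same subspaces, and then restrict to the ideal to read off the types. The one inference that is not valid as written is ``their eigenspace decompositions are identical, hence corresponding fine gradings have the same universal group.'' The universal group is determined not by the decomposition into subspaces alone but by which brackets between components are nonzero, and these differ between $D(1)$ and $D(-1)$ (indeed $\sigma_3=0$ for $D(-1)$); equivalently, for a fine grading the universal group is the character group of the maximal quasitorus, and the correspondence $\tilde{Q}\mapsto\tilde{Q}_\varepsilon$ of \Cref{prop:perturbation} can change the isomorphism type of the quasitorus, since the cocycle $\varepsilon$ need not restrict to a coboundary on $\tilde{Q}$ (for example, the outer automorphism $\varphi$ has order $2$ in $\aut(D(1))$ but satisfies $\varphi^2=\delta$ in $\aut(D(-1))$). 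The paper fills this in by a direct computation of the universal groups for each of the six quasitori, and your argument needs the same case-by-case check rather than the asserted implication; the conclusion is nevertheless correct in all six cases. The remainder of your argument, including the bookkeeping for the types on $\mathfrak{psl}(2\,|\,2)$, is fine.
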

    \begin{proof}
        Using \Cref{prop:perturbation} for each maximal quasitorus corresponding to a fine grading on $D(1)$, we obtain a fine grading on $D(-1)$, which has the same subspaces as homogeneous components. By direct computation, it is easy to see that the universal groups are as stated. Restricting the grading to the unique nontrivial ideal of $D(-1)$ isomorphic to $\mathfrak{psl}(2\,|\,2)$, we complete the proof.
    \end{proof}

    It follows that the classification of $G$-gradings on $D(-1)$ up to isomorphism is the same as on $D(1)$. As for $\mathfrak{psl}(2\,|\,2)$, restricting even derivations gives an isomorphism $\mathrm{Der}(D(-1))_0\cong\mathrm{Der}(\mathfrak{psl}(2\,|\,2))_0$, so the almost fine gradings on $\mathfrak{psl}(2\,|\,2)$ are the restrictions of those on $D(-1)$: the fine gradings and one coarsening of the fine $\mathbb{Z}\times\mathbb{Z}_2^3$-grading (see \Cref{prop:IIM_fine}), which remains proper for $\mathfrak{psl}(2\,|\,2)$. Moreover, since $D(-1)$ and $\mathfrak{psl}(2\,|\,2)$ have the same odd part, it is easy to check that the admissible homomorphisms remain the same, namely, injective on the torsion subgroup of the universal group. We will present the resulting classification of $G$-gradings on $\mathfrak{psl}(2\,|\,2)$ in terms of block matrices. To simplify notation, it will be convenient to identify $M_4(\mathbb{F})$ with $M_2(\mathbb{F})\otimes M_2(\mathbb{F})$ where the superalgebra structure is carried by the first factor:
    a matrix unit $E_{jk}\in M_2(\mathbb{F})$ is even if $j=k$ and odd if $j\ne k$. We also use the $2\times 2$ matrices from \Cref{examples}.

    \begin{definition}\label{def:A}
        We have the following $G$-gradings on $\mathfrak{psl}(2\,\,|2)$: 
        \begin{itemize}
            \item $\Gamma^{(1)}_{A(1,1)}(g_1,g_2,g_3,g_4)$, %$\gr{\IC}_A(h_1,h_2,h_3,h_4)$, 
            with any $g_j\in G$, is the grading induced by the elementary grading (see \Cref{def:elemM}) on $M_4(\mathbb{F})$ defined by $\gamma=(g_1,g_2,g_3,g_4)$;
            %[M: I think the restriction actually gives the inverse of this] the tuple $(h_2^{-1},h_3,e,h_1h_3)$.
            
            \item $\Gamma^{(2)}_{A(1,1)}(s,a,b)$, %$\gr{\IP}_A(g,a,b)$, 
            where $s,a,b\in G$ such that $s^4=a^2=b^2=e$ and $\langle s,a,b\rangle\cong \mathbb{Z}_4\times\mathbb{Z}_2^2$, is defined by $\deg(E_{11}\otimes C)=\deg(E_{22}\otimes C)=s^2$, $\deg(E_{11}\otimes A)=a$, $\deg(E_{22}\otimes A)=b$, $\deg(E_{11}\otimes B)=s^2a$, $\deg(E_{22}\otimes B)=s^2b$ and, for $r\in\{0,1\}$ and $\{j,k\}=\{1,2\}$, $\deg(E_{jk}\otimes A+(-1)^ri E_{kj}\otimes B)=s^{k-j}a^rb^r$, $\deg(E_{12}\otimes I+(-1)^ri E_{21}\otimes C)=sa^rb^{r+1}$ and $\deg(E_{21}\otimes I+(-1)^ri E_{12}\otimes C)=s^{-1}a^{r+1}b^r$;
            
            \item $\Gamma^{(3)}_{A(1,1)}(g,T)$, %$\gr{\IM{}}_A(g,T)$, 
            where $g\in G$ and $T=\{e,a,b,c\}\cong \mathbb{Z}_2^2$ is a subgroup of $G$, is defined by $\deg(E_{11}\otimes H)=e$, $\deg(E_{11}\otimes E)=g^2$, $\deg(E_{11}\otimes F)=g^{-2}$, $\deg(E_{22}\otimes A)=a$, $\deg(E_{22}\otimes B)=b$, $\deg(E_{22}\otimes C)=c$, 
            and, for $r\in\{0,1\}$, $\deg(E_{13} +(-1)^r E_{32})=ga^r$, $\deg(E_{14} +(-1)^r E_{42})=ga^rb$, $\deg(E_{23} +(-1)^r E_{31})=g^{-1}a^r$ and $\deg(E_{24} +(-1)^r E_{41})=g^{-1}a^rc$;
            
            \item $\Gamma^{(4)}_{A(1,1)}(g,h,T)$, %$\gr{\IIM}_A(g,h,T)$, 
            where $T=\{e,a,b,c\}\cong \mathbb{Z}_2^2$ is a subgroup of $G$ and $g,h\in G$ such that $h^2=e$ and $h\notin\{a,b,c\}$, is defined by setting, for each $r\in\{0,1\}$, $\deg(H^r\otimes A)=ah^r$, $\deg(H^r\otimes B)=bh^r$, $\deg(H^r\otimes C)=ch^r$, $\deg(E\otimes I)=g$, $\deg(E\otimes A)=gah$, $\deg(E\otimes B)=gbh$, $\deg(E\otimes C)=gch$, $\deg(F\otimes I)=g^{-1}h$, $\deg(F\otimes A)=g^{-1}a$, $\deg(F\otimes B)=g^{-1}b$ and $\deg(F\otimes C)=g^{-1}c$;
            
            \item $\Gamma^{(5)}_{A(1,1)}(g_1,g_2,h)$, %$\gr{\IIC}_A(g_1,g_2,h)$, 
            where $g_1,g_2,h\in G$ with $h$ of order $2$, is defined setting, for $r\in\{0,1\}$, $\deg(H^r\otimes H)=h^r$, $\deg(H^r\otimes E)=g_2h^r$, $\deg(H^r\otimes F)=g_2^{-1}h^r$, $\deg(E\otimes I)=g_1$, $\deg(E\otimes H)=g_1h$, $\deg(E\otimes E)=g_1g_2h$, $\deg(E\otimes F)=g_1g_2^{-1}h$, $\deg(F\otimes I)=g_1^{-1}h$, $\deg(F\otimes H)=g_1^{-1}$, $\deg(F\otimes E)=g_1^{-1}g_2$ and $\deg(F\otimes F)=g_1^{-1}g_2^{-1}$;
            
            \item $\Gamma^{(6)}_{A(1,1)}(a,b)$, %$\gr{\IIP}_A(a,b)$, 
            where $a^4=b^4=e$ and $T:=\langle a,b\rangle \cong \mathbb{Z}_4^2$, is the grading induced by the Pauli grading on $M_4(\mathbb{F})$ from \Cref{def:PauliM} with superalgebra structure defined by the quotient map $T\to T/\langle a,b^2\rangle\cong\mathbb{Z}_2$.
        \end{itemize}
    \end{definition}

    \begin{theorem}\label{thm:psl}
        Let $G$ be an abelian group. Then every $G$-gradings on $A(1,1)$ is 
        %of one of the types $\IC,\IP,\IM{},\IIC,\IIP,\IIM $ 
        isomorphic to one in \Cref{def:A}. Moreover, gradings from different families are not isomorphic, and the isomorphism classes within each family are parametrized by certain subgroups of $G$ and equivalence classes of certain multisets as indicated in Table \ref{tab:A}. \qed 
    \end{theorem}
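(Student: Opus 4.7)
The plan is to transfer the classification of $G$-gradings from $D(-1)$ to $A(1,1)=\mathfrak{psl}(2|2)$ via the explicit isomorphism $\Omega$. The isomorphism $\theta:\aut(D(-1))\to\aut(\mathfrak{psl}(2|2))$ of algebraic groups, obtained by restriction to the unique nontrivial ideal of $D(-1)$ and transport through $\Omega$, sets up a bijection between the maximal quasitori of the two groups that preserves conjugacy classes and Weyl groups. Thus the equivalence classes of fine abelian group gradings on $\mathfrak{psl}(2|2)$ are in natural bijection with those on $D(-1)$, and via \Cref{prop:perturbation} with those on $D(1)$. Hence \Cref{thm:D3} provides the list of six fine gradings on $\mathfrak{psl}(2|2)$, together with their universal groups and Weyl groups.

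Next I would verify that the almost fine gradings on $\mathfrak{psl}(2|2)$ are precisely the restrictions of the almost fine gradings on $D(-1)$, and that the admissible homomorphisms from each universal group are the same on both algebras, namely those injective on the torsion subgroup. The text preceding the theorem records that restriction induces an isomorphism $\mathrm{Der}(D(-1))_0\cong\mathrm{Der}(\mathfrak{psl}(2|2))_0$, so the support $\Sigma$ appearing in \Cref{thm:almostfine} is the same in both cases, and since the odd parts coincide, \Cref{def:admissible} yields the same admissibility condition on $U\to G$. Consequently, the unique proper almost fine coarsening of the $\mathbb{Z}\times\mathbb{Z}_2^3$ fine grading (cf.\ \Cref{prop:IIM_fine}) remains proper and almost fine on $\mathfrak{psl}(2|2)$, giving seven representatives of equivalence classes in total. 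Applying \Cref{thm:gradings} with these representatives and the Weyl groups from \Cref{thm:D3} immediately gives both the exhaustivity of the six families in \Cref{def:A} (gradings of different types have distinct universal groups or other invariants, hence cannot be isomorphic) and the isomorphism parametrization in Table \ref{tab:A}.

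The remaining task is purely computational: realize each family in matrix form via $\Omega$. Using the explicit formulas for $\Omega$, $\theta(\iota(\cdot,\cdot,\cdot))$, and $\theta(\varphi)$ given in the text, one tracks the homogeneous bases of \Cref{def:IC,def:IP,def:IM,def:IIM,def:IIC,def:IIP} through $\Omega$ to produce the six families of \Cref{def:A}. For $\Gamma^{(1)}$ and $\Gamma^{(6)}$ the result is particularly transparent: $\Gamma^{(1)}$ is the elementary grading on $M_4(\mathbb{F})$ induced by the diagonal quasitorus on $\mathfrak{sl}_2^{\oplus 3}$, while $\Gamma^{(6)}$ falls out as the generalized Pauli grading on $M_4(\mathbb{F})$, with the $\mathbb{Z}_2$-superalgebra structure determined by the quotient $T/\langle a,b^2\rangle$ because $\theta(\varphi)$ swaps the even and odd diagonal blocks.

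The hard part will be the bookkeeping for families $\Gamma^{(2)}$, $\Gamma^{(4)}$, and $\Gamma^{(6)}$, where the outer automorphism $\varphi$ contributes a nontrivial twist: one must verify that the explicit linear combinations of matrix units chosen in \Cref{def:A} (with the factors of $i$) are indeed the eigenvectors of the corresponding quasitorus after transporting via $\Omega$. Once this is carried out by a direct but careful computation with $2\times 2$ blocks, the matching of the two tables is exact and the proof is complete.
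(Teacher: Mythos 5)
Your proposal is correct and follows essentially the same route as the paper, which likewise deduces the theorem from the transfer via $\Omega$ and $\theta$, the cocycle-perturbation correspondence with $D(1)$ (\Cref{prop:perturbation}), the identification of almost fine gradings and admissible homomorphisms through $\mathrm{Der}(D(-1))_0\cong\mathrm{Der}(\mathfrak{psl}(2\,|\,2))_0$, and an application of \Cref{thm:gradings}, with the matrix realizations of \Cref{def:A} obtained by tracking homogeneous bases through $\Omega$. The only cosmetic slip is listing $\Gamma^{(6)}$ both as transparent and as requiring careful bookkeeping, and the aside about distinguishing families by universal groups is unnecessary since the uniqueness clause of \Cref{thm:gradings} already does that work.
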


    \begin{table}
    \begin{tabular}{|c|c|c|c|}
    \hline
    Family & Subgroups & Combinatorial data & Equivalence\\ \hline 
    $\gr{1}_{A(1,1)}$ 
    & trivial & \vtop{\hbox{\strut unordered pair of}\hbox{\strut multisets in $G$:}\hbox{\strut $\{g_1,g_2\}$, $\{g_3,g_4\}$}} & \vtop{\hbox{\strut simultaneous}\hbox{\strut $G$-translation,}\hbox{\strut simultaneous}\hbox{\strut inversion}}  \\ \hline 
    $\gr{2}_{A(1,1)}$ 
    &  $H=\langle s\rangle\times\langle a,b\rangle\cong\mathbb{Z}_4\times\mathbb{Z}_2^2$ & \vtop{\hbox{\strut $\{aS,bS\}$ in $H_{[2]}/S$}\hbox{\strut where $S=\langle s^2\rangle$}} & \\ \hline 
    $\gr{3}_{A(1,1)}$ 
    & $T\cong \mathbb{Z}_2^2$ & $gT\in G/T$ & inversion \\ \hline 
    $\gr{4}_{A(1,1)}$ 
    & \vtop{\hbox{\strut $T\cong\mathbb{Z}_2^2$, $H=\langle h\rangle$}\hbox{\strut with $T\cap H=1$}} & $gH\in G/H$ & inversion \\ \hline
    $\gr{5}_{A(1,1)}$ 
    & $H=\langle h\rangle\cong\mathbb{Z}_2$ & $\{g_1^{\pm 1},g_2^{\pm 1}\}$ in $G$ & $H$-translation \\ \hline
    $\gr{6}_{A(1,1)}$ 
    & $T=\langle a,b\rangle\cong\mathbb{Z}_4^2$, $H=\langle a^2\rangle\leq T_{[2]}$ & &  \\ \hline
    \end{tabular}
    \vspace{5pt} 
    \caption{Classification of $G$-gradings on the Lie superalgebra $A(1,1)$}\label{tab:A}
    \end{table}


\begin{thebibliography}{99}
\bibitem[BHSK17]{BHSK} Y.~Bahturin, C.~De Naday Hornhardt, H.~S.~Dos Santos, M.~Kochetov, \emph{Group gradings on the Lie and Jordan superalgebras $Q(n)$}, Communications in Algebra \textbf{45} (2017), 1914--1925.
\bibitem[BK10]{BK10} Y.~Bahturin, M.~Kochetov, \emph{Classification of group gradings on simple Lie algebras of types $\mathcal{A}$, $\mathcal{B}$, $\mathcal{C}$ and $\mathcal{D}$}, Journal of Algebra \textbf{324} (2010), 2971--2989.
\bibitem[BSZ01]{BSZ} Y.~Bahturin, S.~Sehgal, M.~Zaicev, \emph{Group gradings on associative algebras}, Journal of Algebra \textbf{241} (2001), 677--698.
\bibitem[CDM10]{CDM} A.J.~Calder\'on-Mart\'in, C.~Draper, C. Mart\'in-Gonz\'alez, \emph{Gradings on the Kac
superalgebra}, Journal of Algebra \textbf{234} (2010), no.~12, 3249--3261.
\bibitem[DEM11]{DEM} C.~Draper, A.~Elduque, C.~Mart\'in Gonz\'alez,  \emph{Fine gradings on exceptional simple Lie superalgebras}, International Journal of Mathematics \textbf{22} (2011), 1823--1855.
\bibitem[Eld98]{Eld} A.~Elduque, \emph{Gradings on Octonions}, Journal of Algebra \textbf{207} (1998), 342--354.
\bibitem[Eld10]{Eld10} A.~Elduque, \emph{Fine gradings on simple classical Lie algebras}, Journal of Algebra \textbf{324} (2010), 3532--3571.
\bibitem[EK12]{EK_Weyl} A.~Elduque, M.~Kochetov, \emph{Weyl groups of fine gradings on matrix algebras, octonions and the Albert algebra}, Journal of Algebra \textbf{366} (2012), 165--186.
\bibitem[EK13]{EK13} A.~Elduque, M.~Kochetov, \emph{Gradings on simple Lie algebras}, Mathematical Surveys and Monographs, 189. American Mathematical Society (2013).
\bibitem[EKpr]{EK23} E.~Elduque, M.~Kochetov, \emph{Almost fine gradings on algebras and classification of gradings up to isomorphism}, arXiv:2308.07230
\bibitem[GP04]{GP} D.~Grantcharov, A.~Pianzola, \emph{Automorphisms and twisted loop algebras of finite-dimensional simple Lie superalgebras}, International Mathematics Research Notices (2004), no.~73, 3937--3962.
\bibitem[Hdiss]{HPhD} C.~De Naday Hornhardt, \emph{Group gradings on classical lie superalgebras}. PhD thesis. Memorial University of Newfoundland, 2020.
\bibitem[HSK19]{HSK} C.~De Naday Hornhardt, H.~S.~Dos Santos, M.~Kochetov, \emph{Group gradings on the superalgebras $M(m,n)$, $A(m,n)$ and $P(n)$}, Journal of Pure and Applied Algebra \textbf{223} (2019), 1590--1616.
\bibitem[Kac77]{Kac} V.~Kac, \emph{Lie superalgebras}, Advances in Mathematics \textbf{26} (1977), 8--96.
\bibitem[PZ89]{PZ} J.~Patera, H.~Zassenhaus, \emph{On Lie Gradings.~I}, Linear Algebra and its Applications \textbf{112} (1989), 87--159.
\bibitem[Ser85]{S} V.~Serganova, \emph{Automorphisms of simple Lie superalgebras}, Mathematics of the USSR-Izvestiya \textbf{24} (1985), 539--551.

\end{thebibliography}
\end{document}